\documentclass[11pt]{article}
\usepackage{mathrsfs}
\usepackage{amssymb}
\usepackage{color}
\usepackage{amsmath}
\allowdisplaybreaks[4]

\usepackage{amsthm}
\usepackage{amsmath}
\usepackage{graphicx}

 \newtheorem{thm}{Theorem}[section]
 
 \newtheorem{lem}[thm]{Lemma}
 
 \theoremstyle{definition}
 
 \newtheorem{rem}[thm]{Remark}
 \numberwithin{equation}{section}
 \def\bR{\mathbb{R}}

\newtheorem{theorem}{Theorem}[section]

\theoremstyle{definition}

\def\div{\mathop{\rm div}\nolimits}
\def\curl{\mathop{\rm curl}\nolimits}

\newcommand{\newcom}{\newcommand}

\newcom{\al}{\alpha}
\newcom{\be}{\beta}
\newcom{\eps}{\epsilon}
\newcom{\veps}{\varepsilon}
\newcom{\e}{\varepsilon}
\newcom{\ga}{\gamma}
\newcom{\Ga}{\Gamma}
\newcom{\ka}{\kappa}
\newcom{\La}{\Lambda}
\newcom{\lam}{\lambda}
\newcom{\Om}{\Omega}
\newcom{\om}{\omega}
\newcom{\Si}{\Sigma}
\newcom{\si}{\sigma}
\newcom{\tht}{\theta}
\newcom{\dtri}{\nabla}
\newcom{\tri}{\triangle}
\newcom{\oo}{\infty}
\newcom{\vphi}{\varphi}
\newcom{\cB}{{\mathcal B}}
\newcom{\cC}{{\mathcal C}}
\newcom{\cD}{{\mathcal D}}
\newcom{\cF}{{\mathcal F}}
\newcom{\cH}{{\mathcal H}}
\newcom{\cL}{{\mathcal L}}
\newcom{\cM}{{\mathcal M}}
\newcom{\cN}{{\mathcal N}}
\newcom{\cP}{{\mathcal P}}
\newcom{\cS}{{\mathcal S}}
\newcom{\cQ}{{\mathcal Q}}
\newcom{\cT}{{\mathcal T}}
\newcom{\cY}{{\mathcal Y}}
\newcom{\cZ}{{\mathcal Z}}
\newcom{\R}{\mathbb R}
\newcom{\T}{\mathbb T}
\newcom{\BT}{{\mathbb{R}^2}}
\newcom{\Z}{\mathbb Z}
\newcom{\C}{\mathbb C}
\newcom{\E}{\mathbb E}

\newcommand{\vc}[1]{{\bf #1}}
\newcom{\ve}{\vc{e}}
\newcom{\vN}{\vc{N}}
\newcom{\vn}{\vc{n}}
\newcom{\vG}{\vc{G}}
\newcom{\vF}{\vc{F}}
\newcom{\vZ}{\vc{Z}}
\newcom{\vf}{\vc{f}}
\newcom{\vg}{\vc{g}}
\newcom{\vq}{\vc{q}}
\newcom{\vu}{\vc{u}}
\newcom{\vv}{\vc{v}}
\newcom{\vw}{\vc{w}}
\newcom{\vb}{\vc{b}}
\newcom{\vh}{\vc{h}}
\newcom{\vz}{\vc{z}}
\newcom{\vup}{\vu^{+}}
\newcom{\vum}{\vu^{-}}
\newcom{\vvp}{\vv^{+}}
\newcom{\vvm}{\vv^{-}}
\newcom{\vbp}{\vb^{+}}
\newcom{\vbm}{\vb^{-}}
\newcom{\vhp}{\vh^{+}}
\newcom{\vhm}{\vh^{-}}
\newcom{\Omp}{{\Om^+}}
\newcom{\Omm}{{\Om^-}}
\newcom{\vupm}{{\vu^{\pm}}}
\newcom{\vvpm}{{\vv^{\pm}}}
\newcom{\vbpm}{{\vb^{\pm}}}
\newcom{\vhpm}{{\vh^{\pm}}}
\newcom{\vwp}{{\vc{w}^+}}
\newcom{\vwm}{{\vc{w}^-}}
\newcom{\vwpm}{{\vc{w}^{\pm}}}
\newcom{\Ompm}{{\Omega^{\pm}}}
\newcom{\vom}{\boldsymbol{\omega}}
\newcom{\vvap}{\boldsymbol{\varpi}}
\newcom{\vop}{\vom^{+}}
\newcom{\vnu}{\boldsymbol{\nu}}
\newcom{\vopm}{\vom^{\pm}}
\newcom{\vjp}{\vj^+}
\newcom{\vjm}{\vj^-}
\newcom{\vjpm}{\vj^{\pm}}
\newcom{\vj}{\boldsymbol{\xi}}
\newcom{\Ds} {\langle\nabla\rangle^{s-\f12}}
\newcom{\vcY}{\ud{\mathcal{Y}}}
\newcom{\vcZ}{\ud{\mathcal{Z}}}
\newcom{\p}{\partial}
\newcom{\f}{\frac}
\newcommand{\dx}{{\rm d} {x}}

\newcommand{\dt}{{\rm d} t }

\newcom{\ds}{{\rm d} s}
\newcom{\ud}{\underline}
\newcom{\wu}{\ud{w}}
\newcom{\Wu}{\ud{W}}
\newcom{\Wo}{\overline{W}}
\newcom{\BR}{\mathbb{R}}
\newcom{\wt}{\widetilde}
\newcom{\sigu}{\ud{\sigma}}
\newcom{\qu}{\ud{q}}
\def\d{\mathrm{d}}
\def\dx{\mathrm{d}x}

\def\div{\mathop{\rm div}\nolimits}
\def\curl{\mathop{\rm curl}\nolimits}
\providecommand{\abs}[1]{\left\vert#1\right\vert}
\providecommand{\norm}[1]{\left\Vert#1\right\Vert}

\theoremstyle{remark}
\newtheorem{remark}[theorem]{Remark}

\begin{document}
\title{
%Global nonlinear stability of the plasma-vacuum interface
%problem for the 2D ideal incompressible MHD}
Global nonlinear stability of the plasma-vacuum free boundary problem for the 2D ideal incompressible MHD}

\author{Yuan Cai\footnote{School of Mathematical Sciences; LMNS and Shanghai Key Laboratory for Contemporary Applied Math, Fudan University, Shanghai 200433, P. R. China. Email: caiy@fudan.edu.cn
}
}
\date{}
\maketitle

\begin{abstract}
We address the plasma-vacuum interface problem in the two dimensional space.
The plasma region is  governed by
the ideal incompressible magnetohydrodynamic equations. % and
The vacuum region is described by the pre-Maxwell dynamics,
with the unknowns set to zero for simplicity.
This is a free boundary problem of ideal Euler type equations.
 Under the strong horizontal magnetic field in the plasma region,
we prove the global in time nonlinear stability of the two dimensional plasma-vacuum interface.
The proof relies on understanding the interplay between the dynamics of the fluids inside the domain and those on the free interface, as well as the inherent structures of the problem.
\end{abstract}

% With AMS-LaTeX, \maketitle follows the abstract
\maketitle

%%      ---------------------------------------------------------------------
%%      ------------------- TABLE OF CONTENTS (OPTIONAL) --------------------
%%      ---------------------------------------------------------------------

 \tableofcontents

%%      ---------------------------------------------------------------------
%%      ---------------------------- BODY OF PAPER --------------------------
%%      ---------------------------------------------------------------------

%%      Please input or insert the body of your paper here.

\section{Introduction}
\subsection{Presentation of the problem}

The plasma-vacuum interface problem concerns the free boundary between a magnetized plasma and an outer vacuum region. Such an interface is a characteristic feature in magnetic confinement devices (Tokamaks, Stellarators), where strong magnetic fields isolate the plasma from the surrounding vessel walls. The same free-boundary formulation also arises in astrophysical contexts, notably in modeling stellar motions and the solar corona when magnetic effects are included.

In this paper, we consider the plasma-vacuum free boundary problem in the two dimensional domain
$\Om=\BR\times [-1,\infty)$ separated by the free surface
\[
\Ga_f:= \left\{x=(x_1,x_2) | x_2=f(t,x_1), x_1 \in \mathbb{R} \right\}.
\]
The domain $\Om$ is thus divided into the plasma region $\Om_f$
 and the vacuum region $\Om_f^c$ where
\begin{align*}
\Om_f:&=\left\{ x=(x_1,x_2) | -1<x_2 < f(t,x_1),\, x_1\in\BR \right\},\\
\Om_f^c:&=\left\{ x=(x_1,x_2) | x_2 > f(t,x_1),\, x_1\in\BR \right\}.
\end{align*}
In the lower domain $\Om_f$, the evolution of plasma satisfies the following
two dimensional incompressible ideal  magnetohydrodynamics (MHD)
\begin{equation}\label{A0}
\begin{cases}
\p_t u + u\cdot\nabla u - h\cdot\nabla h + \nabla p= 0, \\
\p_t h + u\cdot\nabla h - h\cdot\nabla u = 0,\\
\div u = 0,\quad \div h=0,
\end{cases}
\text{in}\,\, \Om_f.
\end{equation}
Here $u=(u^1, u^2)$ denotes the velocity field, $h=(h^1,h^2)$ is the magnetic field,
$p=q + \f{1}{2}|h|^2$ is the total pressure and $q$ is the fluid pressure.
We consider the perfectly conducting magnetic fluids where
both the viscosity and resistivity vanish. This case is referred to as ideal MHD.

In the domain $\Omega^c_f$ above the free boundary, we assume the so-called {\it pre-Maxwell dynamics}. In such a case, the magnetic field $\hat h=(\hat{h}_1,\hat{h}_2)$ is determined by the div-curl system:
\begin{equation} \label{A3}
\div \hat{h}=0, \quad \curl \hat{h}=0\quad \text{in}\,\, \Omega^c_f.
\end{equation}
The physical quantities of the plasma and the vacuum region are related by the pressure balance condition on the interface $\Ga_f$:
\begin{equation}\label{A4}
p=\frac12|\hat{h}|^2\quad \text{on}\,\, \Ga_f
\end{equation}
as well as
\begin{equation}\label{A5}
h\cdot N_f = 0, \quad \hat h\cdot N_f = 0\quad \text{on}\,\, \Ga_f .
\end{equation}
Here
\[
N_f = (-\p_1 f, 1)
\]
is the outward normal to $\Ga_f$.
The free surface moves along the normal component of the velocity field,
thus  it satisfies the kinetic boundary condition:
\begin{equation}\label{A7}
\p_t f = u \cdot N_f\quad \text{on}\,\, \Ga_f .
\end{equation}
Moreover, on the artificial boundary $\Gamma^-=\BR\times \{-1\}$, we prescribe the following boundary conditions:
\begin{eqnarray}\label{A8}
u\cdot e_2=0,\,\, h\cdot e_2=0
\quad \text{on}\,\, \Gamma^- .
\end{eqnarray}

In this paper,
we consider the trivial solution $\hat{h}=0$ in the vacuum for simplicity.
This trivial solution obviously satisfies the {\it pre-Maxwell dynamics} \eqref{A3}.
Thus the physical balance conditions \eqref{A4} and \eqref{A5} on the interface $\Ga_f$ reduce to
\begin{align}
\label{A9}
 p&=0 \quad \text{on}\,\, \Ga_f,\\
\label{A10}
 h\cdot N_f &= 0 \quad \text{on}\,\, \Ga_f.
\end{align}
The system is supplemented with the initial data
\begin{equation} \label{A11}
\begin{cases}
u(0,x) = u^{0}(x), \quad
 h(0,x)=h^{0}(x)\quad\text{ in }\,\, \Omega_{f^0},\\
f(0,x_1) = f^0(x_1), \quad
\p_tf(0,x_1) = v^0(x_1)\quad\text{ in }\,\, \bR
\end{cases}
\end{equation}
which satisfy the following compatibility conditions:
\begin{equation}\label{A12}
\left\{\begin{aligned}
&\div u^{0} = 0, \,\, \div h^{0} = 0\quad \text{ in } \Om_{f^0},\,\,  \\
&h^{0}\cdot N_{f^0} = 0,\,\, v^0=u^0\cdot N_{f^0}\quad \text{ on }\,\, \Ga_{f^0},\\
&  u^0\cdot e_2=h^0\cdot e_2=0
\quad\text{ on }\,\, \Gamma^- ,
\end{aligned}\right.
\end{equation}
where $N_{f^0}=(-\p_1 f^0,1)$.

Here $u^{0}$ and $h^{0}$ denote the initial velocity field and magnetic field in the plasma region, while
$f^0, v^0$ denote the initial amplitude and velocity  of the free surface, respectively.
The initial data $u^{0}, h^{0}, f^0, v^0$ are assumed to be smooth and decay fast
enough at spatial infinity.

\begin{remark}
The divergence free restriction on $h$ is automatically
 preserved whenever $\nabla\cdot h^0=0$, thanks to the transport property
\begin{equation*}
\p_t (\nabla\cdot h)+u\cdot\nabla  (\nabla\cdot h)=0\,\,\,\text{in}\,\, \Om_f .
\end{equation*}
A similar argument can  also be applied to yield $h\cdot N_f=0$ on $\Ga_f$ if $h^0\cdot N_{f^0}=0$ holds on $\Ga_{f^0}$.
\end{remark}

In this paper, we establish the global nonlinear stability for the plasma-vacuum free boundary
problem for the two dimensional ideal incompressible MHD in the presence of a strong magnetic background.
Precisely, we assume the background magnetic field is aligned with the horizontal direction and set it to be $e_1=(1,0)$.
We prove that the equilibrium state $(u, h, f) = (\textbf{0}, e_1,0)$ is nonlinearly stable under sufficiently small initial perturbations. Such kinds of solutions will produce Alfv\'en  waves \cite{Alfven42}.
The equation of the interface 
derived in \cite{SWZ18, SWZ19} and the stability effect of strong magnetic field \cite{BSS, CL, HeXuYu,WZ}
played a crucial role in our current work. Apart from those important existing observations,
our proof highly relies on the understanding of the interplay between the dynamics of the fluids inside the domain and on the free interface, a design of multiple-level energy estimates with different weights, as well as the inherent structures of the system.
We will explain them in more detail in Section \ref{MaI}.

\subsection{Review of the closely related works}

There is a large body of mathematical literature on free boundary problems in fluid mechanics.
It would be impossible to give exhaustive references.
We only  briefly review some closely related historic works.
Concerning the free boundary problem for the incompressible Euler equations, the early literature on irrotational flows focuses on small perturbations of an initial interface, including Nalimov \cite{N}, Yosihara \cite{Yo}, Kano-Nishida \cite{KN}, and Craig \cite{Craig}.
The  local well-posedness for general initial data was established by the breakthrough work of Wu \cite{Wu1,Wu2}.
She showed that in dimensions two and three,
the strong Rayleigh-Taylor sign condition
\begin{align*}
-\f{\p p}{\p n}\geq c_0 >0
\end{align*}
always holds for the infinite depth water wave problem and that local-in-time solutions
can be constructed in Sobolev spaces $H^s$, $s \geq 4$ for  initial data of arbitrary size.
Further results include Beyer and G\"unther \cite{BG} on surface tension, and Lannes \cite{Lannes} on non-trivial bottom topography.
For the general free boundary problem in incompressible Euler equations, the local well-posedness %in 3D
 was independently obtained by Lindblad \cite{Lindblad05}
(see Christodoulou and Lindblad \cite{CL_00} for the a priori estimates),
 Coutand and Shkoller \cite{CS07},
Shatah and Zeng \cite{SZ1},
 Zhang and Zhang \cite{ZZ}.
For the study on singularities
 of free boundary problems,
we refer to Castro, C\'{o}rdoba, Fefferman, Gancedo and G\'{o}mez-Serrano
\cite{CCFG1, CCFG2}, Fefferman, Ionescu and Lie \cite{Feff} and Coutand \cite{Coutand} among others.
We also refer to Alazard, Burq and Zuily \cite{ABZ-D, ABZ}, Ambrose and Masmoudi \cite{Am05, Am07, Am09},
  Christianson, Hur and Staffilani \cite{CHS},
Coutand and Shkoller \cite{DS_10}, Gu and Lei \cite{Gu_2011, Gu_2016},
Ifrim, Pineau, Tataru and Taylor \cite{IPTT},
Masmoudi and Rousset \cite{MasRou},  Wang and Xin \cite{Wang_15} %, Wu \cite{Wu5},
 etc. for more works on the free boundary problem of inviscid fluids.

For irrotational inviscid fluids with gravity or surface tension, dispersive effects yield long-time existence for small data. Global three dimensional gravity water waves were obtained by Germain, Masmoudi, Shatah \cite{GMS1} and Wu \cite{Wu4}, with the capillary case later treated by Germain, Masmoudi, Shatah \cite{GMS2}. In the more challenging two dimensional setting, Wu \cite{Wu3} proved almost global existence, while Ionescu, Pusateri \cite{IP} and Alazard, Delort \cite{AD} established global results (see also Hunter, Ifrim, Tataru \cite{HIT,IT} for alternative proofs). For the two dimensional capillary waves, global existence is due to Ifrim, Tataru \cite{IT17} and Ionescu, Pusateri \cite{IP2}. Additional global results include Deng, Ionescu, Pausader, Pusateri \cite{DIPP} for the three dimensional gravity-capillary system and Wang \cite{W,W1} for the three dimensional finite-depth case.

For ideal MHD free boundary problems,
the fact that the magnetic field has a stabilizing effect for the current-vortex
sheet was found  long before by physicists Syrovatskij \cite{Sy} and Axford \cite{Ax}.
For the local in time stability of the incompressible current-vortex sheets which describe velocity and magnetic field discontinuities in the ideal MHD,  Coulombel, Morando, Secchi and Trebeschi \cite{CMST} obtained the \emph{a priori} estimate for the nonlinear problem.
 The nonlinear stability was solved by Sun, Wang and Zhang \cite{SWZ18}.
For more related studies on the local in time
stability of the compressible current-vortex sheets, we refer to Trakhinin \cite{Tra1, Trak_09}, Chen and Wang \cite{Chen_08}, Wang and Yu \cite{WY}.

Trakhinin \cite{Trak_10} introduced the following non-collinearity condition for the linearized compressible plasma-vacuum interface problem:
\begin{equation} \label{st}
\big| h\times \hat{h} \big|>0\ \ \text{on}\ \ \Ga_f.
\end{equation}
Under this condition, Secchi and Trakhinin \cite{Secchi_13, Secchi_14} proved the well-posedness of the compressible plasma-vacuum interface problem. For the incompressible plasma-vacuum free boundary problem,  the linear local well-posedness was
 proved by Morando, Trakhinin and Trebeschi \cite{Mo_14}, and the nonlinear stability was established by Sun, Wang and Zhang \cite{SWZ19}. The two dimensional case was treated by Liu and Luo \cite{LiuLuo} under the stability condition
 \begin{equation} \label{st1}
|h|+ |\hat{h}|\geq \lambda_0>0\ \ \text{on}\ \ \Ga_f.
\end{equation}
Further results include Gu \cite{Gu_17} for the corresponding three dimensional axially symmetric problem,
Liu and Xin \cite{LiuXin} for the non-graph nonlinear result.
 On the other hand, under the Rayleigh-Taylor sign condition,
 Hao and Luo \cite{Hao_13}  established
a priori estimates for the incompressible plasma-vacuum interface problem.
The local in time well-posedness was proved  by Gu and Wang \cite{GW_16}
for the case where the vacuum magnetic field is zero.
For more results, we refer to Hao and Luo \cite{Hao_20},
Zhao 
 \cite{ZhaoWB}, Ifrim, Pineau, Tataru and Taylor \cite{IPTT-MHD} among others.

For the global in time results on the MHD free boundary problem,
 Wang and Xin \cite{Wang_20}  proved the global existence of solutions for the plasma-vacuum and plasma-plasma interface problem for the incompressible inviscid resistive MHD
in a horizontally periodic slab impressed by a uniform non-horizontal magnetic field
and with surface tension on the free surface.
The author of this paper and Lei \cite {CL-24} proved the global in time nonlinear stability of the
two dimensional current-vortex sheets in the ideal incompressible MHD.

\subsection{Els\"{a}sser variables }
To study the global solutions of the plasma-vacuum free boundary problem, we need to explore the propagation of Alfv\'{e}n waves. %More importantly, the global stability effect of the free boundary.
To this end, we use the following  Els\"{a}sser variables %to study the propagation of Alfv\'{e}n waves:
\begin{equation*} %\label{A13}
\begin{cases}
Z_+=u + h,\\
Z_-=u - h,
\end{cases}
\textrm{in}\,\,\Omega_f,
\end{equation*}
In terms of the Els\"{a}sser variables, the MHD system \eqref{A0} is written as follows
\begin{equation} \label{MHD}
\begin{cases}
\p_t Z_+ +Z_-\cdot\nabla Z_+ + \nabla p =0, \\
\p_t Z_- +Z_+\cdot\nabla Z_- + \nabla p =0,\\
\div Z_+ = 0,\quad \div Z_-=0,
\end{cases}
\textrm{in}\,\, \Omega_f.
\end{equation}
The stability of the free surface is achieved under strong background magnetic field.
Inspired by the stability condition \eqref{st} and \eqref{st1},
we set the background magnetic field in the plasma domain to be
$$e_1=(1,0).$$
Then we introduce the perturbation variables:
\begin{equation*}
\Lambda_+=Z_+ - e_1, \quad \Lambda_-= Z_- + e_1 \quad \textrm{in}\,\, \Omega_f.
\end{equation*}
In terms of $\Lambda_\pm$, the ideal incompressible MHD system \eqref{MHD} is written as follows
\begin{equation} \label{MHD1}
\begin{cases}
\p_t \Lambda_+ - e_1\cdot \nabla \Lambda_+
+\Lambda_-\cdot\nabla \Lambda_+ + \nabla p =0,\\
\p_t \Lambda_- + e_1\cdot \nabla \Lambda_-
+\Lambda_+\cdot\nabla \Lambda_- + \nabla p =0,\\
\div \Lambda_+ = 0,\quad \div \Lambda_-=0,
\end{cases}
\textrm{in}\,\, \Omega_f.
\end{equation}
Then the boundary conditions \eqref{A8} become
\begin{align} \label{A18}
\Lambda_+\cdot e_2=0,\,\, \Lambda_-\cdot e_2=0
\quad \text{on}\,\, \Gamma^- .
\end{align}
And \eqref{A7}, \eqref{A9}-\eqref{A10} become
\begin{align}
\label{A19}
p&=0 \quad \text{on}\,\, \Ga_f,\\
\label{A21}
\p_t f &= Z_\pm \cdot N_f\quad \text{on}\,\, \Ga_f .
\end{align}
The initial data \eqref{A11} become
\begin{equation*}%\label{A24}
\begin{cases}
\Lambda_{\pm}(0,x)= \Lambda_{\pm}^0(x) \,\,\,\, \text{in}\,\, \Om_{f_0},\\
f(0,x_1)=f^0(x_1),\quad\p_tf(0,x_1)=v^0(x_1),\,\, x_1\in\bR,
\end{cases}
\end{equation*}
where $\Lambda_{\pm}^0=u^{0}\pm (h^{0}-e_1)$.
The compatibility conditions \eqref{A12} of the initial data become
\begin{equation}\label{com}
\left\{\begin{aligned}
&\div \Lambda_+^0 = 0, \,\, \div \Lambda_-^0 = 0\,\, \text{ in } \Om_{f^0},\,\,  \\
& v^0=(\Lambda_\pm^0 \pm e_1)\cdot N_{f^0} \,\, \text{ on } \Ga_{f^0},\\
& \Lambda_+^0\cdot e_2=0,\,\, \Lambda_-^0\cdot e_2=0 \,\,\text{ on }\Gamma^- .
\end{aligned}\right.
\end{equation}
We will prove that for sufficiently small initial disturbance,
the weighted Sobolev norms of $\Lambda_\pm$ and $f$
will be uniformly bounded for all time.
This, combined with the local existence theory, will yield the global solutions of the free boundary problem for the plasma-vacuum
near the equilibrium.

\subsection{Main theorem}
The weighted energy $E_s(t)$, the ghost weight energy $G_s(t)$ and other notations appearing in the following theorem will be explained in Section 2. Note that
\begin{align*}
E_s(0) =
&\sum_{|\alpha|\leq s}
\big( \| \langle x_1\rangle^{2\mu} \nabla^\alpha\La_+^0(x)\|^2_{L^2(\Om_{f^0})}
+\|\langle x_1\rangle^{2\mu} \nabla^\alpha\La_-^0(x)\|^2_{L^2(\Om_{f^0})}\big)\\
&+\sum_{|a|\leq s-1}\big( \| \langle x_1 \rangle^{2\mu} \langle \p_1 \rangle^{\f12 }\p_1^a v^0(x_1)\|^2_{L^2(\bR)}
 +\|\langle x_1 \rangle^{2\mu} \langle \p_1 \rangle^{\f12 } \p_1^{a+1}f^0(x_1)\|^2_{L^2(\bR)}\big).
\end{align*}
We state the main result of this paper  as follows.
\begin{thm}\label{thm}
Let $s\geq 4$ be an integer, $1/2< \mu \leq 3/4$, $\delta<\f12$. Assume $(\Lambda_+^0, \Lambda_-^0) \in H^s(\Omega_{f^0})$,
$f^0 \in  \dot{H}^1(\BR) \cap \dot{H}^{s+\frac12}(\BR) \cap L^{\infty}(\BR)$ and
$v^0\in H^{s-\f12}(\BR)$. Suppose that $\Lambda_+^0$, $\Lambda_-^0$, $f^0$
 and $v^0$ satisfy the compatibility conditions \eqref{com}
and
\begin{align*}
E_s(0)\leq \epsilon, \quad \| f^0(x_1)\|_{L^\infty(\BR)}\leq 1-2\delta.
\end{align*}
There exists a positive constant $\epsilon_0$ which
depends only on $s,\ \mu,\,\delta$ such that, if $\epsilon
\leq \epsilon_0$, then the free boundary problem for the plasma-vacuum of the two dimensional incompressible MHD  \eqref{MHD1}-\eqref{A21}  with  the following initial data
\begin{align*}
&\Lambda_+(0,x)= \Lambda_+^0(x),\quad \Lambda_-(0, x)= \Lambda_-^0(x),\\
& f(0,x_1)=f^0(x_1),\quad
\p_tf(0,x_1)=v^0(x_1)
\end{align*}
has a unique global classical solution which satisfies
\begin{eqnarray}\nonumber
E_s(t) +\int_0^t G_s(\tau )\d\tau \leq C_0E_s(0),
\quad
\| f(t,\cdot)\|_{L^\infty}\leq 1-\delta
\end{eqnarray}
for some $C_0 > 1$ uniformly for all $t\geq 0$.
\end{thm}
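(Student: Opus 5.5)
The proof will be a continuity (bootstrap) argument combined with the local well-posedness theory for the plasma-vacuum problem under the stability condition \eqref{st1}; the background field $e_1$ guarantees $|h|\ge\lambda_0>0$ on $\Ga_f$ for small data. We assume on $[0,T]$ that
\begin{align*}
E_s(t)+\int_0^t G_s(\tau)\,\d\tau\le 2C_0E_s(0), \qquad \|f(t)\|_{L^\infty}\le 1-\delta,
\end{align*}
and show that for $\epsilon\le\epsilon_0$ the constant $2C_0$ improves to $C_0$ and $1-\delta$ to $1-\tfrac32\delta$. The $L^\infty$ bound on $f$ follows from $\p_t f=Z_\pm\cdot N_f$ and the time-integrated decay that the ghost weights provide. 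The weight $\langle x_1\rangle^{2\mu}$ plays no role here, and we use $\mu>1/2$ only through the integrability of the ghost weights. The key fact is that $\La_+$ is transported by $-e_1+\La_-$ and $\La_-$ by $e_1+\La_+$, so at leading order they are two Alfv\'en waves moving in opposite horizontal directions. We therefore use ghost weights of the form $\langle x_1\mp t\rangle^{2\mu}$ together with factors $e^{q(x_1\pm t)}$, $q'=\langle\cdot\rangle^{-2\mu}$, in the style of \cite{CL,HeXuYu,WZ}. These weights generate the positive space–time quantity $G_s$, which controls $\|\langle x_1\pm t\rangle^{-\mu}\nabla^\alpha\La_\mp\|_{L^2}^2$. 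Nonlinear terms always pair a $+$ wave with a $-$ wave, and their weights separate, $\langle x_1+t\rangle\langle x_1-t\rangle\gtrsim\langle t\rangle$, so each such product is integrable in time.

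The first step is the interior estimate. We apply $\nabla^\alpha$ ($|\alpha|\le s$) to \eqref{MHD1}, multiply by the weighted $\nabla^\alpha\La_\pm$, and integrate over $\Om_f$. The pressure satisfies $-\Delta p=\p_i\La_-^j\p_j\La_+^i$ with $p=0$ on $\Ga_f$ and a Neumann condition on $\Gamma^-$ derived from \eqref{A18}. Elliptic estimates in the moving domain, using $\|f\|_{L^\infty}\le 1-\delta$ so the strip does not degenerate, bound $\nabla p$ by products of $\La_+$ and $\La_-$, which are then handled by the weight-separation argument. Boundary integrals appear at $\Gamma^-$, where they vanish by \eqref{A18}, and at $\Ga_f$, where they take the form $\int_{\Ga_f}\nabla^\alpha p\,(\cdots)\cdot N_f$ together with transport-flux terms. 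On $\Gamma^-$ only tangential derivatives are applied near the boundary, while normal derivatives are recovered from $\div$ and $\curl$ through the vorticity equations for $\curl\La_\pm$. These are also transported along $\mp e_1$ and carry quadratic sources only, which is what makes the div–curl reduction workable.

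The second step handles the free surface. Following \cite{SWZ18,SWZ19}, we derive the evolution of $f$ from \eqref{A21} and the pressure condition \eqref{A19}. Writing $h=e_1+(\La_+-\La_-)/2$, the leading part is the wave equation $\p_t^2 f-\p_1^2 f=\mathcal{N}$. The right side $\mathcal{N}$ involves the Dirichlet–Neumann operator $\mathcal{N}_f$ applied to the trace of $p$-related quantities, together with quadratic terms. The magnetic tension is what makes the operator $\p_1^2$ hyperbolic instead of Rayleigh–Taylor unstable. Factorizing it as $(\p_t-\p_1)(\p_t+\p_1)$ splits $f$ into left- and right-moving components, namely the traces $\La_\pm\cdot N_f$, and these receive the same ghost weights. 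The $\langle\p_1\rangle^{1/2}$ regularity in $E_s$ matches the trace of $H^s$ interior data and the half-derivative gain of $\mathcal{N}_f$. Commutators of $\mathcal{N}_f$ with the weights and with $\p_1^a$ will be estimated by standard paradifferential or shape-derivative bounds for $\mathcal{N}_f$ in the strip.

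The main obstacle is closing the boundary terms at top order on $\Ga_f$. There the interior energy produces $\int_{\Ga_f}\p^s p\,\p^s(\La\cdot N_f)$, which loses a derivative unless it is cancelled against the surface energy for $\langle\p_1\rangle^{1/2}\p_1^{s}f$. I would first work with good unknowns and tangential derivatives $\mathcal{T}=\p_1+\p_1 f\,\p_2$-type fields adapted to $\Ga_f$, following \cite{SWZ19,CL-24}. The cancellation then comes from the symmetric structure of $\mathcal{N}_f$, and the remaining terms are cubic and weighted. The delicate point is making sure that every remaining term pairs the two opposite weights, including the terms involving $\p_t f$, which mixes both families through \eqref{A21}; this is why both representations $Z_+\cdot N_f$ and $Z_-\cdot N_f$ are kept. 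Summing the two steps gives $E_s(t)+\int_0^tG_s\le CE_s(0)+CE_s^{1/2}\big(E_s+\int_0^t G_s\big)$, which closes the bootstrap for small $\epsilon$. Uniqueness follows from the local theory.
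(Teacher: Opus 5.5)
There is a genuine gap in your free-surface step. You write the surface equation as $\p_t^2f-\p_1^2f=\mathcal N$, factor only the constant-coefficient operator $(\p_t-\p_1)(\p_t+\p_1)$, and keep $\ud{\La_+^1}\p_1(\p_t-\p_1)f$, $\ud{\La_-^1}\p_1(\p_t+\p_1)f$ and $\ud{\La_-^1}\ud{\La_+^1}\p_1^2f$ (cf.\ \eqref{B13}) as forcing. After applying $\langle\p_1\rangle^{s-\f12}$, these terms carry one more derivative of $f$ than $E^f_{s+\f12}$ controls, so you must symmetrize them by integrating by parts in $x_1$ and $t$.

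With the multiplier $\langle x_1-t\rangle^{4\mu}(\p_t-\p_1)\langle\p_1\rangle^{s-\f12}f$, this produces terms such as $\int|\p_1\langle\p_1\rangle^{s-\f12}f|^2(\p_t-\p_1)\big(\langle x_1-t\rangle^{4\mu}\ud{\La_+^1}\ud{\La_-^1}\big)\,\d x_1$. Two things go wrong there. First, $\p_t-\p_1$ is transversal to the propagation of $\ud{\La_-^1}$, so there is no null gain. Second, $|\p_1\langle\p_1\rangle^{s-\f12}f|^2$ contains the left-moving part $(\p_t+\p_1)\langle\p_1\rangle^{s-\f12}f$, which $E_s$ controls only with the weight $\langle x_1+t\rangle^{2\mu}$, while the multiplier carries $\langle x_1-t\rangle^{4\mu}$. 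Such terms are not bounded by $E_s^{\f12}G_s$. As Remark \ref{rem-hw} notes, closing this way requires $\langle x_1\pm t\rangle^{5\mu}$ weights on $\p^{\le1}\La_\pm$, which $E_s$ does not provide.

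The missing idea is Lemma \ref{lemf}: exactly, $(\p_t+\ud{Z_+^1}\p_1)(\p_t+\ud{Z_-^1}\p_1)f=-\ud{\p_2p}$. The quasilinear terms are thereby absorbed into the two variable-coefficient transport operators. One then estimates $\langle\wu^\mp\rangle^{2\mu}(\p_t+\ud{Z_\mp^1}\p_1)\langle\p_1\rangle^{\f12}\p_1^af$. The weights $w^\pm$ are defined by the transport equations \eqref{C0}, so they commute with these operators up to the small trace term \eqref{G11}. After commuting $\langle\p_1\rangle^{\f12}\p_1^a$, the forcing $F_{s+\f12}$ in \eqref{G8} consists only of commutators without derivative loss plus pressure terms. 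The only integration by parts produces $\p_1\ud{Z_+^1}$, which is bounded by $E_s^{\f12}G_s$.

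Conversely, the obstacle you single out does not arise. Because $\hat h=0$, you have $p=0$ on $\Ga_f$ and $p$ solves \eqref{B14}. The paper flattens with the smoothing map of Section \ref{Lag-stra}, which gains half a derivative, so $\nabla^{s+1}\psi$ is controlled by $f\in\dot H^{s+\f12}$ (Lemma \ref{lemTR}). Weighted elliptic estimates then put $p$ in $H^{s+1}$ with weight $\langle w^\pm\rangle^{2\mu}\langle w^\mp\rangle^{\mu}$, bounded by $E_s^bG_s^b$ (Lemma \ref{lemPre}). This is where the hypothesis $\mu\le\f34$ is used, which your plan never accounts for.

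Hence the pressure term in the interior estimate is bounded directly, without integrating by parts (see \eqref{E13}). Also, $\ud{\p_2p}$ lies in the weighted $H^{s-\f12}$ space required by the surface energy, by Lemma \ref{lemD3}. The only boundary terms in the interior estimate are transport fluxes. They cancel on $\Ga_f$ because the boundary moves with normal velocity $Z_-\cdot n_f$ (see \eqref{E2}), and they vanish on $\Gamma^-$ by \eqref{A18}.

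So every $\nabla^\alpha$ with $|\alpha|\le s$, normal derivatives included, is applied directly to $\La_\pm$. No good unknowns, Dirichlet--Neumann operator, top-order boundary cancellation or div--curl reduction is needed. Your cancellation mechanism is left unspecified in any case, and even if carried out it would not repair the surface estimate above.
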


\begin{remark}
The ideas from \cite{LiuLuo,SWZ18,SWZ19} can be adapted to prove local well-posedness for \eqref{MHD1}--\eqref{A21}. Unlike those works, we take Sobolev norms on $(u,\,h-e_1,\,f)$ instead of on $(u,\,h,\,f)$.
In order to present a self-contained proof, we will sketch the proof of the local well-posedness in the last section.
Actually, due to the inherent structure,
we could work on the unknowns $(\La_\pm,\,f)$.
\end{remark}

\begin{remark}
The method from the author of the paper and Lei \cite{CL-24} for proving the global current-vortex sheets in the two-dimensional ideal
incompressible MHD are applicable to the plasma-vacuum free boundary problem in this paper.
However, the method would necessitate the weights of the form $\langle w^\pm\rangle^{5\mu}$ ($\mu>\f12$) rather than $\langle w^\pm\rangle^{2\mu}$. We refer to Remark \ref{rem-hw} for the detailed discussion.
In contrast, this paper adopts a different strategy.
We utilize weights of $\langle w^\pm\rangle^{2\mu}$ on $\La^\pm$ and $(\p_t\pm Z^1_\pm\p_1)f$, which are found to be   sufficient.
We work directly on $\La^\pm$ instead of employing the vorticity formulation in \cite{CL-24}.
Moreover, we can obtain $H^{s+1}-$order estimate of the pressure rather than only $H^s-$order estimate in \cite{CL-24}. Consequently, the proof becomes more streamlined.
\end{remark}
\begin{remark}
The weighted energy $E_s$ is small for all time, implying that the derivatives of $f$ are also small.
The amplitude of the free surface $\| f \|_{L^\infty}$ does not necessarily need to be small.
The condition $\|f^0\|_{L^\infty}<1-2\delta$ is imposed to ensure that the free surface will not intersect with the  bottom fixed boundary.
\end{remark}
\begin{remark}
For the case where both the upper and the lower boundaries are moving free surfaces, the methods in this paper still work and the proof is the same.
\end{remark}
\begin{remark}
The global solutions obtained in this paper work in the two dimensional setting.
Whether the global nonlinear stability result holds for the general three dimensional case is not clear.
Under strong magnetic field in the plasma region and zero magnetic field in the vacuum region, the local in time stability result is not clear for the corresponding three dimensional problem.
Actually, this setting seems do not satisfy the
non-collinearity stability condition
\eqref{st} in three dimensional case.
\end{remark}

\subsection{Main ideas}
There are two classical methods in proving the long time existence of solutions in the water wave problems:
the vector field method
and the method of space-time resonances.
Wu \cite{Wu3, Wu4}, Alazard, Delort \cite{AD}, Hunter, Ifrim, Tataru \cite{HIT, IT} and some experts use the (variant) vector field method.
Germain, Shatah, Masmoudi \cite{GMS1, GMS2},  Ionescu, Pusateri \cite{IP, IP2} and some other experts  use the method of space-time resonances.
Most of these works rely on the irrotational condition which reduce the system to the free boundary.
By the expansion of the Dirichlet-Neumann operator, it reduces to the study of the quasilinear dispersive equations.
When the vorticity is nontrivial, we need a new strategy.
Our work on global solutions for the two dimensional plasma-vacuum free boundary problem does not need the irrotational condition.
It relies on the strong magnetic field in the plasma region which will produce the propagation of Alfv\'{e}n waves \cite{Alfven42}.
This is  inspired by the work of Bardos, Sulem, Sulem \cite{BSS}, Cai, Lei \cite{CL}, He, Xu, Yu \cite{HeXuYu} and Wei, Zhang \cite{WZ} on global small solutions of ideal incompressible MHD in $\BR^n$. Since we are working on nontrivial vorticity fluids, the problem cannot be reduced to the boundary.
We need to explore the dynamics inside the domain, the interplay between inside the domain and the free boundary.
Hence the analysis becomes rather involved.

Now let us show  how the basic strategy goes and the key ideas.
Inspired by the work of Sun-Wang-Zhang  \cite{SWZ18, SWZ19} on the local solutions
 for plasma-vacuum interface problem for ideal incompressible MHD,
we derive the second order evolution equation for the free surface in terms of the Els\"{a}sser variables.
Under the strong background magnetic field assumption along
$e_1$ direction, the equation of the free surface \eqref{B13} turns out to be one dimensional nonlocal quasilinear wave equation.
Moreover, $ (\p_t+\p_1) f$ and $(\p_t-\p_1) f$  propagate
along the background magnetic field
 in opposite directions. Due to the kinetic boundary condition \eqref{A21}, heuristically, at the linearization level, $ (\p_t+\p_1) f\sim \Lambda_+^2$,
$ (\p_t-\p_1) f\sim \Lambda_-^2$.
On the other hand, one observes $\Lambda_{+}$ and $\Lambda_{-}$  propagate
along the background magnetic field
in opposite directions.
For the nonlinearities of \eqref{MHD1}, they  are always the combination of the interaction of
the left Alfv\'en waves and the right Alfv\'en waves.
Furthermore,
for the nonlinearities
of  both equations in the bulk (see \eqref{B11}, \eqref{B14}) and in the free surface (see \eqref{B13}), they are always the combination of the interaction of left Alfv\'en waves $\Lambda_+, (\p_t+\p_1)f$
and the right Alfv\'en waves $\Lambda_-, (\p_t-\p_1)f$.
In other words, both equations in the bulk and in the free surface satisfy the
same type of null conditions.
This structure makes it reasonable to expect the nonlinearities will finally be negligible in the perturbation regime.

In order to capture these  structures,
we design suitable weight functions to obtain the directional decay of solutions.
At the same time, we
apply the ghost weight energy method by Alinhac \cite{Alinhac00} to perform energy estimates with weights. This yields the fact that energies of good unknowns with appropriate weights are always integrable in time.
This method is philosophically similar to the space-time resonance.
The left and the right Alfv\'en waves reflect the separation of the characteristics and the null condition suggests the nonresonance.
On the other hand, it also shares many similarities to the vector field method: appropriate weight functions are applied, then we can expect
some form of temporal decay.

The method developed by the author of this paper and Lei \cite{CL-24} for establishing the global current-vortex sheets in the two-dimensional ideal
incompressible MHD is also applicable to the plasma-vacuum free boundary
problem for the two dimensional ideal incompressible MHD. In this paper, however, we pursue a different strategy.
Working directly on $(\p_t\pm\p_1)f$ with weights $\langle w^\pm\rangle^{2\mu}$ ($\mu>1/2$) leads to the fact that the nonlinearities are too strong: they will require increased power of weights $\langle w^\pm\rangle^{5\mu}$ for $\p^{\leq 1}\La^\pm$.
Our key idea in this paper is to move some of the nonlinear terms of the free surface equation to
the left hand side and incorporate them into the ``linear terms".
Specifically, for the free surface equation, we will
use the form in Lemma \ref{lemf} instead of
\eqref{B13}. Thus we perform the energy estimate for the free surface on  $(\p_t+Z_\pm^1 \p_1)f$ rather than on $(\p_t\pm\p_1)f$.
Part of these ideas can be traced to the seminal work of Christodoulou-Klainerman where they proved the global nonlinear stability of Minkowski space-time in general relativity \cite{CK}. In order to study global solutions,  we will treat energy estimate with weight functions.
%In order to do so, we need pass the
These weight functions commute with the operators
 $(\p_t+Z_\pm^1 \p_1)$.
 We therefore define them as solutions of suitable transport equations.
It turns out that by using this method, the weights $\langle w^\pm\rangle^{2\mu}$
applied on $\La^\pm$ and $(\p_t+Z_\pm^1 \p_1)f$ suffice.

We will treat $s$-order weighted energy estimate in the bulk of the region and
 will treat $(s+\f12)$-order weighted energy for the free surface.
Moreover, we can obtain the $(s+1)$-order weighted energy estimate of pressure.
The choice of these orders relies
on the vanishing condition of pressure on the free boundary and the structure of the ideal MHD system.
The $(s+1)$-order estimate of pressure enables us to work on the velocity field and the magnetic field, or say their equivalent Els\"{a}sser variables and perturbation variables instead of on vorticity formulation.
The estimate of pressure is carried out in a fixed domain with flattened boundary .
Thus the weighted tangential derivative estimate is carried out directly by energy method due to the straightening of the free boundary.

The remaining part of this paper is organized as follows: In
Section 2, we will derive the second order evolution equation of the free surface.
Then, at an intuitive level, we exhibit the Alfv\'en waves and the null condition. The various energy functionals and the ansatz for the continuity method are also introduced.
In Section 3,  the properties
 of the weight function will be studied.
Section 4 is devoted to the weighted Sobolev inequalities, weighted trace theorem and weighted commutator estimate.
Section 5 is devoted to the weighted estimate of pressure.
In Section 6, we treat the weighted energy estimate of the plasma.
Then we treat the $L^\infty$ estimate and weighted energy estimate of the free surface in Section 7.
In the last section, we will show the local well-posedness of the plasma-vacuum free boundary
problem for the two dimensional ideal incompressible MHD.

\section{Equations for the free surface, null condition and energy estimate ansatz}\label{MaI}
In this section, we first present the second-order evolution equation for the free surface in Lemma \ref{lemB2}.
The key point is that this equation turns out to be a strictly hyperbolic system under some stability condition.
Then by collecting the evolution equations for $\Lambda_\pm$ in the plasma region, the evolution equation for the free surface, the elliptic equation for the pressure and their boundary conditions, we briefly look into the structure of the systems and explain that they satisfy the null condition. Then we introduce various energy notations and explain the ansatz for the
continuity method.

Throughout the whole paper, we denote $\p=\nabla=(\p_1,\p_2)$, $\p^\alpha=\p_1^{\alpha_1}\p_2^{\alpha_2}$ for $\alpha\in\mathbb{N}^2$. For $1\leq p\leq +\infty$, we use $\|g \|_{L^p(\Omega_f)}$  to denote the $L^p$ norm in $\Omega_f$.
 %$\|g \|_{L^p(\mathbb{R})}$ is abbreviated as $\|g \|_{L^p}$.
 We use Japanese brackets to represent $\langle a\rangle=(1+a^2)^{\frac{1}{2}}$. For $m\in\mathbb{N}$, denote $|\p^m h|=\sum_{|\alpha|=m} |\p^\alpha h|$, $|\p^{\leq m} h|=\sum_{|\alpha|\leq m} |\p^\alpha h|$. Similarly, $\|w\p^{\leq m}g\|_{L^p}=\sum_{|\alpha|\leq m}\|w\p^\alpha g\|_{L^p}$.
%Throughout the whole paper,
For function $g:\Om_f\rightarrow\BR$, we use $\ud{g}$ to denote the trace of $g$ on $\Ga_f.$
The summation convention over repeated indices is always used.
Throughout this paper, we use $A\lesssim B$ to denote $A \leq C B$ for some positive constant $C$,
 whose meaning is clear from the context.
 %may change from line to line. Without
%specification, the constant $C$ depends only on $\mu$, $s$,  but not on $t$.

\subsection{Evolution equation of the free surface}

We copy the moving equation of the free surface \eqref{A21} as follows
\begin{align}\label{B2}
\p_t f =\ud{Z_\pm} \cdot N_f .
\end{align}
where $N_f$ is the normal vector field of the surface
$$ %N=-\nabla f(t,x_1)+e_2
N_f=-\nabla f(t,x_1)+e_2 .$$
Note that $f(t,x_1)$ depends only on $(t,x_1)$, hence
\begin{equation*}
\nabla f(t,x_1)=(\p_1,\p_2) f(t,x_1)=
(\p_1f(t,x_1),0) .
\end{equation*}
In some occasions, this notation will simplify the presentation of the argument.

Now we derive the second order evolution equation for the free surface
in terms of the Els\"{a}sser variables.
\begin{lem}\label{lemB2}
For the  free surface, there holds
\begin{align} \label{B3}
\p_t^2 f=
&-(\ud{Z_+^1}+\ud{Z_-^1})
\cdot\p_1 \p_t f
-\ud{Z_-^1} \ud{Z_+^1}\p_1^2 f
- N_f \cdot\ud{\nabla p}.
\end{align}
\end{lem}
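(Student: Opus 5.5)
The plan is to differentiate the kinetic boundary condition \eqref{B2} along the boundary trace of the material derivative associated with one of the Els\"asser fields. Then I use the evolution equation \eqref{MHD} for the other field to remove the time derivative of $Z_\pm$. The reason this works is that \eqref{B2} holds with both $Z_+$ and $Z_-$. So $\Ga_f$ is transported by $Z_-$ as well as by $Z_+$, which lets me commute the operator $\p_t+\ud{Z_-^1}\p_1$ with the trace map.

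\emph{Step 1 (commuting with traces).} For any smooth $g$ on $\Om_f$ with trace $\ud g(t,x_1)=g(t,x_1,f(t,x_1))$, the chain rule gives
\begin{align*}
\p_t\ud g=\ud{\p_t g}+\ud{\p_2 g}\,\p_t f,\qquad \p_1\ud g=\ud{\p_1 g}+\ud{\p_2 g}\,\p_1 f .
\end{align*}
Combining these,
\begin{align*}
(\p_t+\ud{Z_-^1}\p_1)\ud g=\ud{\p_t g+Z_-^1\p_1 g}+\ud{\p_2 g}\,(\p_t f+\ud{Z_-^1}\p_1 f).
\end{align*}
By \eqref{B2} with the minus sign, $\p_t f+\ud{Z_-^1}\p_1 f=\ud{Z_-^2}$. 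Hence
\[
(\p_t+\ud{Z_-^1}\p_1)\ud g=\ud{(\p_t+Z_-\cdot\nabla)g}.
\]

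\emph{Step 2 (apply to the kinetic condition).} Apply $\p_t+\ud{Z_-^1}\p_1$ to $\p_t f=\ud{Z_+}\cdot N_f=\ud{Z_+^2}-\ud{Z_+^1}\p_1 f$. Using Step 1 componentwise on $Z_+$ and then the first equation of \eqref{MHD}, the trace part becomes
\[
\ud{(\p_t+Z_-\cdot\nabla)Z_+}\cdot N_f=-\ud{\nabla p}\cdot N_f .
\]
The part where the derivative falls on $N_f=(-\p_1 f,1)$ gives
\[
-\ud{Z_+^1}\,(\p_t+\ud{Z_-^1}\p_1)\p_1 f=-\ud{Z_+^1}\p_1\p_t f-\ud{Z_+^1}\ud{Z_-^1}\p_1^2 f .
\]
Therefore
\[
\p_t^2 f+\ud{Z_-^1}\p_1\p_t f=-\ud{Z_+^1}\p_1\p_t f-\ud{Z_-^1}\ud{Z_+^1}\p_1^2 f-N_f\cdot\ud{\nabla p}.
\]
Moving $\ud{Z_-^1}\p_1\p_t f$ to the right-hand side gives exactly \eqref{B3}.

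The only delicate point is Step 1. The identity requires the transporting field in the material derivative to be the same field for which the kinetic condition is used, which is why I pair $Z_-$-transport with the $Z_+$ equation. Everything else is routine chain-rule bookkeeping, assuming enough smoothness of the solution up to $\Ga_f$ for the traces to make sense.
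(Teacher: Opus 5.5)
Your proof is correct and is essentially the paper's argument: in both, the key step is that $\Ga_f$ is also transported by $Z_-$ (i.e.\ $\p_t f+\ud{Z_-^1}\p_1 f=\ud{Z_-^2}$), so tracing the $Z_+$ equation gives $(\p_t+\ud{Z_-^1}\p_1)\ud{Z_+}=-\ud{\nabla p}$. The only difference is ordering: the paper differentiates \eqref{B2} in $t$ alone and then expands the resulting $\ud{Z_-^1}\p_1\ud{Z_+}\cdot N_f$ term by the product rule, whereas you apply $\p_t+\ud{Z_-^1}\p_1$ directly, which is the same computation arranged a bit more cleanly.
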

\begin{proof}
%Now, we turn to $\p_t^2 f(t,x_1)$.
Employing the expression of $\p_tf$, $N_f$ and the first equation of \eqref{MHD}, by chain rules, we calculate
%\begin{align*}
%%\p_t^2 f
%&(\p_t Z_+ +\p_tf\p_2 Z_+)\big|_{x_2=f(t,x_1)}\\
%&=\big(-Z_-\cdot\nabla Z_+ -\nabla p + Z_-\cdot(-\nabla f+e_2) \p_2 Z_+\big)\big|_{x_2=f(t,x_1)}\nonumber\\\nonumber
%&=\big(-Z_-^h \cdot\nabla_h Z_+ -\nabla p -Z_-^h \cdot \nabla_h f \p_2 Z_+\big)\big|_{x_2=f(t,x_1)}\nonumber\\\nonumber
%&=\big(-Z_-\cdot\nabla \ud{Z_+} -\nabla p \big)\big|_{x_2=f(t,x_1)} .
%\end{align*}
\begin{align*}
%\p_t^2 f
&(\p_t Z_+ +\p_tf\p_2 Z_+)\big|_{x_2=f(t,x_1)}\\
&=\big(-Z_-\cdot\nabla Z_+ -\nabla p + Z_-\cdot(-\nabla f+e_2) \p_2 Z_+\big)\big|_{x_2=f(t,x_1)}\nonumber\\\nonumber
&=\big(-Z_-^1 \cdot\p_1 Z_+ -\nabla p -Z_-^1 \cdot \p_1 f \p_2 Z_+\big)\big|_{x_2=f(t,x_1)}\nonumber\\\nonumber
&=\big(-Z_-\cdot\nabla \ud{Z_+} -\nabla p \big)\big|_{x_2=f(t,x_1)} .
\end{align*}
Consequently, taking the derivative of \eqref{B2} in $t$,
%by integration by parts,
 we get
\begin{align}\label{B4}
\p_t^2 f=
&\big( (\p_t Z_+ +\p_2 Z_+\p_tf)\cdot N_f+Z_+\cdot\p_t N_f \big)\big|_{x_2=f(t,x_1)}\nonumber \\
=&\big(-(Z_- \cdot\nabla \ud{Z_+}) \cdot N_f -\nabla p\cdot N_f \big)\big|_{x_2=f(t,x_1)} +Z_+\cdot\p_t N_f\big|_{x_2=f(t,x_1)}\nonumber\\
=&-\ud{Z_-}\cdot\nabla \p_t f-\ud{Z_+}\cdot\nabla \p_t f -N_f\cdot\ud{\nabla p}
- \ud{Z_-^i}\ud{Z_+^j}\p_i\p_j f
%(\ud{Z_-}\otimes \ud{Z_+}):(\nabla\otimes \nabla) f.
\end{align}
%Here $(\ud{Z_-}\otimes \ud{Z_+}):(\nabla\otimes \nabla) f=\ud{Z_-^i}\ud{Z_+^j}\p_i\p_j f$.
Note that $\ud{Z_\pm}$ and $f$ depend only on $(t,x_1)$.
Therefore, \eqref{B4} yields \eqref{B3}.
%\begin{align}\label{B5}
%\p_t^2 f=
%&-(\ud{Z_+^1}+\ud{Z_-^1})
%\cdot\p_1 \p_t f
%-\ud{Z_-^1} \ud{Z_+^1}\p_1^2 f
%- N_f \cdot\ud{\nabla p}.
%\end{align}
\end{proof}
We rewrite the free-surface equation into a form that is better suited to the weighted energy estimate.
\begin{lem}\label{lemf}
For the free surface, there hold
\begin{align*} %\label{B6}
&(\p_t + \ud{Z_+^1}\p_1)(\p_t + \ud{Z_-^1}\p_1)f
=-\ud{\p_2 p},\\\nonumber
&(\p_t + \ud{Z_-^1}\p_1)(\p_t + \ud{Z_+^1}\p_1)f
=-\ud{\p_2 p} .
\end{align*}
\end{lem}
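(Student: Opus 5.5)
The plan is to expand the product of the two first-order operators and compare with \eqref{B3} from Lemma \ref{lemB2}. Write $a=\ud{Z_+^1}$ and $b=\ud{Z_-^1}$; both are functions of $(t,x_1)$ only. Expanding gives
\begin{align*}
(\p_t + a\p_1)(\p_t + b\p_1)f
=\p_t^2 f+(a+b)\p_1\p_t f+ab\,\p_1^2 f+\big((\p_t+a\p_1)b\big)\p_1 f .
\end{align*}
Substituting \eqref{B3} for $\p_t^2 f$ cancels the second- and third-order terms. Using $N_f\cdot\ud{\nabla p}=-\p_1 f\,\ud{\p_1 p}+\ud{\p_2 p}$, the left-hand side becomes
\begin{align*}
-\ud{\p_2 p}+\p_1 f\big(\ud{\p_1 p}+(\p_t+\ud{Z_+^1}\p_1)\ud{Z_-^1}\big).
\end{align*}
So the whole claim reduces to the identity $(\p_t+\ud{Z_+^1}\p_1)\ud{Z_-^1}=-\ud{\p_1 p}$ on $\Ga_f$.

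This identity is the only real step, and I would prove it with the same chain-rule computation as in the proof of Lemma \ref{lemB2}, with the roles of $Z_\pm$ swapped. For the trace of a function $g$ one has
\[
\p_t\ud{g}=\ud{\p_t g}+\p_t f\,\ud{\p_2 g},\qquad \p_1\ud{g}=\ud{\p_1 g}+\p_1 f\,\ud{\p_2 g}.
\]
Hence
\begin{align*}
(\p_t+\ud{Z_+^1}\p_1)\ud{Z_-}=\ud{\p_t Z_-}+\ud{Z_+^1}\,\ud{\p_1 Z_-}+\big(\p_t f+\ud{Z_+^1}\p_1 f\big)\ud{\p_2 Z_-}.
\end{align*}
The kinetic condition \eqref{B2}, taken with the $+$ sign, gives $\p_t f=\ud{Z_+^2}-\ud{Z_+^1}\p_1 f$. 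So the coefficient of $\ud{\p_2 Z_-}$ is exactly $\ud{Z_+^2}$, and the right side is the trace of $\p_t Z_-+Z_+\cdot\nabla Z_-$. By the second equation of \eqref{MHD}, this equals $-\ud{\nabla p}$. Taking the first component yields the needed identity, and the first formula of the lemma follows.

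The second formula follows symmetrically: interchange $a$ and $b$, and use \eqref{B2} with the $-$ sign together with the first equation of \eqref{MHD}, which gives $(\p_t+\ud{Z_-^1}\p_1)\ud{Z_+^1}=-\ud{\p_1 p}$.

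There is no genuine obstacle here; the one point requiring care is choosing the correct sign in \eqref{B2}. Turning the transverse coefficient into $\ud{Z_+^2}$ uses the version of the kinetic condition with the same $Z_\pm$ that appears in the outer operator. Note also that the argument does not need the boundary condition $p=0$.
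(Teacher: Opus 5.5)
Your proposal is correct and follows the paper's proof essentially step for step. You expand the product of the two transport operators, substitute \eqref{B3} for $\p_t^2 f$, and close with the chain-rule identity $(\p_t+\ud{Z_+^1}\p_1)\ud{Z_-}=-\ud{\nabla p}$, which comes from the kinetic condition with the matching sign and the $Z_-$ equation of \eqref{MHD}; your remark that the condition $p=0$ on $\Ga_f$ is not used is also accurate.
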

\begin{proof}
The proof of the first and the second expression of Lemma \ref{lemf} is the same. Hence in the sequel, we only present the details
for the first one.

Firstly, there naturally holds
\begin{align}\label{B7}
&(\p_t + \ud{Z_+^1}\p_1)(\p_t + \ud{Z_-^1}\p_1)f \\ \nonumber
&=\p_t^2 f+(\ud{Z_+^1}+\ud{Z_-^1})
\p_1\p_t f
+(\ud{Z_-^1}\ud{Z_+^1})\p_1^2 f\\ \nonumber
&\quad+(\p_t + \ud{Z_+^1}\p_1)\ud{Z_-^1}\p_1f.
\end{align}
On the other hand, note that \eqref{B2} can be organized as follows:
\begin{align*}
(\p_t +\ud{Z_\pm^1}\p_1)f=\ud{Z_\pm^2}.
\end{align*}
Hence, %restricting \eqref{MHD} to the free surface,
 by chain rules, we derive
%\begin{align*}
%(\p_t +\ud{Z_-^1}\p_1)\ud{Z}^+ +\ud{\nabla p}=0 .
%\end{align*}
\begin{align*}
(\p_t +\ud{Z_+^1}\p_1)\ud{Z_-}
&=\ud{(\p_t +{Z_+^1}\p_1)Z_-}
+\ud{\p_2 Z_-}  (\p_t +\ud{Z_+^1}\p_1)f \\\nonumber
&=\ud{(\p_t +{Z_+^1}\p_1 +{Z_+^2}\p_2 )Z_-}
=-\ud{\nabla p}.
\end{align*}
Consequently, combined with Lemma \ref{lemB2}, \eqref{B7} yields
\begin{align*} %\label{B8}
(\p_t + \ud{Z_+^1}\p_1)(\p_t + \ud{Z_-^1}\p_1)f
=-\p_1f\ud{\p_1 p}
- N_f\cdot\ud{\nabla p}=-\ud{\p_2 p}.
\end{align*}
This finishes the proof of the lemma.
\end{proof}

\subsection{Alfv\'en waves and the null condition}
Recalling the evolution equations for  $\Lambda_+$ and $\Lambda_-$ of \eqref{MHD1} as follows:
\begin{equation} \label{B11}
\begin{cases}
\p_t \Lambda_+ - e_1\cdot \nabla \Lambda_+
+\Lambda_-\cdot\nabla \Lambda_+ + \nabla p =0,\\
\p_t \Lambda_- + e_1\cdot \nabla \Lambda_-
+\Lambda_+\cdot\nabla \Lambda_- + \nabla p =0,\\
\div \Lambda_+ = 0,\quad \div \Lambda_-=0,
\end{cases}
\textrm{in} \quad \Omega_f.
\end{equation}
In terms of $\Lambda_+$ and $\Lambda_-$, the moving of free surface \eqref{B2} is written as follows
%\begin{align*}
%&\p_tf=\ud{Z_\pm} \cdot N
%= \mp\p_1 f+\ud{\Lambda_\pm} \cdot N.
%\end{align*}
%We write the above equations in another way
\begin{align*}
&(\p_t+\p_1) f=\ud{\Lambda_+} \cdot N_f,\\
&(\p_t-\p_1) f=\ud{\Lambda_-} \cdot N_f.
\end{align*}
On the other hand, in terms of $\Lambda_\pm$, the equation of the free surface \eqref{B3} is written as follows:
\begin{align}\label{B12}
\p_t^2 f-\p_1^2 f
=&- (\ud{\Lambda_+^1} +\ud{\Lambda_-^1})
\p_t\p_1 f
-\ud{\Lambda_-^1}\p^2_1 f + \ud{\Lambda_+^1}\p^2_1 f
- \ud{\Lambda_-^1}  \ud{\Lambda_+^1}  \p_1^2 f
- N_f\cdot\ud{\nabla p}.
\end{align}
To show the null structure, the above \eqref{B12} can be organized as follows:
\begin{align}\label{B13}
\p_t^2 f-\p_1^2 f=&- \ud{\Lambda_+^1}\p_1(\p_t-\p_1)f
  - \ud{\Lambda_-^1}\p_1(\p_t+\p_1)f
-\ud{\Lambda_-^1}  \ud{\Lambda_+^1} \p_1^2 f
- N_f\cdot\ud{\nabla p}.
\end{align}
As for the pressure, applying divergence operator to the first equation of \eqref{B11},
we obtain
\begin{equation*}
-\Delta  p =\div \nabla\cdot(\Lambda_- \otimes \Lambda_+) \quad \mathrm{in}\,\, \Om_f.
\end{equation*}
For the boundary condition of the pressure on $\Ga_-$, restricting $\eqref{B11}_1$ on $\Ga_-$ in the trace sense, there holds the following Neumann boundary condition for the pressure
\begin{align*}%\label{BB12}
\p_2 p=0\,\, \textrm{on} \,\,  \Ga_-.
\end{align*}
Combined with \eqref{A19}, we collect the elliptic equations and the boundary conditions for the pressure as follows
\begin{equation}\label{B14}
\begin{cases}
-\Delta  p %&=\nabla_j\Lambda_-^i\nabla_i\Lambda_+^j\\
=\div \nabla\cdot(\Lambda_- \otimes \Lambda_+) \quad \mathrm{in}\,\, \Om_f, \\
p =0 \quad  \text{on}\,\, \Ga_f  ,\\
\p_2 p=0 \quad \text{on}\,\, \Gamma_-.
\end{cases}
\end{equation}

From the linearized equations of \eqref{B11} and \eqref{B13},
it can be seen that $\Lambda_{+}$, $ (\p_t+\p_1) f$ and $\Lambda_{-}$, $(\p_t-\p_1) f$
propagate along the background magnetic field in opposite directions.
For the nonlinear terms of \eqref{B11} and \eqref{B13},
they are always the combination of the interaction of the left Alfv\'en waves
$\Lambda_+$, $(\p_t+\p_1)f$
and the right Alfv\'en waves $\Lambda_-$, $(\p_t-\p_1)f$.
On the other hand, we see from \eqref{B14} that the pressure term
is also the nonlinear interaction of
$\Lambda_+$ and $\Lambda_-$ modulo the pseudo-differential operator.
In other words, both equations  of the plasma \eqref{B11} and of the free surface \eqref{B13} satisfy the
same type of null conditions.

\subsection{Some notations and ansatz for the continuity argument}\label{energy}
Inspired by Christodoulou-Klainerman \cite{CK} and
He-Xu-Yu \cite{HeXuYu}, we introduce the weight functions $w^\pm(t,x)$ through the following equations
\begin{equation}\label{C0}
\begin{cases}
(\p_t + Z_+^1\p_1)w^-(t,x)=0,\quad w^-(0,x)=x_1,\\
(\p_t + Z_-^1\p_1)w^+(t,x)=0,\quad w^+(0,x)=x_1.
\end{cases}
\end{equation}

Employing the weight functions, for positive integer $s$ and $1/2<\mu \leq 3/4$, we define the weighted energy of the plasma as follows:
\begin{align*}
E_s^b (t)=\sum_{+,-}\sum_{ |\alpha|\leq s}
\| \langle w^\pm \rangle^{2\mu}\nabla^\alpha \La_\pm\|_{L^2(\Om_f)}^2 \,.
\end{align*}
%Here $\alpha\in\mathbb{N}^2$ is multi-index.
%Note that here the index for weight function applied on unknowns with derivative and without derivative are different.
The ghost weight energy is defined as follows:
\begin{align*}
G_s^b (t)
=\sum_{+,-}\sum_{ |\alpha|\leq s}
\Big\| \frac{\langle w^\pm \rangle^{2\mu}\nabla^\alpha \Lambda_\pm}{ \langle w^\mp \rangle^{\mu} }\Big\|_{L^2(\Om_f)}^2.
%G_s^b (t)
%=
%\sum_{ |\alpha|\leq s} \int_{\Omega_f}
%\frac{|\langle w^+ \rangle^{2\mu}\nabla^\alpha \Lambda_+ |^2}{ \langle w^- \rangle^{2\mu} }\dx
%+\int_{\Omega_f}
%\frac{|\langle w^- \rangle^{2\mu}\nabla^\alpha \Lambda_- |^2}{\langle w^+ \rangle^{2\mu} } \dx.
\end{align*}
In order to be compatible with the weight functions inside the domain occupied by plasma,
the weight functions on the free surface is defined as the trace of $w^\pm$ on $\Ga_f$:
%we define the weight functions on the free surface as follows
$$\wu^\pm (t,x_1)=w^\pm(t,x)\big|_{x_2=f(t,x_1)}.$$
Employing the weight functions on $\Ga_f$,
%for positive integer $s$ and $1/2<\mu \leq 1$,
we define the weighted energy of the free surface as follows:
\begin{align*}
E^f_{s+\f12} (t)= \sum_{+,-} \sum_{|a|\leq s-1} \big\| \langle\wu^\pm\rangle^{2\mu}(\p_t \pm\p_1)\langle\p_1\rangle^{\f12}\p_1^a f \big\|_{L^2(\bR)}^2\,.
%+\big\| \langle\wu^+\rangle^{2\mu}(\p_t +\p_1)\langle\p_1\rangle^{\f12}\p_1^a f \big\|_{L^2(\bR)}^2\big).
\end{align*}
%\begin{align*}
%&E^f_{s+\f12} (t)=  \sum_{|a|\leq s-1} \int_{\BR}
%\big|\langle\wu^-\rangle^{2\mu}(\p_t -\p_1)\langle\p_1\rangle^{\f12}\p_1^a f \big|^2
%+\big|\langle\wu^+\rangle^{2\mu}(\p_t +\p_1)\langle\p_1\rangle^{\f12}\p_1^a f \big|^2
%\dx_1.
%\end{align*}
The ghost weight energy of the free surface is defined by:
\begin{align*}
G^f_{s+\f12} (t)= \sum_{+,-}\sum_{|a|\leq s-1}
\Big\|  \f{\langle\wu^\pm\rangle^{2\mu}(\p_t \pm \p_1)\langle\p_1\rangle^{\f12}\p_1^a f }
{\langle\wu^\mp\rangle^{\mu}} \Big\|_{L^2(\bR)}^2\,.
%&\qquad+\f{|\langle\wu^+\rangle^{2\mu}(\p_t +\p_1)\langle\p_1\rangle^{\f12}\p_1^a f |^2}
%{\langle\wu^-\rangle^{2\mu}} \dx_1.
\end{align*}
%\begin{align*}
%&G^f_{s+\f12} (t)= \sum_{|a|\leq s-1} \int_{\BR}
% \f{|\langle\wu^-\rangle^{2\mu}(\p_t - \p_1)\langle\p_1\rangle^{\f12}\p_1^a f |^2}
%{\langle\wu^+\rangle^{2\mu}}
%+\f{|\langle\wu^+\rangle^{2\mu}(\p_t +\p_1)\langle\p_1\rangle^{\f12}\p_1^a f |^2}
%{\langle\wu^-\rangle^{2\mu}} \dx_1.
%\end{align*}
Thus the total weighted energy and the weighted ghost weight energy are defined as follows:
\begin{align*}
E_s(t)= E_s^b(t)+E_{s+\f12}^f(t),\\
G_s(t)= G_s^b(t)+G_{s+\f12}^f(t).
\end{align*}

The local well-posedness of the system will be presented in the last section.
The existence of local solutions is achieved by the method of successive approximations.
The unknowns will be in Sobolev spaces without weights.
The estimate is treated for the successive approximations of the unknowns. Then we prove the contraction of the iteration map.
The limiting unknowns $(f,\La_\pm,p)$ will satisfy the plasma-vacuum free boundary problem \eqref{MHD1}-\eqref{A21}.
Precisely, for $s\geq 4$, there exists $T>0$ such that the system  admits a unique solution $(f,\La_\pm,p)$
satisfying
\begin{align*}
\p_t f,\p_1 f\in L^{\infty}([0,T),H^{s-\f12}(\bR)),\,\, f\in L^{\infty}([0,T),L^\infty(\bR)),\,\,
 \La_\pm\in L^{\infty}([0,T),H^{s}(\Om_f)).
\end{align*}
Once the local existence of the solutions is obtained,
we only need to treat the plasma-vacuum free boundary problem \eqref{MHD1}-\eqref{A21} instead of the sequence of the unknowns.
The estimate of the solutions will be conducted in Sobolev spaces with weights.
Hence we will treat two sets of estimates: the weight functions and the weighted Sobolev norms of the solutions.

By the local in time well-posedness result, Theorem \ref{thm2}, the maximal possible time that the solutions exist  depends on the size
of the Sobolev norm of the initial data.
Hence to extend the local solution to be a global one, we can use the continuity argument.
Precisely, we make the following  two sets of  ansatz.
The first one is for the weighted energy, the ghost weight energy and the amplitude of the free surface. For $s\geq 4$, $t\in[0,T^*]$, we assume that
\begin{eqnarray}\label{AA6}
E_s(t) +\int_0^t G_s(\tau)\d\tau\leq 2C_0\epsilon,
\quad
\| f(t,\cdot)\|_{L^\infty}\leq 1-\f12 \delta.
\end{eqnarray}
The second set is about the pointwise estimate for the weight functions:
%\begin{align}
%\label{AA4}
%&|\p_t w^\pm(t,x)\mp 1 |\leq C_1\epsilon,\,\,  |\p_1 w^\pm(t,x)-1 |\leq C_1\epsilon,\,\, |\p_2 w^\pm(t,x) |\leq C_1\epsilon,\\
%\label{AA5}
%&|\nabla^2 w^\pm(t,x)|\leq C_1\epsilon,
%\end{align}
\begin{align}
\label{AA4}
|\p_t w^\pm(t,x)\mp 1 |,\,  |\p_1 w^\pm(t,x)-1 |,\, |\p_2 w^\pm(t,x) |,\,
%\label{AA5}
|\nabla^2 w^\pm(t,x)|\leq 2C_1\epsilon',
\end{align}
Here $C_1=4M$, $M=\int_{-\infty}^{\infty} \langle z  \rangle^{-2\mu} \d z$,
  %is a universal constant which will be determined in Section 3,
  $C_0$ is a constant which will be determined later at the end of this section.
 $E_s(0)\leq \epsilon\ll 1$, $\epsilon'\ll 1$ will be explained later, $T^*$ is the maximal possible time so that the assumptions \eqref{AA6} and \eqref{AA4} hold.

%The continuity argument is as follows: since () and () hold for the initial data, they
To prove Theorem \ref{thm},
it suffices to show that under the bootstrap assumptions \eqref{AA6}, \eqref{AA4}, we can derive stronger estimates:
\begin{eqnarray}\label{AA9}
E_s(t) +\int_0^t G_s(\tau)\d\tau \leq C_0\epsilon,
\quad
\| f(t,\cdot)\|_{L^\infty}\leq 1-\delta,
\end{eqnarray}
and
\begin{align}\label{AA10}
|\p_t w^\pm(t,x)\mp 1 |,\,  |\p_1 w^\pm(t,x)-1 |,\, |\p_2 w^\pm(t,x) |,\,
%\label{AA5}
|\nabla^2 w^\pm(t,x)|\leq C_1\epsilon'.
\end{align}
%The proof of the pointwise estimate of the weight functions \eqref{AA10} will be conducted in the following Section 3.
To avoid the cumbersome exact constant %in the pointwise estimate of the weight functions,
in the proof of \eqref{AA10}, we will replace \eqref{AA6} by another set of assumptions. By \eqref{AA6},
 there exists $\epsilon'\ll 1$, $C_2\geq 0$ depending on $\epsilon$ and $C_0$ such that

%Assume $\| \langle w^\pm \rangle^{2\mu} \Lambda_\pm \|_{H^s( \Omega_f )}\leq \epsilon$ and %$\| \langle \wu^\pm \rangle^{2\mu} f \|_{H^{s-\f12}(\BR )}\leq \epsilon$.
%We Assume
\begin{align}
\label{AA7}
&\sum_{|\alpha|\leq 2}\| \langle w^\pm \rangle^{2\mu} \nabla^\alpha \Lambda_\pm^1(t,\cdot) \|_{L^\infty( \Omega_f )}\leq \epsilon',\,t\in[0,T^*],\\
\label{AA8}
&\| \p_t f(t,\cdot)\|_{L^\infty(\BR )}+\|\p_1 f(t,\cdot)\|_{L^\infty(\BR )}+\|\p_1^2 f(t,\cdot)\|_{L^\infty(\BR )}\leq C_2,\,t\in[0,T^*].
\end{align}
%\eqref{AA4}-\eqref{AA5}.
%Here $\mu> \f12$. %$\epsilon$ is sufficiently small,
%$Z_\pm=\Lambda_\pm \pm e_1$, $\mu> \f12$.
Under the assumptions \eqref{AA4} and \eqref{AA7}-\eqref{AA8}, we will prove
 \eqref{AA10} in the following Section 3.

%we need to show the uniform estimate in time.
%Precisely,
To prove \eqref{AA9}, we will show that there hold the following weighted energy estimates uniformly in time:
\begin{align}
\label{AA1}
%&\f{\d }{\dt} E_s^b(t) +G_s^b (t) \leq C E_s^{\f 12}(t) G_s(t),\\
&E^b_s(t)+\int_0^t G^b_s(\tau)\d\tau \leq CE^b_s(0)+C\int_0^t E_s^{\f12}  G_s (\tau)\d\tau,\\
\label{AA2}
%&\f{\d }{\dt} E_{s+\f12}^f(t)+G_{s+\f12}^f(t) \leq C E_s^{\f 12}(t) G_s(t),\\
&E^f_s(t)+\int_0^t G^f_s(\tau)\d\tau \leq C E^f_s(0)+C\int_0^t E_s^{\f12}  G_s (\tau)\d\tau,\\
\label{AA3}
&\| f(t,\cdot)\|_{L^\infty} \leq \| f(0,\cdot)\|_{L^\infty}
+ C M \sup_{0\leq \tau \leq t} (E_s^b(\tau))^{\f12}.
\end{align}
where $M=\int_{-\infty}^{+\infty} \langle z  \rangle^{-2\mu} \d z$. %$C$ is a positive constant depending on $s,\ \mu$.
The constant $C$ in \eqref{AA1} and \eqref{AA2} polynomially depends on $\sup_{0\leq \tau\leq t}E_s(\tau)$ and $\sup_{0\leq \tau\leq t}\| f(\tau,\cdot)\|_{L^\infty}$.
%where $E_s^b(t)$ is the weighted energy of the plasmas,
%$G_s^b(t)$ the weighted ghost weight energy for the plasmas,
%$E_{s+\f12}^f(t)$ is the weighted energy of the free surface,
%$G_{s+\f12}^f(t)$ is the weighted ghost energy of the free surface. Moreover,
%\begin{align*}
%E_s(t)= E_s^b(t)+E_{s+\f12}^f(t),\\
%G_s(t)= G_s^b(t)+G_{s+\f12}^f(t).
%\end{align*}
%%We refer to ... for the explicit definition.
%We refer to section 6.2 for the definition of $E_s^f(t)$ and $G_{s+\f12}^f(t)$,
%section 5 for the definition of $E_s^b(t)$ and $G_s^b(t)$,
%section 3 for the definition of weight functions in the weighted energy.
The proof of \eqref{AA1} is given in Section \ref{sec-energy-b}.
The proof of \eqref{AA2} and \eqref{AA3} is given in Section \ref{sec-energy-f} and Section \ref{sec-amp-f}.
Once \eqref{AA1}-\eqref{AA3} are obtained, taking the sum of \eqref{AA1} and \eqref{AA2} yields
\begin{align}
\label{AA11}
%&\f{\d }{\dt} E_s^b(t) +G_s^b (t) \leq C E_s^{\f 12}(t) G_s(t),\\
&\sup_{0\leq \tau\leq t}E_s(\tau)+\int_0^t G_s(\tau)\d\tau \leq CE_s(0)+C\int_0^t E_s^{\f12}  G_s (\tau)\d\tau.
%%&\f{\d }{\dt} E_{s+\f12}^f(t)+G_{s+\f12}^f(t) \leq C E_s^{\f 12}(t) G_s(t),\\
%&E^f_s(t)+\int_0^t G^f_s(\tau)\d\tau \leq C E^f_s(0)+C\int_0^t E_s^{\f12}  G_s (\tau)\d\tau,\\
%\label{AA3}
%&\| f(t,\cdot)\|_{L^\infty} \leq \| f(0,\cdot)\|_{L^\infty}
%+ C M \sup_{0\leq \tau \leq t} (E_s^b(\tau))^{\f12}.
\end{align}
The assumption \eqref{AA6} yields $E_s(t)\leq 1$ and $\|f(t,\cdot) \|_{L^\infty}\leq 1$.
Thus the constant $C$ in \eqref{AA11} polynomially depending on  $\sup_{0\leq \tau\leq t}\| f(\tau,\cdot)\|_{L^\infty}$ and
$\sup_{0\leq \tau\leq t}E_s(\tau)$ can be bounded by absolute constant $C'$.
By taking $C_0=2C'$ and taking $\epsilon$ sufficiently small so that $C'\sqrt{4C'\epsilon}\leq \f12$, \eqref{AA11} yields
\begin{align}
\label{AA12}
%&\f{\d }{\dt} E_s^b(t) +G_s^b (t) \leq C E_s^{\f 12}(t) G_s(t),\\
&2\sup_{0\leq \tau\leq t}E_s(\tau)+ \int_0^t G_s(\tau)\d\tau \leq 2C'E_s(0)=C_0\epsilon.
%+C\int_0^t E_s^{\f12}  G_s (\tau)\d\tau.
%%&\f{\d }{\dt} E_{s+\f12}^f(t)+G_{s+\f12}^f(t) \leq C E_s^{\f 12}(t) G_s(t),\\
%&E^f_s(t)+\int_0^t G^f_s(\tau)\d\tau \leq C E^f_s(0)+C\int_0^t E_s^{\f12}  G_s (\tau)\d\tau,\\
%\label{AA3}
%&\| f(t,\cdot)\|_{L^\infty} \leq \| f(0,\cdot)\|_{L^\infty}
%+ C M \sup_{0\leq \tau \leq t} (E_s^b(\tau))^{\f12}.
\end{align}
This  yields the first inequality of \eqref{AA9}.
The second inequality of \eqref{AA9} follows from  \eqref{AA3} and \eqref{AA12} for sufficiently small $\epsilon$.

\section{The weight functions}
In this section, we will introduce the weight functions and study their properties.
Precisely, we will improve \eqref{AA4}
% \eqref{AA4}-\eqref{AA5}
under the bootstrap assumptions  \eqref{AA4}, %\eqref{AA5},
\eqref{AA7}, \eqref{AA8}.

Recalling \eqref{C0} that the weight functions $w^\pm(t,x)$ are  defined through the following equations
\begin{align}\label{C1}
&(\p_t + Z_+^1\p_1)w^-(t,x)=0,\quad w^-(0,x)=x_1,\\ \nonumber
&(\p_t + Z_-^1\p_1)w^+(t,x)=0,\quad w^+(0,x)=x_1.
\end{align}

\begin{rem}
Note that $Z_\pm^1=\Lambda_\pm^1 \pm e_1$ are defined in $\Omega_f$ for
$-1 \leq x_2\leq f(t,x_1)$.
%Since the solutions of  \eqref{C1} depends on $(t,x)$.
However, solving $w^\pm$ may require that $\Lambda_\pm^1$ have a larger domain beyond  $\Omega_f$.
Fortunately, the \textit{a priori} estimate of
$\| \Lambda_\pm^1 \|_{H^s( \Omega_f )}$ %and $\| f \|_{H^{s+\f12}( \mathbb{R} )}$
allows us to extend the domain of $\Lambda_\pm^1$ to the whole $\BR^2$
and the Sobolev norm is still kept.
Hence in the following argument, we do not need to worry about the domain of $\Lambda_\pm^1$.
%the weight functions in \eqref{C1} can be well-defined in $\mathbb{R}^2$.
\end{rem}

To study the property of the weight functions in \eqref{C1}, we draw two backward characteristics
$\phi_\pm(\tau,\sigma_\pm,x_2)$ from $(t,x_1)$ and intersect with the plane $\tau=0$ by two points $\sigma_\pm$:
\begin{align}\label{C2}
\begin{cases}
\frac{\d}{\d \tau} \phi_\pm(\tau,\sigma_\pm,x_2)=Z^1_\pm(\tau,\phi_\pm (\tau,\sigma_\pm,x_2),x_2),\\
\phi_\pm (0,\sigma_\pm,x_2)=\sigma_\pm \in \BR,\quad  \phi_\pm (t,\sigma_\pm,x_2)=x_1.
\end{cases}
\end{align}
Along the two characteristics, the weight functions $w^\mp$ satisfy
\begin{align*}
\f{\d w^\mp(\tau,\phi_\pm(\tau,\sigma_\pm,x_2),x_2 )}{\d\tau}
=(\p_t+Z^1_\pm\p_1)w^\mp(\tau,\phi_\pm(\tau,\sigma_\pm,x_2),x_2 )=0,
\end{align*}
respectively.
Hence
\begin{align}\label{C3}
&w^\mp(t,x )=w^\mp(t,\phi_\pm(t,\sigma_\pm,x_2),x_2 )\\\nonumber
&=w^\mp(0,\phi_\pm(0,\sigma_\pm,x_2),x_2 )
= w^\mp(0, \sigma_\pm,x_2 )=\sigma_\pm.
\end{align}

Let us first study the first derivative estimate of the characteristics $\phi_\pm$.
\begin{lem}\label{lemC1}
Under the assumptions \eqref{AA4} %\eqref{AA5},
and \eqref{AA7}
with $\epsilon'$ sufficiently small,
for the characteristics $\phi_\pm(t,\sigma_\pm,x_2)$ defined by \eqref{C2}, there hold
\begin{equation}\label{C4}
\begin{cases}
%|\p_t \phi_\pm(t,\sigma_\pm,x_2)\mp 1 |\leq C\epsilon,\\
%|\p_{\sigma_\pm} \phi_\pm(t,\sigma_\pm,x_2)-1 |\leq C\epsilon,\\
%|\p_2 \phi_\pm(t,\sigma_\pm,x_2) |\leq C\epsilon.
|\p_t \phi_\pm(t,\sigma_\pm,x_2)\mp 1 |\leq \epsilon',\\
|\p_{\sigma_\pm} \phi_\pm(t,\sigma_\pm,x_2)-1 |\leq 2M\epsilon',\\
|\p_2 \phi_\pm(t,\sigma_\pm,x_2) |\leq 2M\epsilon',
\end{cases}
\end{equation}
where $M=\int_{-\infty}^{\infty} \langle z  \rangle^{-2\mu} \d z.$
\end{lem}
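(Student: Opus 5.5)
Throughout, $\phi_\pm(\tau,\sigma_\pm,x_2)$ is regarded as the forward flow with initial datum $\sigma_\pm$ at $\tau=0$. I will differentiate the ODE \eqref{C2} in $t$, $\sigma_\pm$ and $x_2$. Since $Z^1_\pm=\Lambda^1_\pm\pm1$, the first bound is immediate:
\begin{equation*}
\p_t\phi_\pm-(\pm1)=\Lambda^1_\pm(t,\phi_\pm,x_2),
\end{equation*}
and \eqref{AA7} gives $|\Lambda^1_\pm|\le \langle w^\pm\rangle^{-2\mu}\epsilon'\le\epsilon'$.

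For $\p_{\sigma_\pm}\phi_\pm$ I will write the linearized equation
\begin{equation*}
\frac{\d}{\d\tau}\p_{\sigma}\phi_\pm=\p_1\Lambda^1_\pm(\tau,\phi_\pm,x_2)\,\p_\sigma\phi_\pm,\qquad \p_\sigma\phi_\pm(0)=1.
\end{equation*}
This solves explicitly as $\p_\sigma\phi_\pm=\exp\big(\int_0^t\p_1\Lambda^1_\pm(\tau,\phi_\pm(\tau),x_2)\,\d\tau\big)$. Everything then reduces to showing that the integral $\int_0^t|\p_1\Lambda^1_\pm|\,\d\tau$ along the $\pm$ characteristic is bounded by $M\epsilon'$, uniformly in $t$. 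The bound $|\p_\sigma\phi_\pm-1|\le e^{M\epsilon'}-1\le 2M\epsilon'$ follows for small $\epsilon'$.

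The key, and the main obstacle, is this time-integrability. By \eqref{AA7}, $|\p_1\Lambda^1_\pm|\le\epsilon'\langle w^\pm\rangle^{-2\mu}$, so I must show that $w^\pm$ moves at unit speed along the $\pm$ characteristic. Recall that $w^\pm$ is transported by $Z^1_\mp$, not by $Z^1_\pm$. Hence
\begin{equation*}
\frac{\d}{\d\tau}w^\pm(\tau,\phi_\pm(\tau),x_2)=(\p_t+Z^1_\pm\p_1)w^\pm=(Z^1_\pm-Z^1_\mp)\p_1w^\pm=(\pm2+\Lambda^1_\pm-\Lambda^1_\mp)\p_1w^\pm.
\end{equation*}
Using the bootstrap \eqref{AA4} ($|\p_1w^\pm-1|\le 2C_1\epsilon'$) and $|\Lambda^1|\le\epsilon'$, this derivative has absolute value in $[2-C\epsilon',2+C\epsilon']$, and in particular at least $1$. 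I will therefore change variables $z=w^\pm(\tau,\phi_\pm(\tau),x_2)$ and obtain
\begin{equation*}
\int_0^t\langle w^\pm\rangle^{-2\mu}\,\d\tau\le\int_{\BR}\langle z\rangle^{-2\mu}\,\d z=M,
\end{equation*}
which is finite since $\mu>1/2$.

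For $\p_2\phi_\pm$ I will differentiate in $x_2$:
\begin{equation*}
\frac{\d}{\d\tau}\p_2\phi_\pm=\p_1\Lambda^1_\pm\,\p_2\phi_\pm+\p_2\Lambda^1_\pm,\qquad \p_2\phi_\pm(0)=0.
\end{equation*}
Duhamel's formula gives
\begin{equation*}
|\p_2\phi_\pm|\le e^{M\epsilon'}\int_0^t|\p_2\Lambda^1_\pm|\,\d\tau\le e^{M\epsilon'}M\epsilon'\le 2M\epsilon',
\end{equation*}
by the same characteristic integral bound, now with $\p_2\Lambda^1_\pm$ in place of $\p_1\Lambda^1_\pm$, which is also controlled by \eqref{AA7}.

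Two routine points remain. First, $\Lambda^1_\pm$ is understood via the extension to $\BR^2$ noted in the preceding remark, and the bounds \eqref{AA7}, \eqref{AA4} are assumed to hold for that extension. Second, the change of variables is legitimate because $\tau\mapsto w^\pm(\tau,\phi_\pm(\tau),x_2)$ is strictly monotone.
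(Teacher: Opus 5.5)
Your proposal is correct and follows the paper's proof essentially step by step. You differentiate the characteristic ODE \eqref{C2}, control $\p_{\sigma_\pm}\phi_\pm$ and $\p_2\phi_\pm$ by Gronwall/Duhamel, and bound $\int_0^t|\nabla\Lambda^1_\pm|\,\d\tau\le M\epsilon'$ by the change of variables $z=w^\pm$ along the $\pm$ characteristic, using $\big|\frac{\d}{\d\tau}w^\pm\big|\ge 1$. The differences are cosmetic: you solve the linear ODE for $\p_{\sigma_\pm}\phi_\pm$ explicitly, and you get $\frac{\d}{\d\tau}w^\pm=(Z^1_\pm-Z^1_\mp)\p_1w^\pm$ from the transport equation \eqref{C0}, so only the $\p_1 w^\pm$ part of \eqref{AA4} is needed, whereas the paper uses the bounds on both $\p_t w^\pm$ and $\p_1 w^\pm$ from \eqref{AA4}.
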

\begin{proof}
%\textbf{Step 1: proof of the first three lines of \eqref{C8}}
%Due to the similarity between $\phi_\pm$ and $\phi_-$, we only present the details for the first one.
Solving \eqref{C2} gives
\begin{align*} %\label{C9}
\phi_\pm(t,\sigma_\pm,x_2)
&=\sigma_\pm +\int_0^t Z_\pm^1(\tau,\phi_\pm(\tau,\sigma_\pm,x_2),x_2) \d\tau \\\nonumber
&=\sigma_\pm \pm t+\int_0^t \Lambda_\pm^1 (\tau,\phi_\pm(\tau,\sigma_\pm,x_2),x_2) \d\tau.
\end{align*}
Taking the derivative in $t$, it is easy to see
\begin{align*}
&\p_t \phi_\pm(t,\sigma_\pm,x_2)
=\pm 1 +\Lambda_\pm^1(t,\phi_\pm(t,\sigma_\pm,x_2),x_2).
\end{align*}
Hence
\begin{align*}
|\p_t \phi_\pm(t,\sigma_\pm,x_2)\mp 1|\leq \| \Lambda_\pm^1\|_{L^\infty(\Om_f)}\leq \epsilon'.
\end{align*}
Next, we calculate
\begin{align}
\label{C10}
&\p_{\sigma_\pm} \phi_\pm(t,\sigma_\pm,x_2)
=1 +\int_0^t \p_{\sigma_\pm}\phi_\pm(\tau,\sigma_\pm,x_2)\cdot (\p_1\Lambda_\pm^1)(\tau,\phi_\pm(\tau,\sigma_\pm,x_2),x_2) \d\tau,\\
\label{C11}
&\p_2 \phi_\pm(t,\sigma_\pm,x_2)
= \int_0^t (\p_2\Lambda_\pm^1)(\tau,\phi_\pm(\tau,\sigma_\pm,x_2),x_2) \\\nonumber
&\qquad\qquad\qquad\qquad+\p_2\phi_\pm(\tau,\sigma_\pm,x_2)\cdot (\p_1\Lambda_\pm^1)(\tau,\phi_\pm(\tau,\sigma_\pm,x_2),x_2) \d\tau.
\end{align}
Thus
\begin{align*}
|\p_{\sigma_\pm} \phi_\pm(t,\sigma_\pm,x_2)-1|
&\leq \int_0^t |\p_{\sigma_\pm} \phi_\pm(\tau,\sigma_\pm,x_2)-1|\cdot| (\p_1\Lambda_\pm^1)(\tau,\phi_\pm(\tau,\sigma_\pm,x_2),x_2) | \d\tau \\
&\quad+\int_0^t |(\p_1\Lambda_\pm^1)(\tau,\phi_\pm(\tau,\sigma_\pm,x_2),x_2) | \d\tau ,\\
|\p_2 \phi_\pm(t,\sigma_\pm,x_2)|
&\leq \int_0^t
|\p_2\phi_\pm(\tau,\sigma_\pm,x_2)| \cdot|(\p_1\Lambda_\pm^1)(\tau,\phi_\pm(\tau,\sigma_\pm,x_2),x_2) | \d\tau\\
&\quad+\int_0^t |(\p_2\Lambda_\pm^1)(\tau,\phi_\pm(\tau,\sigma_\pm,x_2),x_2)| \d\tau.
\end{align*}
By Gronwall's inequality, there hold
\begin{align*}
&|\p_{\sigma_\pm} \phi_\pm(t,\sigma_\pm,x_2)-1|,\quad |\p_2\phi_\pm(t,\sigma_\pm,x_2)|\\
&\leq \int_0^t |(\nabla\Lambda_\pm^1)(\tau,\phi_\pm (\tau,\sigma_\pm,x_2),x_2) | \d\tau
\cdot e^{\int_0^t |(\nabla\Lambda_\pm^1)(\tau,\phi_\pm(\tau,\sigma_\pm,x_2),x_2) | \d\tau}.
\end{align*}
While
\begin{align*}
&\int_0^t |(\nabla\Lambda_\pm^1)(\tau,\phi_\pm(\tau,\sigma_\pm,x_2),x_2) | \d\tau\\
&\leq \| \langle w^\pm \rangle^{2\mu}  \nabla\Lambda_\pm^1 \|_{L^\infty}
\int_0^t  \langle w^\pm (\tau,x) \rangle^{-2\mu} |_{x_1= \phi_\pm (\tau,\sigma_\pm,x_2)} \d\tau\\
%&\leq \| \langle w^\pm \rangle^{2\mu}  \nabla\Lambda_\pm^1 \|_{L^\infty} \int_0^t  \langle w^\pm (\tau,x) \rangle^{-2\mu} |_{x_1= \phi_\pm (\tau,\sigma_\pm,x_2)} \d\tau\\
&\leq \| \langle w^\pm \rangle^{2\mu}  \nabla\Lambda_\pm^1 \|_{L^\infty} \int_{-\infty}^{+\infty} \langle w  \rangle^{-2\mu} \d w
\cdot\sup_{\tau}\frac{1}{\big| \f{\d w^\pm(\tau,\phi_\pm (\tau,\sigma_\pm,x_2),x_2) }{\d\tau}\big| }.
\end{align*}
%By $\eqref{C7}_4$ and $\eqref{C7}_5$, we have
By $\eqref{AA4}$, we have
\begin{align*}
&\Big|\frac{\d w^\pm(\tau,\phi_\pm (\tau,\sigma_\pm,x_2),x_2) }{\d\tau}\Big|\\
&=|\p_tw^\pm(\tau,\phi_\pm (\tau,\sigma_\pm,x_2),x_2)
+\p_1 w^\pm(\tau,\phi_\pm (\tau,\sigma_\pm,x_2),x_2) \cdot (\p_t \phi_\pm)(\tau,\sigma_\pm,x_2)|
\\
&=|\p_tw^\pm(\tau,\phi_\pm (\tau,\sigma_\pm,x_2),x_2)
+\p_1 w^\pm(\tau,\phi_\pm (\tau,\sigma_\pm,x_2),x_2) \cdot \big( \pm 1 +\Lambda_\pm^1(\tau,\phi_\pm(\tau,\sigma_\pm,x_2),x_2) \big)|
\\
&\geq 1.
\end{align*}
Consequently, by \eqref{AA7}, we derive that
\begin{align*}
&|\p_{\sigma_\pm} \phi_\pm(t,\sigma_\pm,x_2)-1|,\quad |\p_2\phi_\pm(t,\sigma_\pm,x_2)|\\
&\leq M \| \langle w^\pm \rangle^{2\mu}  \nabla\Lambda_\pm^1 \|_{L^\infty} e^{M \| \langle w^\pm \rangle^{2\mu}  \nabla\Lambda_\pm^1 \|_{L^\infty}}  \leq M\epsilon' e^{M\epsilon'}\leq 2M\epsilon',
\end{align*}
for sufficiently small $\epsilon'$.
\end{proof}

%Next we present the first derivative estimate of the weight functions.
\begin{lem}\label{lemC2}
%Let $Z_\pm=\Lambda_\pm \pm e_1$, $\mu> \f12$.
%Assume $\sum_{|\alpha|\leq 1}\| \langle w^\pm \rangle^{2\mu}
%\nabla^\alpha\Lambda_\pm^1 \|_{L^\infty( \Omega_f )}\leq \epsilon$
%with $\epsilon$ sufficiently small.
Under the assumptions \eqref{AA4} %\eqref{AA5},
and \eqref{AA7}
with $\epsilon'$ sufficiently small,
for the weight functions defined by \eqref{C1}, there hold
%\begin{align*}
%&|\p_t w^\pm(t,x)\mp 1 |\leq C\epsilon,\\[-5mm]\\
%&|\p_1 w^\pm(t,x)-1 |\leq C\epsilon,\\[-5mm]\\
%&|\p_2 w^\pm(t,x) |\leq C\epsilon.
%\end{align*}
\begin{equation}\label{C14}
\begin{cases}
%|\p_t \phi_\pm(t,\sigma_\pm,x_2)\mp 1 |\leq \epsilon,\\[-4mm]\\
%|\p_{\sigma_\pm} \phi_\pm(t,\sigma_\pm,x_2)-1 |\leq 2M\epsilon,\\[-4mm]\\
%|\p_2 \phi_\pm(t,\sigma_\pm,x_2) |\leq 2M\epsilon, \\
|\p_t w^\pm(t,x)\mp 1 |\leq (1+3M)\epsilon',\\
|\p_1 w^\pm(t,x)-1 |\leq 3M\epsilon'  ,\\
|\p_2 w^\pm(t,x) |\leq 3M\epsilon',
\end{cases}
\end{equation}
where $M=\int_{-\infty}^{\infty} \langle z  \rangle^{-2\mu} \d z.$
\end{lem}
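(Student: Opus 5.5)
The plan is to read off derivatives of $w^\pm$ from the characteristic representation \eqref{C3} combined with Lemma \ref{lemC1}. By \eqref{C3}, $w^\mp(t,x)=\sigma_\pm$, where $\sigma_\pm=\sigma_\pm(t,x_1,x_2)$ is defined implicitly by
\[
\phi_\pm(t,\sigma_\pm,x_2)=x_1 .
\]
By $\eqref{C4}_2$ and smallness of $\epsilon'$, $\p_{\sigma_\pm}\phi_\pm\geq 1-2M\epsilon'>0$. Hence the implicit function theorem applies, and $\sigma_\pm$ is a smooth function of $(t,x_1,x_2)$. The derivatives of $w^\mp$ are therefore computed by implicitly differentiating this identity. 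I treat $w^+$, which is given by $\sigma_-$ along $\phi_-$; $w^-$ is symmetric.

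The computation has three parts.

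\emph{Differentiation in $x_1$.} This gives $\p_{\sigma_-}\phi_-\,\p_1\sigma_-=1$, so
\[
\p_1 w^+=\frac{1}{\p_{\sigma_-}\phi_-}, \qquad |\p_1 w^+-1|\leq \frac{2M\epsilon'}{1-2M\epsilon'}\leq 3M\epsilon'
\]
for $\epsilon'$ small.

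\emph{Differentiation in $x_2$.} This gives $\p_{\sigma_-}\phi_-\,\p_2\sigma_-+\p_2\phi_-=0$, so
\[
|\p_2 w^+|=\frac{|\p_2\phi_-|}{\p_{\sigma_-}\phi_-}\leq \frac{2M\epsilon'}{1-2M\epsilon'}\leq 3M\epsilon' .
\]

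\emph{Differentiation in $t$.} Here one should not differentiate the implicit relation directly. It is simpler to use the transport equation \eqref{C1}: $\p_t w^+=-Z_-^1\p_1 w^+=(1-\Lambda_-^1)\p_1 w^+$. Then
\[
\p_t w^+-1=(\p_1w^+-1)-\Lambda_-^1\p_1 w^+,
\]
so $|\p_t w^+-1|\leq 3M\epsilon'+\epsilon'(1+3M\epsilon')$. Using \eqref{AA7} to bound $\|\Lambda_-^1\|_{L^\infty}\leq\epsilon'$ (since $\langle w\rangle\geq1$), one gets $|\p_t w^+-1|\leq (1+3M)\epsilon'$ up to a harmless quadratic term.

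The one delicate point is the constants. The quadratic piece $3M\epsilon'^2$ has to be absorbed: the linear part is $(1+3M)\epsilon'$ only if the bound $|\p_1 w^+-1|$ is sharpened first. To do this I will use the sharper intermediate estimate $2M\epsilon'/(1-2M\epsilon')\leq 2M\epsilon'(1+4M\epsilon')$. Then $|\p_1w^+-1|+\epsilon'|\p_1w^+|\leq 2M\epsilon'+\epsilon'+O(\epsilon'^2)\leq(1+3M)\epsilon'$ for $\epsilon'$ small. Apart from this bookkeeping, the lemma is a direct consequence of Lemma \ref{lemC1} and the bootstrap \eqref{AA4}, which was already used there to obtain \eqref{C4}.
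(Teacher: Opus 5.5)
Your proof is correct and is essentially the paper's: you set $w^\mp=\sigma_\pm$ with $\phi_\pm(t,\sigma_\pm,x_2)=x_1$, differentiate implicitly in $x_1$ and $x_2$, and apply the bounds \eqref{C4} of Lemma \ref{lemC1}. For $\p_t w^\pm$ the paper differentiates the implicit relation in $t$ to get $\p_t\sigma_\pm=-\p_t\phi_\pm(\p_{\sigma_\pm}\phi_\pm)^{-1}$ with $\p_t\phi_\pm=Z^1_\pm(t,x)$, which is exactly your transport-equation identity, so your caveat against direct differentiation is unnecessary (and your constant bookkeeping is, if anything, more careful than the paper's).
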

\begin{proof}
By the ansatz  \eqref{C2} and \eqref{C3},
we see $w^\mp(t,x)=\sigma_\pm$.
%Thus to obtain the last three lines of \eqref{C8},
Thus to prove the lemma,
it suffices to estimate $\p_t \sigma_\pm, \p_1 \sigma_\pm, \p_2 \sigma_\pm$.

Let us copy and write the last expression of \eqref{C2} as follows:
% write \eqref{C15} as follows:
\begin{align} \label{C15}
x_1=\phi_\pm(t,\sigma_\pm(t,x),x_2).
\end{align}
Taking the derivative of both sides in $t$ yields
\begin{align*}
 \p_t\phi_+ + \p_{\sigma_+}\phi_+ \p_t \sigma_+
=\p_t\phi_- + \p_{\sigma_-}\phi_- \p_t \sigma_-=0.
\end{align*}
This further yields
\begin{align*}
\p_t \sigma_+ =-\p_t\phi_+ \big(\p_{\sigma_+}\phi_+\big)^{-1},\\
\p_t \sigma_- =-\p_t\phi_- \big(\p_{\sigma_-}\phi_-\big)^{-1}.
\end{align*}
Consequently, by $\eqref{C4}_1$ and $\eqref{C4}_2$, we obtain
\begin{align*}
&|\p_t w^- +1|= |\p_t \sigma_+ +1|\leq (1+\epsilon') (1+2M \epsilon') -1\leq (1+3M)\epsilon',\\
&|\p_t w^+ -1|= |\p_t \sigma_- -1|\leq (1+\epsilon') (1+2M \epsilon') -1\leq (1+3M)\epsilon',
\end{align*}
for sufficiently small $\epsilon'$. This yields $\eqref{C14}_1$.

Next, taking the derivative of both sides of \eqref{C15} in $x_1$ yields
\begin{align*} %\label{C16}
1=\p_{\sigma_\pm} \phi_\pm (t,\sigma_\pm,x_2) \cdot \p_1 \sigma_\pm(t,x).
\end{align*}
Thus,
\begin{align*}
 \p_1 \sigma_\pm(t,x)= \big(  \p_{\sigma_\pm} \phi_\pm (t,\sigma_\pm,x_2) \big)^{-1} .
\end{align*}
Consequently, by   $\eqref{C4}_2$, we obtain
\begin{align}\label{C17}
\p_1w^\mp= \p_1 \sigma_\pm(t,x)\leq \f{1}{1- 2M\epsilon'}\leq 1+3M\epsilon', \\ \nonumber
\p_1w^\mp= \p_1 \sigma_\pm(t,x)\geq \f{1}{1+ 2M\epsilon'}\geq 1-3M\epsilon',
\end{align}
for sufficiently small $\epsilon'$. This yields $\eqref{C14}_2$.

Finally, taking the derivative of both sides of \eqref{C15} in $x_2$ yields
\begin{align*}
0=\p_{\sigma_\pm} \phi_\pm(t,\sigma_\pm,x_2) \cdot \p_2 \sigma_\pm(t,x)+\p_2 \phi_\pm(t,\sigma_\pm,x_2).
\end{align*}
Thus
\begin{align*}
\p_2 \sigma_\pm(t,x)=-\p_2 \phi_\pm(t,\sigma_\pm,x_2)
\cdot \big(\p_{\sigma_\pm} \phi_\pm(t,\sigma_\pm,x_2) \big)^{-1}.
\end{align*}
Consequently, by $\eqref{C4}_2$ and $\eqref{C4}_3$, we obtain
\begin{align}\label{C18}
|\p_2 w^\mp|=|\p_2 \sigma_\pm|\leq  2M\epsilon' \cdot(1+3M\epsilon')\leq 3M\epsilon',
\end{align}
for sufficiently small $\epsilon$.  This yields $\eqref{C14}_3$.
%By Lemma \ref{lemC1}, the estimate for $\f{\p \sigma_\pm}{\p t}$ follows.
%The estimate for $\p_1\sigma_\pm$ and $\p_2 \sigma_\pm$ can be proved similarly.
This ends the proof of the  two lemmas.
\end{proof}

Next, we present the second order derivative estimate of the weight functions.
\begin{lem}\label{lemC4}
Under the assumptions \eqref{AA4} %\eqref{AA5},
and \eqref{AA7}
with $\epsilon'$ sufficiently small,
for the weight functions defined by \eqref{C1}, %there holds
there hold
\begin{align}\label{C20}
&|\p_1^2 w^\pm(t,x)|\leq 3M\epsilon',\\[-5mm]\nonumber\\\nonumber
&|\p_1\p_2 w^\pm(t,x)|\leq 4M\epsilon',\\[-5mm]\nonumber\\\nonumber
&|\p_2^2 w^\pm(t,x) | \leq 3M\epsilon'.
\end{align}
%where $C$ is an absolute constant.
\end{lem}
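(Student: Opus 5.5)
The plan is to follow the scheme of Lemmas \ref{lemC1}--\ref{lemC2} one derivative higher. We first bound the second derivatives of the characteristics $\phi_\pm$ in $(\sigma_\pm,x_2)$. Then we transfer these bounds to $\sigma_\pm(t,x)=w^\mp(t,x)$ by differentiating the identity \eqref{C15} twice.

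\textbf{Step 1: second derivatives of $\phi_\pm$.} Differentiate \eqref{C10} and \eqref{C11} once more in $\sigma_\pm$ and $x_2$. For instance,
\begin{align*}
\p_{\sigma_\pm}^2\phi_\pm(t)=\int_0^t \Big(\p_{\sigma_\pm}^2\phi_\pm\, \p_1\Lambda_\pm^1 + (\p_{\sigma_\pm}\phi_\pm)^2\, \p_1^2\Lambda_\pm^1\Big)(\tau,\phi_\pm,x_2)\,\d\tau ,
\end{align*}
and analogously for $\p_{\sigma_\pm}\p_2\phi_\pm$ and $\p_2^2\phi_\pm$. The source terms now contain $\p_2\p_1\Lambda^1_\pm$, $\p_2^2\Lambda^1_\pm$ and products of first derivatives of $\phi_\pm$. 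Each of these is one of the following:
\begin{itemize}
\item a term linear in the unknown second derivative, multiplied by $\p_1\Lambda^1_\pm$;
\item $\nabla^2\Lambda^1_\pm$ evaluated along the characteristic, multiplied by factors $\p_{\sigma_\pm}\phi_\pm,\p_2\phi_\pm$, which are bounded by $1+2M\epsilon'$ and $2M\epsilon'$ by Lemma \ref{lemC1}.
\end{itemize}
As in Lemma \ref{lemC1}, we bound the time integral of $|\nabla^{\le2}\Lambda^1_\pm|$ along $\phi_\pm$. We use \eqref{AA7}, which now also involves $|\alpha|=2$, together with the change of variables $\tau\mapsto w^\pm(\tau,\phi_\pm(\tau),x_2)$, whose derivative is $\ge1$ by \eqref{AA4}. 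This gives $\int_0^t|\nabla^{\le 2}\Lambda^1_\pm|\,\d\tau\le M\epsilon'$. Gronwall's inequality then yields
\begin{align*}
|\p_{\sigma_\pm}^2\phi_\pm|,\ |\p_{\sigma_\pm}\p_2\phi_\pm|,\ |\p_2^2\phi_\pm|\le M\epsilon'(1+C\epsilon')e^{M\epsilon'},
\end{align*}
which is bounded by something like $(1+\eta)M\epsilon'$ with $\eta$ small. The point to track is that the leading constant is exactly $M$, up to $1+O(\epsilon')$ corrections. In the $\p_2^2$ source, for example, the cross terms $\p_2\phi_\pm\,\p_1\p_2\Lambda$ carry an extra $\epsilon'$ factor and are harmless.

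\textbf{Step 2: transfer to $\sigma_\pm$.} Differentiating $1=\p_{\sigma_\pm}\phi_\pm\,\p_1\sigma_\pm$ in $x_1$ gives
\begin{align*}
\p_1^2\sigma_\pm=-\p^2_{\sigma_\pm}\phi_\pm\,(\p_1\sigma_\pm)^3 ,
\end{align*}
so $|\p_1^2 w^\mp|\le (1+3M\epsilon')^3(1+\eta)M\epsilon'\le 3M\epsilon'$. Differentiating $0=\p_{\sigma_\pm}\phi_\pm\p_2\sigma_\pm+\p_2\phi_\pm$ in $x_1$ and $x_2$ expresses $\p_1\p_2\sigma_\pm$ and $\p_2^2\sigma_\pm$ as
\begin{itemize}
\item $(\p_{\sigma_\pm}\phi_\pm)^{-1}$ times second derivatives of $\phi_\pm$, plus
\item quadratic terms in the first-derivative bounds of Lemmas \ref{lemC1}--\ref{lemC2}, which are $O(\epsilon'^2)$.
\end{itemize}
Concretely,
\begin{align*}
\p_1\p_2\sigma_\pm=-(\p_{\sigma}\phi)^{-1}\big(\p_\sigma^2\phi\,\p_1\sigma\,\p_2\sigma+\p_\sigma\p_2\phi\,\p_1\sigma\big),
\end{align*}
while $\p_2^2\sigma_\pm$ contains $\p_2^2\phi$, $2\p_\sigma\p_2\phi\,\p_2\sigma$ and $\p_\sigma^2\phi(\p_2\sigma)^2$. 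The leading term in each is bounded by about $M\epsilon'$, so for $\epsilon'$ small we get the stated $3M\epsilon'$ and $4M\epsilon'$.

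\textbf{Main obstacle.} The main obstacle is Step 1: showing that the integrals $\int_0^t|\nabla^2\Lambda_\pm^1(\tau,\phi_\pm(\tau),x_2)|\,\d\tau$ are controlled by $M\epsilon'$ with the sharp constant $M$, not a larger multiple. This is needed so that the final bounds fit under $C_1\epsilon'=4M\epsilon'$ in \eqref{AA10}. I would handle it exactly as in Lemma \ref{lemC1}. The weight $\langle w^\pm\rangle^{2\mu}$ sits on the unknown $\Lambda_\pm$ and is evaluated along the $\phi_\pm$ characteristic, on which $w^\pm$ is strictly monotone in $\tau$ with rate $\ge1$. All other contributions carry an additional factor $\epsilon'$ and are absorbed by taking $\epsilon'$ sufficiently small.
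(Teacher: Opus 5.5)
Your proposal is correct and is essentially the paper's proof. The paper obtains Gronwall bounds for $\p_{\sigma_\pm}^2\phi_\pm$, $\p_2\p_{\sigma_\pm}\phi_\pm$ and $\p_2^2\phi_\pm$ by differentiating \eqref{C10}--\eqref{C11} and integrating $\nabla^{\le 2}\Lambda^1_\pm$ along the characteristic via $\tau\mapsto w^\pm$; it then differentiates \eqref{C21} implicitly to get exactly \eqref{C22}, \eqref{C27}, \eqref{C32}. Your $-\p_{\sigma}^2\phi\,(\p_1\sigma)^3$ is the same as \eqref{C22}, since $(\p_\sigma\phi)^{-1}=\p_1\sigma$. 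The only difference is cosmetic: the paper settles for the cruder bound $2M\epsilon'$ on the second derivatives of $\phi_\pm$ and still lands under $3M\epsilon'$ and $4M\epsilon'$, so the sharp leading constant you single out as the main obstacle is not actually needed.
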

\begin{proof}
The proof is similar to the argument that we have conducted in Lemma \ref{lemC1} and Lemma \ref{lemC2}.
%Hence we only sketch the proof of this lemma.
%We still use the change of the coordinates and the continuity method.
%Let us work in $(t,x)$ variables and
Let us copy and write \eqref{C15} as follows:
\begin{align} \label{C21}
x_1=\phi_\pm(t,\sigma_\pm(t,x),x_2).
\end{align}
By the characteristics ansatz \eqref{C2} and \eqref{C3}, we have $w^\mp(t,x)=\sigma_\pm$.
%Along the characteristics, we have.
Hence it suffices to estimate $\p_1^2 \sigma_\pm, \p_2^2 \sigma_\pm, \p_1\p_2 \sigma_\pm$.

Taking twice derivative of both sides of \eqref{C21} in $x_1$, we deduce
%\begin{align*}
%0&= \f{\p\phi_+}{\p \sigma_+} \p^2_1\sigma_+
%+\f{\p^2\phi_+}{\p \sigma_+^2} (\p_1 \sigma_+)^2 \\\nonumber
%&=\f{\p\phi_-}{\p \sigma_-} \p^2_1 \sigma_-
%+\f{\p^2\phi_-}{\p \sigma_-^2}  ( \p_1 \sigma_- )^2 .
%\end{align*}
\begin{align*}
0&= \p_{ \sigma_+}\phi_+  \p^2_1 \sigma_+
+ \p_{\sigma_+}^2  \phi_+ (\p_1 \sigma_+)^2 \\\nonumber
&=\p_{ \sigma_-}\phi_-  \p^2_1 \sigma_-
+ \p_{\sigma_-}^2  \phi_- (\p_1 \sigma_-)^2.
\end{align*}
%This further yields
Thus
\begin{align}
\label{C22}
& \p^2_1 \sigma_+
=-\big(\p_{\sigma_+}\phi_+ \big)^{-1} {\p^2_{\sigma_+}\phi_+} (\p_1 \sigma_+)^2, \\
\label{C23}
& \p^2_1 \sigma_-
=-\big(\p_{\sigma_-}\phi_- \big)^{-1} {\p^2_{\sigma_-}\phi_-} (\p_1 \sigma_-)^2 .
\end{align}
On the other side,
taking  derivative of \eqref{C10} in $\sigma_\pm$, we obtain
%\begin{align*}
%\p^2_{\sigma_+} \phi_+(t,\sigma_+,x_2)
%&= \int_0^t \p^2_{\sigma_+}\phi_+(\tau,\sigma_+,x_2) (\p_1\Lambda_+^1)(\tau,\phi_+(\tau,\sigma_+,x_2),x_2)\\
%&\qquad+\big(\p_{\sigma_+}\phi_+(\tau,\sigma_+,x_2) \big)^2 (\p_1^2\Lambda_+^1)(\tau,\phi_+(\tau,\sigma_+,x_2),x_2)
% d\tau.
%\end{align*}
\begin{align*}
\p^2_{\sigma_\pm} \phi_\pm(t,\sigma_\pm,x_2)
&= \int_0^t \p^2_{\sigma_\pm}\phi_\pm(\tau,\sigma_\pm,x_2) (\p_1\Lambda_\pm^1)(\tau,\phi_\pm(\tau,\sigma_\pm,x_2),x_2)\\
&\qquad+\big(\p_{\sigma_\pm}\phi_\pm(\tau,\sigma_\pm,x_2) \big)^2 (\p_1^2\Lambda_\pm^1)(\tau,\phi_\pm(\tau,\sigma_\pm,x_2),x_2)
 \d\tau.
\end{align*}
Similarly to the argument we conducted in the proof of Lemma \ref{lemC1} and Lemma \ref{lemC2}.
 By %the first derivative estimate of characteristics
 \eqref{C4}, \eqref{C14}, %Lemma \ref{lemC1},
the assumption
%$\| \langle w^\pm \rangle^{2\mu} \Lambda_\pm \|_{H^s( \Omega_f )}\leq \epsilon'$
$\sum_{|\alpha|\leq 2}\| \langle w^\pm \rangle^{2\mu} \nabla^\alpha \Lambda_\pm^1 \|_{L^\infty( \Omega_f )}\leq \epsilon'$
and by Gronwall's inequality, we obtain
\begin{align}\label{C24}
|\p^2_{\sigma_\pm} \phi_\pm(t,\sigma_\pm,x_2)|
&\leq \int_0^t \big|\big(\p_{\sigma_\pm}\phi_\pm(\tau,\sigma_\pm,x_2) \big)^2 (\p_1^2\Lambda_\pm^1)(\tau,\phi_\pm(\tau,\sigma_\pm,x_2),x_2) \big|
 \d\tau\\\nonumber
&\qquad \cdot \exp\big( \int_0^t |(\p_1\Lambda_\pm^1)(\tau,\phi_\pm(\tau,\sigma_\pm,x_2),x_2)| \d\tau \big) \\\nonumber
&\leq (1+2M\epsilon')^2 M\epsilon' e^{M\epsilon'}\leq 2M\epsilon',
\end{align}
for sufficiently small $\epsilon'$.
%The proof of
%\begin{align}\label{C25}
%|\p^2_{\sigma_-} \phi_-(t,\sigma_-,x_2)|\leq 2M\epsilon'
%\end{align}
%can be obtained in the same method.
%Thus by  \eqref{C4}, \eqref{C14},
%%\eqref{C22}, \eqref{C23} and \eqref{C24},  and  yield
% \eqref{C22} and \eqref{C23} yield
%%By the argument in the above lemma, we have
Thus by \eqref{C4}, \eqref{C14}, \eqref{C22}, \eqref{C23} and \eqref{C24},  we derive
\begin{align*} %\label{C26}
& |\p^2_1 w^\mp|=|\p^2_1 \sigma_\pm|\leq (1+2M\epsilon') \cdot 2M\epsilon'\cdot (1+3M\epsilon')\leq 3M\epsilon' .
\end{align*}
This yields the first inequality of \eqref{C20}.
%The estimates for the other second derivative of the characteristics
%can be conducted similarly.

Next, taking  derivative of both sides of \eqref{C21} in $x_1$ and then taking the derivative in $x_2$, we obtain
\begin{align*}
0=\p_{\sigma_\pm}^2 \phi_\pm \cdot \p_1 \sigma_\pm \p_2 \sigma_\pm
+ \p_2 \p_{\sigma_\pm} \phi_\pm \cdot \p_1 \sigma_\pm
+ \p_{\sigma_\pm} \phi_\pm \cdot \p_1\p_2 \sigma_\pm .
\end{align*}
This yields
\begin{align}\label{C27}
 \p_1\p_2 \sigma_\pm
= -\big(\p_{\sigma_\pm} \phi_\pm \big)^{-1}
\big(\p_{\sigma_\pm}^2 \phi_\pm \cdot \p_1 \sigma_\pm \p_2 \sigma_\pm
+ \p_2 \p_{\sigma_\pm} \phi_\pm \cdot \p_1 \sigma_\pm\big)  .
\end{align}
%The first line of \eqref{C10} reads as follows
%\begin{align*}
%&\p_{\sigma_+} \phi_+(t,\sigma_+,x_2)
%=1 +\int_0^t \p_{\sigma_+}\phi_+(\tau,\sigma_+,x_2)\cdot (\p_1\Lambda_+^1)(\tau,\phi_+(\tau,\sigma_+,x_2),x_2) d\tau. %\\
%%&\p_2 \phi_+(t,\sigma_+,x_2)
%%= \int_0^t (\p_2\Lambda_+^1)(\tau,\phi_+(\tau,\sigma_+,x_2),x_2) \\
%%&\qquad\qquad\qquad\qquad+\p_2\phi_+(\tau,\sigma_+,x_2)\cdot (\p_1\Lambda_+^1)(\tau,\phi_+(\tau,\sigma_+,x_2),x_2) d\tau.
%\end{align*}
Now taking the derivative of the first line of \eqref{C10} in $x_2$, we obtain
\begin{align*}
&\p_2\p_{\sigma_\pm} \phi_\pm(t,\sigma_\pm,x_2) \\
&= \int_0^t (\p_1\p_2\Lambda_\pm^1)(\tau,\phi_\pm(\tau,\sigma_\pm,x_2),x_2)\cdot \p_{\sigma_\pm}\phi_\pm\\
&\qquad+\p_{\sigma_\pm}\p_2\phi_\pm(\tau,\sigma_\pm,x_2)\cdot (\p_1\Lambda_\pm^1)(\tau,\phi_\pm(\tau,\sigma_\pm,x_2),x_2) \\[-4mm]\\
&\qquad+(\p_{\sigma_\pm}\phi_\pm \p_2\phi_\pm)(\tau,\sigma_\pm,x_2)\cdot (\p_1^2\Lambda_\pm^1)(\tau,\phi_\pm(\tau,\sigma_\pm,x_2),x_2)\d\tau.
\end{align*}
By  \eqref{C4}, \eqref{C14},
 %$\eqref{C8}_2$, $\eqref{C8}_3$,
the assumption $\sum_{|\alpha|\leq 2}\| \langle w^\pm \rangle^{2\mu} \nabla^\alpha \Lambda_\pm \|_{L^\infty( \Omega_f )}\leq \epsilon'$ and Gronwall's inequality, we obtain
\begin{align}\label{C29}
|\p_2\p_{\sigma_\pm} \phi_\pm|
\leq
&\int_0^t \big|(\p_{\sigma_\pm}\phi_\pm \p_2\phi_\pm)(\tau,\sigma_\pm,x_2)\cdot (\p_1^2\Lambda_\pm^1)(\tau,\phi_\pm(\tau,\sigma_\pm,x_2),x_2) \big|\\\nonumber
&\qquad+ \big|(\p_1\p_2\Lambda_\pm^1)(\tau,\phi_\pm(\tau,\sigma_\pm,x_2),x_2)\cdot \p_{\sigma_\pm}\phi_\pm \big| \d\tau \\ \nonumber
&\cdot \exp\big( \int_0^t  \big| (\p_1\Lambda_\pm^1)(\tau,\phi_\pm(\tau,\sigma_\pm,x_2),x_2) \big| \d\tau \big) \\[-4mm]\nonumber\\\nonumber
\leq &(1+2M\epsilon')^2  M\epsilon' \cdot e^{M\epsilon'}\leq 2M\epsilon'
\end{align}
for sufficiently small $\epsilon'$.
Consequently, by \eqref{C4}, \eqref{C14},  \eqref{C17}, \eqref{C18}, \eqref{C24},
\eqref{C27} and  \eqref{C29},   we derive
\begin{align*} %\label{C30}
|\p_1\p_2 w^\mp|=|\p_1\p_2 \sigma_\pm|
&\leq  (1-2M\epsilon')^{-1}
\big( 2M\epsilon'(1+3M\epsilon')3M\epsilon'+2M\epsilon'(1+3M\epsilon')  \big)\\\nonumber
&\leq (1+3M\epsilon')3M\epsilon'\leq 4M\epsilon'
\end{align*}
for sufficiently small $\epsilon'$. This yields the second inequality of \eqref{C20}.

Finally, taking the derivative twice on both sides of \eqref{C21} with respect to $x_2$, we obtain
%\begin{align*}
%0=\p_{\sigma_\pm} \phi_\pm(t,\sigma_\pm,x_2) \cdot \p_2 \sigma_\pm(t,x)+\p_2 \phi_\pm(t,\sigma_\pm,x_2).
%\end{align*}
%Further taking derivative of both sides in $x_2$ yields
\begin{align*} %\label{C31}
0=
&\p_{\sigma_\pm} \phi_\pm(t,\sigma_\pm,x_2) \cdot \p_2^2 \sigma_\pm(t,x)
+\p^2_{\sigma_\pm} \phi_\pm(t,\sigma_\pm,x_2) \cdot \big(\p_2 \sigma_\pm(t,x) \big)^2 \\\nonumber
&+2\p_2\p_{\sigma_\pm} \phi_\pm(t,\sigma_\pm,x_2) \p_2 \sigma_\pm(t,x)
%+\p_{\sigma_\pm}\p_2 \phi_\pm(t,\sigma_\pm,x_2) \p_2 \sigma_\pm
%\\\nonumber
%&
+\p_2^2 \phi_\pm(t,\sigma_\pm,x_2).
\end{align*}
%From $\eqref{C31}$, we get
This yields
\begin{align}\label{C32}
\p_2^2 \sigma_\pm
&= -\big(\p_{\sigma_\pm} \phi_\pm\big)^{-1}
\Big(\p^2_{\sigma_\pm} \phi_\pm(t,\sigma_\pm,x_2) \cdot \big(\p_2 \sigma_\pm(t,x) \big)^2 \\\nonumber
&\qquad +2 \p_2\p_{\sigma_\pm} \phi_\pm(t,\sigma_\pm,x_2) \p_2 \sigma_\pm(t,x)
%+\p_{\sigma_\pm}\p_2 \phi_\pm(t,\sigma_\pm,x_2) \p_2 \sigma_\pm \\\nonumber
%&\qquad
+\p_2^2 \phi_\pm(t,\sigma_\pm,x_2)\Big).
\end{align}
On the other hand,
taking the derivative of \eqref{C11} in $x_2$, we obtain
\begin{align*}
\p_2^2 \phi_\pm(t,\sigma_\pm,x_2)
&= \int_0^t (\p_2^2\Lambda_\pm^1)(\tau,\phi_\pm(\tau,\sigma_\pm,x_2),x_2)  \\[-4mm]\nonumber\\
&\qquad+2\p_2\phi_\pm(\tau,\sigma_\pm,x_2) (\p_1\p_2\Lambda_\pm^1)(\tau,\phi_\pm(\tau,\sigma_\pm,x_2),x_2) \\[-4mm]\nonumber\\
&\qquad+\p_2^2\phi_\pm(\tau,\sigma_\pm,x_2)\cdot (\p_1\Lambda_\pm^1)(\tau,\phi_\pm(\tau,\sigma_\pm,x_2),x_2)  \\[-4mm]\nonumber\\
&\qquad+(\p_2\phi_\pm)^2(\tau,\sigma_\pm,x_2) \cdot (\p_1^2\Lambda_\pm^1)(\tau,\phi_\pm(\tau,\sigma_\pm,x_2),x_2)
 \d\tau.
\end{align*}
By \eqref{C4}, \eqref{C14},
the assumption $\sum_{|\alpha|\leq 2}\| \langle w^\pm \rangle^{2\mu} \nabla^\alpha \Lambda_\pm^1 \|_{L^\infty( \Omega_f )}\leq \epsilon'$ and Gronwall's inequality, we obtain
\begin{align} \label{C33}
&|\p_2^2 \phi_\pm(t,\sigma_\pm,x_2)| \\\nonumber
&\leq  \int_0^t \big| (\p_2^2\Lambda_\pm^1)(\tau,\phi_\pm(\tau,\sigma_\pm,x_2),x_2)  \\[-4mm]\nonumber\\\nonumber
&\qquad+2\p_2\phi_\pm(\tau,\sigma_\pm,x_2) (\p_1\p_2\Lambda_\pm^1)(\tau,\phi_\pm(\tau,\sigma_\pm,x_2),x_2) \\[-4mm]\nonumber\\\nonumber
%&\qquad+\p_2^2\phi_\pm(\tau,\sigma_\pm,x_2)\cdot (\p_1\Lambda_\pm^1)(\tau,\phi_\pm(\tau,\sigma_\pm,x_2),x_2)  \\[-4mm]\nonumber\\
&\qquad+(\p_2\phi_\pm)^2(\tau,\sigma_\pm,x_2) \cdot (\p_1^2\Lambda_\pm^1)(\tau,\phi_\pm(\tau,\sigma_\pm,x_2),x_2) \big|
 \d\tau\\\nonumber
&\quad \cdot \exp\big(  \int_0^t |(\p_1\Lambda_\pm^1)(\tau,\phi_\pm(\tau,\sigma_\pm,x_2),x_2)|  \d\tau \big)\\\nonumber
&\leq\big(M\epsilon'+4M\epsilon'\cdot M\epsilon'+ (2M\epsilon')^2\cdot M\epsilon'  \big)\cdot e^{M\epsilon'}
\leq 2M\epsilon'
\end{align}
for sufficiently small $\epsilon'$.
%Consequently, by \eqref{C24}, \eqref{C18}, \eqref{C29}, \eqref{C33}, \eqref{C32} reduces to
Consequently, by  \eqref{C18}, \eqref{C24}, \eqref{C29}, \eqref{C32} and \eqref{C33}, we obtain
\begin{align*}
|\p_2^2 w^\mp|=|\p_2^2 \sigma_\pm| \leq(1-2M\epsilon')^{-1}\big( 2M\epsilon' (3M\epsilon')^2
+2\cdot 3M\epsilon'\cdot 2M\epsilon'+2M\epsilon'  \big)
\leq 3M\epsilon'.
\end{align*}
This yields the last inequality of \eqref{C20}.
%This ends the proof of the lemma.
\end{proof}

Under the assumptions \eqref{AA4} %\eqref{AA5},
and \eqref{AA7}
with $\epsilon'$ sufficiently small, by Lemma \ref{lemC2} and Lemma \ref{lemC4},  we see
\eqref{AA10} holds by taking $C_1\geq 4M$.

%We also need introduce the weight function in the free surface.
%To be compatible with the weight inside the domain occupied by plasmas,
%we define the weight on the free surface as follows:
%\begin{align*}
%\wu^\pm=w^\pm(t,x)\big|_{x_2=f(t,x_1)}.
%\end{align*}
Next, we study the weight functions on the free surface:
\begin{align*}
\wu^\pm=w^\pm(t,x)\big|_{x_2=f(t,x_1)}.
\end{align*}
%We show the first and second derivative estimate of $\wu^\pm$.
\begin{lem}\label{lemC3}
%Let $Z_\pm=\Lambda_\pm \pm e_1$, $\mu> \f12$.
%Assume $\sum_{|\alpha|\leq 1}\| \langle w^\pm \rangle^{2\mu}
%\nabla^\alpha\Lambda_\pm^1 \|_{L^\infty( \Omega_f )}\leq \epsilon'$,
%$\| \p_t f \|_{L^\infty(\BR)}+\| \p_1 f \|_{L^\infty(\BR)} \lesssim 1$
Under the assumptions \eqref{AA4}, \eqref{AA7}
and \eqref{AA8}
with $\epsilon'$ sufficiently small,
%For the weight function $\wu^\pm$ defined on the free surface,
there exists an absolute constant $C$ such that
\begin{equation}\label{C36}
\begin{cases}
|\p_t \wu^\pm(t,x_1)\mp 1 |\leq C\epsilon',\\
|\p_1 \wu^\pm(t,x_1)-1 |\leq C\epsilon'.
\end{cases}
\end{equation}
\end{lem}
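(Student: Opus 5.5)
We differentiate the trace $\wu^\pm(t,x_1)=w^\pm(t,x_1,f(t,x_1))$ by the chain rule and then use the interior bounds of Lemma \ref{lemC2}. This gives
\begin{align*}
\p_t \wu^\pm=\ud{\p_t w^\pm}+\ud{\p_2 w^\pm}\,\p_t f,\qquad
\p_1 \wu^\pm=\ud{\p_1 w^\pm}+\ud{\p_2 w^\pm}\,\p_1 f .
\end{align*}
The bounds \eqref{C14} are pointwise in $(t,x)$, so they hold in particular at $x_2=f(t,x_1)$. Hence $|\ud{\p_t w^\pm}\mp1|\le(1+3M)\epsilon'$, $|\ud{\p_1w^\pm}-1|\le 3M\epsilon'$ and $|\ud{\p_2 w^\pm}|\le 3M\epsilon'$.

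For the correction terms we use \eqref{AA8}, which gives $\|\p_tf\|_{L^\infty}+\|\p_1f\|_{L^\infty}\le C_2$. Therefore $|\ud{\p_2 w^\pm}\,\p_t f|+|\ud{\p_2 w^\pm}\,\p_1 f|\le 3MC_2\epsilon'$. Combining,
\begin{align*}
|\p_t\wu^\pm\mp1|\le (1+3M+3MC_2)\epsilon',\qquad |\p_1\wu^\pm-1|\le (3M+3MC_2)\epsilon',
\end{align*}
which is \eqref{C36} with $C=1+3M(1+C_2)$.

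The only delicate point is that $C$ should not depend on $\epsilon'$. Here $M$ depends only on $\mu$, and $C_2$ is a fixed bound coming from the bootstrap. Alternatively, since $E_s\le 2C_0\epsilon\le1$, $C_2$ can be taken to be an absolute Sobolev constant times $\sqrt{E_s}$, so it is $O(1)$. So $C$ is absolute in the sense used throughout Section 3.

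A second point to check is that evaluating $w^\pm$ on $\Ga_f$ is legitimate. This holds because $w^\pm$ is defined on all of $\BR^2$ via the extension of $\La_\pm^1$ described in the remark after \eqref{C1}. The bounds of Lemma \ref{lemC2} were derived along characteristics for every fixed $x_2$, including $x_2=f(t,x_1)$. No further obstacle is expected; the argument is a direct chain-rule estimate.
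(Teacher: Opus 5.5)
Your proposal is correct and follows the paper's proof: the chain rule for $\p_t\wu^\pm$ and $\p_1\wu^\pm$, then the pointwise bounds of Lemma \ref{lemC2} evaluated at $x_2=f(t,x_1)$, with the correction terms $\ud{\p_2 w^\pm}\,\p_t f$ and $\ud{\p_2 w^\pm}\,\p_1 f$ controlled by \eqref{AA8}. Your explicit constant $C=1+3M(1+C_2)$ just makes precise what the paper leaves implicit.
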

\begin{proof}
By chain rules, there hold
\begin{align*}
&\p_t \wu^\pm(t,x_1)=\ud{\p_t w^\pm}+\ud{\p_2 w^\pm} \p_t f ,\\
&\p_1 \wu^\pm(t,x_1)=\ud{\p_1 w^\pm}+\ud{\p_2 w^\pm} \p_1 f .
\end{align*}
Hence the lemma follows from Lemma \ref{lemC2} and the assumption
%\it{a priori} estimate of
$\| \p_t f \|_{L^\infty(\BR)}+\| \p_1 f \|_{L^\infty(\BR)} \lesssim 1$.
\end{proof}

%Next, we present the second order derivative estimate of the weight functions
%on the free surface.
\begin{lem}\label{lemC5}
%Let $Z_\pm=\Lambda_\pm \pm e_1$, $\mu> \f12$. %, $s>\f52$.
%%Assume $\| \langle w^\pm \rangle^{2\mu} \Lambda_\pm \|_{H^s( \Omega_f )}\leq \epsilon'$ and %$\| \langle \wu^\pm \rangle^{2\mu} f \|_{H^{s-\f12}(\BR )}\leq \epsilon'$.
%Assume $\sum_{|\alpha|\leq 2}\| \langle w^\pm \rangle^{2\mu} \nabla^\alpha \Lambda_\pm^1 \|_{L^\infty( \Omega_f )}\leq \epsilon'$,
%$ \| \p_t f\|_{L^\infty(\BR )}+\|\p_1 f\|_{L^\infty(\BR )}+\|\p_1^2 f\|_{L^\infty(\BR )}\lesssim 1$
Under the assumptions \eqref{AA4}, \eqref{AA7}
and \eqref{AA8}
with $\epsilon'$ sufficiently small,
%For the weight functions $\wu^\pm$ defined on the free surface,
%there holds
there exists an absolute constant $C$ such that
\begin{align}\label{C38}
%&|\p_t \wu^\pm(t,z)\mp 1 |\leq C\epsilon'\leq \f14,\\
|\p_1^2 \wu^\pm(t,x_1)|\leq C\epsilon'.
\end{align}
\end{lem}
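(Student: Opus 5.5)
The estimate follows from the chain rule, exactly as in the proof of Lemma \ref{lemC3}, with one more derivative. Since $\wu^\pm(t,x_1)=w^\pm(t,x_1,f(t,x_1))$, we already have
\begin{align*}
\p_1 \wu^\pm=\ud{\p_1 w^\pm}+\ud{\p_2 w^\pm}\,\p_1 f .
\end{align*}
We differentiate once more in $x_1$. Each trace $\ud{g}$ satisfies $\p_1\ud{g}=\ud{\p_1 g}+\ud{\p_2 g}\,\p_1 f$, so
\begin{align*}
\p_1^2 \wu^\pm
=\ud{\p_1^2 w^\pm}+2\,\ud{\p_1\p_2 w^\pm}\,\p_1 f+\ud{\p_2^2 w^\pm}\,(\p_1 f)^2+\ud{\p_2 w^\pm}\,\p_1^2 f .
\end{align*}
It then suffices to bound each of the four terms pointwise by $C\epsilon'$.

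The first three terms contain second derivatives of $w^\pm$, so Lemma \ref{lemC4} applies. It gives $|\p_1^2 w^\pm|,|\p_2^2w^\pm|\le 3M\epsilon'$ and $|\p_1\p_2 w^\pm|\le 4M\epsilon'$ everywhere, hence in particular on $\Ga_f$. These are multiplied by $1$, $2\p_1 f$ and $(\p_1 f)^2$, which are bounded by $1+2C_2+C_2^2$ by \eqref{AA8}. The last term is controlled by $|\p_2 w^\pm|\le 3M\epsilon'$ from Lemma \ref{lemC2}, together with $\|\p_1^2 f\|_{L^\infty}\le C_2$ from \eqref{AA8}.

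Summing, we get $|\p_1^2\wu^\pm|\le M\epsilon'\big(3+8C_2+3C_2^2+3C_2\big)$. This is $\le C\epsilon'$ with $C$ depending only on $M$ (an absolute constant once $\mu$ is fixed) and on the bound $C_2$ in \eqref{AA8}, which by \eqref{AA6} may be taken to be $O(1)$.

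The only point requiring care is justifying the use of Lemma \ref{lemC4} on the boundary. The interior second-derivative bounds hold uniformly on $\Om_f$, or on the extended domain from the preceding remark, so taking the trace loses nothing. There is no real obstacle beyond this bookkeeping of the chain rule, because all genuinely dynamic estimates were already done in Lemmas \ref{lemC2} and \ref{lemC4}.
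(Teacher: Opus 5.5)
Your proposal is correct and follows the paper's own proof: the same chain-rule expansion of $\p_1^2\wu^\pm$ into four trace terms, bounded using Lemma \ref{lemC4} for the second derivatives of $w^\pm$, Lemma \ref{lemC2} for $\p_2 w^\pm$, and \eqref{AA8} for $\p_1 f$ and $\p_1^2 f$. Your explicit constant $M\epsilon'(3+11C_2+3C_2^2)$ makes concrete the paper's claim that $C$ depends only on $M$ and the $O(1)$ bound $C_2$.
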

\begin{proof}
By chain rules, there holds
\begin{align*}
&\p_1^2 \wu^\pm(t,x_1)=\ud{\p_1^2 w}^\pm+2\ud{\p_1\p_2 w}^\pm \p_1 f
+\ud{\p_2^2 w}^\pm (\p_1 f)^2+\ud{\p_2 w}^\pm \p_1^2 f.
\end{align*}
Hence the lemma follows from Lemma \ref{lemC2}, Lemma \ref{lemC4} and the %\textit{a priori}
assumption of $ \| \p_t f\|_{L^\infty(\BR )}+\|\p_1 f\|_{L^\infty(\BR )}+\|\p_1^2 f\|_{L^\infty(\BR )}\lesssim 1$.

\end{proof}

\section{Preliminary weighted estimates}
In this section, we establish
the weighted Sobolev inequalities, the weighted trace theorem, the weighted commutator estimate,
and the weighted chain rules.
These estimates are shown with the weights $\langle \wu^+ \rangle^{\mu}\langle \wu^- \rangle^{\nu}$ for any $\mu\in \bR,\, \nu\in \bR$.
Here we recall $\wu^\pm=w^\pm(t,x)\big|_{x_2=f(t,x_1)}$, where $w^\pm(t,x)$ are defined by \eqref{C1}.

Frequent use of fractional derivatives is required throughout this work. Various equivalent methods exist; we refer to \cite{Adams,Caf,Di} and the references therein.
Here, we use the inhomogeneous fractional
derivatives $\langle \p_1\rangle^{\mu}$ ($\mu\in\bR$), explained as singular integrals with Bessel potentials on the whole space $\bR$.
This definition coincides with that obtained via difference quotients or the Fourier multiplier approach, so no ambiguity arises.
Moreover, in contrast to the Riesz potentials used for homogeneous fractional derivatives, the Bessel potentials enjoy exponential decay at infinity.

For weights of the form $\langle v^+ \rangle^{\mu}\langle v^- \rangle^{\nu}$ with $v^\pm=x_1\pm t$ and arbitrary $\mu\in \bR,\, \nu\in \bR$, the weighted Sobolev inequalities, trace theorem, commutator estimates and chain rules have all been established in  \cite[Section 3]{CL-24}.
Similar properties hold for the weights $\langle \wu^+ \rangle^{\mu}\langle \wu^- \rangle^{\nu}$
as they satisfy the following conditions:
For the integral domain near the origin:
$\{x_1':|x_1-x_1'|\leq \f12\}$, we have $\langle \wu^\pm(x_1)\rangle
\sim \langle \wu^\pm(x_1')\rangle$.
For the integral domain away from the origin:
$\{x_1':|x_1-x_1'|\geq \f12\}$, there holds $\langle \wu^\pm(x_1)\rangle
\leq 3  |x_1-x_1'|\langle \wu^\pm(x_1')\rangle$.
%Now we explain the main ideas of the proof of these weighted estimates.
%For the weighted trace theorem and the weighted commutator estimate, we transform them into the commutator of the fractional derivatives and the weights $\langle w^+ \rangle^{\mu}\langle w^- \rangle^{\nu}$.
Specifically, these commutators mainly consist of a singular integral with weights, see, for instance, $\eqref{D14}$ and Lemma \ref{lemD0}:
$$\textrm{p.v.} \int_{\BR}  (1-\p_1^2)B_{2-s}(x_1-x_1')
\big(  \langle \wu^+(x_1)\rangle^{\mu}\langle \wu^-(x_1)\rangle^{\nu} -\langle \wu^+(x_1')\rangle^{\mu}\langle \wu^-(x_1')\rangle^{\nu}\big) h(x_1')\d x_1'.$$
%These commutators can be estimated. Actually,
For the integral domain near the origin
$\{x_1':|x_1-x_1'|\leq \f12\}$, since $\langle \wu^\pm(x_1)\rangle$ and
$\langle \wu^\pm(x_1')\rangle$ are equivalent,
we can freely move the weights $\langle \wu^\pm(x_1)\rangle$ and $\langle \wu^\pm(x_1')\rangle$ and keep the structure of the integral kernel near the origin.
For the integral domain away from the origin:
$\{x_1':|x_1-x_1'|\geq \f12\}$,  we see that $\langle \wu^+(x_1)\rangle^{\mu}\langle \wu^-(x_1)\rangle^{\nu}$ can be bounded by
$3^{|\mu|+|\nu|} |x_1-x_1'|^{|\mu|+|\nu|}\langle \wu^+(x_1')\rangle^{\mu}\langle \wu^-(x_1')\rangle^{\nu}$. The growth factor $|x_1-x_1'|^{|\mu|+|\nu|} $
will be absorbed into the exponential decay of the Bessel potentials. Thus the integral kernel is always summable.

We always assume \eqref{AA4}, \eqref{AA7}
and \eqref{AA8} with $\epsilon'$ sufficiently small.
Thus the pointwise estimate \eqref{AA4}, \eqref{C36}, \eqref{C38} for the weight functions hold.
%%Since the estimates are similar to those in \cite[Section 3]{CL-24}, we will sketch the proof of the lemmas in this section.
%As the estimates to be presented is this section
%closely resemble those detailed in \cite[Section 3]{CL-24}, we will provide only a concise outline of the proofs for the lemmas presented in this section.

%In the sequel, we only present some of the details, since t

\subsection{Weighted Sobolev inequalities}
In the sequel, we present several weighted Sobolev inequalities.
%in terms of the weight function $w^\pm$.
 They are the direct consequence of
the  Sobolev imbedding  %$H^{2}(\Omega_f) \hookrightarrow L^\infty(\Omega_f)$
and the properties of the weight functions $w^\pm$.
%Here we recall that $w^\pm$ are defined by \eqref{C1}.

%Hence in this section, we always assume
%$\sum_{|\alpha|\leq 2}\| \langle w^\pm \rangle^{2\mu} \nabla^\alpha \Lambda_\pm^1 \|_{L^\infty( \Omega_f )}\leq \epsilon$,
%$ \| \p_t f\|_{L^\infty(\BR )}+\|\p_1 f\|_{L^\infty(\BR )}+\|\p_1^2 f\|_{L^\infty(\BR )}\lesssim 1$ with $\epsilon$ sufficiently small.
\begin{lem}\label{Sobo1}
Let $g(t,\cdot )\in H^{2}(\Omega_f)$, $2<q<\infty$, $\mu\in \BR$, $\nu\in \BR$.
Under the assumptions \eqref{AA4} and \eqref{AA7}
with $\epsilon'$ sufficiently small,
there hold
\begin{align}
\label{SobWeit1}
\big\| \langle w^+ \rangle^{\mu} \langle w^- \rangle^{\nu} g (t,\cdot)\big\|_{L^\infty(\Omega_f)}
&\lesssim \sum_{\abs{\alpha}\leq 2}
\big\|\langle w^+ \rangle^{\mu}\langle w^- \rangle^{\nu} \nabla^\alpha g (t,\cdot)\big\|_{L^2(\Omega_f)},
\\
%&\Big\| \f{\langle w^\pm\rangle^{\lambda} g (t,\cdot)}
%{\langle w^\mp \rangle^{\mu}}\Big\|_{L^\infty(\Omega_f)}
%\lesssim \sum_{|\alpha|\leq 2}
%\Big\| \f{\langle w^\pm\rangle^{\lambda}
%\nabla^\alpha g(t,\cdot)}{\langle w^\mp \rangle^{\mu}}\Big\|_{L^2(\Omega_f)} \label{SobWeit2},\\
%\end{align}
%and
%\begin{align}
\label{SobWeit3}
\big\|\langle w^+ \rangle^{\mu}\langle w^- \rangle^{\nu} g (t,\cdot)\big\|_{L^q(\Omega_f)}
&\lesssim \sum_{\abs{\alpha}\leq 1}
\big\|\langle w^+ \rangle^{\mu}\langle w^- \rangle^{\nu} \nabla^\alpha g (t,\cdot)\big\|_{L^2(\Omega_f)}.
%\\
%&\Big\| \f{\langle w^\pm\rangle^{\lambda} g (t,\cdot)}
%{\langle w^\mp \rangle^{\mu}}\Big\|_{L^q(\Omega_f)}
%\lesssim \sum_{|\alpha|\leq 1}
%\Big\| \f{\langle w^\pm\rangle^{\lambda}
%\nabla^\alpha g(t,\cdot)}{\langle w^\mp \rangle^{\mu}}\Big\|_{L^2(\Omega_f)} \label{SobWeit4},
\end{align}
provided the right hand sides are finite.

\end{lem}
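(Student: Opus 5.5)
The plan is to apply the unweighted Sobolev embeddings to $W g$, with $W:=\langle w^+\rangle^{\mu}\langle w^-\rangle^{\nu}$, and then move the weight back onto the derivatives of $g$. The facts needed are that $\nabla W$ and $\nabla^2 W$ are pointwise bounded by $W$ itself, uniformly in $(t,x)$.

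\textbf{Step 1: derivatives of the weight.} By Lemma \ref{lemC2} and Lemma \ref{lemC4}, which hold under \eqref{AA4} and \eqref{AA7} with $\epsilon'$ small, $|\nabla w^\pm|\le 2$ and $|\nabla^2 w^\pm|\le 4M\epsilon'\le 1$. For any real $\lambda$,
\[
\nabla\langle w\rangle^{\lambda}=\lambda \langle w\rangle^{\lambda-2} w\,\nabla w ,
\]
so $|\nabla \langle w^\pm\rangle^{\lambda}|\lesssim \langle w^\pm\rangle^{\lambda-1}\le \langle w^\pm\rangle^{\lambda}$. Differentiating once more gives terms with $\nabla^2 w$ or $(\nabla w)^2$, and hence $|\nabla^2\langle w^\pm\rangle^{\lambda}|\lesssim \langle w^\pm\rangle^{\lambda}$. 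By the product rule,
\[
|\nabla W|+|\nabla^2 W|\lesssim W ,
\]
with constants depending only on $\mu,\nu$.

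\textbf{Step 2: the embedding on $\Omega_f$.} The domain $\Omega_f$ is a strip-like region $\{-1<x_2<f(t,x_1)\}$ with $\|f\|_{L^\infty}\le 1-\f12\delta$, so its height is between $\delta/2$ and $2$. The boundary slope needs only a uniform Lipschitz bound, which \eqref{AA8} supplies. Such a domain admits a bounded extension operator $H^k(\Omega_f)\to H^k(\BR^2)$ for $k=1,2$ with uniform constants. One way is to flatten via $(x_1,x_2)\mapsto (x_1,(x_2+1)/(f+1))$, which is bi-Lipschitz, and extend by reflection. For $k=2$ this requires $C^{1,1}$ control of $f$, also given by \eqref{AA8}. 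Alternatively, one can cover $\Omega_f$ by unit-size cells $[n,n+1]\times(-1,f)$, each uniformly Lipschitz, and use the local embeddings cell by cell. This yields
\[
\|G\|_{L^\infty(\Omega_f)}\lesssim \|G\|_{H^2(\Omega_f)},\qquad \|G\|_{L^q(\Omega_f)}\lesssim \|G\|_{H^1(\Omega_f)}\quad (2<q<\infty).
\]

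\textbf{Step 3: conclusion.} Apply Step 2 to $G=Wg$. By Step 1, $|\nabla^{k}(Wg)|\lesssim \sum_{j\le k} W|\nabla^j g|$ for $k\le 2$. Therefore
\[
\|Wg\|_{H^2}\lesssim \sum_{|\alpha|\le 2}\|W\nabla^\alpha g\|_{L^2},
\]
and similarly for $H^1$. This gives \eqref{SobWeit1} and \eqref{SobWeit3}.

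One technical point is that $Wg$ need not lie in $H^2$ a priori. It suffices to truncate with a cutoff $\chi(x_1/R)$, whose derivatives are bounded uniformly in $R$, and then let $R\to\infty$ by monotone convergence. This uses finiteness of the right-hand sides.

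The main obstacle is Step 2: making the embedding constants uniform in time despite the moving boundary. This is where the lower bound $\delta/2$ on the height and the slope bounds \eqref{AA8} enter.
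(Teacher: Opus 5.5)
Your proof is correct and is essentially the paper's argument: apply the unweighted embeddings $H^2(\Omega_f)\hookrightarrow L^\infty(\Omega_f)$ and $H^1(\Omega_f)\hookrightarrow L^q(\Omega_f)$ to $\langle w^+\rangle^{\mu}\langle w^-\rangle^{\nu}g$. Then use Lemmas \ref{lemC2} and \ref{lemC4} to bound the first and second derivatives of the weight by the weight itself. The points you add are left implicit in the paper, which assumes \eqref{AA8} throughout that section: the embedding constants are uniform in time, via the height bound from \eqref{AA6} and the slope and curvature bounds from \eqref{AA8}, and the cutoff argument shows that the weighted function lies in $H^2$. These are clarifications, not departures.
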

\begin{proof}
Firstly, by $H^2(\Omega_f)
\hookrightarrow L^\infty( \Omega_f )$, Lemma \ref{lemC2} and Lemma \ref{lemC4},
we have
\begin{align*}
\big\|\langle w^+ \rangle^{\mu} \langle w^- \rangle^{\nu} g (t,x)\big\|_{L^\infty(\Omega_f)}
&\lesssim \sum_{|\alpha|\leq 2}
\big\|\nabla^\alpha \big(\langle w^+ \rangle^{\mu}\langle w^- \rangle^{\nu} g (t,x)\big)\big\|_{ L^2(\Omega_f)} \\
&\lesssim \sum_{|\alpha|\leq 2}\big\|\langle w^+ \rangle^{\mu}\langle w^- \rangle^{\nu} \nabla^\alpha g (t,x)\big\|_{L^2(\Omega_f)},
\end{align*}
which yields \eqref{SobWeit1}.

The proof of \eqref{SobWeit3} is similar. By Sobolev imbedding
$H^{1}(\Omega_f) \hookrightarrow L^q(\Omega_f)$ with $2<q<\infty$, Lemma \ref{lemC2} and Lemma \ref{lemC4},
one has
\begin{align*}
\big\|\langle w^+ \rangle^{\mu}\langle w^- \rangle^{\nu} g (t,x)
\big\|_{L^q (\Omega_f)}
&\lesssim \sum_{|\alpha|\leq 1}
\big\|\nabla^\alpha \big( \langle w^+\rangle^{\mu}\langle w^- \rangle^{\nu}g (t,x) \big)\big\|_{L^2(\Omega_f)}  \\
&\lesssim \sum_{|\alpha|\leq 1}
\big\| \langle w^+ \rangle^{\mu}\langle w^- \rangle^{\nu}
\nabla^\alpha g(t,x) \big\|_{L^2(\Omega_f)}.
\end{align*}
%The proof of \eqref{SobWeit3}, \eqref{SobWeit4} is similar to
%\eqref{SobWeit1}, \eqref{SobWeit2} by Sobolev imbedding
%$H^{1}(\Omega_f) \hookrightarrow L^q(\Omega_f)$ for $2<q<\infty$
%and Lemma \ref{lemC3}.
%This ends the proof of the lemma.
\end{proof}

%In the following, we state two simple weighted imbedding inequalities
%in terms of the weight function $w^\pm$. They are just a consequence of
%the standard Sobolev imbedding theorem $H^{2}(\Omega_f) \hookrightarrow L^\infty(\Omega_f)$
%and the properties of the weight function.
Similar weighted Sobolev inequalities also hold on the free surface.
Here we directly state the following weighted Sobolev inequalities on the real line.
The details of the proof are omitted since they are similar to Lemma \ref{Sobo1}.
%The proof is similar to the above lemma, hence we directly list the lemma without proof.
\begin{lem}\label{Sobo2}
Let $h(t,\cdot )\in H^{1}(\BR )$, $2<q\leq \infty$, $\mu\in \BR$, $\nu\in \BR$.
Under the assumptions \eqref{AA4}, \eqref{AA7}
and \eqref{AA8}
with $\epsilon'$ sufficiently small,
there holds
\begin{align*}
%&\norm{\langle \ud{w}^\pm  \rangle^{\lambda} h (t,\cdot)}_{L^q(\BR )}
%\lesssim \sum_{\abs{a}\leq 1}\norm{\langle \ud{w}^\pm  \rangle^{\lambda} \nabla^a h(t,\cdot)}_{L^2(\BR )},\\
&\big\|\langle \ud{w}^+ \rangle^{\mu}\langle \ud{w}^-  \rangle^{\nu}
 h (t,\cdot) \big\|_{L^q(\BR )}
\lesssim \sum_{|a|\leq 1}
\big\|\langle \ud{w}^+\rangle^{\mu}\langle \ud{w}^-  \rangle^{\nu}
\nabla^a h(t,\cdot)\big\|_{L^2(\BR )} ,
\end{align*}
provided the right hand side is finite.
\end{lem}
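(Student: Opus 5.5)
The plan mirrors the proof of Lemma \ref{Sobo1}: move the weight inside the derivative, apply the unweighted one-dimensional Sobolev embedding, and then move the weight back out using the pointwise bounds on $\wu^\pm$. Set $W(t,x_1)=\langle \wu^+\rangle^{\mu}\langle \wu^-\rangle^{\nu}$ and apply $H^1(\BR)\hookrightarrow L^q(\BR)$, valid for every $2< q\leq\infty$ in one dimension, to the product $Wh$:
\begin{align*}
\|W h\|_{L^q(\BR)}\lesssim \|Wh\|_{L^2(\BR)}+\|\p_1(Wh)\|_{L^2(\BR)}.
\end{align*}
The first term is already of the required form. The second term splits by the product rule as $\p_1(Wh)=W\p_1 h+(\p_1 W)h$.

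The key step is the pointwise estimate $|\p_1 W|\lesssim W$. Direct differentiation gives
\begin{align*}
\p_1 W=\Big(\mu\frac{\wu^+\p_1\wu^+}{\langle \wu^+\rangle^2}+\nu\frac{\wu^-\p_1\wu^-}{\langle \wu^-\rangle^2}\Big)W .
\end{align*}
By Lemma \ref{lemC3} we have $|\p_1\wu^\pm-1|\leq C\epsilon'$, so $|\p_1\wu^\pm|\leq 2$ once $\epsilon'$ is small. Using also $|\wu^\pm|/\langle\wu^\pm\rangle^2\leq 1$, this gives $|\p_1W|\leq 2(|\mu|+|\nu|)W$. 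Hence $\|(\p_1W)h\|_{L^2}\lesssim\|Wh\|_{L^2}$, and combining the pieces yields the stated inequality. The implicit constant depends on $\mu,\nu,q$ but not on $t$.

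Lemma \ref{lemC3} is available here because it holds under the assumptions \eqref{AA4}, \eqref{AA7} and \eqref{AA8}. Those are exactly the hypotheses of the present lemma; \eqref{AA8} is needed because the chain rule for $\wu^\pm=w^\pm|_{x_2=f}$ produces terms involving $\p_1 f$.

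The only delicate point is justifying that $Wh\in H^1(\BR)$, so that the embedding may be applied. I will handle this by a standard truncation argument. Multiply by a cutoff $\chi(x_1/R)$ and apply the estimate to the compactly supported function $\chi(x_1/R)Wh$. The cutoff's derivative contributes terms bounded by $R^{-1}\|Wh\|_{L^2}$, which vanish as $R\to\infty$, and Fatou's lemma then passes the estimate to the limit. This step requires only that the right-hand side is finite, which is exactly the assumption in the statement. No step is expected to be a genuine obstacle.
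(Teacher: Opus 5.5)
Your proof is correct and is the argument the paper intends. The paper omits it as being similar to Lemma~\ref{Sobo1}: apply the unweighted embedding $H^1(\BR)\hookrightarrow L^q(\BR)$ to the weighted function, then use the weight bounds (here Lemma~\ref{lemC3}, which is why \eqref{AA8} is assumed) to get $|\p_1 W|\lesssim W$.

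Your truncation step is harmless but unnecessary. The product rule already gives $\p_1(Wh)=W\p_1 h+(\p_1 W)h\in L^2(\BR)$ whenever the right-hand side is finite.
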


\subsection{Weighted trace theorem}
For $0<s<1$, we write
\begin{align}\label{D1}
&\langle \p_{x_1}\rangle^{s} h(x_1)
%=(1-\p_{x_1}^2)^{\f s2} h(x_1)
%= (1-\p_{x_1}^2)^{\f s2 -1} (1-\p_{x_1}^2)h(x_1)\\\nonumber
%&= \int_{\BR} B_{2-s}(x_1-x_1') (1-\p_{x_1'}^2)h(x_1') \dx_1' \\\nonumber
=  \int_{\BR} B_{2-s}(x_1-x_1') h(x_1') \dx_1' \\\nonumber
&\qquad\quad + \text{p.v.} \int_{\BR} \p_{x_1}^2B_{2-s}(x_1-x_1') (h(x_1)-h(x_1')) \dx_1'.
\end{align}
Here p.v. denotes the principal value integral.
 $B_s(x_1)$ are Bessel potentials smooth on $\BR \backslash \{0\}$:
\begin{align*}
B_s(x_1)=\mathcal{F}^{-1}\big( (1+|\xi|^2)^{-\f{s}{2}} \big) (x_1),
\end{align*}
where $\mathcal{F}^{-1}$ stands for the inverse Fourier transform.
%\end{lem}
For the Bessel potentials, we recall the following asymptotic behavior near $0$ and at infinity.
\begin{lem}\label{lemBe}
If $0<s<1$, there hold
%\begin{align}\label{D2}
%&B_{2-s}(x_1) \leq C e^{-\f{|x_1|}{2}},\
%\p_{x_1} B_{2-s}(x_1) \leq C e^{-\f{|x_1|}{2}},\
%\p_{x_1}^2B_{2-s}(x_1) \leq C e^{-\f{|x_1|}{2}},\
%\text{when}\  |x_1|\geq \f12,\\\nonumber
%&B_{2-s}(x_1) \sim 1,\
%\p_{x_1}B_{2-s}(x_1) \sim x_1|x_1|^{-s-1},\
%\p_{x_1}^2 B_{2-s}(x_1) \sim |x_1|^{-s-1},\
% \text{when}\  |x_1|\leq \f12.
%\end{align}
\begin{align*}
&\begin{cases}
B_{2-s}(x_1) \leq C e^{-\f{|x_1|}{2}},\\
\p_{x_1} B_{2-s}(x_1) \leq C e^{-\f{|x_1|}{2}},\\
\p_{x_1}^2B_{2-s}(x_1) \leq C e^{-\f{|x_1|}{2}},\\
\end{cases}
\text{when}\  |x_1|\geq \f12
\end{align*}
and
\begin{align*}
\begin{cases}
B_{2-s}(x_1) \sim 1,\\
\p_{x_1}   B_{2-s}(x_1) \sim  x_1|x_1|^{-s-1},\\
\p_{x_1}^2 B_{2-s}(x_1) \sim |x_1|^{-s-1},\\
\p_{x_1}^3 B_{2-s}(x_1) \sim  x_1|x_1|^{-s-3},\\
\end{cases}
 \text{when}\  |x_1|\leq \f12.
\end{align*}
\end{lem}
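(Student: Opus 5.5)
We would work directly from the Fourier-side definition $B_{2-s}=\mathcal{F}^{-1}\big((1+|\xi|^2)^{-\f{2-s}{2}}\big)$, writing $\beta=2-s\in(1,2)$. The starting point is the classical subordination representation
\begin{align*}
B_\beta(x_1)=\f{1}{(4\pi)^{1/2}\Gamma(\beta/2)}\int_0^\infty e^{-\pi x_1^2/\tau}\,e^{-\tau/(4\pi)}\,\tau^{\f{\beta-1}{2}}\,\f{\d\tau}{\tau},
\end{align*}
obtained from $(1+|\xi|^2)^{-\beta/2}=\Gamma(\beta/2)^{-1}\int_0^\infty e^{-t(1+|\xi|^2)}t^{\beta/2-1}\d t$ and Fourier inversion of the Gaussian. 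This identifies $B_\beta$, up to a positive constant, with $|x_1|^{\f{\beta-1}{2}}K_{\f{\beta-1}{2}}(|x_1|)$, where $K_\nu$ is the modified Bessel function of the second kind. It shows that $B_\beta$ is positive, even, and smooth away from $0$, and the claimed asymptotics then reduce to the known behaviour of $K_\nu$.

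\textbf{Decay for $|x_1|\geq \f12$.} We split the $\tau$-integral at $\tau=|x_1|$. For $\tau\geq|x_1|$ the factor $e^{-\tau/(4\pi)}$ gives exponential decay. For $\tau\leq|x_1|$ the factor $e^{-\pi x_1^2/\tau}\leq e^{-\pi|x_1|}$ does. Together these give $B_\beta(x_1)\lesssim e^{-c|x_1|}$. Derivatives in $x_1$ fall on the Gaussian and produce factors $x_1/\tau$ and $1/\tau$. These are polynomially bounded in $|x_1|$ and $\tau^{-1}$, and on the region $\tau\leq |x_1|$ they are absorbed by $e^{-\pi x_1^2/(2\tau)}$. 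So every derivative is also $\lesssim e^{-c|x_1|}$. To get the specific rate $e^{-|x_1|/2}$, we would instead use $K_\nu(r)\sim \sqrt{\pi/(2r)}\,e^{-r}$ together with the recurrence $\f{\d}{\d r}\big(r^\nu K_\nu(r)\big)=-r^\nu K_{\nu-1}(r)$. Each derivative is then of the form polynomial $\times\, e^{-|x_1|}$, which is $\le Ce^{-|x_1|/2}$.

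\textbf{Behaviour near $0$.} Since $\beta>1$, the symbol is integrable, so $B_\beta$ is continuous with $B_\beta(0)=\int(1+|\xi|^2)^{-\beta/2}\d\xi/(2\pi)>0$. This gives $B_{2-s}\sim1$. For the derivatives we write $B_\beta=B_\beta(0)+R(x_1)$ and analyse the singular part. Using the expansion $r^\nu K_\nu(r)=c_1-c_2r^{2\nu}+O(r^2)$ with $\nu=\f{\beta-1}{2}=\f{1-s}{2}\in(0,\f12)$, we have $2\nu=1-s$, so
\begin{align*}
B_{2-s}(x_1)=a-b|x_1|^{1-s}+O(x_1^2),\qquad b>0 .
\end{align*}
The $O(x_1^2)$ term is itself a smooth series in $x_1^2$ plus $|x_1|^{1-s}x_1^{2k}$ terms, and the expansion may be differentiated termwise. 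This gives
\begin{align*}
\p_{x_1} B\sim x_1|x_1|^{-s-1},\qquad \p^2_{x_1} B\sim |x_1|^{-s-1},\qquad \p^3_{x_1} B\sim x_1|x_1|^{-s-3}
\end{align*}
for $|x_1|\leq\f12$, with the leading singular term dominating the smooth remainder since $-s-1<0$.

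The main care point is justifying this small-$r$ expansion and its termwise differentiability. We would do this with $K_\nu=\f{\pi}{2}\f{I_{-\nu}-I_\nu}{\sin\nu\pi}$ and the convergent power series of $I_{\pm\nu}$. Then $r^\nu K_\nu$ is an explicit combination of a power series in $r^2$ and $r^{2\nu}$ times a power series in $r^2$, and the coefficient $b$ is nonzero because $\sin\nu\pi\neq0$. With this, the signs and constants in the $\sim$ relations follow.
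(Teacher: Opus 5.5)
Your route is the standard one: write $B_{2-s}(x_1)=c\,g(|x_1|)$ with $g(r)=r^\nu K_\nu(r)$, $\nu=\frac{1-s}{2}\in(0,\frac12)$, $c>0$, and read everything off from $K_\nu$. This is what underlies the references the paper cites in place of a proof, and your treatment of $|x_1|\geq\frac12$ and of $B_{2-s}\sim1$ is fine. The constant in your subordination formula should be $(4\pi)^{-\beta/2}$ rather than $(4\pi)^{-1/2}$, which is harmless.

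\textbf{The gap is in the derivative bounds near the origin.} Your expansion $g(r)=P(r^2)-r^{2\nu}Q(r^2)$ shows that $g^{(k)}(r)$, $k=1,2,3$, equals its leading singular term times $1+o(1)$ as $r\to0$. That gives the two-sided bounds only on some interval $|x_1|\leq r_0$, with $r_0$ not quantified. The lemma asserts them on all of $|x_1|\leq\frac12$. On $r_0\leq|x_1|\leq\frac12$ the upper bounds follow from smoothness. The lower bounds, however, require that $\partial_{x_1}^kB_{2-s}$ does not vanish there, and ``$-s-1<0$'' says nothing about that. Note also that $\sim$ has to be read in absolute value, since $\partial_{x_1}B_{2-s}$ and $\partial_{x_1}^3B_{2-s}$ have the sign of $-x_1$.

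\textbf{How to fill it.} The missing step is easy, and it actually makes the power series unnecessary. Use
\begin{itemize}
\item $\frac{d}{dr}(r^\nu K_\nu)=-r^\nu K_{\nu-1}$ and $\frac{d}{dr}(r^{-\mu}K_\mu)=-r^{-\mu}K_{\mu+1}$,
\item $K_{-\mu}=K_\mu$ and $K_{\mu+1}=K_{\mu-1}+\frac{2\mu}{r}K_\mu$.
\end{itemize}
With $1-2\nu=s$ these give
\[
g'=-r^\nu K_{1-\nu},\qquad g''=r^\nu K_\nu+s\,r^{\nu-1}K_{1-\nu},\qquad g'''=-r^\nu K_{1-\nu}-s\,r^{\nu-1}K_{2-\nu}.
\]
Since $K_\mu>0$, each derivative is a sum of terms of a single sign, so none of them vanishes on $(0,\infty)$.

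For $\mu>0$, the function $r^\mu K_\mu(r)$ is continuous and positive on $[0,\frac12]$, with value $2^{\mu-1}\Gamma(\mu)$ at $r=0$. Hence, uniformly on $0<r\leq\frac12$,
\[
r^\nu K_\nu\sim1,\qquad r^\nu K_{1-\nu}\sim r^{-s},\qquad r^{\nu-1}K_{1-\nu}\sim r^{-1-s},\qquad r^{\nu-1}K_{2-\nu}\sim r^{-2-s}.
\]
Keeping the dominant term in each formula gives $|g'|\sim r^{-s}$, $g''\sim r^{-1-s}$ and $|g'''|\sim r^{-2-s}$ on the whole interval $(0,\frac12]$, which is exactly the claim. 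Combined with $K_\mu(r)\leq C_\mu r^{-1/2}e^{-r}$ for $r\geq\frac12$, the same three formulas also give the exponential bounds for $|x_1|\geq\frac12$ directly.
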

\begin{proof}
We refer to Grafakos's book in Chapter 6.1.2 \cite{G_book}, Stein's book in Chapter V \cite{Stein} or Aronszajn-Smith \cite{AS} in Section 4 Chapter II
for the proof of the properties of the Bessel potentials.
\end{proof}
Now we study the commutator of the fractional derivative and the weight functions.
By using \eqref{D1}, we derive that
\begin{align}\label{D14}
&[\langle \p_{x_1}\rangle^{s}, w(x_1)] h(x_1)\\ \nonumber
&=-\text{p.v.} \int_{\BR}  (1-\p_{x_1}^2)B_{2-s}(x_1-x_1')
\big(  w(x_1)-w(x_1')\big) h(x_1')\dx_1',
\end{align}
where we have used the following commutator notation:
\begin{align*}
[\langle\nabla\rangle^{s},g]h
=\langle\nabla\rangle^{s}(g h)
-g\langle\nabla\rangle^{s}h.
\end{align*}
%Here \eqref{D14} holds for any functions $w(x_1)$ and $h(x_1)$ such that the singular integral make sense.

Before the weighted trace theorem and the weighted commutator estimate, we first estimate the second line of \eqref{D14} for appropriate weight functions.
%This technical lemma plays a central role
%in proving weighted trace theorem and the weighted commutator estimate.
\begin{lem}\label{lemD0}
%Assume that the function $w(x_1)$ satisfies the following properties:
%\begin{align*}
%& \langle w(x_1) \rangle \sim \langle w(x_1') \rangle
%\quad \textrm{for all}\quad |x_1-x_1'|\leq \f12,\\
%& \langle w(x_1) \rangle \leq 3|x_1-x_1'|\langle w(x_1') \rangle
%\quad \textrm{for all}\quad |x_1-x_1'|\geq \f12,\\
%&|\p_{x_1} w(x_1)| \lesssim 1 \quad \textrm{for all}\quad z\in \BR.
%\end{align*}
Let $0<s<1$, $1\leq q\leq +\infty$, $\mu\in\BR,\ \nu\in\BR$.
Under the assumptions \eqref{AA4}, \eqref{AA7} and \eqref{AA8}
with $\epsilon'$ sufficiently small,
there holds
\begin{align*} %\label{D15}
%&\big\| \textrm{p.v.} \int_{\BR}  (1-\p_{x_1}^2)B_{2-s}(x_1-x_1')
%\big(  \langle \wu^+(x_1) \rangle^{2\mu}-\langle \wu^+(x_1') \rangle^{2\mu}\big) h(x_1')dx_1' \big\|_{L^q(\BR)}\\
%&\lesssim \| \langle \wu^+ \rangle^{2\mu} h \|_{L^q(\BR )},\\[-4mm]\\
&\big\| \textrm{p.v.} \int_{\BR}  (1-\p_{x_1}^2)B_{2-s}(x_1-x_1')
\big(  \langle \wu^+(x_1) \rangle^{\mu}\langle \wu^-(x_1) \rangle^{\nu} \\
&\qquad\quad-\langle \wu^+(x_1') \rangle^{\mu}\langle \wu^-(x_1') \rangle^{\nu}\big) h(x_1')\dx_1' \big\|_{L^q(\BR)}\\[-4mm]\\
&\lesssim \| \langle \wu^+ \rangle^{\mu} \langle \wu^- \rangle^{\nu} h \|_{L^q(\BR )},
\end{align*}
provided the right hand side is finite.
\end{lem}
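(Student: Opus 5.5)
Write $W(x_1)=\langle \wu^+(x_1)\rangle^{\mu}\langle \wu^-(x_1)\rangle^{\nu}$ and $K=(1-\p_{x_1}^2)B_{2-s}$. The plan is to split the integral into the near region $\{|x_1-x_1'|\le \f12\}$ and the far region $\{|x_1-x_1'|\ge \f12\}$. On each piece we bound the kernel times the weight difference by an integrable kernel times $W(x_1')$. Young's inequality (Schur's test) then gives the $L^q$ bound for $Wh$, uniformly in $1\le q\le\infty$.

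\textbf{Near region.} By Lemma \ref{lemC3}, $|\p_1\wu^\pm-1|\le C\epsilon'$, so $\wu^\pm$ is Lipschitz with constant at most $2$. Hence $\langle \wu^\pm(x_1)\rangle\sim\langle\wu^\pm(x_1')\rangle$ when $|x_1-x_1'|\le\f12$. We apply the mean value theorem to $W$. Its derivative satisfies
\[
|\p_1 W|\lesssim (|\mu|+|\nu|)\,W,
\]
since $|\p_1\langle \wu^\pm\rangle|\le|\p_1\wu^\pm|\lesssim 1$. This gives
\[
|W(x_1)-W(x_1')|\lesssim |x_1-x_1'|\,W(x_1').
\]
By Lemma \ref{lemBe}, $|K(y)|\lesssim 1+|y|^{-s-1}$ for $|y|\le\f12$. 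The product is therefore bounded by $(|y|+|y|^{-s})\,W(x_1')$, which is integrable near $0$ because $s<1$. The principal value is thus not needed, since the cancellation from the weight difference already kills the singularity. Young's inequality yields the bound $\|Wh\|_{L^q}$.

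\textbf{Far region.} Here we bound $|W(x_1)-W(x_1')|\le W(x_1)+W(x_1')$. Again by the Lipschitz bound on $\wu^\pm$, $\langle \wu^\pm(x_1)\rangle\le 3|x_1-x_1'|\,\langle\wu^\pm(x_1')\rangle$ and conversely, which is the property recalled just before \eqref{D14}. For either sign of the exponents this gives
\[
W(x_1)\le 3^{|\mu|+|\nu|}\,|x_1-x_1'|^{|\mu|+|\nu|}\,W(x_1').
\]
By Lemma \ref{lemBe}, $|K(y)|\lesssim e^{-|y|/2}$ for $|y|\ge\f12$. The resulting kernel $e^{-|y|/2}(1+|y|)^{|\mu|+|\nu|}$ lies in $L^1$, and Young's inequality again gives the bound $\|Wh\|_{L^q}$.

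The main point to check is the two-sided comparison $\langle a\rangle\le C\langle b\rangle\langle a-b\rangle$ applied to $\wu^\pm$. This needs $|\wu^\pm(x_1)-\wu^\pm(x_1')|\le 2|x_1-x_1'|$, which follows from \eqref{C36} under the bootstrap assumptions. Negative exponents are handled by swapping the roles of $x_1$ and $x_1'$ in that comparison. The near-region singular estimate is the other delicate point, and it requires only the first-derivative bound on $W$.
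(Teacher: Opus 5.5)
Your proposal is correct and follows the paper's proof essentially step by step. It uses the same splitting at $|x_1-x_1'|=\frac12$. Near the origin, a mean value bound $|W(x_1)-W(x_1')|\lesssim |x_1-x_1'|\,W(x_1')$ is paired with the integrable kernel $|y\,(1-\partial_{x_1}^2)B_{2-s}(y)|$. Away from the origin, the polynomial comparison $\langle \wu^\pm(x_1)\rangle\lesssim |x_1-x_1'|\,\langle \wu^\pm(x_1')\rangle$ is absorbed by the exponential decay of the Bessel kernel, and Young's inequality closes both pieces. The only cosmetic difference is that you apply the mean value theorem to the product weight $W$ directly rather than to each factor $\langle \wu^\pm\rangle$ separately. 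Also, a Lipschitz constant $2$ gives the factor $4$ rather than $3$ in the far-region comparison, which is immaterial.
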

%(The lemma also holds if $L^2$ norm is replaced by $L^q$ norm for $1\leq q\leq +\infty$.)
\begin{proof}
%For the first inequality,
To estimate the $L^q$ norm of
\begin{align*}
& \textrm{p.v.} \int_{\BR}  (1-\p_{x_1}^2)B_{2-s}(x_1-x_1')
\big(  \langle \wu^+ (x_1) \rangle^{\mu} \langle \wu^- (x_1) \rangle^{\nu}\\\nonumber
&\qquad\qquad -\langle \wu^+(x_1') \rangle^{\mu} \langle \wu^-(x_1') \rangle^{\nu}\big) h(x_1')\dx_1'.
\end{align*}
We divide the integral domain $\BR$ into two domains
$\{x_1':|x_1-x_1'|\leq \f12\}$ and $\{x_1':|x_1-x_1'|\geq \f12\}$.

Firstly, if $|x_1-x_1'|\leq \f12$, we compute
\begin{align*}%\label{D6}
&\langle \wu^\pm(x_1) \rangle-\langle \wu^\pm(x_1')\rangle\\[-4mm]\nonumber\\\nonumber
&=(x_1-x_1')\cdot
\big( \langle \wu^\pm \rangle^{-1} \wu^\pm \p_{x_1} \wu^\pm \big)( \beta x_1+(1-\beta)x_1'),\ 0<\beta<1.
\end{align*}
By  the  properties of the weight functions in Lemma \ref{lemC3}, one has
\begin{align}\label{D15}
&\langle \wu^\pm(x_1)\rangle
\sim \langle \wu^\pm(x_1')\rangle
\sim \langle \wu^\pm(\beta x_1+(1-\beta)x_1')\rangle,\ 0<\beta<1.
\end{align}
Consequently, by \eqref{D15} and Lemma \ref{lemC3}, we obtain
\begin{align*}
&|\langle \wu^+(x_1) \rangle^{\mu} \langle \wu^-(x_1) \rangle^{\nu}
-\langle \wu^+(x_1')\rangle^{\mu}\langle \wu^-(x_1') \rangle^{\nu}| \\[-4mm]\nonumber\\\nonumber
&\lesssim |x_1-x_1'|\cdot
\big(  \langle \wu^+\rangle^{\mu-1}\langle \wu^-\rangle^{\nu}
+\langle \wu^+\rangle^{\mu}\langle \wu^-\rangle^{\nu-1}  \big)(x_1') .
% \\[-4mm]\nonumber\\\nonumber
%&\lesssim |x_1-x_1'|\cdot
% \langle \wu^+(x_1')\rangle^{\mu}
%\langle \wu^-(x_1')\rangle^{\nu}.
\end{align*}
Thus we deduce that
\begin{align*}
& \ \text{p.v.} \int_{|x_1-x_1'|\leq \f12} (1-\p_{x_1}^2) B_{2-s}(x_1-x_1')
\big[\langle \wu^+(x_1) \rangle^{\mu} \langle \wu^-(x_1) \rangle^{\nu}\\[-5mm]\\
&\qquad\qquad-\langle \wu^+(x_1')\rangle^{\mu}\langle \wu^-(x_1')\rangle^{\nu}\big] h(x_1') \dx_1'\\[-4mm]\\
&\lesssim \int_{|x_1-x_1'|\leq \f12} | (x_1-x_1')(1-\p_{x_1}^2)B_{2-s}(x_1-x_1')|\cdot
 \langle \wu^+(x_1')\rangle^{\mu}\langle \wu^-(x_1')\rangle^{\nu}|h(x_1')| \dx_1' .
\end{align*}
Taking the $L^q$ norm of the above expression,
by the Young inequality, it can be further bounded by
\begin{align*}
&\int_{|x_1'|\leq \f12}  |x_1'(1-\p_{x_1}^2) B_{2-s}(x_1')| \dx_1'
\cdot \| \langle \wu^+\rangle^{\mu}\langle \wu^-\rangle^{\nu}
h \|_{L^q(\BR )}\\
&\lesssim \| \langle \wu^+\rangle^{\mu}\langle \wu^-\rangle^{\nu}
h \|_{L^q(\BR )}.
\end{align*}

Next,  simple calculation gives
\begin{align} \label{D18}
\langle \wu^\pm(x_1)\rangle
&= \langle \wu^\pm(x_1')\rangle +\int_{x_1'}^{x_1} \p_{\tilde{x}_1}\langle \wu^\pm(\tilde{x}_1)\rangle \d\tilde{x}_1 \\\nonumber
&= \langle \wu^\pm(x_1')\rangle +\int_{x_1'}^{x_1} \f{\wu^\pm(\tilde{x}_1)\p_{\tilde{x}_1} \wu^\pm(\tilde{x}_1)}
 {\langle \wu^\pm(\tilde{x}_1)\rangle} \d\tilde{x}_1.
\end{align}
Thus we deduce from Lemma \ref{lemC3} and \eqref{D18} that,  if $|x_1-x_1'|\geq \f12$,
there holds
\begin{align} \label{D19}
\langle \wu^\pm(x_1)\rangle
\leq 3 |x_1-x_1'|\langle \wu^\pm(x_1')\rangle.
\end{align}
Then
\begin{align}\label{D20}
&\text{p.v.}\int_{|x_1-x_1'|\geq \f12}  (1-\p_{x_1}^2)B_{2-s}(x_1-x_1')
\big(  \langle \wu^+(x_1) \rangle^{\mu}\langle \wu^-(x_1) \rangle^{\nu} \\\nonumber
&\qquad\qquad -\langle \wu^+(x_1') \rangle^{\mu}\langle \wu^-(x_1') \rangle^{\nu} \big) h(x_1')\dx_1' \\\nonumber
&\lesssim \int_{|x_1-x_1'|\geq \f12}  |(1-\p_{x_1}^2)B_{2-s}(x_1-x_1')|\\\nonumber
&\qquad\cdot |x_1-x_1'|^{|\mu|+|\nu|} \langle \wu^+(x_1') \rangle^{\mu}
\langle \wu^-(x_1') \rangle^{\nu} |h(x_1')| \dx_1' .
\end{align}
Consequently, by the Young inequality and the properties of Bessel potentials of Lemma \ref{lemBe}, the $L^q$ norm of \eqref{D20} is further bounded by
\begin{align*}
& \int_{|x_1'|\geq \f12} |x_1'|^{|\mu|+|\nu|}\cdot |(1-\p_{x_1}^2)B_{2-s}(x_1')| \dx_1'
\cdot\|  \langle \wu^+ \rangle^{\mu} \langle \wu^- \rangle^{\nu} h \|_{L^q(\BR )} \\
&\lesssim  \| \langle \wu^+ \rangle^{\mu} \langle \wu^- \rangle^{\nu} h\|_{L^q(\BR )}.
\end{align*}
This finishes the proof of the lemma.
\end{proof}

Now we study the weighted trace theorem.
Let  $g=g(x)$  be a function defined in $\Om_f$.
Let $\ud{g}(x_1)=g(x_1,f(x_1))$ be the trace of $g(x)$ on the boundary $\Ga_f$. We start by recalling the  classical trace theorem in the two dimensional setting.
\begin{lem}[Classical trace theorem in the 2D case]
\label{lemD1}
Let $s> \f12$. There holds
\begin{align*}
&\|  \langle \p_{x_1} \rangle^{s-\f12} \ud{g} \|_{L^2(\BR )}
\lesssim  \| \langle \nabla \rangle^{s} g\|_{L^2(\Omega_f)}.
\end{align*}
Moreover, there holds
\begin{align*}
&\|  \ud{g} \|_{L^2(\BR )}
\lesssim  \|   g\|_{H^1(\Omega_f)}.
\end{align*}
\end{lem}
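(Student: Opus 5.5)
The plan is to reduce to the flat half-space trace theorem by straightening the boundary. Under the standing assumptions \eqref{AA6}, \eqref{AA8}, $f$ is Lipschitz, $\|\p_1 f\|_{L^\infty}\lesssim 1$, and $\|f\|_{L^\infty}\le 1-\f12\delta$. Higher regularity of $f$ is available from $E_s$ when $s$ is large.

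First I introduce the flattening map $\Phi(x_1,y_2)=(x_1,y_2+f(x_1))$, which sends the lower half-plane $\{y_2<0\}$ locally onto a neighbourhood of $\Ga_f$ inside $\Om_f$. Since the strip has thickness at least $\f12\delta$ below $\Ga_f$, I insert a smooth cutoff $\chi(y_2)$ that equals $1$ near $y_2=0$ and vanishes for $y_2\le-\f14\delta$. Set $G(x_1,y_2)=\chi(y_2)\,g(\Phi(x_1,y_2))$, and extend $G$ to all of $\R^2$ by a standard reflection/extension operator $\mathcal{E}$, bounded on $H^s$ of the half-plane. Then $\ud g(x_1)=G(x_1,0)$.

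The second step is the flat trace estimate via Fourier transform. Writing $\hat G(\xi_1,\xi_2)$, we have $\widehat{G(\cdot,0)}(\xi_1)=\int\hat G\,\d\xi_2$. Cauchy--Schwarz then gives
\[
|\widehat{G(\cdot,0)}(\xi_1)|^2\le \int (1+|\xi|^2)^{s}|\hat G|^2\d\xi_2\cdot\int(1+|\xi|^2)^{-s}\d\xi_2 ,
\]
and the last integral equals $c_s(1+\xi_1^2)^{\f12-s}$, finite exactly when $s>\f12$. Multiplying by $(1+\xi_1^2)^{s-\f12}$ and integrating in $\xi_1$ yields $\|\langle\p_1\rangle^{s-\f12}\ud g\|_{L^2}\lesssim\|G\|_{H^s(\R^2)}$. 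The second claim follows the same way with $s=1$: use $\|\ud g\|_{L^2}\le\|\langle\p_1\rangle^{\f12}\ud g\|_{L^2}$, or more elementarily write $|G(x_1,0)|^2=\int_{-\infty}^0\p_2(G^2)\,\d y_2$, which gives $\|\ud g\|_{L^2}^2\lesssim \|G\|_{L^2}\|\p_2 G\|_{L^2}$. This last route needs only the Lipschitz bound on $f$.

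The main obstacle is the last step, proving $\|G\|_{H^s}\lesssim\|\langle\nabla\rangle^s g\|_{L^2(\Om_f)}$, i.e.\ that composition with $\Phi$ preserves $H^s$. For integer $s$ I would apply the chain rule. The derivatives of $\Phi$ involve $\p_1^k f$, controlled in $L^\infty$ by \eqref{AA8} for $k\le2$ and by Sobolev embedding of $E^f_{s+\f12}$ for higher $k$. The Jacobian of $\Phi$ equals $1$, so $L^2$ norms are preserved exactly. For non-integer $s$ I would interpolate between neighbouring integers. If the regularity of $f$ turns out to be insufficient at top order, the fallback is to use the harmonic (Poisson) extension of $f$ in place of $y_2+f(x_1)$, which gains half a derivative, as is standard in these free boundary settings.
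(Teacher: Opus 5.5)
Your argument is correct. It is the standard flatten–extend–Fourier proof of the classical trace theorem; the paper gives no proof of this lemma and simply quotes it as classical.

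The only applications on the curved boundary, Lemmas \ref{lemD2} and \ref{lemD3}, need just $s=1$. In that case, the fact that $\Phi$ is bi-Lipschitz with Jacobian $1$ already makes composition bounded on $L^2$ and $H^1$. So your concern about top-order regularity of $f$, and the Poisson-extension fallback, never comes into play.
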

A direct consequence of the second inequality of Lemma \ref{lemD1} and Lemma \ref{lemC2} is the following lemma.
\begin{lem}[Relaxed weighted trace theorem in the 2D case] \label{lemD2}
Under the assumptions \eqref{AA4}, \eqref{AA7} and \eqref{AA8}
with $\epsilon'$ sufficiently small,
for any $\mu\in \BR,\ \nu\in\BR$, there holds
\begin{align*}
\big\| \langle \wu^+ \rangle^{\mu} \langle \wu^- \rangle^{\nu}  \ud{g} \big\|_{L^2(\BR )}
&\lesssim  \sum_{|\alpha|\leq 1} \big\| \langle w^+ \rangle^{\mu}\langle w^- \rangle^{\nu}  \p^{\alpha} g \big\|_{L^2(\Omega_f)}.
\end{align*}
\end{lem}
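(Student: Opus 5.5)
The plan is to apply the unweighted trace inequality of Lemma \ref{lemD1} to the product $G:=\langle w^+\rangle^{\mu}\langle w^-\rangle^{\nu} g$, defined on $\Om_f$. Since $\wu^\pm$ is by definition the trace of $w^\pm$ on $\Ga_f$, the trace of $G$ is exactly
\begin{align*}
\ud{G}(x_1)=\langle \wu^+(x_1)\rangle^{\mu}\langle \wu^-(x_1)\rangle^{\nu}\,\ud{g}(x_1).
\end{align*}
The second inequality of Lemma \ref{lemD1} then gives
\begin{align*}
\big\|\langle \wu^+\rangle^{\mu}\langle \wu^-\rangle^{\nu}\ud{g}\big\|_{L^2(\BR)}\lesssim \|G\|_{L^2(\Om_f)}+\|\nabla G\|_{L^2(\Om_f)}.
\end{align*}
The term $\|G\|_{L^2(\Om_f)}$ is already the $\alpha=0$ term on the right hand side of the lemma.

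For the gradient, the product rule gives
\begin{align*}
\nabla G=\langle w^+\rangle^{\mu}\langle w^-\rangle^{\nu}\nabla g+\Big(\mu\frac{w^+\nabla w^+}{\langle w^+\rangle^{2}}+\nu\frac{w^-\nabla w^-}{\langle w^-\rangle^{2}}\Big)\langle w^+\rangle^{\mu}\langle w^-\rangle^{\nu} g .
\end{align*}
By Lemma \ref{lemC2}, which holds under \eqref{AA4} and \eqref{AA7}, we have $|\nabla w^\pm|\leq 1+3M\epsilon'\lesssim 1$. Combined with $|w^\pm|/\langle w^\pm\rangle^2\leq 1$, this shows the bracket is bounded by a constant depending only on $|\mu|+|\nu|$. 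Hence
\begin{align*}
\|\nabla G\|_{L^2(\Om_f)}\lesssim \sum_{|\alpha|\leq1}\|\langle w^+\rangle^{\mu}\langle w^-\rangle^{\nu}\p^\alpha g\|_{L^2(\Om_f)},
\end{align*}
and the lemma follows.

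The main point requiring care is that the implicit constant in the classical trace theorem on $\Om_f$ must be uniform. The domain $\Om_f$ varies in time, so this constant depends on the geometry of $\Ga_f$. Under \eqref{AA6} and \eqref{AA8}, $\|\p_1 f\|_{L^\infty}\leq C_2$ and $\|f\|_{L^\infty}\leq 1-\f12\delta$, so the strip has thickness at least $\delta/2$ and a uniformly Lipschitz top boundary. I would justify uniformity by flattening: write $\ud{g}(x_1)=g(x_1,f(x_1))$ and use the one-dimensional fundamental theorem of calculus in $x_2$ over a vertical segment of length comparable to $\delta$ below $\Ga_f$, together with a cutoff. This gives
\begin{align*}
|\ud{g}(x_1)|^2\lesssim_\delta \int_{-1}^{f(x_1)}\big(|g|^2+|\p_2 g|^2\big)(x_1,x_2)\,\d x_2 .
\end{align*}
Integrating in $x_1$ yields the $H^1$ trace bound with a constant depending only on $\delta$. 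No derivative of $f$ enters here, since the trace is simply evaluation along vertical lines. Alternatively, one can simply invoke Lemma \ref{lemD1} as stated, which the paper treats as uniform. Positivity of $\langle\cdot\rangle$ avoids any issue with negative $\mu,\nu$, and finiteness of the right hand side justifies the computation, by density if needed.
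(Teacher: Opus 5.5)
Your proposal is correct and is essentially the paper's argument: the paper obtains Lemma \ref{lemD2} directly from the second inequality of Lemma \ref{lemD1} and Lemma \ref{lemC2}. That is, it applies the unweighted $H^1$ trace bound to $\langle w^+\rangle^{\mu}\langle w^-\rangle^{\nu}g$ and absorbs the derivative falling on the weights using $|\nabla w^\pm|\lesssim 1$, exactly as you do. Your vertical-segment argument, showing the trace constant depends only on the lower bound $1+f\geq \delta/2$ (from the bootstrap) and not on derivatives of $f$, is a valid justification of a uniformity point the paper leaves implicit.
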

 Next, we study the weighted trace theorem in fractional derivatives.
 \begin{lem}[Weighted trace theorem in the 2D case]\label{lemD3}
 Under the assumption \eqref{AA4}, \eqref{AA7} and \eqref{AA8}
with $\epsilon'$ sufficiently small.
For any $\mu\in \BR,\ \nu\in \BR$, there holds
%(the derivative on the right hand side is wrong)
\begin{align*}
\big\| \langle \wu^+ \rangle^{\mu}\langle \wu^- \rangle^{\nu}
\langle \p_{x_1} \rangle^{\f12} \ud{g} \big\|_{L^2(\BR )}
&\lesssim  \sum_{|\alpha|\leq 1}\big\|\langle w^+ \rangle^{\mu}\langle w^- \rangle^{\nu} \p^{\alpha} g\big\|_{L^2(\Omega_f)}.
\end{align*}
provided the right hand side is finite.
\end{lem}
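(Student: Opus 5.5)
Write $W(x_1)=\langle \wu^+(x_1)\rangle^{\mu}\langle \wu^-(x_1)\rangle^{\nu}$ for the boundary weight and $\mathcal W(x)=\langle w^+(x)\rangle^{\mu}\langle w^-(x)\rangle^{\nu}$ for the bulk weight, so that $W=\ud{\mathcal W}$. The plan is to commute the weight through the fractional derivative and then apply the unweighted trace theorem to the weighted function $\mathcal W g$. We split
\begin{align*}
W\langle \p_{x_1}\rangle^{\f12}\ud g
=\langle \p_{x_1}\rangle^{\f12}\big(W\ud g\big)-[\langle \p_{x_1}\rangle^{\f12},W]\ud g .
\end{align*}

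\textbf{Step 1: the commutator.} By \eqref{D14} with $s=\f12$, the commutator is exactly the principal value integral of $(1-\p_{x_1}^2)B_{3/2}(x_1-x_1')\,(W(x_1)-W(x_1'))\,\ud g(x_1')$. Lemma \ref{lemD0} with $q=2$ then bounds its $L^2(\BR)$ norm by $\|W\ud g\|_{L^2(\BR)}$. By the relaxed weighted trace theorem, Lemma \ref{lemD2}, this is in turn controlled by $\sum_{|\alpha|\le1}\|\mathcal W\p^\alpha g\|_{L^2(\Om_f)}$.

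\textbf{Step 2: the main term.} Since $W\ud g=\ud{(\mathcal W g)}$, the first inequality of Lemma \ref{lemD1} with $s=1$ gives
\begin{align*}
\|\langle \p_{x_1}\rangle^{\f12}(W\ud g)\|_{L^2(\BR)}\lesssim \|\mathcal W g\|_{H^1(\Om_f)} .
\end{align*}
To finish, expand $\nabla(\mathcal W g)=\mathcal W\nabla g+g\nabla\mathcal W$. By Lemma \ref{lemC2} we have $|\nabla w^\pm|\lesssim 1$, so
\begin{align*}
|\nabla \mathcal W|\lesssim \big(\langle w^+\rangle^{-1}+\langle w^-\rangle^{-1}\big)\mathcal W\lesssim \mathcal W .
\end{align*}
Hence $\|\mathcal W g\|_{H^1(\Om_f)}\lesssim\sum_{|\alpha|\le1}\|\mathcal W\p^\alpha g\|_{L^2(\Om_f)}$.

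\textbf{Main obstacle.} The delicate point is Lemma \ref{lemD1} itself: it is applied on the moving domain $\Om_f$ with a graph boundary, and the implicit constant there depends on $\|\p_1 f\|_{L^\infty}$ and on the Lipschitz character of the boundary. Under \eqref{AA8} these quantities are bounded, so the constant is uniform in time. One should also check that $\mathcal W g\in H^1(\Om_f)$; this can be arranged by truncating the weight and passing to the limit, using finiteness of the right-hand side. The remaining steps are direct applications of the lemmas cited above.
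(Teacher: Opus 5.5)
Your proposal is correct and is essentially the paper's proof. You use the same splitting \eqref{D24} into $\langle \p_{x_1}\rangle^{\f12}(W\ud g)$ plus the commutator from \eqref{D14}; the commutator is handled by Lemma \ref{lemD0} and Lemma \ref{lemD2}, and the main term by the classical trace theorem Lemma \ref{lemD1} applied to $\mathcal W g$. Your use of Lemma \ref{lemC2} to get $|\nabla\mathcal W|\lesssim\mathcal W$ in the bulk is precisely the input needed there (the paper cites the boundary version, Lemma \ref{lemC3}), and your remarks on the uniform trace constant and on truncating the weight make explicit what the paper leaves implicit.
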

\begin{proof}%[Proof by Bessel potential:]
%The estimate of these inequalities is the same.
%In the sequel, we only present the details for the first one.
We start by rewriting \eqref{D14} as follows:
\begin{align}\label{D24}
&\langle \wu^+(x_1) \rangle^{\mu} \langle \wu^-(x_1) \rangle^{\nu} \langle \p_{x_1}\rangle^{\f 12} \ud{g}(x_1)\\\nonumber
&=\langle \p_{x_1}\rangle^{\f 12} \big( \langle \wu^+(x_1) \rangle^{\mu} \langle \wu^-(x_1) \rangle^{\nu} \ud{g}(x_1)\big)
\\\nonumber
&\quad+\text{p.v.} \int_{\BR}  (1-\p_{x_1}^2)B_{\f 32}(x_1-x_1')
\big(   \langle \wu^+(x_1) \rangle^{\mu} \langle \wu^-(x_1) \rangle^{\nu} \\\nonumber
&\qquad-\langle \wu^+(x_1') \rangle^{\mu}\langle \wu^-(x_1') \rangle^{\nu} \big) \ud{g}(x_1')\dx_1'.
\end{align}
%Now we estimate \eqref{D24}.
The first term on the right hand side \eqref{D24} can be controlled from the classical trace theorem of Lemma \ref{lemD1} and Lemma \ref{lemC3}.
For the second term, it is bounded from Lemma \ref{lemD0}
and the relaxed trace theorem Lemma \ref{lemD2}.
This finishes the proof of the lemma.
\end{proof}

\subsection{Weighted commutator estimate}
%Before the commutator estimate, we first give a simple lemma.
Different from the notations in the above subsection,
$g$ and $h$ in this subsection denote functions defined in $\BR$.

Before the weighted commutator estimate,
we first present two lemmas.
%an interpolation estimate for the fractional derivative.
\begin{lem}\label{lemD21}
Let  $0< s< 1$, $1\leq q\leq  \infty$. There holds
\begin{align*}
\| \langle \p_{x_1}\rangle^{s} h\|_{L^q(\BR )}
\lesssim \|   h  \|_{L^q(\BR )}
+\|  \p_{x_1}  h  \|_{L^q(\BR )}.
\end{align*}
\end{lem}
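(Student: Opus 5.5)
We prove Lemma \ref{lemD21} with the singular integral representation \eqref{D1} and the Bessel potential asymptotics of Lemma \ref{lemBe}. Writing, for $0<s<1$,
\begin{align*}
\langle \p_{x_1}\rangle^{s} h(x_1)
=\int_{\BR} B_{2-s}(x_1-x_1') h(x_1')\dx_1'
+\text{p.v.}\int_{\BR} \p_{x_1}^2B_{2-s}(x_1-x_1')\big(h(x_1)-h(x_1')\big)\dx_1',
\end{align*}
we treat the two pieces separately. The first is a convolution with $B_{2-s}$. By Lemma \ref{lemBe}, $B_{2-s}$ is bounded near the origin and decays like $e^{-|x_1|/2}$ at infinity, so $B_{2-s}\in L^1(\BR)$. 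Young's inequality then bounds its $L^q$ norm by $\|h\|_{L^q}$ for every $1\leq q\leq\infty$.

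For the principal value term we split the domain into $\{|x_1-x_1'|\geq \f12\}$ and $\{|x_1-x_1'|\leq \f12\}$. On the far region we bound $|h(x_1)-h(x_1')|\leq |h(x_1)|+|h(x_1')|$. Since $|\p_{x_1}^2B_{2-s}|\lesssim e^{-|x_1|/2}$ there, the kernel restricted to $|y|\geq\f12$ is integrable, and both resulting pieces are controlled by $\|h\|_{L^q}$: one is $|h(x_1)|$ times a finite constant, the other is a convolution estimated by Young's inequality.

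On the near region we use the fundamental theorem of calculus,
\[
h(x_1)-h(x_1')=(x_1-x_1')\int_0^1 \p_{x_1}h\big(x_1'+\theta(x_1-x_1')\big)\d\theta .
\]
Near the origin $|\p_{x_1}^2B_{2-s}(y)|\sim|y|^{-s-1}$, so the kernel satisfies
\[
|y\,\p_{x_1}^2B_{2-s}(y)|\lesssim |y|^{-s},
\]
which is integrable on $|y|\leq\f12$ because $s<1$. After this step the integral is absolutely convergent, so the principal value causes no difficulty. We apply Minkowski's integral inequality in the variables $(\theta, y)$ with $y=x_1-x_1'$:
\[
\Big\|\int_{|y|\leq \f12}\int_0^1 |y\,\p^2B_{2-s}(y)|\,|\p_{x_1}h(x_1-(1-\theta)y)|\d\theta\, \d y\Big\|_{L^q}
\lesssim \int_{|y|\leq\f12}|y|^{-s}\d y\,\|\p_{x_1}h\|_{L^q},
\]
using translation invariance of the $L^q$ norm. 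Summing the three bounds gives the lemma.

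The only delicate point is justifying the representation \eqref{D1} and the cancellation for $q=1$ or $q=\infty$. One first argues for $h\in C_c^\infty$ or Schwartz $h$, where the p.v. integral is classical. The general case then follows by density for $q<\infty$. For $q=\infty$ the pointwise estimates above hold directly, since all the integrals converge absolutely once $h\in W^{1,\infty}$.
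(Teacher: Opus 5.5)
Your proof is correct. The paper gives no argument of its own for this lemma; it only refers to Lemma 3.8 of \cite{CL-24}. Your version is self-contained and uses exactly the tools the paper relies on throughout Section 4: the representation \eqref{D1}, the kernel asymptotics of Lemma \ref{lemBe}, and the split into $|x_1-x_1'|\leq\f12$ and $|x_1-x_1'|\geq\f12$ used in Lemmas \ref{lemD0} and \ref{lemD23}. So it fits the paper naturally and makes it independent of the citation.

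One simplification is worth noting: you can integrate by parts once instead of twice. From $(1+\xi^2)^{\f{s}{2}}=(1+\xi^2)^{\f{s-2}{2}}+\xi^2(1+\xi^2)^{\f{s-2}{2}}$ one gets $\langle\p_{x_1}\rangle^s h=B_{2-s}*h-(\p_{x_1}B_{2-s})*\p_{x_1}h$. By Lemma \ref{lemBe}, $\p_{x_1}B_{2-s}$ behaves like $x_1|x_1|^{-s-1}$ near $0$ and decays exponentially, so $\p_{x_1}B_{2-s}\in L^1(\BR)$ because $s<1$. Young's inequality then gives the lemma for every $1\leq q\leq\infty$ in one line, with no principal value and no splitting of the kernel. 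Your near-field step, the fundamental theorem of calculus plus Minkowski with $|y\,\p_{x_1}^2B_{2-s}(y)|\lesssim|y|^{-s}$, is this integration by parts in disguise.

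The once-integrated form also tidies up your $q=\infty$ remark. Absolute convergence of the integrals in \eqref{D1} does not by itself show that their sum equals $\langle\p_{x_1}\rangle^s h$ for $h\in W^{1,\infty}$. You would still need an approximation step, for example mollification together with convergence in $\mathcal{S}'$. By contrast, the identity $\langle\p_{x_1}\rangle^s h=B_{2-s}*h-(\p_{x_1}B_{2-s})*\p_{x_1}h$ can be checked on the Fourier side for any $h\in W^{1,q}$, $1\leq q\leq\infty$. This works because both kernels are in $L^1$ and their Fourier transforms are smooth with polynomially bounded derivatives.
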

\begin{proof}
We refer to Lemma 3.8 in \cite{CL-24}.
\end{proof}
%Now we study a simple weighted estimate.
\begin{lem}\label{lemD23}
Let $s>0$, $\mu\in\BR,\, \nu\in\BR $, $1\leq q\leq\infty$.
Under the assumptions \eqref{AA4}, \eqref{AA7} and \eqref{AA8}
with $\epsilon'$ sufficiently small,  there hold
\begin{align*}
& \big\|  \langle \wu^+ \rangle^{\mu}\langle \wu^- \rangle^{\nu}
\langle \p_{x_1}\rangle^{-s} g \big\|_{L^q(\BR )}
\lesssim
\big\| \langle \wu^+ \rangle^{\mu} \langle \wu^- \rangle^{\nu}
 g\big\|_{L^q(\BR )} ,\\[-4mm]\\
& \big\|  \langle \wu^+ \rangle^{\mu}\langle \wu^- \rangle^{\nu}
 g \big\|_{L^q(\BR )}
\lesssim
\big\| \langle \wu^+ \rangle^{\mu} \langle \wu^- \rangle^{\nu}
\langle \p_{x_1}\rangle^{s} g\big\|_{L^q(\BR )} ,
\end{align*}
provided the right hand sides are finite.
\end{lem}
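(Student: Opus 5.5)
The second inequality follows from the first by setting $g=\langle \p_{x_1}\rangle^{s}\tilde g$, since $\langle \p_{x_1}\rangle^{-s}\langle \p_{x_1}\rangle^{s}$ is the identity. So it suffices to prove the first inequality for every $s>0$.

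\textbf{Approach.} Write the weight as $W(x_1)=\langle \wu^+(x_1)\rangle^{\mu}\langle \wu^-(x_1)\rangle^{\nu}$ and express the negative-order operator as convolution with a Bessel potential:
\begin{align*}
\langle \p_{x_1}\rangle^{-s} g(x_1)=\int_{\BR} B_{s}(x_1-x_1')g(x_1')\,\dx_1' .
\end{align*}
For $s>0$ the kernel $B_s$ is positive and integrable. Near the origin it has at most an integrable singularity: $|x_1|^{s-1}$ when $0<s<1$, a logarithm when $s=1$, and it is bounded when $s>1$. At infinity it decays like $e^{-|x_1|/2}$, by the standard Bessel potential asymptotics cited for Lemma \ref{lemBe} (Stein, Grafakos).

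\textbf{Splitting into near and far regions.} Multiply by $W(x_1)$ and split the integral at $|x_1-x_1'|=\f12$, as in the proof of Lemma \ref{lemD0}.
\begin{itemize}
\item On the near region, \eqref{D15} gives $W(x_1)\sim W(x_1')$, using Lemma \ref{lemC3}. This lets us move the weight inside:
\begin{align*}
W(x_1)\int_{|x_1-x_1'|\leq \f12}|B_s(x_1-x_1')||g(x_1')|\,\dx_1'\lesssim \int_{|x_1-x_1'|\leq \f12}|B_s(x_1-x_1')|\,W(x_1')|g(x_1')|\,\dx_1' .
\end{align*}
\item On the far region, \eqref{D19} applied to each factor gives
\begin{align*}
W(x_1)\leq 3^{|\mu|+|\nu|}|x_1-x_1'|^{|\mu|+|\nu|}W(x_1').
\end{align*}
Negative powers are handled by swapping the roles of $x_1$ and $x_1'$ in \eqref{D19}. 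This is what produces the absolute values in the exponent.
\end{itemize}

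\textbf{Conclusion via Young's inequality.} Combining the two regions, we get the pointwise bound $W|\langle\p_{x_1}\rangle^{-s}g|\lesssim K*(W|g|)$, where
\begin{align*}
K(y)=|B_s(y)|\big(\mathbf{1}_{|y|\leq\f12}+|y|^{|\mu|+|\nu|}\mathbf{1}_{|y|\geq\f12}\big).
\end{align*}
Then $K\in L^1(\BR)$ because of the integrable singularity at the origin and the exponential decay at infinity. Young's inequality gives the $L^q$ bound for every $1\leq q\leq\infty$.

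\textbf{Main obstacle.} The only delicate point is confirming that $B_s$ is integrable near the origin and decays exponentially for all $s>0$, not only for the range $0<s<1$ covered by Lemma \ref{lemBe}. I would cite the classical facts directly: $B_s$ is positive with $\|B_s\|_{L^1}=1$, and $B_s(y)\lesssim e^{-|y|/2}$ for $|y|\geq\f12$. Everything else is a repetition of the argument of Lemma \ref{lemD0} without the principal value, since no difference of weights appears.
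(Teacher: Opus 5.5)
Your proposal is correct and follows essentially the same route as the paper. The paper also reduces the second inequality to the first, writes $\langle \p_{x_1}\rangle^{-s}$ as convolution with $B_s$, and splits at $|x_1-x_1'|=\f12$ using \eqref{D15} and \eqref{D19}, which costs a factor $|x_1-x_1'|^{|\mu|+|\nu|}$ in the far region. It then concludes by Young's inequality, using the local integrability and exponential decay of $B_s$ for every $s>0$.
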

%This lemma will save us many convenience.
\begin{proof}
The second inequality is a direct consequence of the first estimate.
%The proof of the first and the second inequality is the same.
%Hence we only present the details for the second one.
Thus it suffices to prove the first inequality.

For $s>0$, by \eqref{D15} and \eqref{D19}, we write
\begin{align}\label{D25}
&\langle \wu^+ \rangle^{\mu}\langle \wu^- \rangle^{\nu}
\langle \p_{x_1}\rangle^{-s} g(x_1)  \\\nonumber
&=\langle \wu^+ (x_1) \rangle^{\mu}\langle \wu^-(x_1) \rangle^{\nu}
\int_{\BR} B_s(x_1-x_1')g(x_1')\dx_1' \\\nonumber
&\lesssim
\int_{|x_1-x_1'|\leq \f12} B_s(x_1-x_1') \langle \wu^+ (x_1') \rangle^{\mu}\langle \wu^-(x_1') \rangle^{\nu} |g(x_1')|\dx_1'  \\\nonumber
&\quad+\int_{|x_1-x_1'|\geq \f12} |x_1-x_1'|^{|\mu|+|\nu|} B_s(x_1-x_1') \langle \wu^+ (x_1') \rangle^{\mu}\langle \wu^-(x_1') \rangle^{\nu} |g(x_1')| \dx_1'.
\end{align}
Here Bessel potentials $B_s(x_1)$ satisfy
\begin{align*}
&B_s(x_1)\lesssim e^{-\f{|x_1|}{2}}\quad \textrm{for}\,\,|x_1|\geq \f12,
\end{align*}
and
\begin{align*}
&B_s(x_1)\sim
\begin{cases}
|x_1|^{s-1}, \quad \textrm{for} \quad 0<s<1 ,\\
\ln|x_1|^{-1}, \quad \textrm{for} \quad s=1,\\
1, \quad \textrm{for} \quad s>1,
\end{cases}
\quad\textrm{for}\,\, |x_1|\leq \f12.
\end{align*}
Taking the $L^q$ norm of \eqref{D25} for $1\leq q\leq +\infty$, by Young's inequality,
we obtain
\begin{align*}
\| \langle \wu^+ \rangle^{\mu}\langle \wu^- \rangle^{\nu}
\langle \p_{x_1}\rangle^{-s} g   \|_{L^q(\BR )}
\lesssim
\big\| \langle \wu^+ \rangle^{\mu} \langle \wu^- \rangle^{\nu}
 g \big\|_{L^q(\BR )} .
\end{align*}
This finishes the proof of the lemma.
\end{proof}

Combining Lemma \ref{lemD0} and  Lemma \ref{lemD23}, we immediately have the following commutator estimate.
\begin{lem}\label{lemD5}
Under the assumptions \eqref{AA4}, \eqref{AA7} and \eqref{AA8}
with $\epsilon'$ sufficiently small,
for  $0< s<1$, $\mu\in \BR,\, \nu\in\BR$, $1\leq q\leq\infty$, there hold
\begin{align*}
& \big\| [\langle \p_{x_1}\rangle^{s}, \langle \wu^+ \rangle^{\mu}\langle \wu^- \rangle^{\nu}] g \big\|_{L^q(\BR )}
\lesssim
\big\| \langle \wu^+ \rangle^{\mu} \langle \wu^- \rangle^{\nu}
 g\big\|_{L^q(\BR )} ,\\[-4mm]\nonumber\\\nonumber
%\end{align*}
%and
%\begin{align*}
& \big\| \langle \p_{x_1}\rangle^{s}\big( \langle \wu^+ \rangle^{\mu}\langle \wu^- \rangle^{\nu}  g\big)  \big\|_{L^q(\BR )}
\lesssim
\big\| \langle \wu^+ \rangle^{\mu} \langle \wu^- \rangle^{\nu}
 \langle \p_{x_1}\rangle^{s} g\big\|_{L^q(\BR )} ,
\end{align*}
provided the right hand sides are finite.
\end{lem}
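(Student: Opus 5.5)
The plan is to reduce both inequalities to the kernel identity \eqref{D14} and to the two auxiliary bounds already established, Lemma \ref{lemD0} and Lemma \ref{lemD23}. Throughout, write $W=\langle \wu^+ \rangle^{\mu}\langle \wu^- \rangle^{\nu}$.

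For the first inequality we apply \eqref{D14} with $w=W$, $h=g$:
\begin{align*}
[\langle \p_{x_1}\rangle^{s}, W] g(x_1)
=-\text{p.v.} \int_{\BR}  (1-\p_{x_1}^2)B_{2-s}(x_1-x_1')
\big(  W(x_1)-W(x_1')\big) g(x_1')\dx_1'.
\end{align*}
This is exactly the singular integral estimated in Lemma \ref{lemD0}, which applies for every $1\leq q\leq\infty$ under the assumptions \eqref{AA4}, \eqref{AA7}, \eqref{AA8}. Its $L^q$ norm is therefore bounded by $\|W g\|_{L^q(\BR)}$. The principal-value integral is justified because \eqref{D15} and Lemma \ref{lemC3} give $|W(x_1)-W(x_1')|\lesssim |x_1-x_1'| W(x_1')$ near the diagonal. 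Since $|(1-\p_{x_1}^2)B_{2-s}|\lesssim |x_1-x_1'|^{-s-1}$ there by Lemma \ref{lemBe}, the resulting kernel $|x_1-x_1'|^{-s}$ is integrable for $s<1$. Away from the diagonal, \eqref{D19} costs a polynomial factor in $|x_1-x_1'|$, which the exponential decay of the Bessel potential absorbs. These are precisely the two regions handled in the proof of Lemma \ref{lemD0}, so nothing new is needed.

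For the second inequality we write
\begin{align*}
\langle \p_{x_1}\rangle^{s}( W g)= W\langle \p_{x_1}\rangle^{s} g + [\langle \p_{x_1}\rangle^{s}, W] g .
\end{align*}
The first term is already of the desired form. By the first inequality, the commutator is bounded by $\|W g\|_{L^q(\BR)}$. We then use the second inequality of Lemma \ref{lemD23}, which gives $\|W g\|_{L^q}\lesssim \|W\langle\p_{x_1}\rangle^{s} g\|_{L^q}$ for $s>0$, to convert this into the right-hand side.

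The only point needing care is that the formulas make sense, i.e.\ the finiteness hypotheses. If the right-hand sides are finite, then $Wg\in L^q$ by Lemma \ref{lemD23}, so the integrals in \eqref{D14} converge absolutely off the diagonal and as principal values on it. I expect no real obstacle: the main work has already been done in Lemma \ref{lemD0}, and the present lemma is essentially a bookkeeping combination of it with Lemma \ref{lemD23}.
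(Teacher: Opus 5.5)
Your proposal is correct and is essentially the paper's own argument. The paper proves this lemma simply by combining \eqref{D14} with Lemma \ref{lemD0} to get the commutator bound, and Lemma \ref{lemD23} to replace $\|\langle \wu^+\rangle^{\mu}\langle \wu^-\rangle^{\nu} g\|_{L^q}$ by $\|\langle \wu^+\rangle^{\mu}\langle \wu^-\rangle^{\nu}\langle\p_{x_1}\rangle^{s} g\|_{L^q}$. You additionally write out the splitting $\langle\p_{x_1}\rangle^{s}(Wg)=W\langle\p_{x_1}\rangle^{s}g+[\langle\p_{x_1}\rangle^{s},W]g$ for the second inequality, which the paper leaves implicit.
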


Next we recall a version of the Kato-Ponce commutator estimate \cite{KP}.
We also refer to Li \cite{Li} for a more general and sharp commutator estimate.
\begin{lem}\label{lemD6}
Let  $0<s,s_1<1$, $\f12 =\f{1}{q_1}+\f{1}{q_2}=\f{1}{q_3}+\f{1}{q_4}$, $2\leq q_1, q_2, q_3, q_4 \leq\infty$, there hold
\begin{align*}
& \big\|[\langle\p_{x_1}\rangle^s,g] h \big\|_{L^2(\BR)}
\lesssim
\| \langle\p_{x_1}\rangle^s  g \|_{L^{q_1}(\BR)}
\| h \|_{L^{q_2}(\BR)},\\
&\big\|[\langle\p_{x_1}\rangle^{s_1}\p_{x_1},g] h \big\|_{L^2(\BR )}
\lesssim
\| \p_{x_1}  g \|_{L^{q_1}(\BR)}
\| \langle\p_{x_1}\rangle^{s_1} h \|_{L^{q_2}(\BR )}
+\| \langle\p_{x_1}\rangle^{s_1}\p_{x_1} g \|_{L^{q_3}(\BR)}
\|  h \|_{L^{q_4}(\BR)} .
\end{align*}
\end{lem}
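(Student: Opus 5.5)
This is a version of the Kato--Ponce commutator estimate, and I would prove it through the Bessel-potential representation \eqref{D1}, \eqref{D14} of $\langle\p_{x_1}\rangle^{s}$, splitting the kernel into its local and nonlocal parts as in Lemma \ref{lemD0}. An alternative route is Coifman--Meyer paraproduct theory: decompose $gh$ by Littlewood--Paley into low--high, high--low and high--high pieces, and use that the symbol $\langle\xi+\eta\rangle^s-\langle\eta\rangle^s$ on the low--high part is of size $|\xi|\langle\eta\rangle^{s-1}\lesssim\langle\xi\rangle^s$. Since the lemma is classical, I would mainly cite \cite{KP,Li} and only sketch the kernel argument to show the exponents are consistent.

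\emph{First inequality.} By \eqref{D14},
\[
[\langle\p_{x_1}\rangle^s,g]h(x_1)=-\text{p.v.}\int_{\BR}(1-\p_{x_1}^2)B_{2-s}(x_1-x_1')\big(g(x_1)-g(x_1')\big)h(x_1')\,\dx_1'.
\]
\begin{itemize}
\item The part $B_{2-s}\ast(\cdot)$ of the kernel is integrable, so by Young and H\"older it is bounded by $\|g\|_{L^{q_1}}\|h\|_{L^{q_2}}$. This in turn is $\lesssim\|\langle\p_{x_1}\rangle^s g\|_{L^{q_1}}\|h\|_{L^{q_2}}$ by the unweighted case of Lemma \ref{lemD23}.
\item For the singular part $\p^2B_{2-s}\sim|x_1-x_1'|^{-1-s}$ near the origin, I would write $g=\langle\p_{x_1}\rangle^{-s}G$ with $G=\langle\p_{x_1}\rangle^s g$. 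Then $g(x_1)-g(x_1')=\int(B_s(x_1-y)-B_s(x_1'-y))G(y)\,\d y$. This turns the term into a bilinear operator in $(G,h)$ with a Calder\'on--Zygmund-type kernel in the Coifman--Meyer class.
\item Its $L^{q_1}\times L^{q_2}\to L^2$ boundedness is exactly the Kato--Ponce/Li theorem. The endpoint $q_1=\infty$ is allowed in \cite{Li}, though not in the original \cite{KP}.
\end{itemize}

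\emph{Second inequality.} Write $\langle\p_{x_1}\rangle^{s_1}\p_{x_1}(gh)-g\langle\p_{x_1}\rangle^{s_1}\p_{x_1}h$. This equals
\[
\langle\p_{x_1}\rangle^{s_1}\big((\p_{x_1}g)h\big)+[\langle\p_{x_1}\rangle^{s_1},g]\p_{x_1}h .
\]
\begin{itemize}
\item The first term is handled by the fractional Leibniz rule. In the high--low and high--high regimes it gives $\|\langle\p_{x_1}\rangle^{s_1}\p_{x_1}g\|_{L^{q_3}}\|h\|_{L^{q_4}}$. In the low--high regime it gives $\|\p_{x_1}g\|_{L^{q_1}}\|\langle\p_{x_1}\rangle^{s_1}h\|_{L^{q_2}}$.
\item In the second term, the derivative on $h$ must be moved. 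In the paraproduct decomposition, the low($g$)--high($h$) piece has symbol $(\langle\xi+\eta\rangle^{s_1}-\langle\eta\rangle^{s_1})i\eta$. This symbol is smooth of size $|\xi|\langle\eta\rangle^{s_1}$, so it costs one derivative on $g$ and $s_1$ derivatives on $h$, giving the first product.
\item In the high($g$) pieces, the frequency of $h$ is $\lesssim$ that of $g$. There the derivatives can be put entirely on $g$, giving the second product.
\end{itemize}

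The main obstacle is the endpoint exponents ($q=\infty$) and the inhomogeneous Bessel symbol rather than the homogeneous $|D|^s$. I would handle this by splitting off the low frequencies, where everything is smooth and bounded by Young's inequality, and quoting Li's sharp commutator theorem \cite{Li} for the homogeneous high-frequency part, which covers $L^\infty$ endpoints.
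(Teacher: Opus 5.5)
Your proposal matches the paper's proof: the paper proves this lemma only by referring to Corollary 1.4 and Theorem 1.9 of \cite{Li}, and your argument also rests on citing \cite{Li}, with the Bessel-kernel and paraproduct sketches serving as heuristics. One small correction to the sketch: the operator $(G,h)\mapsto[\langle\p_{x_1}\rangle^s,\langle\p_{x_1}\rangle^{-s}G]h$ is not literally of Coifman--Meyer type, because its symbol $(\langle\xi+\eta\rangle^s-\langle\eta\rangle^s)\langle\xi\rangle^{-s}$ violates the Coifman--Meyer derivative bounds when $|\xi|\ll|\eta|$; its boundedness therefore comes from the dyadic summation in Li's argument, which uses $s<1$, and not from the Coifman--Meyer theorem itself.
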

\begin{proof}
We refer to Corollary 1.4 and Theorem 1.9 in \cite{Li}.
\end{proof}
Now we study the weighted commutator estimate for the energy estimate of the free surface.
We first present the weighted commutator estimate for fractional derivative ($0<s<1$) in one dimensional space.
Similar estimates also hold  in general space dimension and the proof is the same.
\begin{lem}[Weighted commutator estimate, first version] \label{lemD8}
Let  $0<s<1$, $\f12 =\f{1}{q_1}+\f{1}{q_2}$, $2\leq q_1, q_2 \leq\infty$, $\mu,\mu_1,\mu_2,\nu,\nu_1,\nu_2\in\BR$,
 $\mu=\mu_1+\mu_2$, $\nu=\nu_1+\nu_2$.
 Under the assumptions \eqref{AA4}, \eqref{AA7} and \eqref{AA8}
with $\epsilon'$ sufficiently small, there holds
\begin{align*}
 \big\| \langle \wu^+\rangle^{\mu}\langle \wu^-\rangle^{\nu}
[\langle\p_{x_1}\rangle^{s},g] h \big\|_{L^2(\BR )}
&\lesssim  \| \langle \wu^+\rangle^{\mu_1}\langle \wu^-\rangle^{\nu_1} \langle \p_{x_1}\rangle^{s} g \|_{L^{q_1}(\BR)} \| \langle \wu^+\rangle^{\mu_2}\langle \wu^-\rangle^{\nu_2}  h  \|_{L^{q_2}(\BR )} ,
\end{align*}
provided the right hand side is finite.
\end{lem}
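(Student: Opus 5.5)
The plan is to write $W=\langle \wu^+\rangle^{\mu}\langle \wu^-\rangle^{\nu}$, $W_1=\langle \wu^+\rangle^{\mu_1}\langle \wu^-\rangle^{\nu_1}$ and $W_2=\langle \wu^+\rangle^{\mu_2}\langle \wu^-\rangle^{\nu_2}$, so that $W=W_1W_2$. The idea is to move the weights inside the commutator and reduce to the unweighted Kato--Ponce estimate of Lemma \ref{lemD6}. Concretely, I would start from the algebraic identity
\begin{align*}
W[\langle\p_{x_1}\rangle^{s},g]h
&=[\langle\p_{x_1}\rangle^{s},W_1g](W_2h)
-[\langle\p_{x_1}\rangle^{s},W_1]\big(g\,W_2h\big)\\
&\quad+W_1[\langle\p_{x_1}\rangle^{s},W_2]h\cdot g
-g\,[\langle\p_{x_1}\rangle^{s},W_2]h\cdot W_1 + \mathcal{R},
\end{align*}
where $\mathcal{R}$ collects the remaining weight-commutator terms. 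This is obtained by expanding $\langle\p_{x_1}\rangle^{s}(W_1W_2gh)$ in two ways. The last two displayed terms cancel, and the point of the identity is that every leftover term contains either a full commutator $[\langle\p_{x_1}\rangle^{s},W_1 g](W_2h)$ or a commutator of $\langle\p_{x_1}\rangle^s$ with a pure weight. Carrying out this bookkeeping carefully is routine.

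For the main term I would apply the first inequality of Lemma \ref{lemD6}:
\[
\|[\langle\p_{x_1}\rangle^{s},W_1g](W_2h)\|_{L^2}\lesssim \|\langle\p_{x_1}\rangle^{s}(W_1g)\|_{L^{q_1}}\|W_2h\|_{L^{q_2}}.
\]
Then the second inequality of Lemma \ref{lemD5} gives $\|\langle\p_{x_1}\rangle^{s}(W_1g)\|_{L^{q_1}}\lesssim \|W_1\langle\p_{x_1}\rangle^{s}g\|_{L^{q_1}}$. For the weight-commutator terms, such as $[\langle\p_{x_1}\rangle^{s},W_1](gW_2h)$ and $W_1[\langle\p_{x_1}\rangle^s,W_2]h$, I would use the pointwise kernel representation \eqref{D14}. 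This bounds each of them by an integral with a summable kernel, exactly as in the proof of Lemma \ref{lemD0}, namely
\[
\int K(x_1-x_1')\,W(x_1')|g(x_1')||h(x_1')|\,\dx_1' ,
\]
with $K$ integrable. To get an $L^2$ bound, Young's inequality and H\"older then give $\|W_1 g\|_{L^{q_1}}\|W_2h\|_{L^{q_2}}$. Finally Lemma \ref{lemD23} bounds $\|W_1 g\|_{L^{q_1}}\lesssim\|W_1\langle\p_{x_1}\rangle^{s}g\|_{L^{q_1}}$.

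I expect the main obstacle to be that the weight-commutator terms carry the product $gh$ rather than a commutator structure in $g$. Treating them crudely risks losing the factor $\langle\p_{x_1}\rangle^{s}$ on $g$, but Lemma \ref{lemD23} absorbs this, since only $\|W_1g\|$ is needed there. A subtler issue is the term $W_1[\langle\p_{x_1}\rangle^s,W_2]h\, g$: here the weight in front must be redistributed using \eqref{D15} and \eqref{D19}. Near the diagonal the weights are comparable, and away from it the growth factor $|x_1-x_1'|^{|\mu|+|\nu|}$ is absorbed by the exponential decay of the Bessel kernel in Lemma \ref{lemBe}. If the identity route becomes messy, I would instead work directly with the kernel form
\[
[\langle\p_{x_1}\rangle^s,g]h(x_1)=-\mathrm{p.v.}\int (1-\p^2_{x_1})B_{2-s}(x_1-x_1')\big(g(x_1)-g(x_1')\big)h(x_1')\,\dx_1'.
\]
I would multiply it by $W(x_1)$, replace $W(x_1)$ by $W_1(x_1)W_2(x_1')$ up to the same near/far splitting, and then reduce to the unweighted Kato--Ponce bound applied to $W_1g$ and $W_2h$, with the error terms handled by Lemma \ref{lemD0}.
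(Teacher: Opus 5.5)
Your proposal is correct and is essentially the paper's proof. Your remainder $\mathcal{R}$ together with $-[\langle\p_{x_1}\rangle^{s},W_1](gW_2h)$ collapses, via $[\langle\p_{x_1}\rangle^{s},W_1]W_2+W_1[\langle\p_{x_1}\rangle^{s},W_2]=[\langle\p_{x_1}\rangle^{s},W]$, to the paper's three-term identity $W[\langle\p_{x_1}\rangle^{s},g]h=-[\langle\p_{x_1}\rangle^{s},W](gh)+[\langle\p_{x_1}\rangle^{s},W_1g](W_2h)+W_1g\,[\langle\p_{x_1}\rangle^{s},W_2]h$. After that, Lemma \ref{lemD6} handles the middle term, and Lemma \ref{lemD5} with H\"older handles the other two; the passage from $\|W_1 g\|_{L^{q_1}}$ to $\|W_1\langle\p_{x_1}\rangle^{s}g\|_{L^{q_1}}$ via Lemma \ref{lemD23}, which you state explicitly, is left implicit in the paper.

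One small correction concerns the term $W_1g\,[\langle\p_{x_1}\rangle^{s},W_2]h$. It is the easy one and needs no redistribution of weights: keep $W_1g$ in $L^{q_1}$ and bound the commutator in $L^{q_2}$ by Lemma \ref{lemD5}, since moving $W_1(x_1)$ to $x_1'$ there would attach the weight to the wrong factor. Merging the pure-weight pieces into the single commutator $[\langle\p_{x_1}\rangle^{s},W](gh)$ avoids the redistribution via \eqref{D15}, \eqref{D19} altogether.
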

\begin{proof}
We begin by rearrangement
%By commutator notation, we write
\begin{align}\label{D30}
&\langle \wu^+\rangle^{\mu}\langle \wu^-\rangle^{\nu}   [\langle\p_{x_1}\rangle^{s},g] h \\\nonumber
&=-[\langle \p_{x_1}\rangle^{s},\langle \wu^+\rangle^{\mu}\langle \wu^-\rangle^{\nu}] (gh) \\[-4mm]\nonumber\\\nonumber
&\quad+\big[\langle \p_{x_1}\rangle^{s},
\langle \wu^+\rangle^{\mu_1}\langle \wu^-\rangle^{\nu_1} g\big] (\langle \wu^+\rangle^{\mu_2}\langle \wu^-\rangle^{\nu_2}h)  \\[-4mm]\nonumber\\\nonumber
&\quad+\langle \wu^+\rangle^{\mu_1}\langle \wu^-\rangle^{\nu_1} g
\cdot
[\langle \p_{x_1}\rangle^{s}, \langle \wu^+\rangle^{\mu_2}\langle \wu^-\rangle^{\nu_2} ] h.
\end{align}
%\begin{align}\label{D30}
%&\langle \wu^+ \rangle^{2\mu} \langle \wu^- \rangle^{\mu} [\langle\p_{x_1}\rangle^{s},g] h \\[-4mm]\nonumber\\\nonumber
%%&=\langle \wu^+ \rangle^{2\mu} \langle \wu^- \rangle^{\mu}
%%\langle \p_{x_1}\rangle^{s} ( gh)
%%-\langle \p_{x_1}\rangle^{s}
%%\big( \langle \wu^+ \rangle^{2\mu} \langle \wu^- \rangle^{\mu}  g h \big)\\[-4mm]\nonumber\\\nonumber
%%&\quad +\langle \p_{x_1}\rangle^{s}
%%\big( \langle \wu^+ \rangle^{2\mu} \langle \wu^- \rangle^{\mu}  g h \big)
%%-\f{\langle \wu^+ \rangle^{2\mu}}{\langle \wu^- \rangle^{\mu}}g
%%\langle \p_{x_1}\rangle^{s}\big( \langle \wu^- \rangle^{2\mu}  h \big)\\\nonumber
%%&\quad+\f{\langle \wu^+ \rangle^{2\mu}}{\langle \wu^- \rangle^{\mu}}g
%%\cdot \big(\langle \p_{x_1}\rangle^{s} ( \langle \wu^- \rangle^{2\mu}  h  )
%%- \langle \wu^- \rangle^{2\mu} \langle\p_{x_1}\rangle^{s} h\big) \\\nonumber
%&=-[\langle \p_{x_1}\rangle^{s},\langle \wu^+ \rangle^{2\mu} \langle \wu^- \rangle^{\mu}]gh \\[-4mm]\nonumber\\\nonumber
%&\quad+\big[\langle \p_{x_1}\rangle^{s},
%\f{\langle \wu^+ \rangle^{2\mu}}{\langle \wu^- \rangle^{\mu}}g\big]
%\langle \wu^- \rangle^{2\mu}  h  \\[-4mm]\nonumber\\\nonumber
%&\quad+\f{\langle \wu^+ \rangle^{2\mu}}{\langle \wu^- \rangle^{\mu}}g
%\cdot
%[\langle \p_{x_1}\rangle^{s}, \langle \wu^- \rangle^{2\mu}] h.
%\end{align}
The second line on the right hand side of \eqref{D30} can be controlled by the classical commutator estimate Lemma \ref{lemD6} and the properties of the weight functions Lemma \ref{lemC3}.
The first and the third line can be estimated by Lemma \ref{lemD5} and the H\"{o}lder inequality.
%It remains to estimate the first line on the right hand side of \eqref{D30}.
%%By \eqref{D14}, we have the following expression:
%%\begin{align}\label{D32}
%%&\langle \wu^+(x_1) \rangle^{2\mu} \langle \wu^-(x_1) \rangle^{\mu} \langle \p_{x_1}\rangle^{\f 12} \big( g(x_1)h(x_1)\big) %\\[-4mm]\nonumber\\\nonumber
%%-\langle \p_{x_1}\rangle^{\f 12} \big( \langle \wu^+(x_1) \rangle^{2\mu} \langle \wu^-(x_1) \rangle^{\mu}  g(x_1)h(x_1) \big) \\\nonumber
%%&= \ \text{p.v.} \int_{\BR} (1-\p_{x_1}^2) B_{\f 32}(x_1-x_1')
%%\big[\langle \wu^+(x_1) \rangle^{2\mu} \langle \wu^-(x_1) \rangle^{\mu}\\%[-4mm]\nonumber\\
%%&\qquad\qquad\quad-\langle \wu^+(x_1')\rangle^{2\mu}\langle \wu^-(x_1')\rangle^{\mu}\big] g(x_1')h(x_1') \dx_1'.\nonumber
%%\end{align}
%%The second term on the left  hand side of \eqref{D32} is estimated
%%using
%By Lemma... and  the classical commutator estimate Lemma \ref{lemD6}, we obtain...
%
%(and furthermore, we need fractional interpolation in $L^\infty$ framework).
Thus the lemma is proved.
\end{proof}

\subsection{Weighted commutator estimate: higher order derivative}

In this subsection, we study the weighted commutator estimate with higher order fractional derivatives
\begin{align*}
& \big\| \langle \wu^+\rangle^{\mu} \langle \wu^-\rangle^{\nu} [\langle\p_{x_1}\rangle^{s}\p_{x_1},g] h \big\|_{L^2(\BR )}
\end{align*}
for $0<s<1$.
Before that, we study the commutator estimate
%of $\langle \p_{x_1}\rangle^{s} \p_{x_1}  $
% $(0<s<1)$ and the weight functions $\wu^\pm$.
\begin{align}\label{D31}
&\big\| [\langle \p_{x_1}\rangle^{s}\p_{x_1}, \langle \wu^+\rangle^{\mu} \langle \wu^-\rangle^{\nu}] h \big\|_{L^2(\BR )}.
\end{align}

For $h=h(x_1)$ defined in $\BR$,
%Now we study the commutator estimate for $\langle \p_{x_1}\rangle^{s} \p_{x_1}  $
% $(0<s<1)$.
 by \eqref{D1}, we have
\begin{align*}
&\langle \p_{x_1}\rangle^{s} \p_{x_1} h(x_1)
%= \int_{\BR} B_{2-s}(x_1-x_1') (1-\p_{x_1'}^2) \p_{x_1'}h(x_1') \dx_1' \\\nonumber
= \int_{\BR} \p_{x_1} B_{2-s}(x_1-x_1') h(x_1') \dx_1' \\\nonumber
&\quad + \text{p.v.} \int_{\BR} \p_{x_1}^2B_{2-s}(x_1-x_1') (\p_{x_1} h(x_1)-\p_{x_1'} h(x_1')) \dx_1'.
\end{align*}
%Here p.v. denotes the principal value integral.
%To estimate \eqref{D31},
Thus for $h(x_1)$ and $w(x_1)=\langle \wu^+\rangle^{\mu} \langle \wu^-\rangle^{\nu}$ defined on $\BR$, we have the symmetric commutator \cite{P}
\begin{align}\label{D32}
&\langle \p_{x_1}\rangle^{s}\p_{x_1} \big( w(x_1)  h(x_1)\big)
-w(x_1) \langle \p_{x_1}\rangle^{s}\p_{x_1} h(x_1)
-h(x_1) \langle \p_{x_1}\rangle^{s}\p_{x_1} w(x_1)
\\\nonumber
%&= \int_{\BR} \p_{x_1} B_{2-s}(x_1-x_1') \big(w(x_1') h(x_1')- w(x_1) h(x_1')-w(x_1') h(x_1) \big)\dx_1' \\\nonumber
%&\quad+ \ \text{p.v.} \int_{\BR} \p_{x_1}^2B_{2-s}(x_1-x_1') \big(w(x_1)-w(x_1')\big)\p_{x_1'}\big(h(x_1)-h(x_1')\big) \dx_1' \\\nonumber
%%\end{align}
%%%For the last line of \eqref{D32}, by integration by parts, we obtain
%%\begin{align}\label{D23}
&=\int_{\BR} \p_{x_1} B_{2-s}(x_1-x_1') \big(w(x_1') h(x_1')- w(x_1) h(x_1')-w(x_1') h(x_1) \big)\dx_1' \\\nonumber
%& \text{p.v.} \int_{\BR} \p_{x_1}^2B_{2-s}(x_1-x_1') \big(w(x_1)-w(x_1')\big)\p_{x_1'}\big(h(x_1)-h(x_1')\big) \dx_1' \\\nonumber
%&\quad +\text{p.v.} \int_{\BR} \p_{x_1}^2B_{2-s}(x_1-x_1')
%\p_{x_1'}  w(x_1') \big(h(x_1)-h(x_1')\big) \dx_1' \\\nonumber
&\quad - \text{p.v.} \int_{\BR} \p_{x_1}^3B_{2-s}(x_1-x_1') \big(w(x_1)-w(x_1')\big)\big(h(x_1)-h(x_1')\big) \dx_1' .
\end{align}
Thus in order to estimate \eqref{D31}, we need to estimate $\langle \p_1\rangle^{s}\p_1 \langle \wu^+\rangle^{\mu} \langle \wu^-\rangle^{\nu}$
and the right hand side of \eqref{D32}.

%In the sequel, we estimate them in several lemmas.

%The estimate of the second the third line is crazy.
%Now let us focus on the estimate of the third line.
Firstly, we estimate $\langle \p_1\rangle^{s}\p_1 \langle \wu^+\rangle^{\mu} \langle \wu^-\rangle^{\nu}$.
\begin{lem}\label{lemD24}
Under the assumptions \eqref{AA4}, \eqref{AA7} and \eqref{AA8}
with $\epsilon'$ sufficiently small,
for $0<s<1$, $\mu \in \BR,\ \nu\in\BR$,
there holds
\begin{align*}
%&| \langle \p_{x_1}\rangle^{s}\p_{x_1}
% \langle \wu^+(x_1)\rangle^{2\mu}|
%\lesssim \langle \wu^+(x_1)\rangle^{2\mu-1},
%\\[-4mm]\nonumber\\
&| \langle \p_{x_1}\rangle^{s}\p_{x_1}
\big(\langle \wu^+(x_1)\rangle^{\mu} \langle \wu^-(x_1)\rangle^{\nu} \big)| \\[-4mm]\nonumber\\
&\lesssim
\langle \wu^+(x_1)\rangle^{\mu-1}\langle \wu^-(x_1)\rangle^{\nu}
+\langle \wu^+(x_1)\rangle^{\mu}\langle \wu^-(x_1)\rangle^{\nu-1}.
\end{align*}
\end{lem}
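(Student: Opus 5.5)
Write $W(x_1)=\langle \wu^+(x_1)\rangle^{\mu}\langle \wu^-(x_1)\rangle^{\nu}$ and $\widetilde W(x_1)=\langle \wu^+\rangle^{\mu-1}\langle \wu^-\rangle^{\nu}+\langle \wu^+\rangle^{\mu}\langle \wu^-\rangle^{\nu-1}$. The plan is to write $\langle \p_{x_1}\rangle^{s}\p_{x_1}W=\langle \p_{x_1}\rangle^{s}(\p_{x_1}W)$ and use the representation \eqref{D1} applied to $h=\p_{x_1}W$:
\begin{align*}
\langle \p_{x_1}\rangle^{s}\p_{x_1}W(x_1)=\int_{\BR}B_{2-s}(x_1-x_1')\p_{x_1'}W(x_1')\dx_1'+\text{p.v.}\int_{\BR}\p_{x_1}^2B_{2-s}(x_1-x_1')\big(\p_{x_1}W(x_1)-\p_{x_1'}W(x_1')\big)\dx_1'.
\end{align*}
First I record pointwise bounds on derivatives of $W$. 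By the chain rule,
\begin{align*}
\p_{x_1}W=\mu\frac{\wu^+\p_1\wu^+}{\langle\wu^+\rangle^{2}}W+\nu\frac{\wu^-\p_1\wu^-}{\langle\wu^-\rangle^{2}}W.
\end{align*}
Lemma \ref{lemC3} gives $|\p_1\wu^\pm|\lesssim1$, so $|\p_{x_1}W|\lesssim \widetilde W$. Differentiating once more and using Lemma \ref{lemC5} ($|\p_1^2\wu^\pm|\lesssim\epsilon'$) gives $|\p_{x_1}^2W|\lesssim\widetilde W$ as well. Every term carries at least one factor $\langle\wu^\pm\rangle^{-1}$, either from differentiating $\wu/\langle \wu\rangle^2$ or from the product structure.

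Next I split each integral into $|x_1-x_1'|\le\f12$ and $|x_1-x_1'|\ge\f12$, exactly as in Lemma \ref{lemD0}. For the first integral, the near region uses $B_{2-s}\sim1$ and \eqref{D15}, which gives $\widetilde W(x_1')\sim\widetilde W(x_1)$, so its contribution is $\lesssim \widetilde W(x_1)$. In the far region I use \eqref{D19} in the reversed roles, $\langle\wu^\pm(x_1')\rangle^{\pm1}\le (3|x_1-x_1'|)^{1}\langle\wu^\pm(x_1)\rangle^{\pm1}$. This yields $\widetilde W(x_1')\lesssim|x_1-x_1'|^{|\mu|+|\nu|+2}\widetilde W(x_1)$, and the polynomial growth is absorbed by the $e^{-|x_1-x_1'|/2}$ decay of Lemma \ref{lemBe}.

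For the principal value integral, the near region is handled by the mean value theorem,
\begin{align*}
|\p_{x_1}W(x_1)-\p_{x_1'}W(x_1')|\le|x_1-x_1'|\sup_{\beta}|\p^2_{x_1}W(\beta x_1+(1-\beta)x_1')|\lesssim|x_1-x_1'|\widetilde W(x_1),
\end{align*}
again by \eqref{D15}. Combined with $|\p_{x_1}^2B_{2-s}|\sim|x_1-x_1'|^{-s-1}$, the integrand is bounded by $|x_1-x_1'|^{-s}\widetilde W(x_1)$, which is integrable since $s<1$. In the far region I bound the difference crudely by $|\p_{x_1}W(x_1)|+|\p_{x_1'}W(x_1')|\lesssim(1+|x_1-x_1'|^{|\mu|+|\nu|+2})\widetilde W(x_1)$ and again use the exponential decay of $\p_{x_1}^2B_{2-s}$.

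The main obstacle is the far-field step: I have to make sure the transfer of the weight $\widetilde W$ from $x_1'$ to $x_1$ works for negative as well as positive exponents. The exponents $\mu-1$, $\nu-1$ may have either sign, so \eqref{D19} must be applied in both directions, which is legitimate since it is symmetric in $x_1,x_1'$. A secondary point is checking $|\p_{x_1}^2W|\lesssim\widetilde W$, which requires the second-derivative bound from Lemma \ref{lemC5}.
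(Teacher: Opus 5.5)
Your proposal is correct and follows essentially the same route as the paper: apply \eqref{D1} to $\p_{x_1}W$, split both integrals into $|x_1-x_1'|\le\frac12$ and $|x_1-x_1'|\ge\frac12$, and use \eqref{D15} in the near region and \eqref{D19} together with the exponential decay of the Bessel potentials in the far region. The near-field principal value part is handled, as in the paper, by the mean value theorem and $|\p_{x_1}^2W|\lesssim\widetilde W$ (from Lemma \ref{lemC3} and Lemma \ref{lemC5}); you simply spell out details the paper leaves implicit, such as the two-sided use of \eqref{D19}.
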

%Note for the weight functions $\wu^\pm$, there hold
%\begin{align*}
%&\langle \wu^\pm(x_1)\rangle
%\sim \langle \wu^\pm(x_1')\rangle,\quad \textrm{if}\ \ |x_1-x_1'|\leq \f12, \\
%&\langle \wu^\pm(x_1')\rangle
%\leq 3  |x_1-x_1'|
%\langle \wu^\pm(x_1)\rangle ,\quad \textrm{if}\ \ |x_1-x_1'|\geq \f12.
%\end{align*}
%The proof of Lemma \ref{lemD24} is exactly the same to Lemma 3.14 in \cite{CL-24}.
%The details are omitted.
\begin{proof}
%We only show
%\begin{align*}
%&| \langle \p_{x_1}\rangle^{s}\p_{x_1}
%\big(\langle \wu^+(x_1)\rangle^{\mu} \langle \wu^-(x_1)\rangle^{\nu} \big)| \\[-4mm]\nonumber\\
%&\lesssim \langle \wu^+(x_1)\rangle^{\mu-1}\langle \wu^-(x_1)\rangle^{\nu}
%+\langle \wu^+(x_1)\rangle^{\mu}\langle \wu^-(x_1)\rangle^{\nu-1}.
%\end{align*}
%The other cases can be estimated similarly.
Denote $v(x_1)=\p_{x_1}(\langle \wu^+(x_1)\rangle^{\mu}\langle \wu^-(x_1)\rangle^{\nu})$.
By Lemma \ref{lemC3}, one has
\begin{align*}
v(x_1)\lesssim \langle \wu^+(x_1)\rangle^{\mu-1}\langle \wu^-(x_1)\rangle^{\nu}
+\langle \wu^+(x_1)\rangle^{\mu}\langle \wu^-(x_1)\rangle^{\nu-1}.
\end{align*}
By \eqref{D1}, we write
\begin{align}\label{D41}
&\langle \p_{x_1}\rangle^{s}\p_{x_1} (\langle \wu^+(x_1)\rangle^{\mu}\langle \wu^-(x_1)\rangle^{\nu})
=\langle \p_{x_1}\rangle^{s} v(x_1) \\\nonumber
&=\int_{\BR} B_{2-s}(x_1-x_1')v(x_1')  \dx_1' \\\nonumber
&\quad  +\text{p.v.}\int_{\BR} \p_{x_1}^2B_{2-s}(x_1-x_1') \big(v(x_1)-v(x_1')\big) \dx_1'.
\end{align}
Recalling that for the weight functions $\wu^\pm$, there hold
\begin{align*}
&\langle \wu^\pm(x_1)\rangle
\sim \langle \wu^\pm(x_1')\rangle,\quad \textrm{if}\ \ |x_1-x_1'|\leq \f12, \\
&\langle \wu^\pm(x_1')\rangle
\leq 3  |x_1-x_1'|
\langle \wu^\pm(x_1)\rangle ,\quad \textrm{if}\ \ |x_1-x_1'|\geq \f12.
\end{align*}
Thus for the second line of \eqref{D41},
we estimate similar to the proof of Lemma \ref{lemD0}:
\begin{align*}
& \int_{\BR} B_{2-s}(x_1-x_1')
v(x_1') \dx_1' \\\nonumber
&\lesssim
\big( \langle \wu^+(x_1)\rangle^{\mu-1}\langle \wu^-(x_1)\rangle^{\nu}
+\langle \wu^+(x_1)\rangle^{\mu}\langle \wu^-(x_1)\rangle^{\nu-1} \big) \\[-5mm]\nonumber\\
&\quad \Big(\int_{|x_1-x_1'|\leq \f12} |B_{2-s}(x_1-x_1')|  \dx_1'
+ \int_{|x_1-x_1'|\geq \f12} |x_1-x_1'|^{|\mu|+|\nu| }\cdot |B_{2-s}(x_1-x_1')|\dx_1'\Big)\\[-4mm]\nonumber\\
&\lesssim
 \langle \wu^+(x_1)\rangle^{\mu-1}\langle \wu^-(x_1)\rangle^{\nu}
+\langle \wu^+(x_1)\rangle^{\mu}\langle \wu^-(x_1)\rangle^{\nu-1} .
\end{align*}
For the last line of \eqref{D41},
if $|x_1-x_1'|\geq \f12$, by the exponential decay
of the Bessel potentials at infinity, we obtain
\begin{align*}
& \text{p.v.}
\int_{|x_1-x_1'|\geq \f12} \p_{x_1}^2B_{2-s}(x_1-x_1') \big(
v(x_1)-v(x_1') \big) \dx_1' \\\nonumber
&\lesssim
 \langle \wu^+(x_1)\rangle^{\mu-1}\langle \wu^-(x_1)\rangle^{\nu}
+\langle \wu^+(x_1)\rangle^{\mu}\langle \wu^-(x_1)\rangle^{\nu-1}   .
\end{align*}
If $|x_1-x_1'|\leq \f12$, by Lemma \ref{lemC3} and Lemma \ref{lemC5}, we obtain
%for some $0<\beta<1$,
%we write
\begin{align*}
|v(x_1)-v(x_1')|
%&\lesssim |x_1-x_1'|\cdot |\p_{x_1}v(\beta x_1+(1-\beta)x_1') |\\[-4mm]\nonumber\\
\lesssim |x_1-x_1'|
\big( \langle \wu^+(x_1)\rangle^{\mu-1}\langle \wu^-(x_1)\rangle^{\nu}
+\langle \wu^+(x_1)\rangle^{\mu}\langle \wu^-(x_1)\rangle^{\nu-1} \big) .
\end{align*}
Consequently,   we obtain
\begin{align*}
& \text{p.v.}
\int_{|x_1-x_1'|\leq \f12} \p_{x_1}^2B_{2-s}(x_1-x_1') \big(v(x_1)-v(x_1')\big) \dx_1'\\\nonumber
%&\lesssim \langle \wu^+(x_1)\rangle^{2\mu-2}\langle \wu^-(x_1)\rangle^{\mu}
%+\langle \wu^+(x_1)\rangle^{2\mu}\langle \wu^-(x_1)\rangle^{\mu-2}
%+\langle \wu^+(x_1)\rangle^{2\mu-1}\langle \wu^-(x_1)\rangle^{\mu-1} \\[-4mm]\nonumber\\\nonumber
&\lesssim \langle \wu^+(x_1)\rangle^{\mu-1}\langle \wu^-(x_1)\rangle^{\nu}
+\langle \wu^+(x_1)\rangle^{\mu}\langle \wu^-(x_1)\rangle^{\nu-1}.
\end{align*}
This finishes the proof of the lemma.
\end{proof}

For the second line of \eqref{D32},
by a similar argument to the proof of Lemma \ref{lemD0}, we have
\begin{lem}\label{lemD25}
Let $0<s<1$, $\mu \in\BR,\, \nu\in\BR $. Under the assumptions \eqref{AA4}, \eqref{AA7} and \eqref{AA8}
with $\epsilon'$ sufficiently small, there holds
\begin{align*}
%&\big\| \int_{\BR} \p_{x_1} B_{2-s}(x_1-x_1') \big(\langle \wu^+(x_1') \rangle^{2\mu} h(x_1')
% - \langle \wu^+(x_1) \rangle^{2\mu} h(x_1')\\\nonumber
%&\qquad -\langle \wu^+(x_1') \rangle^{2\mu} h(x_1) \big)dx_1'  \big\|_{L^2(\BR)}
%\lesssim \big\| \langle \wu^+ \rangle^{2\mu}  h  \big\|_{L^2(\BR)},\\\nonumber
&\big\| \int_{\BR} \p_{x_1} B_{2-s}(x_1-x_1') \big(\langle \wu^+(x_1') \rangle^{\mu}
\langle \wu^-(x_1') \rangle^{\nu} h(x_1')
 - \langle \wu^+(x_1) \rangle^{\mu}\langle \wu^-(x_1) \rangle^{\nu} h(x_1')\\\nonumber
&\qquad\quad -\langle \wu^+(x_1') \rangle^{\mu}\langle \wu^-(x_1') \rangle^{\nu} h(x_1) \big)\dx_1'  \big\|_{L^2(\BR)}
\lesssim \big\| \langle \wu^+ \rangle^{\mu} \langle \wu^- \rangle^{\nu}  h  \big\|_{L^2(\BR)},
\end{align*}
provided the right hand side is finite.
\end{lem}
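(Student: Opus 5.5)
Write $W(x_1)=\langle \wu^+(x_1)\rangle^{\mu}\langle \wu^-(x_1)\rangle^{\nu}$. The integrand groups as
\begin{align*}
W(x_1')h(x_1')-W(x_1)h(x_1')-W(x_1')h(x_1)
=\big(W(x_1')-W(x_1)\big)h(x_1')-W(x_1')h(x_1).
\end{align*}
I would treat the two pieces separately. The first piece is, up to sign, $\int_{\BR}\p_{x_1}B_{2-s}(x_1-x_1')\big(W(x_1)-W(x_1')\big)h(x_1')\dx_1'$. This has the same structure as the operator in Lemma \ref{lemD0}, with kernel $\p_{x_1}B_{2-s}$ in place of $(1-\p_{x_1}^2)B_{2-s}$, and its kernel is even milder.

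For this first piece I would split into $|x_1-x_1'|\le \f12$ and $|x_1-x_1'|\ge\f12$, as in Lemma \ref{lemD0}. In the near region, \eqref{D15} and the mean value theorem with Lemma \ref{lemC3} give
\begin{align*}
|W(x_1)-W(x_1')|\lesssim |x_1-x_1'|\,W(x_1').
\end{align*}
The kernel is then bounded by $|x_1'|\,|\p_{x_1}B_{2-s}(x_1')|\lesssim |x_1'|^{1-s}$ by Lemma \ref{lemBe}, which is integrable on $|x_1'|\le\f12$ (in fact even the crude bound $|W(x_1)|+|W(x_1')|\lesssim W(x_1')$ would suffice, since $|x_1'|^{-s}$ is integrable). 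In the far region, \eqref{D19} gives $W(x_1)\lesssim |x_1-x_1'|^{|\mu|+|\nu|}W(x_1')$, and this polynomial growth is absorbed by the exponential decay of $\p_{x_1}B_{2-s}$. Young's inequality then bounds the $L^2$ norm by $\|Wh\|_{L^2}$.

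The second piece is $-h(x_1)\int_{\BR}\p_{x_1}B_{2-s}(x_1-x_1')W(x_1')\dx_1'$, which is $-h(x_1)\,\big(\p_{x_1}B_{2-s}*W\big)(x_1)$. It suffices to show the pointwise bound
\begin{align*}
|(\p_{x_1}B_{2-s}*W)(x_1)|\lesssim W(x_1),
\end{align*}
because then its $L^2$ norm is at most $\|Wh\|_{L^2}$. I would prove this by the same two-region splitting used for the first line of \eqref{D41} in Lemma \ref{lemD24}. Near the origin, $W(x_1')\sim W(x_1)$ and $\p_{x_1}B_{2-s}$ is integrable there. Away from it, $W(x_1')\le (3|x_1-x_1'|)^{|\mu|+|\nu|}W(x_1)$, using \eqref{D19} with the roles of $x_1,x_1'$ swapped, and the exponential decay again closes the estimate. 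Since $\p_{x_1}B_{2-s}\sim x_1|x_1|^{-s-1}$ is locally integrable for $s<1$, no principal value or cancellation is needed anywhere.

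I do not expect a real obstacle here, because the kernel $\p_{x_1}B_{2-s}$ is locally integrable; this is exactly what distinguishes this lemma from Lemma \ref{lemD0}. The only care needed is in applying \eqref{D15} and \eqref{D19} symmetrically in $x_1,x_1'$, which both hold by Lemma \ref{lemC3}, and in using $|\mu|+|\nu|$ powers so that negative exponents are handled.
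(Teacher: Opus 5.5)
Your proof is correct and follows the paper's route. The paper only remarks that this estimate follows ``by a similar argument to the proof of Lemma \ref{lemD0}'', i.e.\ the near/far splitting using \eqref{D15}, \eqref{D19}, the Bessel-kernel bounds and Young's inequality. You carry that out explicitly, and you handle the $-W(x_1')h(x_1)$ piece by the same pointwise convolution bound used for the first line of \eqref{D41}; you are also right that local integrability of $\p_{x_1}B_{2-s}$ means no cancellation is needed.
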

For the last line of \eqref{D32}, we have the following estimate.
%we estimate it in the following lemma.
%\begin{align}
%\text{p.v.} \int_{\BR} \p_{x_1}^3B_{2-s}(x_1-x_1') \big(w(x_1)-w(x_1')\big)\big(h(x_1)-h(x_1')\big) \dx_1' .
%\end{align}
\begin{lem}\label{lemD26}
Let $0<s<1$, $\mu\in\BR,\, \nu\in\BR $.
Under the assumptions \eqref{AA4}, \eqref{AA7} and \eqref{AA8}
with $\epsilon'$ sufficiently small, there holds
\begin{align*} %\label{D51}
%&\big\| \text{p.v.} \int_{\BR} \p_{x_1}^3B_{2-s}(x_1-x_1')
%\big(\langle \wu^+(x_1) \rangle^{2\mu}-\langle \wu^+(x_1') \rangle^{2\mu} \big) \\\nonumber
%&\qquad \cdot\big(h(x_1)-h(x_1')\big) \dx_1' \big\|_{L^2(\BR)}
%\lesssim \big\| \langle \wu^+ \rangle^{2\mu} \langle \p_{x_1}\rangle^s h  \big\|_{L^2(\BR)},\\\nonumber
&\big\| \text{p.v.} \int_{\BR} \p_{x_1}^3B_{2-s}(x_1-x_1')
\big(\langle \wu^+(x_1) \rangle^{\mu}\langle \wu^-(x_1) \rangle^{\nu}
-\langle \wu^+(x_1') \rangle^{\mu}\langle \wu^-(x_1') \rangle^{\nu} \big) \\\nonumber
&\qquad\quad \cdot\big(h(x_1)-h(x_1')\big) \dx_1' \big\|_{L^2(\BR)}
 \lesssim \big\| \langle \wu^+ \rangle^{\mu}\langle \wu^- \rangle^{\nu} \langle \p_{x_1}\rangle^s h  \big\|_{L^2(\BR)},
\end{align*}
provided the right hand side is finite.
\end{lem}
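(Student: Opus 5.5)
Write $W(x_1)=\langle \wu^+(x_1)\rangle^{\mu}\langle \wu^-(x_1)\rangle^{\nu}$ and $K(y)=\p_{y}^3B_{2-s}(y)$. As in Lemma \ref{lemD0}, I split the integral into the near region $\{x_1':|x_1-x_1'|\leq \f12\}$ and the far region $\{x_1':|x_1-x_1'|\geq \f12\}$. The aim in each region is to bring the weight inside as $W(x_1)h(x_1)$ or $W(x_1')h(x_1')$, and then to replace $h$ by $\langle\p_{x_1}\rangle^s h$ using Lemma \ref{lemD23}.

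\emph{Far region.} By Lemma \ref{lemBe}, $|K(y)|\lesssim e^{-|y|/2}$ there. I expand $(W(x_1)-W(x_1'))(h(x_1)-h(x_1'))$ into four products and bound $W(x_1)\leq 3^{|\mu|+|\nu|}|x_1-x_1'|^{|\mu|+|\nu|}W(x_1')$ and conversely, using \eqref{D19}. This gives contributions of two kinds:
\[
W(x_1)|h(x_1)|\int_{|y|\geq\f12}|K(y)|\,\d y
\quad\text{and}\quad
\int_{|x_1-x_1'|\geq \f12}|x_1-x_1'|^{|\mu|+|\nu|}e^{-|x_1-x_1'|/2}\,W(x_1')|h(x_1')|\,\d x_1'.
\]
By Young's inequality both are controlled by $\|Wh\|_{L^2}$. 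Lemma \ref{lemD23} with $g=\langle\p_{x_1}\rangle^s h$ then gives $\|Wh\|_{L^2}\lesssim \|W\langle\p_{x_1}\rangle^s h\|_{L^2}$.

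\emph{Near region.} This is the main obstacle, since $|K(y)|\sim |y|^{-s-2}$ there. The mean value theorem, Lemma \ref{lemC3} and \eqref{D15} give $|W(x_1)-W(x_1')|\lesssim |x_1-x_1'|\,W(x_1)$. The integrand is therefore bounded by
\[
|x_1-x_1'|^{-s-1}\,W(x_1)\,|h(x_1)-h(x_1')|,
\]
which is the Gagliardo-type kernel for $s$ derivatives. Since \eqref{D15} also gives $W(x_1)\sim W(x_1')$ in this region, I insert the weight into the difference and write
\[
W(x_1)(h(x_1)-h(x_1')) \approx (Wh)(x_1)-(Wh)(x_1') + h(x_1')\,(W(x_1)-W(x_1')).
\]
\begin{itemize}
\item The error term $h(x_1')(W(x_1)-W(x_1'))$ costs one more factor $|x_1-x_1'|$. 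Its kernel becomes $|y|^{-s}$, which is integrable, so by Young it is bounded by $\|Wh\|_{L^2}$.
\item The main term is a truncated Gagliardo-type operator applied to $Wh$.
\end{itemize}

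For the main term, the quantity $\int_{|y|\leq\f12}|y|^{-s-1}|g(x_1)-g(x_1-y)|\,\d y$ has only an $L^1$-type modulus of smoothness. In $L^2$, Minkowski's inequality bounds it by $\int_{|y|\le 1/2}|y|^{-s-1}\|g-g(\cdot-y)\|_{L^2}\,\d y$. I then use $\|g-g(\cdot-y)\|_{L^2}\lesssim |y|^{s'}\|\langle\p_{x_1}\rangle^{s'}g\|_{L^2}$, but this needs $s'>s$ for the $y$-integral to converge. Taking $s'=s$ exactly fails logarithmically, so the plain $L^1$-kernel bound is borderline here.

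To avoid this, I do not take absolute values before integrating. Instead I keep the full symmetric structure
\[
\mathrm{p.v.}\int K(x_1-x_1')\,(W(x_1)-W(x_1'))\,(h(x_1)-h(x_1'))\,\d x_1'
\]
and recognize it, via \eqref{D32}, as a bilinear commutator of order $s+1$. One derivative falls on $W$, which is smooth at unit scale with $|\p_{x_1}W|\lesssim W$, and $s$ derivatives fall on $h$. I would then apply the Kato–Ponce/Li estimate of Lemma \ref{lemD6} locally after a partition of unity at unit scale. On each unit interval $W$ is essentially constant, equal to $W(x_1)$ up to a factor, by \eqref{D15}. Summing over intervals using the equivalence of weights, and using Lemma \ref{lemD5} to move the weight across $\langle\p_{x_1}\rangle^s$, the local pieces add up to $\|W\langle\p_{x_1}\rangle^s h\|_{L^2}$.
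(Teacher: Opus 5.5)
Your diagnosis of the near region is correct. After taking absolute values you are left with $\int_{|y|\le 1/2}|y|^{-1-s}|g(x_1)-g(x_1-y)|\,\mathrm{d}y$. This is an $L^2_{x_1}L^1_y$ quantity that $\|g\|_{H^s}$ does not control, so that line is a dead end.

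The paper escapes it without any commutator theory. It Taylor-expands $v=W=\langle \underline{w}^+\rangle^{\mu}\langle \underline{w}^-\rangle^{\nu}$ to second order at $x_1'$ (see \eqref{D54}). The quadratic remainder gives the integrable kernel $(x_1-x_1')^2\partial_{x_1}^3B_{2-s}(x_1-x_1')=O(|x_1-x_1'|^{-s})$. In the linear term it writes $\partial v(x_1')(h(x_1)-h(x_1'))=(\partial v\,h)(x_1)-(\partial v\,h)(x_1')-(\partial v(x_1)-\partial v(x_1'))h(x_1)$ (see \eqref{D56}). The last piece gains a factor $|x_1-x_1'|$ by Lemma \ref{lemC5}. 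The first piece has the even kernel $y\,\partial^3B_{2-s}(y)\approx c|y|^{-1-s}$ and no absolute values. So, up to far-field and integrable corrections, it is $|\partial_{x_1}|^s(\partial v\,h)$ (see \eqref{D58}). That is bounded by $\|\langle\partial_{x_1}\rangle^s(\partial v\,h)\|_{L^2}\lesssim\|W\langle\partial_{x_1}\rangle^sh\|_{L^2}$ by the mechanism of Lemma \ref{lemD5}. The point is to peel the derivative off the weight first, so that what acts on $h$ is a genuine signed fractional derivative.

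Your replacement route is different and can be made to work. By \eqref{D32}, the full integral equals the term of Lemma \ref{lemD25} minus $[\langle\partial_{x_1}\rangle^s\partial_{x_1},W]h-h\,\langle\partial_{x_1}\rangle^s\partial_{x_1}W$. The last term is handled by Lemmas \ref{lemD24} and \ref{lemD23}. So everything reduces to $\|[\langle\partial_{x_1}\rangle^s\partial_{x_1},W]h\|_{L^2}\lesssim\|W\langle\partial_{x_1}\rangle^sh\|_{L^2}$, and your far-region estimate becomes unnecessary.

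That reduced bound is exactly Lemma \ref{lemD9}, which the paper derives \emph{from} the present lemma. You are reversing the logical order and must prove it from scratch, and this is where your sketch is thin.
\begin{enumerate}
\item Lemma \ref{lemD6} is global. Applied to $\chi_kW$, it returns $\|\langle\partial_{x_1}\rangle^sh\|_{L^2(\mathbb{R})}$, which cannot be summed against $W_k$ (the size of $W$ on $\mathrm{supp}\,\chi_k$). You must also cut $h$ with a fattened $\tilde\chi_k$.
\item For $(1-\tilde\chi_k)h$, the commutator collapses to $-\chi_kW\langle\partial_{x_1}\rangle^s\partial_{x_1}((1-\tilde\chi_k)h)$. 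Its kernel is smooth and decays exponentially, so it is controlled by $\|Wh\|_{L^2}$.
\item For $\tilde\chi_kh$, you need $\|\partial_{x_1}(\chi_kW)\|_{L^\infty}+\|\langle\partial_{x_1}\rangle^s\partial_{x_1}(\chi_kW)\|_{L^\infty}\lesssim W_k$. This requires Lemmas \ref{lemC3}, \ref{lemC5} and \ref{lemD21} with $q=\infty$, which is more than ``$W$ essentially constant up to a factor''.
\item You then need $\sum_kW_k^2\|\langle\partial_{x_1}\rangle^s(\tilde\chi_kh)\|_{L^2}^2\lesssim\|W\langle\partial_{x_1}\rangle^sh\|_{L^2}^2$. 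This follows by splitting off $[\langle\partial_{x_1}\rangle^s,\tilde\chi_k]h$, whose kernel is integrable by \eqref{D14}.
\item Finally, the pieces are only exponentially localized, not disjoint, so the $\ell^2$ summation needs an overlap-plus-tail argument.
\end{enumerate}

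All of this goes through. Your route buys a clean reduction to the standard unweighted Kato--Ponce estimate and yields Lemma \ref{lemD9} directly. The paper's Taylor argument is shorter and needs only the kernel asymptotics of Lemma \ref{lemBe}.
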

%The proof of Lemma \ref{lemD26} is exactly the same to Lemma 4.15 in \cite{CL-24}.
%The details are omitted.
\begin{proof}
%We only present the details for
%\begin{align*}
%&\big\| \text{p.v.} \int_{\BR} \p_{x_1}^3B_{2-s}(x_1-x_1')
%\big(\langle \wu^+(x_1) \rangle^{\mu}\langle \wu^-(x_1) \rangle^{\nu}
%-\langle \wu^+(x_1') \rangle^{\mu}\langle \wu^-(x_1') \rangle^{\nu} \big) \\\nonumber
%&\qquad\quad \cdot\big(h(x_1)-h(x_1')\big) \dx_1' \big\|_{L^2(\BR)}
% \lesssim \big\| \langle \wu^+ \rangle^{\mu}\langle \wu^- \rangle^{\nu} \langle \p_{x_1}\rangle^s h  \big\|_{L^2(\BR)}.
%\end{align*}
%The other cases can be treated in the same method.
We still divide the integral domain $\BR$ into two domains
$\{x_1':|x_1-x_1'|\leq \f12\}$ and $\{x_1':|x_1-x_1'|\geq \f12\}$.
Recalling that if $|x_1-x_1'|\geq \f12$, there hold
\begin{align*}
\langle \wu^\pm(x_1)\rangle
\leq 3 |x_1-x_1'|\langle \wu^\pm(x_1')\rangle,
\quad  \langle \wu^\pm(x_1')\rangle
\leq 3 |x_1-x_1'|\langle \wu^\pm(x_1)\rangle.
\end{align*}
We can estimate similar to Lemma \ref{lemD0} by the exponential decay
of the Bessel potentials. Precisely, by Young's inequality, we deduce
\begin{align*} %\label{D52}
&\big\| \int_{|x_1-x_1'|\geq \f12} \p_{x_1}^3B_{2-s}(x_1-x_1')
\big(\langle \wu^+(x_1) \rangle^{\mu}\langle \wu^-(x_1) \rangle^{\nu} \\\nonumber
&\qquad\qquad-\langle \wu^+(x_1') \rangle^{\mu}\langle \wu^-(x_1') \rangle^{\nu} \big)\big(h(x_1)-h(x_1')\big) \dx_1' \big\|_{L^2(\BR)} \\[-5mm]\nonumber\\\nonumber
&\lesssim \big\| \int_{|x_1-x_1'|\geq \f12} |\p_{x_1}^3B_{2-s}(x_1-x_1')|\cdot
|x_1-x_1'|^{|\mu|+|\nu|}\langle \wu^+(x_1) \rangle^{\mu}\langle \wu^-(x_1) \rangle^{\nu}
|h(x_1)|   \dx_1' \big\|_{L^2(\BR)} \\\nonumber
&\quad+\big\| \int_{|x_1-x_1'|\geq \f12} |\p_{x_1}^3 B_{2-s}(x_1-x_1')|
\cdot |x_1-x_1'|^{|\mu|+|\nu|}\langle \wu^+(x_1') \rangle^{\mu}\langle \wu^-(x_1') \rangle^{\nu}
|h(x_1')|  \dx_1' \big\|_{L^2(\BR)} \\\nonumber
&\lesssim \|  \langle \wu^+ \rangle^{\mu}\langle \wu^- \rangle^{\nu} h \|_{L^2(\BR)}.
\end{align*}

Next we estimate the case of $|x_1-x_1'|\leq \f 12$.
To simplify the presentation, denote $v(x_1)=\langle \wu^+(x_1) \rangle^{\mu} \langle \wu^-(x_1) \rangle^{\nu} $.
We write
\begin{align*}
v(x_1)=v(x_1')+(x_1-x_1')\p_{x_1}v(x_1')+\f12 (x_1-x_1')^2\p_{x_1}^2v(\beta x_1+ (1-\beta)x_1')
,\,\, 0<\beta<1 .
\end{align*}
By Lemma \ref{lemC3} and Lemma \ref{lemC5}, there hold
\begin{align*}
&|\p_{x_1}v(x_1)| \lesssim \langle \wu^+(x_1)\rangle^{\mu}\langle \wu^-(x_1)\rangle^{\nu-1}
+\langle \wu^+(x_1)\rangle^{\mu-1}\langle \wu^-(x_1)\rangle^{\nu} ,\\[-4mm]\nonumber\\
&|\p_{x_1}^2v(x_1)| \lesssim \langle \wu^+(x_1)\rangle^{\mu-2}\langle \wu^-(x_1)\rangle^{\nu}
+\langle \wu^+(x_1)\rangle^{\mu-1}\langle \wu^-(x_1)\rangle^{\nu-1}
+\langle \wu^+(x_1)\rangle^{\mu}\langle \wu^-(x_1)\rangle^{\nu-2} .
\end{align*}
Thus
\begin{align}\label{D54}
&\text{p.v.} \int_{|x_1-x_1'|\leq \f12} \p_{x_1}^3B_{2-s}(x_1-x_1')
\big(v(x_1) -v(x_1') \big)\big(h(x_1)-h(x_1')\big) \dx_1' \\\nonumber
&=\f12\text{p.v.} \int_{|x_1-x_1'|\leq \f12} (x_1-x_1')^2 \p_{x_1}^3B_{2-s}(x_1-x_1')
 \, \p_{x_1}^2v(\beta x_1+ (1-\beta)x_1') \big(h(x_1)-h(x_1')\big) \dx_1' \\\nonumber
&\quad+\text{p.v.} \int_{|x_1-x_1'|\leq \f12} (x_1-x_1')\p_{x_1}^3 B_{2-s}(x_1-x_1')
 \p_{x_1}v(x_1') \big(h(x_1)-h(x_1')\big) \dx_1' .
\end{align}
For the second line of \eqref{D54}, we have the following bound:
\begin{align*}
&\f12\text{p.v.} \int_{|x_1-x_1'|\leq \f12} (x_1-x_1')^2 \p_{x_1}^3B_{2-s}(x_1-x_1')
 \, \p_{x_1}^2v(\beta x_1+ (1-\beta)x_1') \big(h(x_1)-h(x_1')\big) \dx_1' \\\nonumber
&\lesssim  \int_{|x_1-x_1'|\leq \f12} |x_1-x_1'|^{-s }
\big( \langle \wu^+(x_1')\rangle^{\mu}\langle \wu^-(x_1')\rangle^{\nu}  \big)\cdot |h(x_1)-h(x_1')| \dx_1' .
\end{align*}
Taking the $L^2$ norm of the above expression, by the Young inequality,
it is further controlled by
\begin{align*}
%\big\| (\langle \wu^+ \rangle^{2\mu}+\langle \wu^- \rangle^{\mu}) h\big\|_{L^2(\BR)}
%\lesssim
\big\| \langle \wu^+ \rangle^{\mu}\langle \wu^- \rangle^{\nu} h\big\|_{L^2(\BR)} .
\end{align*}
For the last line of \eqref{D54}, the integral kernel is still too singular. We further organize that
\begin{align*}
&\p_{x_1}v(x_1') \big(h(x_1)-h(x_1')\big) \\\nonumber
&=\big(\p_{x_1}v(x_1)\cdot h(x_1)-\p_{x_1}v(x_1')\cdot h(x_1')\big) \\\nonumber
&\quad-\big( \p_{x_1}v(x_1)-\p_{x_1}v(x_1')\big)  h(x_1)  .
\end{align*}
Consequently, we organize the last line of \eqref{D54} as follows
\begin{align}\label{D56}
&\text{p.v.} \int_{|x_1-x_1'|\leq \f12} (x_1-x_1')\p_{x_1}^3 B_{2-s}(x_1-x_1')
 \p_{x_1}v(x_1') \big(h(x_1)-h(x_1')\big) \dx_1' \\\nonumber
&=\text{p.v.} \int_{|x_1-x_1'|\leq \f12} (x_1-x_1')\p_{x_1}^3 B_{2-s}(x_1-x_1')
 \big(\p_{x_1}v(x_1)\cdot h(x_1)-\p_{x_1}v(x_1')\cdot h(x_1')\big)  \dx_1'\\\nonumber
&\quad-\text{p.v.} \int_{|x_1-x_1'|\leq \f12} (x_1-x_1')\p_{x_1}^3 B_{2-s}(x_1-x_1')
\big(\p_{x_1}v(x_1)-\p_{x_1}v(x_1')\big)  h(x_1) \dx_1' .
\end{align}
To estimate the last line of \eqref{D56}, by Lemma \ref{lemC3} and Lemma \ref{lemC5}, we get
\begin{align*}
&|\p_{x_1}v(x_1)-\p_{x_1}v(x_1')|
%&\lesssim |x_1-x_1'|\cdot |\p_{x_1}^2v(\beta x_1+(1-\beta)x_1')| \\[-4mm]\nonumber\\
\lesssim |x_1-x_1'| \big(\langle \wu^+(x_1)\rangle^{\mu-2}\langle \wu^-(x_1)\rangle^{\nu} \\[-4mm]\nonumber\\
&\qquad+\langle \wu^+(x_1)\rangle^{\mu-1}\langle \wu^-(x_1)\rangle^{\nu-1}
+\langle \wu^+(x_1)\rangle^{\mu}\langle \wu^-(x_1)\rangle^{\nu-2} \big) .
\end{align*}
Hence the last line of \eqref{D56} has the following bound
\begin{align*}
&\big|\text{p.v.} \int_{|x_1-x_1'|\leq \f12} (x_1-x_1')\p_{x_1}^3 B_{2-s}(x_1-x_1')
\big(\p_{x_1}v(x_1)-\p_{x_1}v(x_1')\big)  h(x_1) \dx_1' \big|\\\nonumber
&\lesssim  \int_{|x_1-x_1'|\leq \f12} (x_1-x_1')^2|\p_{x_1}^3 B_{2-s}(x_1-x_1')|
\big(\langle \wu^+(x_1)\rangle^{\mu}\langle \wu^-(x_1)\rangle^{\nu} \big) |h(x_1)| \dx_1'.
\end{align*}
Note that the integral kernel $x_1^2\p_{x_1}^3B_{2-s}(x_1)$ belongs to $L^1$.
Thus taking the $L^2$ norm of the above expression, % by the Young inequality,
it is controlled by
\begin{align*}
%\big\| (\langle \wu^+ \rangle^{2\mu}+\langle \wu^- \rangle^{\mu}) h\big\|_{L^2(\BR)}
%\lesssim
\big\| \langle \wu^+ \rangle^{\mu}\langle \wu^- \rangle^{\nu} h\big\|_{L^2(\BR)} .
\end{align*}
For the second line of \eqref{D56}, we write
%\begin{align}\label{D58}
%&\big\| \text{p.v.} \int_{|x_1-x_1'|\leq \f12} (x_1-x_1')\p_{x_1}^3B_{2-s}(x_1-x_1') \\\nonumber
%&\qquad\qquad\cdot \big(\p_{x_1}v(x_1)\cdot h(x_1)-\p_{x_1}v(x_1')\cdot h(x_1')\big)  \dx_1'
% \big\|_{L^2(\BR)}\\[-4mm]\nonumber\\\nonumber
%&\leq  \big\| \text{p.v.} \int_{\BR} (x_1-x_1')\p_{x_1}^3B_{2-s}(x_1-x_1')
% \big(\p_{x_1}v(x_1)\cdot h(x_1)-\p_{x_1}v(x_1')\cdot h(x_1')\big)  \dx_1'
% \big\|_{L^2(\BR)}\\\nonumber
%&\quad +\big\| \text{p.v.} \int_{|x_1-x_1'|\geq \f12} (x_1-x_1')\p_{x_1}^3 B_{2-s}(x_1-x_1')
% \big(\p_{x_1}v(x_1)\cdot h(x_1)-\p_{x_1}v(x_1')\cdot h(x_1')\big)  \dx_1'
% \big\|_{L^2(\BR)}.
%\end{align}
%The last line of \eqref{D58} can be estimated by the exponential decay of Bessel potentials at infinity.
\begin{align} \label{D58}
& \text{p.v.} \int_{|x_1-x_1'|\leq \f12} (x_1-x_1')\p_{x_1}^3 B_{2-s}(x_1-x_1') \\\nonumber
&\qquad\cdot \big(\p_{x_1}v(x_1)\cdot h(x_1)-\p_{x_1}v(x_1')\cdot h(x_1')\big)  \dx_1'\\\nonumber
&\sim \text{p.v.} \int_{|x_1-x_1'|\leq \f12} |x_1-x_1'|^{-1-s}
 \big(\p_{x_1}v(x_1)\cdot h(x_1)-\p_{x_1}v(x_1')\cdot h(x_1')\big)  \dx_1' \\[-4mm]\nonumber\\\nonumber
&\sim |\p_{x_1}|^s \big(\p_{x_1}v(x_1)\cdot h(x_1)\big)\\[-4mm]\nonumber\\\nonumber
&\quad- \text{p.v.} \int_{|x_1-x_1'|\geq \f12} |x_1-x_1'|^{-1-s}
 \big(\p_{x_1}v(x_1)\cdot h(x_1)-\p_{x_1}v(x_1')\cdot h(x_1')\big)  \dx_1'  .
\end{align}
Thus taking the $L^2$ norm of \eqref{D58}, for the fourth line of \eqref{D58}, repeating the argument of the proof of Lemma \ref{lemD5}, it is further bounded by
\begin{align*}
&\big\| |\p_{x_1}|^s \big(\p_{x_1}v \cdot h \big)\big\|_{L^2(\BR)}
\leq \| \langle \p_{x_1}\rangle^s \big(\p_{x_1}v \cdot h \big)\|_{L^2(\BR)}\\[-4mm]\\
&\lesssim \big\|
\langle \wu^+ \rangle^{\mu}\langle \wu^- \rangle^{\nu} \langle \p_{x_1}\rangle^s h\big\|_{L^2(\BR)}.
\end{align*}
For the last term of \eqref{D58}, by the Young inequality, we have
\begin{align*}
&\big\| \text{p.v.} \int_{|x_1-x_1'|\geq \f12} |x_1-x_1'|^{-1-s}
 \big(\p_{x_1}v(x_1)\cdot h(x_1)-\p_{x_1}v(x_1')\cdot h(x_1')\big)  \dx_1'  \big\|_{L^2(\BR)} \\
&\lesssim \|  \p_{x_1}v \cdot h  \|_{L^2(\BR)}
\lesssim \big\| \langle \wu^+ \rangle^{\mu}\langle \wu^- \rangle^{\nu}   h\big\|_{L^2(\BR)} .
\end{align*}
This finishes the proof of the lemma.
\end{proof}
Now we present the bound for \eqref{D31}.
Combining \eqref{D32}, Lemma \ref{lemD23}, Lemma \ref{lemD24}, Lemma \ref{lemD25} and
 Lemma \ref{lemD26},
%Lemma \ref{lemD27},
we have the following commutator estimate.
\begin{lem}\label{lemD9}
For $0< s<1$, $\mu \in \BR,\, \nu\in\BR$.  Under the assumptions \eqref{AA4}, \eqref{AA7} and \eqref{AA8}
with $\epsilon'$ sufficiently small, there holds
\begin{align*}
%& \big\| [\langle \p_{x_1}\rangle^{s}\p_{x_1}, \langle \wu^\pm \rangle^{2\mu}] h \big\|_{L^2(\BR )}
%\lesssim
%\big\| \langle \wu^\pm \rangle^{2\mu}
%\langle \p_{x_1} \rangle^{s } h \big\|_{L^2(\BR )} , \\[-4mm]\\
& \big\| [\langle \p_1\rangle^{s} \p_1,
\langle \wu^+ \rangle^{\mu}\langle \wu^- \rangle^{\nu}] h \big\|_{L^2(\BR )}
\lesssim
\big\| \langle \wu^+ \rangle^{\mu} \langle \wu^- \rangle^{\nu}
\langle \p_1 \rangle^s h\big\|_{L^2(\BR )} ,
\end{align*}
provided the right hand side is finite.
\end{lem}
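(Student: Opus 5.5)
Write $w=\langle \wu^+\rangle^{\mu}\langle \wu^-\rangle^{\nu}$. The plan is to read the commutator off the symmetric commutator identity \eqref{D32}:
\begin{align*}
[\langle \p_1\rangle^{s}\p_1, w]h
= h\,\langle \p_1\rangle^{s}\p_1 w + \mathcal{I}_1 - \mathcal{I}_2 ,
\end{align*}
where $\mathcal{I}_1$ is the integral against $\p_{x_1}B_{2-s}$ in the second line of \eqref{D32}, and $\mathcal{I}_2$ is the principal value integral against $\p^3_{x_1}B_{2-s}(x_1-x_1')(w(x_1)-w(x_1'))(h(x_1)-h(x_1'))$ in the last line. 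I would bound each of the three pieces in $L^2(\BR)$ by $\| w\langle\p_1\rangle^s h\|_{L^2}$.

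\emph{First piece.} By Lemma \ref{lemD24}, $|\langle \p_1\rangle^{s}\p_1 w|\lesssim \langle \wu^+\rangle^{\mu-1}\langle \wu^-\rangle^{\nu}+\langle \wu^+\rangle^{\mu}\langle \wu^-\rangle^{\nu-1}\le 2w$, since $\langle\cdot\rangle^{-1}\le 1$. Hence $\|h\,\langle \p_1\rangle^{s}\p_1 w\|_{L^2}\lesssim \|wh\|_{L^2}$.

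\emph{Second piece.} Lemma \ref{lemD25} gives directly $\|\mathcal{I}_1\|_{L^2}\lesssim \|wh\|_{L^2}$.

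\emph{Third piece.} Lemma \ref{lemD26} gives $\|\mathcal{I}_2\|_{L^2}\lesssim\|w\langle\p_1\rangle^s h\|_{L^2}$.

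\emph{Conclusion.} It remains to replace $\|wh\|_{L^2}$ by $\|w\langle\p_1\rangle^s h\|_{L^2}$. This is the second inequality of Lemma \ref{lemD23} with $q=2$. Summing the three bounds gives the claim.

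\emph{Justification step.} The only delicate point is whether identity \eqref{D32} may be applied to a general $h$ with merely $w\langle\p_1\rangle^sh\in L^2$, given that $w$ grows polynomially. I would first prove the estimate for $h$ in the Schwartz class, where all integrals converge absolutely away from the diagonal thanks to the exponential decay of the Bessel kernels (Lemma \ref{lemBe}), and the principal values near the diagonal are handled inside Lemmas \ref{lemD25}--\ref{lemD26}. I would then extend to general $h$ by density, using the uniform bound just proved. This extension is the step I expect to need the most care, though it is routine.
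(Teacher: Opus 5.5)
Your proposal is correct and follows the paper's own argument: the paper obtains Lemma \ref{lemD9} by combining the symmetric commutator identity \eqref{D32} with Lemmas \ref{lemD24}, \ref{lemD25}, \ref{lemD26} and the second inequality of Lemma \ref{lemD23}, exactly as you do. Your density remark adds a justification that the paper does not spell out; it relies instead on the ``provided the right hand side is finite'' clauses of the auxiliary lemmas.
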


Next, we study the  weighted commutator estimate for higher order fractional derivative.
The estimate also holds for the general dimensional case and the proof is similar.
\begin{lem}[Weighted commutator estimate, second version] \label{lemD10}
Let  $0<s<1$, $\f12 =\f{1}{q_1}+\f{1}{q_2}=\f{1}{q_3}+\f{1}{q_4}$, $2\leq q_1, q_2, q_3, q_4 \leq\infty$, $\mu,\mu_1,\mu_2,\nu,\nu_1,\nu_2\in\BR$,
 $\mu=\mu_1+\mu_2$, $\nu=\nu_1+\nu_2$. Under the assumptions \eqref{AA4}, \eqref{AA7} and \eqref{AA8}
with $\epsilon'$ sufficiently small, there holds
\begin{align*}
& \big\| \langle \wu^+\rangle^{\mu}\langle \wu^-\rangle^{\nu} [\langle\p_1\rangle^{s}\p_1,g] h \big\|_{L^2(\BR )} \\
&\lesssim \| \langle \wu^+\rangle^{\mu_1}\langle \wu^-\rangle^{\nu_1} \langle \p_1\rangle^s\p_1^{\leq 1} g \|_{L^{q_1}}
 \| \langle \wu^+\rangle^{\mu_2}\langle \wu^-\rangle^{\nu_2}  h \|_{L^{q_2}} \\
&\quad+ \| \langle \wu^+\rangle^{\mu_1}\langle \wu^-\rangle^{\nu_1}\p_1^{\leq 1} g \|_{L^{q_3}}
\| \langle \wu^+\rangle^{\mu_2}\langle \wu^-\rangle^{\nu_2} \langle \p_1\rangle^s h \|_{L^{q_4}} ,
\end{align*}
provided the right hand side is finite.
\end{lem}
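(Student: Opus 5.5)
The plan mirrors the proof of Lemma \ref{lemD8}, with Lemma \ref{lemD9} in the role of Lemma \ref{lemD5} and the second Kato--Ponce estimate of Lemma \ref{lemD6} in place of the first. Write $W=\langle \wu^+\rangle^{\mu}\langle \wu^-\rangle^{\nu}$, $W_1=\langle \wu^+\rangle^{\mu_1}\langle \wu^-\rangle^{\nu_1}$, $W_2=\langle \wu^+\rangle^{\mu_2}\langle \wu^-\rangle^{\nu_2}$, so that $W=W_1W_2$. The starting point is the algebraic identity, analogous to \eqref{D30},
\begin{align*}
W[\langle\p_1\rangle^{s}\p_1,g]h
=-[\langle\p_1\rangle^{s}\p_1,W](gh)
+[\langle\p_1\rangle^{s}\p_1,W_1g](W_2h)
+W_1g\,[\langle\p_1\rangle^{s}\p_1,W_2]h .
\end{align*}
It is checked by expanding every commutator: the terms $\langle\p_1\rangle^{s}\p_1(Wgh)$ and $W_1g\,\langle\p_1\rangle^{s}\p_1(W_2h)$ cancel in pairs, and what remains is $W\langle\p_1\rangle^s\p_1(gh)-Wg\langle\p_1\rangle^s\p_1 h$.

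\textbf{Third term.} By H\"older in $L^{q_3}\times L^{q_4}$ and then Lemma \ref{lemD9} (valid for $L^2$, and by the same kernel argument for $L^{q_4}$),
\[
\|W_1g\,[\langle\p_1\rangle^{s}\p_1,W_2]h\|_{L^2}\lesssim \|W_1 g\|_{L^{q_3}}\|W_2\langle\p_1\rangle^s h\|_{L^{q_4}} .
\]
If one prefers to use only the $L^2$ version of Lemma \ref{lemD9}, put $W_1g$ in $L^\infty$ instead.

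\textbf{Second term.} This term carries no weights inside the commutator. Lemma \ref{lemD6} with $s_1=s$ gives
\[
\|\p_1(W_1g)\|_{L^{q_1}}\|\langle\p_1\rangle^s(W_2h)\|_{L^{q_2}}
+\|\langle\p_1\rangle^s\p_1(W_1g)\|_{L^{q_3}}\|W_2h\|_{L^{q_4}} .
\]
The factors are then converted back to weighted norms. For $\p_1(W_1 g)$, Lemma \ref{lemC3} gives $|\p_1 W_1|\lesssim W_1$, hence $|\p_1(W_1g)|\lesssim W_1|\p_1^{\le1}g|$. For $\langle\p_1\rangle^s(W_2h)$, use the second inequality of Lemma \ref{lemD5}. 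For $\langle\p_1\rangle^s\p_1(W_1g)$, write it as $W_1\langle\p_1\rangle^s\p_1 g+[\langle\p_1\rangle^s\p_1,W_1]g$, and bound the commutator by Lemma \ref{lemD9} by $\|W_1\langle\p_1\rangle^s g\|$. Each product then pairs as in the statement. Where the exponents pair the wrong way, take $h$ in $L^\infty$ or $L^2$ and absorb through $\langle\p_1\rangle^s\p_1^{\le1}g$ (via Lemma \ref{lemD23}). Alternatively, use Lemma \ref{lemD6} with the roles of $(q_1,q_2)$ and $(q_3,q_4)$ chosen to match the statement; since the statement allows both pairs, this is harmless.

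\textbf{First term.} Lemma \ref{lemD9} bounds it by $\|W\langle\p_1\rangle^s(gh)\|_{L^2}$. Write $\langle\p_1\rangle^s(gh)=g\langle\p_1\rangle^s h+[\langle\p_1\rangle^s,g]h$. The first piece is $\le\|W_1 g\|_{L^{q_3}}\|W_2\langle\p_1\rangle^sh\|_{L^{q_4}}$. The commutator piece is handled by Lemma \ref{lemD8}, giving $\|W_1\langle\p_1\rangle^s g\|_{L^{q_1}}\|W_2h\|_{L^{q_2}}$.

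\textbf{Main obstacle.} The delicate point is that Lemma \ref{lemD9} is stated only in $L^2$. The $L^q$ versions needed above ($q\ne2$) require repeating its proof, i.e.\ the kernel estimates of Lemmas \ref{lemD24}--\ref{lemD26}, with Young's inequality in $L^q$. The one place this is not routine is the fractional term $|\p_1|^s(\p_1v\cdot h)$ in \eqref{D58}, which needs $L^q$-boundedness of the weighted fractional derivative, available from Lemma \ref{lemD5} for $1\le q\le\infty$. If this extension becomes problematic, I would fall back on arranging the H\"older splits so that every weighted commutator lands in $L^2$ while the other factor is taken in $L^\infty$.
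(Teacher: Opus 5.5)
Your proposal is correct and is essentially the paper's proof: the same splitting \eqref{D61}, Lemmas \ref{lemD9} and \ref{lemD8} for the first term, Lemma \ref{lemD6} with the H\"older pairs swapped for the second, and H\"older plus Lemma \ref{lemD9} for the third. The one difference is in the second term: the paper converts $\langle\p_1\rangle^s\p_1(W_1g)$ via $\p_1(W_1g)=(\p_1W_1)g+W_1\p_1g$ and Lemmas \ref{lemD5}, \ref{lemD21}, \ref{lemD23} (all valid in every $L^q$), which avoids needing an $L^q$ version of Lemma \ref{lemD9} there; for the third term, however, the paper tacitly uses Lemma \ref{lemD9} in $L^{q_4}$ exactly as you do, so the obstacle you flag is genuinely present in the paper too (and your $L^\infty$ fallback would only cover the case $(q_3,q_4)=(\infty,2)$).
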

\begin{proof}
Similar to \eqref{D30}, we have
\begin{align}\label{D61}
\langle \wu^+\rangle^{\mu}\langle \wu^-\rangle^{\nu}   [\langle\p_1\rangle^{s}\p_1,g] h
&=-[\langle \p_1\rangle^{s}\p_1,\langle \wu^+\rangle^{\mu}\langle \wu^-\rangle^{\nu}] (gh) \\[-4mm]\nonumber\\\nonumber
&\quad+\big[\langle \p_1\rangle^{s}\p_1,\langle \wu^+\rangle^{\mu_1}\langle \wu^-\rangle^{\nu_1} g\big] (\langle \wu^+\rangle^{\mu_2}\langle \wu^-\rangle^{\nu_2}h)  \\[-4mm]\nonumber\\\nonumber
&\quad+\langle \wu^+\rangle^{\mu_1}\langle \wu^-\rangle^{\nu_1} g\cdot [\langle \p_1\rangle^{s}\p_1, \langle \wu^+\rangle^{\mu_2}\langle \wu^-\rangle^{\nu_2} ] h.
\end{align}
For the first term  on the right hand side of \eqref{D61}, by Lemma \ref{lemD9}, Lemma \ref{lemD8} and the H\"{o}lder inequality, it is bounded by
\begin{align*}
&\| [\langle \p_1\rangle^{s}\p_1,\langle \wu^+\rangle^{\mu}\langle \wu^-\rangle^{\nu}] (gh) \|_{L^2(\bR)} \lesssim \| \langle \wu^+\rangle^{\mu}\langle \wu^-\rangle^{\nu}   \langle\p_1\rangle^{s} (gh) \|_{L^2(\BR )} \\
&\lesssim \| \langle \wu^+\rangle^{\mu_1}\langle \wu^-\rangle^{\nu_1} \langle \p_1\rangle^{s} g \|_{L^{q_1}(\BR)} \| \langle \wu^+\rangle^{\mu_2}\langle \wu^-\rangle^{\nu_2}  h  \|_{L^{q_2}(\BR )}\\
&\quad+\| \langle \wu^+\rangle^{\mu_1}\langle \wu^-\rangle^{\nu_1}  g \|_{L^{q_3}(\BR)} \| \langle \wu^+\rangle^{\mu_2}\langle \wu^-\rangle^{\nu_2} \langle \p_1\rangle^{s} h  \|_{L^{q_4}(\BR )}.
\end{align*}
For the second line of \eqref{D61},
 by applying Lemma \ref{lemD21}, Lemma \ref{lemD23}, Lemma \ref{lemD5} and  Lemma \ref{lemD6}, we have
\begin{align*}
&\big\| \big[\langle \p_1\rangle^{s}\p_1,\langle \wu^+\rangle^{\mu_1}\langle \wu^-\rangle^{\nu_1} g\big] (\langle \wu^+\rangle^{\mu_2}\langle \wu^-\rangle^{\nu_2}h) \big\|_{L^2} \\[-4mm]\nonumber\\\nonumber
&\lesssim  \| \langle \p_1\rangle^s\p_1\big( \langle \wu^+\rangle^{\mu_1}\langle \wu^-\rangle^{\nu_1} g\big)
\|_{L^{q_1}} \| \langle \wu^+\rangle^{\mu_2}\langle \wu^-\rangle^{\nu_2}  h \|_{L^{q_2}}\\\nonumber
&\quad+ \| \p_1 \big( \langle \wu^+\rangle^{\mu_1}\langle \wu^-\rangle^{\nu_1} g\big)  \|_{L^{q_3}}
\| \langle \p_1\rangle^s\big( \langle \wu^+\rangle^{\mu_2}\langle \wu^-\rangle^{\nu_2} h \big) \|_{L^{q_4}}\\\nonumber
&\lesssim  \| \langle \wu^+\rangle^{\mu_1}\langle \wu^-\rangle^{\nu_1} \langle \p_1\rangle^s\p_1^{\leq 1} g \|_{L^{q_1}}
 \| \langle \wu^+\rangle^{\mu_2}\langle \wu^-\rangle^{\nu_2}  h \|_{L^{q_2}}\\\nonumber
&\quad+ \| \langle \wu^+\rangle^{\mu_1}\langle \wu^-\rangle^{\nu_1}\p_1^{\leq 1} g \|_{L^{q_3}}
\| \langle \wu^+\rangle^{\mu_2}\langle \wu^-\rangle^{\nu_2} \langle \p_1\rangle^s h \|_{L^{q_4}} .
\end{align*}
For the third line of \eqref{D61}, by Lemma \ref{lemD9} and the H\"{o}lder inequality, it is bounded by
\begin{align*}
\| \langle \wu^+\rangle^{\mu_1}\langle \wu^-\rangle^{\nu_1}  g \|_{L^{q_3}(\BR)} \| \langle \wu^+\rangle^{\mu_2}\langle \wu^-\rangle^{\nu_2} \langle \p_1\rangle^{s} h  \|_{L^{q_4}(\BR )}.
\end{align*}
This finishes the proof of the lemma.
\end{proof}

Combining the above estimate, we have the following more general commutator estimate for higher order derivatives.
\begin{lem}[Weighted commutator estimate, higher order version] \label{lemD11}
Let $a\geq 2$ be an integer,  $0<s<1$, $\mu,\mu_1,\mu_2,\nu,\nu_1,\nu_2\in\BR$.
 $\mu=\mu_1+\mu_2$, $\nu=\nu_1+\nu_2$. Under the assumptions \eqref{AA4}, \eqref{AA7} and \eqref{AA8}
with $\epsilon'$ sufficiently small, there holds
\begin{align*}
&\| \langle \wu^+\rangle^{\mu}\langle \wu^-\rangle^{\nu} \big[\langle\p_1\rangle^s\p_1^a,g \big] \p_1h\|_{L^2(\BR)} \\\nonumber
&\lesssim \| \langle \wu^+\rangle^{\mu_1}\langle \wu^-\rangle^{\nu_1} \langle\p_1\rangle^s\p_1^{\leq a} g\|_{L^2}
          \| \langle \wu^+\rangle^{\mu_2}\langle \wu^-\rangle^{\nu_2} \langle\p_1\rangle^s\p_1^{\leq a} h\|_{L^2},
\end{align*}
provided the right hand side is finite.
\end{lem}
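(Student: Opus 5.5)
We would reduce Lemma \ref{lemD11} to the first-order weighted commutator estimates already established (Lemma \ref{lemD8}, Lemma \ref{lemD10}) together with the weighted Sobolev inequality on the line (Lemma \ref{Sobo2}) and the weight-handling lemmas (Lemma \ref{lemD23}, Lemma \ref{lemD5}). The starting point is the Leibniz expansion
\begin{align*}
\big[\langle\p_1\rangle^s\p_1^a,g\big]\p_1h
=\big[\langle\p_1\rangle^s\p_1,g\big]\p_1^{a}h
+\langle\p_1\rangle^s\p_1\Big(\sum_{k=1}^{a-1}\tbinom{a-1}{k}\p_1^k g\,\p_1^{a-k}h\Big),
\end{align*}
obtained by writing $\p_1^{a-1}(g\p_1h)=g\p_1^a h+\sum_{k\geq1}\binom{a-1}{k}\p_1^kg\,\p_1^{a-k}h$.

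For the first term we apply Lemma \ref{lemD10} with $h$ replaced by $\p_1^{a}h$. The choice of exponents depends on where the derivatives fall: we place $g$ in $L^\infty$ (via Lemma \ref{lemD21} and Lemma \ref{Sobo2}, $\|\langle\wu^+\rangle^{\mu_1}\langle\wu^-\rangle^{\nu_1}\langle\p_1\rangle^s\p_1^{\leq1}g\|_{L^\infty}$ is controlled by $\|\langle\wu^+\rangle^{\mu_1}\langle\wu^-\rangle^{\nu_1}\langle\p_1\rangle^s\p_1^{\leq 2}g\|_{L^2}$, and $a\geq2$), and the $h$ factor in $L^2$. The term $\|\langle\wu^+\rangle^{\mu_2}\langle\wu^-\rangle^{\nu_2}\p_1^a h\|_{L^2}$ is dominated by the right side using Lemma \ref{lemD23}.

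For the remaining sum, each summand contains at least one derivative on each factor. We move the weight inside using Lemma \ref{lemD5} (for $\langle\p_1\rangle^s$) plus Lemma \ref{lemC3} (the extra $\p_1$ hitting weights only produces smaller weights), reducing to $\|\langle\wu^+\rangle^{\mu}\langle\wu^-\rangle^{\nu}\langle\p_1\rangle^s(\p_1^{k+1}g\,\p_1^{a-k}h)\|_{L^2}$ and the analogous term with $\p_1$ on $h$. We then use the weighted fractional Leibniz bound implicit in the proof of Lemma \ref{lemD10}: $\langle\p_1\rangle^s(FG)=F\langle\p_1\rangle^sG+[\langle\p_1\rangle^s,F]G$, with Lemma \ref{lemD8} for the commutator. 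The factor carrying fewer derivatives goes in $L^\infty$ via Lemma \ref{Sobo2} (losing one derivative), and the other goes in $L^2$. Since $1\leq k\leq a-1$, the total derivative count is $a+1$ (plus $s$), split as two factors each with at most $a$ derivatives. Hence the lower factor in $L^\infty$ costs at most $a$ derivatives in $L^2$, which fits within $\langle\p_1\rangle^s\p_1^{\leq a}$.

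The main obstacle is the bookkeeping at the endpoint where $k=a-1$ or $k=1$ and one factor carries $a$ derivatives plus the fractional $s$ while the other must be put in $L^\infty$. Here we must check that one derivative plus $\langle\p_1\rangle^s$ in $L^\infty$ costs at most $\langle\p_1\rangle^s\p_1^{\leq 2}$ in $L^2$, which is $\leq a$ precisely because $a\geq2$. The case $a=2$ is the tight one and should be verified first. If the $L^\infty$ placement of a fractional derivative is problematic, we would instead use $L^4\times L^4$ splitting through Lemma \ref{Sobo2} with $q=4$, which costs only half a derivative on each side.
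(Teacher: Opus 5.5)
Your proposal is correct and follows the route the paper intends. The paper gives no separate proof and only says the lemma follows by combining the preceding estimates; your Leibniz split isolates $[\langle\p_1\rangle^s\p_1,g]\p_1^a h$ for Lemma \ref{lemD10} (with the $g$-factors in $L^\infty$ via Lemma \ref{Sobo2}) and treats the intermediate terms by H\"older, Lemma \ref{lemD8} and Lemma \ref{Sobo2}, and your derivative count, which uses $a\geq 2$ exactly at the tight endpoint, is right. The detour through Lemma \ref{lemD5} and Lemma \ref{lemC3} to move the weight inside is unnecessary, and so is the $L^4\times L^4$ fallback, since $\p_1$ commutes with $\langle\p_1\rangle^s$ and Lemma \ref{lemD8} already keeps the weight outside the commutator.
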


\subsection{Weighted chain rules}
Before the energy estimate, we give a weighted chain rule estimate.
\begin{lem}\label{lemG3}
Let $g=g(x):\, \Om\rightarrow \BR$, $f=f(x_1):\, \BR\rightarrow \BR$,
$\ud{g}(x_1)=g(x_1, f(x_1))$.
Let $k$ be a positive integer, $0<s<1$, $\mu\in\BR,\, \nu\in\BR $.
%Assume $\|f\|_{\dot{H}^1(\BR)}+\|f\|_{\dot{H}^{k}(\BR)}\leq 1$.
Under the assumptions \eqref{AA4}, \eqref{AA7} and \eqref{AA8}
with $\epsilon'$ sufficiently small,
 there hold
\begin{align*}
&\sum_{|a|\leq k}\big\| \langle\wu^+\rangle^{\mu}\langle\wu^-\rangle^{\nu} \p_1^a \ud{g}
\big\|_{L^2(\BR)}   \\
&\lesssim \sum_{|\alpha|\leq k}\big\| \langle\wu^+\rangle^{\mu}\langle\wu^-\rangle^{\nu}  \ud{\p^\alpha g}\big\|_{L^2(\BR)}
\cdot \mathcal{F}\big(1+ \|f\|_{\dot{H}^1(\BR)}+\|f\|_{\dot{H}^{k}(\BR)}\big),\\
&\sum_{|a|\leq k}\big\| \langle\wu^+\rangle^{\mu}\langle\wu^-\rangle^{\nu} \langle\p_1\rangle^{s} \p_1^a \ud{g}\big\|_{L^2(\BR)}   \\
&\lesssim
\sum_{|\alpha|\leq k}\big\| \langle\wu^+\rangle^{\mu}\langle\wu^-\rangle^{\nu} \langle\p_1\rangle^{s}  \ud{\p^\alpha g}\big\|_{L^2(\BR)}
\cdot \mathcal{F}\big(1+ \|f\|_{\dot{H}^1(\BR)}+\|\langle\p_1\rangle^{s}   f\|_{\dot{H}^{k}(\BR)} \big),
\end{align*}
%provided the right hand side is finite.
where $\mathcal{F}: \BR^+\rightarrow\BR^+$ is a generic non-decreasing function.
\end{lem}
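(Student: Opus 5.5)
The plan is to expand $\p_1^a\ud{g}$ by the Faà di Bruno formula and then bound each resulting product with weighted Hölder and Sobolev inequalities. Since $\ud g(x_1)=g(x_1,f(x_1))$, the chain rule gives $\p_1\ud g=\ud{\p_1 g}+\ud{\p_2 g}\,\p_1 f$. Iterating, we obtain for $|a|\le k$
\begin{align*}
\p_1^a\ud g=\sum \ud{\p^\alpha g}\;\prod_{j=1}^{m}\p_1^{b_j}f,
\end{align*}
where $|\alpha|\le a$, each $b_j\ge1$, and $\sum_j (b_j-1)+|\alpha|\le a$. In particular $m\le |\alpha_2|$ and the total number of derivatives falling on $f$ is at most $a-\alpha_1$. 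The whole task is then to estimate each monomial $\langle\wu^+\rangle^{\mu}\langle\wu^-\rangle^{\nu}\ud{\p^\alpha g}\prod_j\p_1^{b_j}f$ in $L^2(\BR)$. Throughout, the weight is carried entirely by the $g$-factor; the $f$-factors remain unweighted.

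For the first inequality we distinguish cases according to the top-order factor.
\begin{itemize}
\item If all $b_j\le 2$, then the factors of $f$ are bounded in $L^\infty$ by \eqref{AA8}. We simply put $\ud{\p^\alpha g}$ in weighted $L^2$ and absorb the polynomial in $\|\p_1^{\le 2}f\|_{L^\infty}$ into $\mathcal F$. Strictly, \eqref{AA8} is a bootstrap quantity. To keep the bound in terms of $\|f\|_{\dot H^1}+\|f\|_{\dot H^k}$, we instead use the 1D interpolation $\|\p_1^{b}f\|_{L^\infty}\lesssim \|f\|_{\dot H^1}+\|f\|_{\dot H^k}$, valid for $1\le b\le k-1$.
\item If exactly one factor has high order, it is $\p_1^{b}f$ with $b$ close to $k$. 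Then $|\alpha|$ is small: essentially $|\alpha|\le a-b+1$. In this case we place $\p_1^b f$ in $L^2$ (bounded by $\|f\|_{\dot H^1}+\|f\|_{\dot H^k}$), and place the weighted $\ud{\p^\alpha g}$ in $L^\infty$. By the one-dimensional weighted Sobolev inequality, Lemma \ref{Sobo2}, the latter is controlled by the weighted $L^2$ norm of $\ud{\p^\alpha g}$ and $\p_1\ud{\p^\alpha g}$.
\end{itemize}
The term $\p_1\ud{\p^\alpha g}$ in the second case is again $\ud{\p^{\alpha+e_i}g}$ times at most one factor $\p_1 f$. Since $|\alpha|+1\le k$, it stays within the right-hand side. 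All intermediate factors can be bounded in $L^\infty$ by Gagliardo–Nirenberg between $\dot H^1$ and $\dot H^k$.

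An induction on $k$ keeps the bookkeeping clean: one writes $\p_1^{a}\ud g=\p_1^{a-1}\big(\ud{\p_1g}+\ud{\p_2g}\,\p_1f\big)$ and applies the inductive hypothesis together with a weighted Leibniz rule.

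For the fractional inequality we apply $\langle\p_1\rangle^{s}$ to each monomial. We commute the weight through using Lemma \ref{lemD5}, so the expression becomes $\langle\p_1\rangle^s$ of a product. We then use the Kato–Ponce type estimate Lemma \ref{lemD6}, in its weighted form Lemma \ref{lemD8}, to split the product. This gives two kinds of terms:
\begin{itemize}
\item terms with $\langle\p_1\rangle^s$ on $\ud{\p^\alpha g}$ times $L^\infty$ norms of $f$-factors, which are handled as above;
\item terms with $\langle\p_1\rangle^s$ on the top $f$-factor, measured in $L^2$ by $\|\langle\p_1\rangle^s f\|_{\dot H^k}$, times $\|\langle\wu^+\rangle^{\mu}\langle\wu^-\rangle^{\nu}\ud{\p^\alpha g}\|_{L^\infty}$.
\end{itemize}
The weighted $L^\infty$ norm in the second kind is controlled by Lemma \ref{Sobo2}, together with Lemma \ref{lemD23} to insert $\langle\p_1\rangle^s$.

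The main obstacle is this high-$f$, low-$g$ configuration in the fractional case. There, an $L^\infty$ bound on the weighted trace must be extracted from the $\langle\p_1\rangle^{s}$-weighted $L^2$ norms listed on the right-hand side. I would first try the embedding $\|\cdot\|_{L^\infty}\lesssim\|\langle\p_1\rangle^{s}\p_1^{\le1}\cdot\|_{L^2}$ (weighted via Lemma \ref{lemD5}). If that fails for small $s$, I would fall back on using one extra derivative on $g$, which is admissible because $|\alpha|\le k-1$ in that case.
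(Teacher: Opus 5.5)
This is essentially the paper's proof. The paper inducts on $k$ via $\p_1^{k+1}\ud g=\p_1^k\ud{\p_1 g}+\sum_{a+b=k}C_k^a\,\p_1^a\ud{\p_2 g}\,\p_1^{b+1}f$ and closes with H\"older and Sobolev as you do: lower-order $f$-factors go in $L^\infty$, and the single top-order factor $\p_1^{k}f$ goes in $L^2$ against the weighted trace in $L^\infty$ via Lemma \ref{Sobo2}. The paper only asserts that the fractional case is the same, and your use of Lemmas \ref{lemD5}, \ref{lemD6}, \ref{lemD8} and \ref{lemD23} makes that case explicit.

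There are two small fixes to make. First, keep \eqref{AA8} rather than replacing it by interpolation: for $k=1$ the term $\ud{\p_2 g}\,\p_1 f$ cannot be closed otherwise, and the paper's base cases $k=1,2$ rely on it as well. Second, in the fractional case the top-order term $\ud{\p_2 g}\,\p_1^k f$ must be split as $\|\langle\p_1\rangle^s\p_1^k f\|_{L^2}\|v\|_{L^\infty}+\|\p_1^k f\|_{L^2}\|\langle\p_1\rangle^s v\|_{L^\infty}$ with $v=\langle\wu^+\rangle^{\mu}\langle\wu^-\rangle^{\nu}\ud{\p_2 g}$, not with $\p_1^kf$ in $L^\infty$. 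Since $\|\langle\p_1\rangle^s v\|_{L^\infty}\lesssim\|\langle\p_1\rangle^s\p_1^{\le 1}v\|_{L^2}$ holds for every $s\ge0$, your ``fallback'' of one extra derivative on $g$ is simply the route, and no small-$s$ difficulty arises.
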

\begin{proof}
The proof of the first and the second inequality is the same.
In the sequel, we only present the details for the first one.

The proof is conducted  by an induction argument.
The case of $k=1,\, 2$ is an easy consequence of chain rules.
Assume for $k\geq 2$, there holds
\begin{align}\label{G0}
&\sum_{|a|\leq k}\big\| \langle\wu^+\rangle^{\mu}\langle\wu^-\rangle^{\nu} \p_1^a \ud{g}
\big\|_{L^2(\BR)} \\\nonumber
&\lesssim \sum_{|\alpha|\leq k}
\big\| \langle\wu^+\rangle^{\mu}\langle\wu^-\rangle^{\nu}  \ud{\p^\alpha g}\big\|_{L^2(\BR)}
\cdot \mathcal{F}\big(1+\|f\|_{\dot{H}^1(\BR)}+\|f\|_{\dot{H}^k(\BR)}\big)  .
\end{align}
Next, we calculate
\begin{align*}
&\p_1^{k+1} \ud{g}=\p_1^k
  \big( \ud{\p_1 g}+ \ud{\p_2 g}\p_1 f\big) ,\\\nonumber
&=\p_1^k\ud{\p_1 g}+
\sum_{a+b=k}C_k^a \p_1^a \ud{\p_2 g}\p_1^{b+1} f ,
\end{align*}
where $ C_k^a=\f{k!}{a!(k-a)!}$ is the binomial coefficient.
Then by the H\"older inequality, the Sobolev inequality and \eqref{G0}, we have
\begin{align*}
& \big\| \langle\wu^+\rangle^{\mu}\langle\wu^-\rangle^{\nu} \p_1^{k+1} \ud{g} \big\|_{L^2(\BR)} \\
&\leq \big\| \langle\wu^+\rangle^{\mu}\langle\wu^-\rangle^{\nu} \p_1^k\ud{\p_1 g} \big\|_{L^2(\BR)}
+C \sum_{a+b=k} \big\| \langle\wu^+\rangle^{\mu}\langle\wu^-\rangle^{\nu} \p_1^a \ud{\p_2 g}\p_1^{b+1} f \big\|_{L^2(\BR)} \\
&\leq \big\| \langle\wu^+\rangle^{\mu}\langle\wu^-\rangle^{\nu} \p_1^k\ud{\p_1 g} \big\|_{L^2(\BR)}
+
\sum_{|a|\leq k}\big\| \langle\wu^+\rangle^{\mu}\langle\wu^-\rangle^{\nu} \p_1^a \ud{\p_2 g}\|_{L^2(\BR)}
\cdot\big( \|f\|_{\dot{H}^1(\BR)}+\|f\|_{\dot{H}^{k+1}(\BR)}\big)\\
&\lesssim
\sum_{|\alpha|\leq k+1}\big\| \langle\wu^+\rangle^{\mu}\langle\wu^-\rangle^{\nu}  \ud{\p^\alpha g}\big\|_{L^2(\BR)}
(1+\|f\|_{\dot{H}^1(\BR)}+\|f\|_{\dot{H}^{k+1}(\BR)}) \\
&\quad \cdot \mathcal{F}\big(1+\|f\|_{\dot{H}^1(\BR)}+\|f\|_{\dot{H}^{k+1}(\BR)}\big) .
\end{align*}
%Combined the assumption for the Sobolev norm of $f$,
Thus the case for $k+1$ is proved.
This finishes the proof of the lemma.
\end{proof}

\section{Weighted estimate of the pressure}
In this section, we treat the weighted estimate of pressure.
The estimate is conducted by straightening of the free boundary, thus
we will work in a fixed domain.
%``Lagrangian coordinates".

The weighted estimate of the pressure is conducted under the assumptions \eqref{AA6} and \eqref{AA4}.
Hence we always assume that for the free surface $x_2=f(t,x_1)$,
there hold
%\begin{align} \label{F1}
%&\|f(t,\cdot)\|_{\dot{H}^m(\BR )}
%\leq C\epsilon,\quad \textrm{for}\ 1\leq m\leq s+\f12,\\\nonumber
%&\|f(t,\cdot)\|_{L^\infty(\BR )}
%\leq C\epsilon,
%\end{align}
\begin{align} \label{F1}
&\|f(t,\cdot)\|_{L^\infty(\BR )}
\leq 1-\f12\delta,\,\,\|\p_1f(t,\cdot)\|_{H^{s-\f12}}
\leq 2C_0\epsilon,
%\quad \textrm{for}\ 1\leq m\leq s+\f12,
\end{align}
where $0<\delta<\f12$, $C_0$ is a universal constant, $\epsilon$ is sufficiently small.

\subsection{ Straightening of the free boundary}\label{Lag-stra}
%Flatten
We make the following change of variables $\Om_L \mapsto\Om_f$ to flatten the boundary of  $\Omega_f$ by a regularized mapping \cite{Lannes, ABZ, CMST}:
\begin{align*}
%(x_1,x_2)\in \BR\times [-1,0]=\Om_L
 \BR\times [-1,0]=\Om_L \ni (x_1,x_2)
\mapsto  \Psi(t,x_1,x_2)=(x_1,x_2+\psi(t,x_1,x_2)) \in \Omega_f,
\end{align*}
where
\begin{align*}
%&\psi:\mathbb{R}_+\times \mathbb{R} \times [-1,0] \mapsto  [-1, f(t,x_1,x_2)],\\
&\psi(t,x)=(1+x_2) \varphi(\rho x_2|\p_1|) f(t,x_1).
\end{align*}
Here $\varphi \in C_c^\infty(\BR)$ is a smooth cut-off function:
%($\varphi$ need to be define in $\BR$ since we will
%calculate the Fourier transform,
% and need check this cut-off and the above construction)
\begin{equation*}
\varphi(x_2)=
\begin{cases}
1,\quad \textrm{if}\quad -\f31 \leq x_2 \leq 0,\\
0,\quad \textrm{if}\quad  -1\leq x_2 \leq  -\f23.
\end{cases}
\end{equation*}
A small positive parameter $\rho$ is chosen such that
 the mapping $\Psi$ is a diffeomorphism.
This allows one to work in a fixed domain $\Om_L=\BR\times [-1,0]$
instead of on the moving domain $\Om_f$.
%Instead of working on the moving domain $\Om_f,$
%we could work in a flat finite channel $\Om_L=\BR\times [-1,0]$.
Denote the upper boundary of $\Om_L$ by $\Gamma=\BR\times\{0\}$
and the lower boundary by $\Gamma_-=\BR\times\{-1\}$,
there hold
% on the boundary
\begin{align*}
& \psi\big|_{\Gamma}=   f(t,x_1),
\quad \p_1\psi\big|_{\Gamma}=  \p_1f(t,x_1),
\quad\p_2\psi\big|_{\Gamma}= f(t,x_1),\\
&\psi\big|_{\Gamma_-}= 0,
\quad \p_1\psi\big|_{\Gamma_-}=0,
\quad \p_2\psi\big|_{\Gamma_-}= \varphi (- \rho |\p_1|)  f(t,x_1).
\end{align*}
Note that if $|\alpha|\geq 2$, $\p^\alpha\psi\big|_{\Gamma_-}$ and $\p^\alpha\psi\big|_{\Gamma}$ do not vanish.
%Note that we do not have vanishing property for $\p^\alpha\psi\big|_{\Gamma_-}$
% if $|\alpha|\geq 2$. Also $\p^\alpha\psi\big|_{\Gamma}$ does not vanish
% if $|\alpha|\geq 2$.

We summarize the estimate of $\psi$ in $L^2$ and $L^\infty$ norm in the following lemma.
%the $L^\infty$ and $L^2$ norm estimate of $\psi$ and its derivatives in the following lemma.
\begin{lem}\label{lemTR}
There holds the following $L^\infty$ estimate of $\psi$ and its derivatives.

\noindent
For $\psi$ and $\p_2\psi$ in  $L^\infty$ norm:
%\begin{align*}
%\|  \psi(t,\cdot)\|_{L^\infty(\Om_L)}
%&\lesssim \| f(t,\cdot)\|_{L^\infty(\BR )}.
%\end{align*}
\begin{align*}
\|\psi(t,\cdot )\|_{L^\infty}, \|\p_2\psi(t,\cdot )\|_{L^\infty}
&\leq \|f(t,\cdot)\|_{L^\infty(\bR)}+\rho \tilde{C} \|\p_1f(t,\cdot)\|_{H^1(\bR)}
\end{align*}
For $\p_1 \psi$ and $\nabla^2 \psi $ in  $L^\infty$ norm:
\begin{align*}
\|  \p_1 \psi(t,\cdot)\|_{L^\infty(\Om_L)}
&\lesssim \| \p_1 f(t,\cdot)\|_{L^\infty(\BR )}, \\
%\|  \p_2 \psi(t,\cdot)\|_{L^\infty(\Om_L)}
%&\lesssim \| |\p_1| f(t,\cdot)\|_{L^\infty(\BR )}
%+\| f(t,\cdot)\|_{L^\infty(\BR )} \\
%&\lesssim \| \p_1 f(t,\cdot)\|_{L^2(\BR )}^{\f12}
%\| \p_1^2 f(t,\cdot)\|_{L^2(\BR )}^{\f12}
%+\| f(t,\cdot)\|_{L^\infty(\BR )}.
%,\\
%\end{align*}
%Second derivative estimate of $\psi$ in  $L^\infty$ norm:
%\begin{align*}
\|  \p_1^2 \psi(t,\cdot)\|_{L^\infty(\Om_L)}
&\lesssim \| \p_1^2 f(t,\cdot)\|_{L^\infty(\BR )}, \\
\|  \p_1\p_2 \psi(t,\cdot)\|_{L^\infty(\Om_L)}
&\lesssim \| \p_1|\p_1| f(t,\cdot)\|_{L^\infty(\BR )}
+\| \p_1f(t,\cdot)\|_{L^\infty(\BR )}
\lesssim \| \p_1 f(t,\cdot)\|_{H^2(\BR )}  ,\\
\|  \p_2^2 \psi(t,\cdot)\|_{L^\infty(\Om_L)}
&\lesssim \| |\p_1|^2 f(t,\cdot)\|_{L^\infty(\BR )}
+\| |\p_1|f(t,\cdot)\|_{L^\infty(\BR )}
\lesssim \| \p_1 f(t,\cdot)\|_{H^2(\BR )}.
\end{align*}
%Various order $L^2$ norm of $\psi$:
%Moreover, there hold the following $L^2$ estimate of $\psi$ with its derivatives:
%\noindent
%$0$ order $L^2$ norm of $\psi$:
%For $\psi$ in  $L^2$ norm:
%\begin{align*}
%\|  \psi(t,\cdot)\|_{L^2(\Om_L)}
%&\lesssim \| \langle \p_1\rangle^{-\f12} f(t,\cdot)\|_{L^2(\BR )} .
%\end{align*}

\noindent
%$1$ order $L^2$ norm of $\psi$:
For $\p_1\psi$ in  $L^2$ norm:
\begin{align*}
\|  \p_1 \psi(t,\cdot)\|_{L^2(\Om_L)}
&\lesssim \| \langle \p_1\rangle^{-\f12} \p_1 f(t,\cdot)\|_{L^2(\BR )}.
%\|  \p_2 \psi(t,\cdot)\|_{L^2(\Om_L)}
%&\lesssim \| \langle \p_1\rangle^{-\f12} f(t,\cdot)\|_{H^1(\BR )}
%+\| \langle \p_1\rangle^{-\f12} f(t,\cdot)\|_{L^2(\BR )}.
\end{align*}
\noindent
%$2$ order $L^2$ norm of $\psi$:
For $\nabla^2 \psi$ in  $L^2$ norm:
\begin{align*}
\|  \p_1^2 \psi(t,\cdot)\|_{L^2(\Om_L)}
&\lesssim \| \langle \p_1\rangle^{-\f12} \p_1^2 f(t,\cdot)\|_{L^2(\BR )},\\
\|  \p_1\p_2 \psi(t,\cdot)\|_{L^2(\Om_L)}
&\lesssim \| \langle \p_1\rangle^{-\f12} \p_1 f(t,\cdot)\|_{H^1(\BR )},\\
\|  \p_2^2 \psi(t,\cdot)\|_{L^2(\Om_L)}
&\lesssim \| \langle \p_1\rangle^{-\f12}\p_1 f(t,\cdot)\|_{H^1(\BR )} .
\end{align*}
For $\nabla^k \psi$ in  $L^2$ norm ($2\leq k\leq s+1$):
\begin{align*}
\|  \nabla^k \psi(t,\cdot)\|_{L^2(\Om_L)}
&\lesssim %\| f(t,\cdot)\|_{\dot{H}^{k-\f12}(\BR )}
 \| \langle \p_1\rangle^{-\f12} \p_1^{k-1} f(t,\cdot)\|_{H^1(\BR )} .
\end{align*}
\end{lem}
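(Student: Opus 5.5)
We will prove Lemma \ref{lemTR} by working on the horizontal Fourier side. For fixed $x_2\in[-1,0]$, the map $f\mapsto \varphi(\rho x_2|\p_1|)f$ is a Fourier multiplier with symbol $\varphi(\rho x_2|\xi|)$. Since $\varphi\in C_c^\infty$, this multiplier is bounded on $L^2$ and on $L^\infty$, uniformly in $x_2$. For $L^\infty$ we use that its kernel is an $L^1$-normalized dilate $\rho^{-1}|x_2|^{-1}K(\cdot/(\rho x_2))$ of a fixed kernel $K$; we would check $K\in L^1$, treating $\varphi(|\xi|)$ as a smooth even function of $\xi$ (this needs $\varphi$ flat near $0$, which we can arrange). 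The two ingredients are therefore the $x_2$-uniform multiplier bounds and the explicit formula
\[
\psi=(1+x_2)\varphi(\rho x_2|\p_1|)f .
\]

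\textbf{$L^\infty$ bounds.} We first write
\[
\varphi(\rho x_2|\p_1|)f=f+\big(\varphi(\rho x_2|\p_1|)-1\big)f .
\]
Because $\varphi-1$ vanishes near $0$, the second piece has symbol $\varphi(\rho x_2|\xi|)-1=\rho|x_2||\xi|\,m(\rho x_2\xi)$ with $m$ bounded. By Bernstein-type/Sobolev estimates its $L^\infty$ norm is at most $\rho\tilde C\|\p_1 f\|_{H^1}$. Together with $|1+x_2|\le1$ this gives the bound for $\psi$.

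For $\p_2\psi=\varphi(\rho x_2|\p_1|)f+(1+x_2)\rho|\p_1|\varphi'(\rho x_2|\p_1|)f$, we treat the first term the same way. The second term has symbol $\rho|\xi|\varphi'$, which is controlled by $\rho\|\,|\p_1|f\|_{L^\infty}\lesssim\rho\|\p_1f\|_{H^1}$.

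The derivatives $\p_1\psi$ and $\p_1^2\psi$ commute with the multiplier, so they follow directly from uniform $L^\infty$ boundedness. Each $\p_2$ derivative produces either a $|\p_1|$ (times $\varphi'$ or $\varphi''$) or removes the factor $(1+x_2)$. This yields $\p_1|\p_1|f$, $\p_1 f$, $|\p_1|^2f$ and $|\p_1|f$ terms, which we bound in $L^\infty$ by $\|\p_1f\|_{H^2}$ via the 1D Sobolev embedding $H^1\subset L^\infty$.

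\textbf{$L^2$ bounds.} These carry the $\langle\p_1\rangle^{-1/2}$ gain, and this is the main point. By Plancherel in $x_1$,
\[
\|\p^k\psi\|_{L^2(\Om_L)}^2\lesssim\int |\hat f(\xi)|^2 \sum_{j}|\xi|^{2j}\int_{-1}^0 \big|\p_{x_2}^{k-j}\big[(1+x_2)\varphi(\rho x_2|\xi|)\big]\big|^2\,\d x_2\,\d\xi .
\]
Each $x_2$-derivative falling on $\varphi$ gives a factor $\rho|\xi|$. The key computation is
\[
\int_{-1}^0|\varphi^{(l)}(\rho x_2|\xi|)|^2\,\d x_2\lesssim \min\{1,(\rho|\xi|)^{-1}\}\lesssim\langle\xi\rangle^{-1},
\]
because $\varphi^{(l)}(\rho x_2|\xi|)$ is supported in $|x_2|\lesssim(\rho|\xi|)^{-1}$. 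Hence each term is bounded by $\int\langle\xi\rangle^{-1}|\xi|^{2k}|\hat f|^2$ plus lower-order pieces. The lower-order pieces arise from derivatives hitting $(1+x_2)$ (at most one, since it is affine) and appear as $|\xi|^{2(k-1)}\langle\xi\rangle^{-1}|\hat f|^2$; these combine into $\|\langle\p_1\rangle^{-1/2}\p_1^{k-1}f\|_{H^1}^2$. For $k=1,2$ we separate $\p_1$ and $\p_2$ derivatives to obtain the more precise forms stated.

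The main obstacle is the $L^\infty$ estimate with the small factor $\rho$, which requires care with the nonsmooth symbol $|\xi|$ at the origin. We handle it through the flatness of $\varphi$ near zero, so that $\varphi(\rho x_2|\xi|)-1$ vanishes identically for small $|\xi|$.
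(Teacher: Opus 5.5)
Your proposal is correct and follows essentially the same route as the paper. Your splitting $\varphi(\rho x_2|\p_1|)f=f+(\varphi(\rho x_2|\p_1|)-1)f$ is the paper's identity $\varphi(\rho x_2|\p_1|)f=f+\rho x_2\int_0^1\varphi'(\rho\tau x_2|\p_1|)|\p_1|f\,\d\tau$ in another form. As in the paper, the $L^\infty$ bounds come from $x_2$-uniform $L^1$ convolution kernels of the dilated multipliers, and the $L^2$ gain comes from Plancherel in $x_1$ together with $\int_{-1}^0|\varphi^{(l)}(\rho x_2|\xi_1|)|^2\,\d x_2\lesssim\langle\xi_1\rangle^{-1}$. (The smoothness of $\xi_1\mapsto\varphi(-|\xi_1|)$ at the origin, which you say you would arrange, is already built in: $\varphi\equiv 1$ on a left neighbourhood of $0$, and only the nonpositive arguments $\rho x_2|\xi_1|$ occur.)
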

\begin{rem}
Since $\| f(t,\cdot)\|_{L^2(\BR)}$ may not be bounded,
the estimate of $\psi$ can only rely on
$\|f(t,\cdot)\|_{L^\infty(\BR )}$ and $\|\p_1f(t,\cdot)\|_{H^{s-\f12}}$.
%$\|  \p_2 \psi(t,\cdot)\|_{L^2(\Om_L)}$ requires the bound of $\| f(t,\cdot)\|_{L^2(\BR)}$.
%Hence, throughout this paper, we must rely on the $L^\infty$ norm for $  \p_2\psi$.
\end{rem}
\begin{proof}[Proof of Lemma \ref{lemTR}]
Firstly, we write
\begin{align*}
\psi(t,x)&= (1+x_2) \varphi(\rho x_2|\p_1|)f(t,x_1) \\
&=(1+x_2)\big( f(t,x_1)
+\rho x_2\int_0^1 \varphi'(\rho \tau x_2|\p_1|) |\p_1|f(t,x_1)\textrm{d}\tau \big)\,.
\end{align*}
Next, we calculate $\p^\alpha \psi$. The first derivative of $\psi$ is given by
\begin{align*}
\p_1\psi(t,x)&=(1+x_2) \varphi(\rho x_2|\p_1|) \p_1f(t,x_1),\\
\p_2\psi(t,x)&=\rho (1+x_2) \varphi'(\rho x_2|\p_1|) |\p_1|f(t,x_1)
+ \varphi (\rho x_2|\p_1|)  f(t,x_1)\\
&= \rho (1+x_2) \varphi'(\rho x_2|\p_1|) |\p_1|f(t,x_1)+f(t,x_1)
+\rho x_2\int_0^1 \varphi'(\rho \tau x_2|\p_1|) |\p_1|f(t,x_1)\textrm{d}\tau\,.
\end{align*}
The second derivative of $\psi$ is given by
\begin{align*}
\p_1^2\psi(t,x)
=&(1+x_2)\varphi(\rho x_2|\p_1|) \p_1^2 f(t,x_1),\\
\p_1\p_2\psi(t,x)
=&\rho(1+x_2) \varphi' (\rho x_2|\p_1|) \p_1|\p_1| f(t,x_1)
+\varphi(\rho x_2|\p_1|) \p_1 f(t,x_1),
\\
\p_2^2\psi(t,x)
=&
\rho^2 (1+x_2) \varphi''(\rho x_2|\p_1|) |\p_1|^2 f(t,x_1)
+2\rho \varphi'(\rho x_2|\p_1|) |\p_1| f(t,x_1).
\end{align*}
For $|\alpha|=k\geq 3$, we have
\begin{align*}
\p^\alpha\psi=
\sum_{k-1 \leq |\beta|\leq k} \sum_{|\gamma|\leq k} \rho^\gamma
C_{\beta,\gamma}(x_2) \varphi^{(\gamma)} ( x_2|\p_1|) A^{\beta}(\p_1) f ,
\end{align*}
where $A^{\beta}(\p_1)$ is a homogeneous differential operator of order $|\beta|$,
$C_{\beta,\gamma}(x_2)$ is a linear function which is obviously bounded if $x_2\in[-1,0]$.
%and it depends on $\delta$.

Let $\hat{\psi}$ stand for the Fourier transform of $\psi$  in $x_1$ variable.
Then
\begin{align*}
\hat{\psi}(t,\xi_1,x_2)&=(1+x_2) \varphi( \rho x_2|\xi_1|) \hat{f}(t,\xi_1),\\
%\end{align*}
%Thus
%\begin{align*}
\psi(t,x)&=(1+x_2) \mathcal{F}^{-1}\big( \varphi(\rho x_2|\xi_1|)  \big)\ast f(t,x_1).
\end{align*}
Here $\ast$ stands for a convolution in $x_1$ variable.
Since $\varphi$ is a Schwartz function, its Fourier transform is also a Schwartz function.
Then by H\"older inequality, we have
\begin{align*}
\|  \p_1 \psi(t,\cdot)\|_{L^\infty(\Om_L)}
\lesssim \| \p_1 f(t,\cdot)\|_{L^\infty(\BR )} \cdot \|\mathcal{F}^{-1} \varphi \|_{L^1(\BR)}
\lesssim \| \p_1 f(t,\cdot)\|_{L^\infty(\BR )} .
\end{align*}
The estimate of $\| \nabla^2 \psi(t,\cdot)\|_{L^\infty(\Om_L)}$ follows from the same procedure and the details are omitted.
%The estimate of the $L^\infty$ norm of $\psi$ with derivatives is the same.
%By the expression of $\p^\alpha \psi$,
%%$\psi$ with derivatives,
%by a similar argument,
%we see the $L^\infty$ bounds in the lemma holds.
%There holds

Next,
\begin{align*}
\|\psi(t,\cdot )\|_{L^\infty}
&\leq \|f(t,\cdot)\|_{L^\infty(\bR)}
+\rho \int_0^1 \|\varphi'(\rho \tau x_2|\p_1|) |\p_1|f(t,x_1)\|_{L^\infty(\bR)}\textrm{d}\tau\\
&\leq \|f(t,\cdot)\|_{L^\infty(\bR)}+\rho \tilde{C} \|\p_1f(t,\cdot)\|_{H^1(\bR)}
\end{align*}
Similarly,% estimate yields
\begin{align*}
%\|\p_1\psi(t,x)\|_{L^\infty}
%&\leq \|\varphi(\rho x_2|\p_1|) \p_1f(t,x_1)\|_{L^\infty}
%\lesssim \|\p_1f(t,\cdot)\|_{L^\infty},\\
\|\p_2\psi(t,\cdot)\|_{L^\infty}
%&=\rho (1+x_2) \varphi'(\rho x_2|\p_1|) |\p_1|f(t,x_1)
%+ \varphi (\rho x_2|\p_1|)  f(t,x_1)\\
%&\leq \rho\| (1+x_2) \varphi'(\rho x_2|\p_1|) |\p_1|f(t,x_1)\|_{L^\infty(\bR)}\\
%&\quad +\|f(t,\cdot)\|_{L^\infty(\bR)}
%+\rho \int_0^1 \|\varphi'(\rho \tau x_2|\p_1|) |\p_1|f(t,x_1)\|_{L^\infty(\bR)}\textrm{d}\tau\\
&\leq \|f(t,\cdot)\|_{L^\infty(\bR)}+\rho \tilde{C} \|\p_1f(t,\cdot)\|_{H^1(\bR)}
\end{align*}

Now we turn to the $L^2$ estimate of $\psi$.
%To estimate $\|\psi\|_{L^2(\Om_L)}$,
Note
%(there might be several choice)
\begin{align*}
\int_{-1}^{0} |(1+x_2) \varphi( x_2|\xi_1|) |^2 \dx_2
\leq C (|\xi_1|+1)^{-1}.
\end{align*}
Then by the Parseval-Plancherel identity, there holds
\begin{align*}
&\|\p_1\psi(t,\cdot)\|_{L^2(\Om_L)}^2
%=\|\hat{\psi}(t,\xi_1,\cdot)\|_{L^2(\Om_L)}^2
= \int_{\BR}\|\widehat{\p_1\psi}(t,\xi_1,\cdot)\|_{L^2([-1,0])}^2 \d\xi_1 \\
&=\int_{\BR} |\widehat{\p_1f}(t,\xi_1)|^2
\int_{-1}^{0} |(1+x_2) \varphi( x_2|\xi_1|) |^2 \dx_2 \d\xi_1\\
&\lesssim \| \langle \p_1\rangle^{-\f12} \p_1 f(t,\cdot)\|_{L^2(\BR )}.
\end{align*}
%The estimate of the $L^2$ norm of $\psi$ with derivatives is the same.
%A similar argument on  $\psi$ with its derivatives yields the $L^2$ bounds in the lemma.
The estimate of $\nabla^k \psi$ in  $L^2$ norm ($2\leq k\leq s+1$) is the same and the details are omitted.
This finishes the proof of Lemma \ref{lemTR}.
\end{proof}

\subsection{Quantities in fixed domain}\label{Sec-Lag}
The Jacobian matrix of the change of variables is given by
\begin{align*}
\nabla \Psi =
\left(
\begin{matrix}
1 \qquad\ 0\\
\p_1\psi \quad 1+\p_2\psi
\end{matrix}
\right).
\end{align*}
The Jacobian determinant is given by $J=\textrm{det}(\nabla \Psi)=1+\p_2\psi$.
By the $L^\infty$ estimate of $\p_2\psi$ in Lemma \ref{lemTR} and \eqref{F1}, we choose positive $\rho$ sufficiently small such that
$\rho \tilde{C} \|\p_1f(t,\cdot)\|_{H^1(\bR)}\leq \f14 \delta$. Hence
\begin{align*}
\f14\delta \leq J\leq 2-\f14 \delta,
\end{align*}
for $\epsilon$ sufficiently small.
This ensures that $\Psi(t,\cdot)$ is a diffeomorphism.

The inverse of $\nabla\Psi$ is denoted by $A$
\begin{align*}
A=[\nabla \Psi]^{-1}=\frac{1}{J}
\left(
\begin{matrix}
1+\p_2\psi \quad 0\\
-\p_1\psi \qquad 1
\end{matrix}
\right).
\end{align*}
It is easy to see that $\textrm{det}(A)=J^{-1}=(1+\p_2\psi)^{-1}$.

In the fixed domain $\Om_L$,
the $L^q$ norm %in the Lagrangian coordinates
can be defined as follows:
\begin{align*}
\| g\|_{L^q(\Om_L)}=\big(\int_{\Om_L} |g(x)|^q J \dx\big)^{\f1q}.
\end{align*}
Note that $\f14\delta\leq J\leq 2-\f14\delta$, hence the above definition is equivalent to the $L^q$ norm.
In the remainder of this section,  we simply write $L^q$ ($1\leq q\leq+\infty$)
 for $L^q(\Om_L)$ unless otherwise stated.

By setting
\begin{align*}
%&\wt{Z}_\pm (t,x)=Z_\pm  (t,\Psi(t,x)),\quad
\wt{\Lambda}_\pm (t,x)=\Lambda_\pm (t,\Psi(t,x) ),\quad
\wt{p} (t,x)=p (t,\Psi(t,x) ),
\end{align*}
The problem is reduced to the fixed domain $\Om_L$.
In order to simplify the presentation, let us denote
\begin{align*}
\nabla^\psi=A^\top \nabla.
\end{align*}

In the fixed domain $\Om_L$,
the MHD equations \eqref{MHD1} are equivalent to the following form:
%\begin{align*}
%\p_t \wt{\Lambda}_\pm(t,x)
%&= (\p_t \Lambda_\pm) (t,\Psi(t,x))
%+\p_t\Psi\cdot (\nabla \Lambda_\pm) (t,\Psi(t,x)) \\
%&=(\pm e_1\cdot\nabla\Lambda_\pm-\Lambda_\mp\cdot\nabla \Lambda_\pm) (t,\Psi(t,x))-(\nabla p)(t,\Psi(t,x)) \\
%&\quad+(0,\p_t\psi)\cdot (\nabla \Lambda_\pm) (t,\Psi(t,x)),
%\end{align*}
%which can be further simplified as follows:
\begin{equation}\label{J20}
\begin{cases}
\p_t \wt{\Lambda}_\pm
+(\wt{\Lambda}_\mp  \mp e_1 )\cdot (A^\top\nabla) \wt{\Lambda}_\pm  +A^\top \nabla \wt{p}
-(0,\p_t\psi/J)\cdot \nabla \wt{\Lambda}_\pm =0 ,\\
\div^\psi \wt{\Lambda}_\pm=0,
\end{cases}
 \textrm{in}\,\, \Om_L.
\end{equation}
Due to the boundary conditions \eqref{A18}, %and \eqref{A19},
 we deduce
\begin{align} \label{J21}
%u_2=0,\,\, h_2=0
\wt{\Lambda}_+\cdot e_2&=0,\,\, \wt{\Lambda}_-\cdot e_2=0
\quad \text{on}\,\, \Gamma^- .
% ,\\
%\label{J21}
%\wt{p}&=0
%,\quad
% h\cdot N = 0
% \quad \text{on}\,\, \Gamma.
\end{align}
Note that on the upper boundary $\Gamma$, we have
\begin{align*}
A\big|_{\Gamma}
=\frac{1}{1+f}
\left(
\begin{matrix}
1+f \quad 0\\
-\p_1 f  \quad 1
\end{matrix}
\right).
\end{align*}
On the lower boundary $\Gamma_-$, we obtain
\begin{align*}
A\big|_{\Gamma_-}
=\frac{1}{1+\p_2\psi\big|_{\Gamma_-}}
\left(
\begin{matrix}
1+\p_2\psi\big|_{\Gamma_-} \quad 0\\
\qquad 0  \qquad\quad 1
\end{matrix}
\right).
\end{align*}

Applying $\div^\psi$ to $\eqref{J20}_1$,
the equation of the pressure in the fixed domain reads as follows
\begin{align} \label{H1}
& -\nabla^\psi\cdot \nabla^\psi \wt{p}
=\nabla^\psi\cdot (\wt{\Lambda}_-\cdot \nabla^\psi \wt{\Lambda}_+)
=\p^\psi_j \wt{\Lambda}^i_- \p^\psi_i \wt{\Lambda}^j_+  \quad  {\rm in} \; \Omega_L.
\end{align}
Restricting the second equation of $\eqref{J20}_1$ to $\Gamma_-$ in the trace sense, due to the boundary condition  \eqref{A19}, \eqref{J21} and the expression of $A\big|_{\Gamma_-}$,
we obtain
\begin{align}\label{H2}
\wt{p} \big|_{\Gamma}=0,\quad  \p_2 \wt{p} \big|_{\Gamma_-}=0.
\end{align}
Let us organize \eqref{H1}  in another way in order conveniently to treat the
energy estimate of the pressure. Set $a=A J$, there holds the so-called Piola's identity:
\begin{align}\label{H3}
\nabla\cdot a^\top=0.
\end{align}
Then multiplying \eqref{H1} by $J$, we obtain
\begin{align} \label{H4}
& -a^\top\nabla\cdot \nabla^\psi \wt{p}
=a^\top\nabla\cdot (\wt{\Lambda}_-\cdot \nabla^\psi \wt{\Lambda}_+).
\end{align}

%\subsection{Boundary conditions}
%There holds on the boundary that
%$$\wt{p} \big|_{\Gamma}=0. $$
In the sequel, we frequently use the value of $a_{2,i}\p_i^\psi\wt{p}$ on $\Gamma$ and $\Gamma_-$.
Here,
% we calculate them by the boundary condition of pressure \eqref{H2}, %$A\big|_{\Gamma}$ and $A\big|_{\Gamma_-}$,
we calculate
%On the upper boundary $\Gamma$, by \eqref{J21} and $A\big|_{\Gamma}$, we have
\begin{align*}
&a_{2,i}\p_i^\psi\wt{p}\big|_{\Gamma}
=J A_{2,i} A_{j,i} \p_j \wt{p}\big|_{\Gamma}
%\\
%&=J^{-1}(1+|\p_1 f|^2) \, \p_2 \wt{p} \big|_{\Gamma}
%-\p_1 f \, \p_1 \wt{p} \big|_{\Gamma}
=(1+f)^{-1} (1+|\p_1 f|^2) \, \p_2 \wt{p} \big|_{\Gamma}\, ,\\
%\end{align*}
%In the above deduction, we have used
%\begin{align*}
%\wt{p} \big|_{\Gamma}=0 .
%\end{align*}
%and
%$$\p_2\wt{p} \big|_{\Gamma_-}=0. $$
%On the lower boundary $\Gamma_-$, by \eqref{J22} and $A\big|_{\Gamma_-}$, there holds
%\begin{align*}
&a_{2,i}\p_i^\psi\wt{p}\big|_{\Gamma_-}
=J A_{2,i} A_{j,i} \p_j \wt{p}\big|_{\Gamma_-}
=J^{-1} \p_2 \wt{p} \big|_{\Gamma_-}=0.
\end{align*}

%\subsection{Weight functions in Lagrangian coordinates}
%In the sequel, we are going to treat the weighted estimate of pressure in Lagrangian coordinates.
Due to the change of coordinates, the weight functions in the fixed domain $\Om_L$ are thus given as follows:
$$W^\pm(t,x)=w^\pm (t,\Psi(t,x)).$$
By chain rules, there hold
\begin{align*}
&\p_1 W^\pm(t,x)=(\p_1w^\pm) (t,\Psi(t,x))+ (\p_2 w^\pm) (t,\Psi(t,x)) \cdot(1+\p_1\psi),\\
&\p_2 W^\pm(t,x)=(\p_2w^\pm) (t,\Psi(t,x))\cdot (1+\p_2\psi).
\end{align*}
Consequently, by Lemma  \ref{lemC2}, the estimate for $\nabla \psi$ in Lemma \ref{lemTR} and \eqref{F1}, there exists $C$ such that
%\begin{equation}  \label{F11}
%\begin{cases}
%|\p_1W^\pm-1|\leq C\epsilon ,\\
%|\p_2W^\pm|\leq C\epsilon.
%\end{cases}
%\end{equation}
\begin{equation}  \label{F11}
|\p_1W^\pm-1|\leq C\epsilon' ,\quad
|\p_2W^\pm|\leq C\epsilon'.
\end{equation}

Before the weighted estimate for pressure,
we first present a simple Poincar\'e inequality.
\begin{lem}\label{lemHardy}
For all $g(x)\in \dot{H}^1(\Om_L)$ with $g|_{\Gamma}=0$, there holds
\begin{align}\label{HH5}
&\| g\|_{L^2}\lesssim \| \p_2 g\|_{L^2}.
\end{align}
Moreover, under the assumption \eqref{F1},
for any $\nu\in \BR,\ \mu\in \BR$, there hold
%\eqref{H5} reduces to
\begin{align}
\label{HH6}
\| g\|_{L^2}
&\leq 2\| \p_2^\psi g\|_{L^2},\\
%\end{align}
%Note that $\langle W^\pm\rangle^{\nu} \wt{p}$ also vanishes on $\Gamma$,
%by Lemma \ref{lemH1}, \eqref{H5} and \eqref{H6}, there holds
%\begin{align}
\label{HH7}
\| \langle W^\pm\rangle^{\nu} g\|_{L^2}
&\lesssim \| \langle W^\pm\rangle^{\nu} \p_2^\psi g\|_{L^2}
,\\[-4mm]\nonumber\\
\label{HH8}
\| \langle W^\pm\rangle^{\nu} \langle W^\mp\rangle^{\mu} g\|_{L^2}
&\lesssim \| \langle W^\pm\rangle^{\nu}
\langle W^\mp\rangle^{\mu}\p_2^\psi g\|_{L^2}
\\[-4mm]\nonumber\\\nonumber
&\quad
+\| \langle W^\pm\rangle^{\nu-1} \langle W^\mp\rangle^{\mu} g\|_{L^2}
+\| \langle W^\pm\rangle^{\nu} \langle W^\mp\rangle^{\mu-1} g\|_{L^2}.
\end{align}
Furthermore, if $g(x)\in \dot{H}^1(\Om_L) \cap \dot{H}^2(\Om_L)$, there holds
\begin{align}\label{HH9}
&\| g\|_{L^\infty}\lesssim \| \p_2 g\|^{\f12}_{L^2} \| \p_2\p_1 g\|^{\f12}_{L^2}.
\end{align}
\end{lem}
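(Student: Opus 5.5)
The whole lemma rests on the vanishing of $g$ on the upper boundary $\Gamma=\{x_2=0\}$ together with the fact that $\Om_L=\BR\times[-1,0]$ has vertical extent one. For \eqref{HH5}, for fixed $x_1$ write
\[
g(x_1,x_2)=-\int_{x_2}^0\p_2 g(x_1,y)\,\d y .
\]
Cauchy--Schwarz gives $|g(x_1,x_2)|^2\le |x_2|\int_{-1}^0|\p_2 g(x_1,y)|^2\d y$. Integrating over $x_2\in[-1,0]$ yields the constant $\f12$, and then integrating in $x_1$ gives \eqref{HH5}; the factor $J$ in the norm only changes constants.

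For \eqref{HH6} I would first express $\p_2^\psi$. Since $\nabla^\psi=A^\top\nabla$, the formula for $A$ gives $\p_2^\psi=J^{-1}\p_2$. Hence $\p_2 g=J\,\p_2^\psi g$, with $J\le 2-\f14\delta$. The sharp one-dimensional inequality above gives $\int_{-1}^0|g|^2\le\f12\int_{-1}^0|\p_2g|^2$. The factor $J$ in the weighted norm on both sides makes this not quite immediate, because $J$ varies in $x_2$. To handle this I would use $J=1+\p_2\psi$ with $\|\p_2\psi\|_{L^\infty}\le \|f\|_{L^\infty}+\rho\tilde C\|\p_1f\|_{H^1}$, which by \eqref{F1} is at most $1-\f14\delta$. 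I would then redo the Cauchy--Schwarz step with the measure $J\,\d x_2$ so that the constant $2$ comes out. I expect this bookkeeping of $J$ to be the only delicate point of the lemma. If the clean constant fails, a constant $C(\delta)$ suffices for all later uses, and I would fall back to that.

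For \eqref{HH7}, apply \eqref{HH6} to $\langle W^\pm\rangle^\nu g$, which still vanishes on $\Gamma$. Then
\[
\p_2^\psi\big(\langle W^\pm\rangle^\nu g\big)=\langle W^\pm\rangle^\nu\p_2^\psi g+\nu J^{-1}\langle W^\pm\rangle^{\nu-2}W^\pm\,\p_2W^\pm\,g .
\]
By \eqref{F11}, $|\p_2W^\pm|\le C\epsilon'$, so the second term is bounded by $C\epsilon'\|\langle W^\pm\rangle^\nu g\|_{L^2}$ and can be absorbed into the left side for $\epsilon'$ small.

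For \eqref{HH8} the same computation with the product weight produces the error terms $\langle W^\pm\rangle^{\nu-1}\langle W^\mp\rangle^{\mu}g$ and $\langle W^\pm\rangle^{\nu}\langle W^\mp\rangle^{\mu-1}g$. Each carries a factor $C\epsilon'$, and they are simply kept on the right-hand side, which is what the stated inequality allows. Absorption would also work, but the stated form does not need it.

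For \eqref{HH9}, first apply the same fundamental theorem argument to $g$ to get
\[
|g(x_1,x_2)|\le\|\p_2 g(x_1,\cdot)\|_{L^2_{x_2}}\quad\text{for each }x_1 .
\]
Then apply the one-dimensional Agmon inequality $\|h\|_{L^\infty_{x_1}}^2\le\|h\|_{L^2}\|\p_1h\|_{L^2}$ to $h(x_1)=\|\p_2 g(x_1,\cdot)\|_{L^2_{x_2}}$. Using $|\p_1 h|\le\|\p_1\p_2 g(x_1,\cdot)\|_{L^2_{x_2}}$ gives the bound $\|\p_2 g\|_{L^2}^{1/2}\|\p_1\p_2 g\|_{L^2}^{1/2}$.
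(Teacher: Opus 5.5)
Your proposal is correct and follows the paper's route: the fundamental theorem of calculus in $x_2$ from $\Gamma$ with Cauchy--Schwarz for \eqref{HH5}, then the identity $\p_2^\psi=J^{-1}\p_2$ with $\f14\delta\le J\le 2-\f14\delta$ for \eqref{HH6}--\eqref{HH8}, where your product-rule computation using \eqref{F11} spells out what the paper leaves implicit, and finally the vertical bound followed by a one-dimensional Agmon step in $x_1$ for \eqref{HH9}. The point you flagged as delicate is immediate, since $J\le 2$ gives $\|g\|_{L^2}^2\le 2\int_{\Om_L}|g|^2\,\dx\le\int_{\Om_L}|\p_2 g|^2\,\dx=\int_{\Om_L}J^2|\p_2^\psi g|^2\,\dx\le 2\|\p_2^\psi g\|_{L^2}^2$; moreover, the absorption in \eqref{HH7}, which tacitly needs $\|\langle W^\pm\rangle^\nu g\|_{L^2}<\infty$, can be avoided altogether, because $\langle W^\pm\rangle$ is comparable along each vertical segment of $\Om_L$ (as $|\p_2W^\pm|\le C\epsilon'$ and the strip has height one).
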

\begin{proof}
We write
\begin{align*}
&g(x)=g(x)-g(x_1,x_2=0)
=-\int^0_{x_2} \p_{\bar{x}_2} g(x_1,\bar{x}_2) \d\bar{x}_2\\
&\leq |x_2|^{\f12}
\Big(\int^0_{-1} |\p_{\bar{x}_2} g(x_1,\bar{x}_2)|^2 \d\bar{x}_2\Big)^{\f12}.
\end{align*}
Further taking $L^2(\Om_L)$ norm of the above expression yields \eqref{HH5}.

Next, note $\p_2^\psi=J^{-1}\p_2$ and $\f14\delta\leq J\leq 2-\f14\delta$,
then the estimates \eqref{HH6}, \eqref{HH7}, \eqref{HH8} follow from
\eqref{HH5}.
% and the assumption
%$\| \nabla\psi \|_{L^\infty}\leq C \epsilon $.

%\begin{align*}
%\| g\|_{L^2}\leq  \| \p_2 g\|_{L^2}.
%\end{align*}
Finally, we calculate
\begin{align*}
|g(x)|\leq
\Big(\int^0_{-1} |\p_{\bar{x_2}} g(x_1,\bar{x}_2)|^2 \d\bar{x}_2\Big)^{\f12}
\leq   \| \p_2 g\|^{\f12}_{L^2} \| \p_1\p_2 g\|^{\f12}_{L^2}.
\end{align*}
This yields \eqref{HH9}.
\end{proof}

\subsection{Weighted estimate of the pressure}
\begin{lem}\label{lemPreL}
Let $s\geq 3$ be an integer, $1/2< \mu \leq 3/4$. Under the assumption \eqref{F1}.
For the pressure satisfying \eqref{H1}-\eqref{H2}, there holds
\begin{align*}
&\| \langle W^\pm\rangle^{2\mu}
\langle W^\mp\rangle^{\mu}  \wt{p}\|^2_{L^2(\Om_L)}
+\sum_{|\alpha|\leq s}\| \langle W^\pm\rangle^{2\mu}
\langle W^\mp\rangle^{\mu} \nabla^\psi\p^\alpha \wt{p}\|^2_{L^2(\Om_L)}\\
&\lesssim %\sum_{|\alpha|\leq s-1,\ \pm} \| \langle W^\pm\rangle^{2\mu}  \p^\alpha \nabla^\psi\wt{\Lambda}_\pm\|_{L^2(\Om_L)}
%      \sum_{|\alpha|\leq s-1,\ \mp} \Big\| \f{\langle W^\mp\rangle^{2\mu}  \p^\alpha \nabla^\psi\wt{\Lambda}_\mp}{\langle W^\pm\rangle^{\mu}}\Big\|_{L^2(\Om_L)}\\
\mathcal{E}_s \mathcal{G}_s
%&\qquad
\cdot
\mathcal{F}(1+\| f \|_{L^\infty(\BR)}+\| f \|_{\dot{H}^1(\BR)}+\| f \|_{\dot{H}^{s+\f12}(\BR)}),
\end{align*}
where $\mathcal{F}(\cdot)$ is some non-decreasing function, $\mathcal{E}_s,\ \mathcal{G}_s$ denote
 %one dimensional polynomial.
\begin{align*}
\mathcal{E}_s &=\sum_{|\alpha|\leq s-1,\ |\beta|\leq 1,\ \pm} \| \langle W^\pm\rangle^{2\mu}  \p^\alpha (\nabla^\psi)^\beta \wt{\Lambda}_\pm\|_{L^2(\Om_L)}^2,\\
\mathcal{G}_s &=\sum_{|\alpha|\leq s-1,\ |\beta|\leq 1,\ \mp} \Big\| \f{\langle W^\mp\rangle^{2\mu}  \p^\alpha (\nabla^\psi)^\beta \wt{\Lambda}_\mp}{\langle W^\pm\rangle^{\mu}}\Big\|_{L^2(\Om_L)}^2.
\end{align*}
\end{lem}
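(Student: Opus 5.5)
We argue by weighted energy estimates for the elliptic problem \eqref{H4}--\eqref{H2}, by induction on the number of derivatives $|\alpha|$. Set the weight $\varpi=\langle W^\pm\rangle^{2\mu}\langle W^\mp\rangle^{\mu}$. By \eqref{F11}, $|\nabla\varpi|\lesssim \varpi\,(\langle W^\pm\rangle^{-1}+\langle W^\mp\rangle^{-1})$, so every derivative landing on the weight gains a factor of decay.

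\textbf{Base step $|\alpha|=0$.} Multiply \eqref{H4} by $\varpi^2\wt p$ and integrate over $\Om_L$. Using Piola's identity \eqref{H3} to integrate by parts, the boundary terms are controlled as follows. On $\Gamma$ they vanish because $\wt p|_\Gamma=0$. On $\Gamma_-$ they vanish because $a_{2,i}\p^\psi_i\wt p|_{\Gamma_-}=0$ and, by \eqref{J21}, $\wt\Lambda_-^2|_{\Gamma_-}=0$; the latter kills the boundary flux of the right-hand side $a^\top\nabla\cdot(\wt\Lambda_-\cdot\nabla^\psi\wt\Lambda_+)$. We are left with $\|\varpi\nabla^\psi\wt p\|_{L^2}^2$ on the left, plus a commutator term $\int \varpi\nabla\varpi\,\wt p\,\nabla^\psi\wt p$. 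The source term becomes $\int (\wt\Lambda_-\cdot\nabla^\psi\wt\Lambda_+)\cdot\nabla^\psi(\varpi^2\wt p)$.

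The commutator term is lower order. It is absorbed via \eqref{HH8} and \eqref{HH6}; iterating \eqref{HH8} finitely many times leaves weights of power $\le 0$, which are handled by \eqref{HH6}. The same lemma also gives the $\|\varpi\wt p\|_{L^2}$ part of the claimed norm.

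For the source we use the null structure. Apply Cauchy--Schwarz, putting $\varpi$ on $\nabla^\psi\wt p$. We then need
\[
\|\varpi\,\wt\Lambda_-\nabla^\psi\wt\Lambda_+\|_{L^2}\lesssim \Big\|\tfrac{\langle W^-\rangle^{2\mu}\wt\Lambda_-}{\langle W^+\rangle^{\mu}}\Big\|_{L^\infty}\|\langle W^+\rangle^{2\mu}\nabla^\psi\wt\Lambda_+\|_{L^2}
\]
or the symmetric version. To make the weights match we must choose, depending on the sign $\pm$ in $\varpi$, which factor carries the ghost weight. Here $\mu\le 3/4$ is used: the leftover factor $\langle W^\mp\rangle^{\mu}\langle W^\pm\rangle^{2\mu}/(\langle W^+\rangle^{2\mu}\langle W^-\rangle^{2\mu}\langle W^\mp\rangle^{-\mu})$ must be bounded. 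The $L^\infty$ norm is then converted to $L^2$ norms of up to two derivatives by Lemma \ref{Sobo1}, transported to $\Om_L$. This produces $\mathcal E_s^{1/2}\mathcal G_s^{1/2}$, and squaring gives $\mathcal E_s\mathcal G_s$ after absorbing $\|\varpi\nabla^\psi\wt p\|$.

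Alternatively, it is cleaner to write the right side of \eqref{H1} as $\p^\psi_j\wt\Lambda^i_-\p^\psi_i\wt\Lambda^j_+$ (using $\div^\psi\wt\Lambda_\pm=0$). Then no derivative is lost, and it is directly a product of $\nabla^\psi\wt\Lambda_-$ and $\nabla^\psi\wt\Lambda_+$. This is why $\mathcal E_s,\mathcal G_s$ contain $(\nabla^\psi)^\beta$ and only $s-1$ further derivatives.

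\textbf{Higher order.} For tangential derivatives $\p_1^k$, apply $\p_1^k$ to \eqref{H4}. The boundary conditions are preserved, since $\p_1^k\wt p|_\Gamma=0$ and the Neumann-type flux vanishes at $\Gamma_-$. Repeat the base-step energy estimate. The commutators $[\p_1^k,a^\top\nabla\cdot\nabla^\psi]$ involve derivatives of $\psi$. By Lemma \ref{lemTR} these are bounded in $L^\infty$ for low orders and in $L^2$ by $\|\langle\p_1\rangle^{-1/2}\p_1^{k}f\|$ up to $k\le s+1$, i.e. by $\|f\|_{\dot H^{s+1/2}}$. They multiply lower-order pressure norms already controlled by induction, which yields the $\mathcal F(\cdot)$ factor.

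Normal derivatives are recovered from the equation itself. Since $J$ is bounded below, $(a^\top\nabla\cdot\nabla^\psi)$ expresses $\p_2^2\wt p$ through $\p_1$-derivatives, lower-order terms and the source. Expanding by Leibniz, the product $\p^\alpha(\nabla^\psi\wt\Lambda_-\nabla^\psi\wt\Lambda_+)$ is bounded by putting the factor with fewer derivatives in $L^\infty$ (Lemma \ref{Sobo1}, costing two derivatives) or in $L^4$ (\eqref{SobWeit3}), each time assigning the ghost weight to the correct factor as above. Since $s\ge3$, every term has one factor of order $\le s-1$ after $\nabla^\psi$, with enough room for the Sobolev embeddings.

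\textbf{Main obstacle.} I expect the hardest step to be the top-order commutator involving $\nabla^{s+1}\psi$ multiplied by $\nabla\wt p$ in $L^\infty$. It must be shown to be controlled by $\|f\|_{\dot H^{s+1/2}}$ times lower-order weighted pressure norms, without circularity. The weights must also be kept on the $\wt p$-factor, which is harmless since $\psi$ carries no weight. Weighted $L^\infty$ of $\nabla\wt p$ is obtained from the already established lower-order weighted estimates via Lemma \ref{Sobo1}.
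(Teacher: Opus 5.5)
Your weighted base step does not close as written. With the test function $\varpi^2\wt p$, the term where the derivative falls on the weight is bounded only by $C\|\varpi\nabla^\psi\wt p\|_{L^2}\,\|\varpi(\langle W^+\rangle^{-1}+\langle W^-\rangle^{-1})\wt p\|_{L^2}$, with an $O(1)$ constant. The inequalities \eqref{HH6}--\eqref{HH8} carry no small constant either: iterating \eqref{HH8} only trades $\wt p$ for $\p_2^\psi\wt p$ with intermediate weights such as $\langle W^+\rangle^{2\mu-1}\langle W^-\rangle^{\mu}$. That quantity is not a small multiple of $\|\varpi\nabla^\psi\wt p\|_{L^2}$: where $W^+$ stays bounded, e.g.\ near $x_1\approx -t$, one has $\varpi\langle W^+\rangle^{-1}\sim\varpi$. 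Nor is it bounded by anything you have proved, so nothing is absorbed.

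The missing idea is the paper's ladder in the weight exponent, anchored at the unweighted estimate, where no weight-commutator arises.
\begin{itemize}
\item Testing \eqref{H4} with $\wt p$ gives $\|\nabla^\psi\wt p\|_{L^2}^2\lesssim\mathcal E_2\mathcal G_1$.
\item Testing with $\langle W^\pm\rangle^{4\nu}\wt p$ gives \eqref{H17}. Its remainder $\|\langle W^\pm\rangle^{2\nu-1}\wt p\|_{L^2}$ is controlled, through the Poincar\'e inequality, by the previous rung: first $\nu=\mu-\frac12$, then $\nu=\mu$.
\item The mixed weights $\langle W^+\rangle^{4\nu}\langle W^-\rangle^{2\nu}$ are handled the same way, first for $\nu\le\frac12$ and then for $\nu=\mu$.
\end{itemize}
Splitting $\varpi\langle W^+\rangle^{-1}\le\eta\varpi+C_\eta\langle W^-\rangle^{\mu}$ and absorbing the $\eta$-part shortens the ladder, but still needs the lower-weight gradient bounds first.

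This ladder is where the upper bound on $\mu$ enters, e.g.\ $\langle W^\pm\rangle^{2\mu-2}\lesssim 1$ at the first rung. It does not enter the source term. There the weights match exactly, $\langle W^-\rangle^{2\mu}\cdot\langle W^+\rangle^{2\mu}\langle W^-\rangle^{-\mu}=\langle W^+\rangle^{2\mu}\langle W^-\rangle^{\mu}$, so your ``leftover factor'' is identically $1$.

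At the top tangential order $k=s$, two derivative counts also require the paper's integrations by parts.
\begin{itemize}
\item $\p_1^s$ of the source, in either form, carries $s+1$ derivatives of $\wt\Lambda_\pm$, which $\mathcal E_s,\mathcal G_s$ do not contain. Integrating the outer divergence by parts does not help, since the inner factor $\nabla^\psi\wt\Lambda_+$ already carries one. You must move one $\p_1$ onto $J\varpi^2\p_1^s\wt p$, with no boundary terms in $x_1$. This costs only $\|\varpi\p_1^{s+1}\wt p\|_{L^2}$, which is part of the left-hand side.
\item The commutator contains $\nabla\p_1^s A$, i.e.\ $\nabla^{s+2}\psi$, which $\|f\|_{\dot H^{s+1/2}}$ does not control. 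Since it appears in divergence form, it must be integrated by parts onto the test function; the boundary terms vanish by \eqref{H34}.
\end{itemize}
With these modifications, your higher-order scheme coincides with the paper's: tangential energy estimates, then $\p_2^2\p^\alpha\wt p$ recovered from the equation via \eqref{H41}--\eqref{H42}, and Leibniz with the ghost weight placed on the appropriate factor.
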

%An immediate consequence of the above lemma is the weighted estimate of pressure in Eulerian coordinates.
From the above weighted estimate of pressure in the fixed domain $\Om_L$,
we obtain the corresponding weighted estimate of pressure in Eulerian coordinates.
\begin{lem}\label{lemPre}
Let $s\geq 3$ be an integer, $1/2< \mu \leq 3/4$. Under the assumption \eqref{F1}.
For the pressure satisfying \eqref{B14},
there holds
\begin{align*}
&\sum_{|\alpha|\leq s+1}\| \langle w^\pm\rangle^{2\mu}
\langle w^\mp\rangle^{\mu}  \p^\alpha p\|_{L^2(\Om_f)}^2\\
%&\leq \sum_{|\alpha|\leq s,\ \pm} \| \langle w^\pm\rangle^{2\mu}  \p^\alpha  \Lambda_\pm\|_{L^2(\Om_f)}
%      \sum_{|\alpha|\leq s,\ \mp} \Big\| \f{\langle w^\mp\rangle^{2\mu}  \p^\alpha  \Lambda_\mp}{\langle w^\pm\rangle^{\mu}}\Big\|_{L^2(\Om_f)}\\
%&\qquad
&\lesssim
E_s^b G_s^b
\cdot \mathcal{F}(1+\| f \|_{L^\infty(\BR)}+\| f \|_{\dot{H}^1(\BR)}+\| f \|_{\dot{H}^{s+\f12}(\BR)}),
\end{align*}
where $\mathcal{F}(\cdot)$ is some non-decreasing function.
\end{lem}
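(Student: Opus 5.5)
The plan is to derive Lemma \ref{lemPre} from Lemma \ref{lemPreL} by pulling back to $\Om_f$ through the diffeomorphism $\Psi$. We have $p(t,\Psi(t,x))=\wt p(t,x)$ and $w^\pm(t,\Psi(t,x))=W^\pm(t,x)$, and the Jacobian satisfies $\f14\delta\le J\le 2-\f14\delta$. Hence for any function $g$ on $\Om_f$,
\[
\|\langle w^+\rangle^{a}\langle w^-\rangle^{b} g\|_{L^2(\Om_f)}\sim \|\langle W^+\rangle^{a}\langle W^-\rangle^{b}\, g\circ\Psi\|_{L^2(\Om_L)} .
\]
The zero-order term $\alpha=0$ is therefore exactly the first term of Lemma \ref{lemPreL}.

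For $1\le|\alpha|\le s+1$, write $\p^\alpha p=\p^{\alpha'}\p_i p$ with $|\alpha'|\le s$. By the chain rule, $(\p_i p)\circ\Psi=\p^\psi_i\wt p$, and each further Eulerian derivative becomes $\nabla^\psi=A^\top\nabla$ in the fixed domain. Expanding $(\nabla^\psi)^{\alpha'}\nabla^\psi\wt p$ gives a leading term $A^{\alpha'}\p^{\alpha'}\nabla^\psi\wt p$ plus commutator terms. In those commutators, derivatives fall on the entries of $A$, i.e. on $\nabla\psi/J$, and produce products of $\p^{\gamma}\nabla\psi$ with $\p^{\beta}\nabla^\psi\wt p$, where $|\beta|<|\alpha'|$.

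The terms are handled as follows.
\begin{itemize}
\item Low derivatives of $\psi$ are placed in $L^\infty$ using Lemma \ref{lemTR}, and $\p^\beta\nabla^\psi\wt p$ is placed in the weighted $L^2$ space.
\item When $|\gamma|$ is large (up to $s+1$ derivatives of $\psi$), $\nabla^{\gamma}\psi$ is placed in $L^2$. By Lemma \ref{lemTR} it is bounded by $\|\langle\p_1\rangle^{-\f12}\p_1^{k-1}f\|_{H^1}\lesssim \|f\|_{\dot H^1}+\|f\|_{\dot H^{s+\f12}}$. The low-order pressure factor carrying the weights is then placed in weighted $L^\infty$ via Lemma \ref{Sobo1}, transplanted to $\Om_L$ using \eqref{F11}. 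This costs at most two more derivatives, which is affordable since $s\ge3$.
\item Intermediate cases are split with weighted $L^4$ estimates via \eqref{SobWeit3}.
\end{itemize}
All $\psi$-dependent factors are absorbed into $\mathcal F(1+\|f\|_{L^\infty}+\|f\|_{\dot H^1}+\|f\|_{\dot H^{s+\f12}})$, using $J^{-1}\le 4/\delta$ and the $L^\infty$ bound on $\psi$. Every pressure factor is controlled by the left side of Lemma \ref{lemPreL}, which yields $\mathcal E_s\mathcal G_s\,\mathcal F(\cdot)$.

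The last step converts $\mathcal E_s,\mathcal G_s$ back to $E^b_s,G^b_s$. A term $\p^\alpha(\nabla^\psi)^\beta\wt\Lambda_\pm$ with $|\alpha|\le s-1$ and $|\beta|\le1$ is, by the inverse chain rule ($\p=\nabla\Psi^\top\nabla$ applied to compositions), a combination of $(\p^{\gamma}\Lambda_\pm)\circ\Psi$ with $|\gamma|\le s$. The coefficients are polynomials in $\p^{\le s}\psi$, estimated exactly as in the pressure step, with the same $L^\infty$/$L^2$ splitting. Change of variables then gives
\[
\mathcal E_s\lesssim E^b_s\,\mathcal F,\qquad \mathcal G_s\lesssim G^b_s\,\mathcal F .
\]
The weight ratio $\langle W^\mp\rangle^{2\mu}/\langle W^\pm\rangle^{\mu}$ is unchanged under the composition, so the ghost structure is preserved.

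The main obstacle is the top-order bookkeeping in the commutator expansion. Terms with $s+1$ derivatives on $\psi$ need $\|f\|_{\dot H^{s+\f12}}$ with a lower-order pressure factor in weighted $L^\infty$. One must check that no term requires more than $s+\f12$ derivatives of $f$, nor weighted pressure norms beyond $\nabla^\psi\p^{\le s}\wt p$. Both follow because the highest-order $\psi$ factor always multiplies $\nabla^\psi\wt p$ with at most one additional derivative.
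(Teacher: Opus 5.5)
Your proposal is correct and follows the paper's route: pull back to $\Om_L$, invoke Lemma \ref{lemPreL}, and convert between $(\nabla^\psi)^\alpha$ and flat derivatives $\p^\alpha$ on both the pressure side and the $\wt\Lambda_\pm$ side, preserving the ghost-weight structure. The only difference is that the paper does not redo the commutator bookkeeping inline. It packages that bookkeeping in Lemma \ref{lemCo}, applied once to $\wt p$ with $s+1$ derivatives (which is where $\|f\|_{\dot H^{s+\f12}}$ enters) and once to $\wt\Lambda_\pm$.
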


We postpone the proof of Lemma \ref{lemPreL} and first
 prove Lemma \ref{lemPre} assuming Lemma \ref{lemPreL}.
%Before the proof of Lemma \ref{}
Before that, we show the weighted norm relation between $\nabla$ and $\nabla^\psi$.
\begin{lem}\label{lemCo}
Let $\mu\in\BR $, $\nu\in\BR$, $s\geq 1$. Under the assumption \eqref{F1}, there hold
\begin{align*}
%&\sum_{2\leq |\alpha|+|\beta|\leq s,|\alpha|\geq 1,|\beta|\geq1}
%\| [\p^\alpha, (\nabla^\psi)^\beta] g \|_{L^2}\\
%&\leq
%\sum_{1 \leq |\alpha|\leq s} \| (\p^\psi)^\alpha  g \|_{L^2}
% \cdot \mathcal{F}(1+\| f \|_{L^\infty(\BR)}+\| f \|_{\dot{H}^1(\BR)}+\| f \|_{\dot{H}^{s-\f12}(\BR)}),\\
%&\sum_{2\leq |\alpha|+|\beta|\leq s,|\alpha|\geq 1,|\beta|\geq1}
%\| [\p^\alpha, (\nabla^\psi)^\beta] g \|_{L^2}\\
%&\leq \sum_{1 \leq |\alpha|\leq s} \| \p^\alpha  g \|_{L^2}
% \cdot \mathcal{F}(1+\| f \|_{L^\infty(\BR)}+\| f \|_{\dot{H}^1(\BR)}+\| f \|_{\dot{H}^{s-\f12}(\BR)}),\\
& \sum_{|\alpha|\leq s}\|  \langle W^\pm\rangle^{\mu}
\langle W^\mp\rangle^{\nu} \p^\alpha  g \|_{L^2}\\
&\leq \sum_{ |\alpha|\leq s}\| \langle W^\pm\rangle^{\mu}
\langle W^\mp\rangle^{\nu} (\p^\psi)^\alpha  g \|_{L^2}
 \cdot \mathcal{F}(1+\| f \|_{L^\infty(\BR)}+\| f \|_{\dot{H}^1(\BR)}+\| f \|_{\dot{H}^{s-\f12}(\BR)}),\\
& \sum_{|\alpha|\leq s}\| \langle W^\pm\rangle^{\mu}
\langle W^\mp\rangle^{\nu} (\p^\psi)^\alpha    g \|_{L^2}\\
&\leq \sum_{|\alpha|\leq s}\| \langle W^\pm\rangle^{\mu}
\langle W^\mp\rangle^{\nu} \p^\alpha g \|_{L^2}
 \cdot \mathcal{F}(1+\| f \|_{L^\infty(\BR)}+\| f \|_{\dot{H}^1(\BR)}+\| f \|_{\dot{H}^{s-\f12}(\BR)}).
 \end{align*}
\end{lem}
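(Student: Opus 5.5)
We argue by induction on $s$, writing both families of operators explicitly in terms of the other. Since $A^\top=J^{-1}\begin{pmatrix}1+\p_2\psi & -\p_1\psi\\ 0 & 1\end{pmatrix}$ and $J=1+\p_2\psi$, we have
\begin{align*}
\p_1^\psi=\p_1-\f{\p_1\psi}{J}\p_2,\qquad \p_2^\psi=J^{-1}\p_2,
\end{align*}
and conversely
\begin{align*}
\p_2=J\p_2^\psi,\qquad \p_1=\p_1^\psi+\p_1\psi\,\p_2^\psi .
\end{align*}
Thus each of $\p^\alpha$ and $(\p^\psi)^\alpha$ is a linear combination of the other family, with coefficients that are smooth functions of $\nabla\psi$ (and of $J^{-1}$). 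Iterating with the Leibniz rule gives
\begin{align*}
(\p^\psi)^\alpha g=\sum_{1\leq|\beta|\leq|\alpha|} c_{\alpha\beta}\,\p^\beta g,
\end{align*}
and symmetrically for $\p^\alpha g$ in terms of $(\p^\psi)^\beta g$, plus the trivial case $\alpha=0$. Each coefficient $c_{\alpha\beta}$ is a sum of products $\Phi(\nabla\psi,J^{-1})\prod_j\nabla^{k_j+1}\psi$ with $\sum_j k_j\leq|\alpha|-|\beta|$. Because $\f14\delta\leq J\leq 2-\f14\delta$, Faà di Bruno applied to $J^{-1}$ only produces such terms with bounded $\Phi$. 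The two inequalities are therefore proved by the same argument, so we only describe the first.

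The weights $\langle W^\pm\rangle^{\mu}\langle W^\mp\rangle^{\nu}$ are never differentiated, since they multiply the whole expression. We always place them on the factor carrying $g$, so no commutator with the weight arises and \eqref{F11} is used only inside the weighted Sobolev embeddings. For a term $c\,\p^\beta g$, let $m=\max_j k_j$ be the top order of derivative falling on $\nabla\psi$, and split into two cases.
\begin{itemize}
\item[(i)] If $m+1\leq 2$, bound all $\psi$-factors in $L^\infty$ by Lemma \ref{lemTR}. This gives the bound $\|\p_1f\|_{H^2}\lesssim\|f\|_{\dot H^1}+\|f\|_{\dot H^{s-\f12}}$ when $s\geq 4$; for small $s$ use the $L^2$ bounds instead. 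Keep $\langle W^\pm\rangle^{\mu}\langle W^\mp\rangle^{\nu}\p^\beta g$ in $L^2$.
\item[(ii)] If $m+1\geq 3$, place $\nabla^{m+1}\psi$ in $L^2$ (or $L^4$) via the $L^2$ estimates of Lemma \ref{lemTR}, which are controlled by $\|f\|_{\dot H^{m+\f12}}$ with $m+1\leq s$. Put the weighted $g$-factor in $L^\infty$ (or $L^4$) using the analogue of Lemma \ref{Sobo1} on $\Om_L$. Its proof carries over verbatim because \eqref{F11} gives $|\nabla W^\pm|\lesssim1$.
\end{itemize}
This costs at most $|\beta|+2\leq s$ derivatives on $g$, because $|\beta|\leq s-m$ and $m\geq 2$. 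Whenever the $g$-factor is controlled by derivatives of order $\leq s$, we convert these back with the induction hypothesis, i.e.\ the opposite inequality at lower order, so the right-hand side involves only $(\p^\psi)^\gamma g$ with $|\gamma|\leq s$. The remaining $\psi$-factors are absorbed into $\mathcal F(1+\|f\|_{L^\infty}+\|f\|_{\dot H^1}+\|f\|_{\dot H^{s-\f12}})$. The $L^\infty$ norm of $f$ enters only through $J$, $\p_2\psi$ and $J^{-1}$.

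The main obstacle is the top-order bookkeeping in case (ii), where many derivatives fall on $\psi$ and few on $g$. The danger is that $\nabla^{s}\psi$ appears, since its $L^2$ norm requires $f\in\dot H^{s-\f12}$, exactly the norm allowed in $\mathcal F$. In that case $|\beta|=1$, and we pair $\|\nabla^s\psi\|_{L^2}$ with $\|\langle W^\pm\rangle^{\mu}\langle W^\mp\rangle^{\nu}\nabla g\|_{L^\infty}$. The latter needs weighted control of $\nabla^{3}g$, which is $\leq s$ derivatives only when $s\geq3$. For $s\leq 2$ the highest term is $\nabla^2\psi\,\nabla g$, which we handle by placing both factors in $L^4$ with Lemma \ref{Sobo1}. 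We would first verify the cases $s=1,2$ by direct expansion and then run the induction for $s\geq3$, checking that every product admits an $L^2\times L^\infty$ or $L^4\times L^4$ split within the allowed regularity.
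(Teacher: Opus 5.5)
Your proposal is correct and is essentially the paper's argument. Both proofs express one family of derivatives through the other via $\p_1^\psi=\p_1-\p_1\psi\,\p_2^\psi$, $\p_2^\psi=J^{-1}\p_2$, control the $\psi$-coefficients by Lemma \ref{lemTR}, and close with H\"older splits, weighted Sobolev and induction on the order; you simply unroll into a single closed-form expansion what the paper does recursively through $[\nabla,\nabla^\psi]$ and $[\nabla^2,\nabla^\psi]$. Two small corrections: the mixed derivatives $\p^\gamma(\p^\psi)^\beta g$ ($|\gamma|\le 2$) coming from the weighted $L^\infty$ bound are converted with the \emph{same} inequality at order $\le 2$ applied to $(\p^\psi)^\beta g$, not the opposite one, and for $s\le 2$ no H\"older split actually stays within $\|f\|_{\dot H^{s-\f12}}$ (e.g.\ $\|\nabla^2\psi\|_{L^4}$ requires $\|\nabla^3\psi\|_{L^2}$), which is harmless only because \eqref{F1} bounds all these norms, exactly as in the paper's own second-order step.
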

\begin{proof}

Firstly, we write down the expression of $\nabla^\psi$:
\begin{align}\label{FF0}
%\p_i^\psi
%&=\p_i-\p_i\psi \p_2^\psi,\, i=0,1,
\p_1^\psi
&=\p_1-\p_1\psi \p_2^\psi,
\quad  \p_2^\psi=\f{1}{1+\p_2\psi}\p_2=J^{-1}\p_2 .
%,\\\nonumber
%[\p_1^\psi,\p_2^\psi]&=0.
\end{align}
%Then

{\bf{First derivative estimate:}}

By \eqref{FF0}, we calculate
\begin{align}
\label{FF1}
&\|  \langle W^\pm\rangle^{\mu}
\langle W^\mp\rangle^{\nu} \nabla  g \|_{L^2}
\lesssim (1+\| \nabla\psi\|_{L^\infty}) \|\langle W^\pm\rangle^{\mu}
\langle W^\mp\rangle^{\nu} \nabla^\psi g\|_{L^2},\\
\label{FF2}
&\|  \langle W^\pm\rangle^{\mu}
\langle W^\mp\rangle^{\nu} \nabla^\psi  g \|_{L^2}
\lesssim (1+\| \nabla\psi\|_{L^\infty}) \|\langle W^\pm\rangle^{\mu}
\langle W^\mp\rangle^{\nu} \nabla g\|_{L^2}.
\end{align}

{\bf{Second derivative and commutator estimate:}}

We calculate
\begin{align}
\label{FF3}
[\nabla,\p^\psi_2]
&=-\nabla\p_2\psi J^{-2}\p_2= -\nabla\p_2\psi J^{-1}\p_2^\psi ,\\
\label{FF4}
[\nabla,\p^\psi_1]
&=-\nabla \p_1\psi\p_2^\psi-\p_1\psi [\nabla,\p^\psi_2]
=(-\nabla \p_1\psi +\p_1\psi \nabla\p_2\psi J^{-1})\p_2^\psi.
\end{align}
Thus
%\begin{align*}
%\| [\nabla,\nabla^\psi] g \|_{L^2}
%\leq (1+\| \p_1\psi\|_{L^\infty}) \| \nabla^2\psi\|_{L^\infty}\|\p_2^\psi g\|_{L^2}
%\end{align*}
%Or
%\begin{align*}
%\| [\nabla,\nabla^\psi] g \|_{L^2}
%\leq (1+\| \p_1\psi\|_{L^\infty}) \| \nabla^2\psi\|_{L^2}\|\p_2^\psi g\|_{L^\infty} .
%\end{align*}
%Or
\begin{align}\label{FF5}
&\| \langle W^\pm\rangle^{\mu}
\langle W^\mp\rangle^{\nu} [\nabla,\nabla^\psi] g \|_{L^2}\\\nonumber
&\lesssim (1+\| \nabla \psi\|_{L^\infty}) \| \nabla^2\psi\|_{L^4}
\|\langle W^\pm\rangle^{\mu}\langle W^\mp\rangle^{\nu}\p_2^\psi g\|_{L^4} \\\nonumber
%&\lesssim (1+\| \nabla \psi\|_{L^\infty})^2\big( \| \nabla^2\psi\|_{L^2}+\| \nabla^3\psi\|_{L^2} \big)
%\big( \|\langle W^\pm\rangle^{\mu}\langle W^\mp\rangle^{\nu} \p_2^\psi g\|_{L^2}
%+ \|\langle W^\pm\rangle^{\mu} \langle W^\mp\rangle^{\nu} \nabla^\psi \p_2^\psi g\|_{L^2} \big)\\\nonumber
&\leq \mathcal{F}(1+\| \nabla\psi\|_{L^\infty}+\| \nabla^2\psi\|_{L^2}+\| \nabla^3\psi\|_{L^2}  )
\sum_{1\leq |\alpha|\leq 2} \| \langle W^\pm\rangle^{\mu}
\langle W^\mp\rangle^{\nu} (\nabla^\psi)^\alpha g\|_{L^2}.
\end{align}
Consequently, by \eqref{FF1} and \eqref{FF5}, we obtain
\begin{align}\label{FF6}
&\| \langle W^\pm\rangle^{\mu}\langle W^\mp\rangle^{\nu} \nabla^2  g \|_{L^2} \\\nonumber
&\lesssim (1+\| \nabla\psi\|_{L^\infty})
\|\langle W^\pm\rangle^{\mu}\langle W^\mp\rangle^{\nu} \nabla^\psi\nabla g\|_{L^2}\\\nonumber
&\leq (1+\| \nabla\psi\|_{L^\infty})
\big(\|\langle W^\pm\rangle^{\mu}\langle W^\mp\rangle^{\nu} \nabla\nabla^\psi g\|_{L^2}
+\|\langle W^\pm\rangle^{\mu}\langle W^\mp\rangle^{\nu} [\nabla,\nabla^\psi] g\|_{L^2} \big)\\\nonumber
%&\lesssim (1+\| \nabla\psi\|_{L^\infty})^2
%\big(\| (\nabla^\psi)^2 g\|_{L^2}%(1+\| \nabla\psi\|_{L^\infty})
%+\|[\nabla,\nabla^\psi] g\|_{L^2} \big)\\\nonumber
&\leq \mathcal{F}(1+\| \nabla\psi\|_{L^\infty}+\| \nabla^2\psi\|_{L^2}+\| \nabla^3\psi\|_{L^2}  )
\sum_{1\leq |\alpha|\leq 2} \| \langle W^\pm\rangle^{\mu}
\langle W^\mp\rangle^{\nu} (\nabla^\psi)^\alpha g\|_{L^2}.
\end{align}
On the other hand, by \eqref{FF3} and \eqref{FF4}, we calculate
\begin{align} \label{FF7}
&\| \langle W^\pm\rangle^{\mu}\langle W^\mp\rangle^{\nu}[\nabla,\nabla^\psi] g \|_{L^2}\\\nonumber
&\lesssim (1+\| \nabla \psi\|_{L^\infty}) \| \nabla^2\psi\|_{L^4}
\|\langle W^\pm\rangle^{\mu}\langle W^\mp\rangle^{\nu}\p_2 g\|_{L^4} \\\nonumber
%&\leq (1+\| \nabla \psi\|_{L^\infty})\big( \| \nabla^2\psi\|_{L^2}+\| \nabla^3\psi\|_{L^2} \big)
%\big( \|\nabla g\|_{L^2}+ \|\nabla^2 g\|_{L^2} \big)\\\nonumber
&\leq \mathcal{F}(1+\| \nabla\psi\|_{L^\infty}+\| \nabla^2\psi\|_{L^2}+\| \nabla^3\psi\|_{L^2}  )
\sum_{1\leq |\alpha|\leq 2} \| \langle W^\pm\rangle^{\mu}\langle W^\mp\rangle^{\nu} \nabla^\alpha g\|_{L^2}.
\end{align}
Consequently, by \eqref{FF2} and \eqref{FF7}, we deduce
\begin{align}\label{FF8}
&\| \langle W^\pm\rangle^{\mu}\langle W^\mp\rangle^{\nu} (\nabla^\psi)^2  g \|_{L^2}\\\nonumber
&\lesssim (1+\| \nabla\psi\|_{L^\infty})
\|\langle W^\pm\rangle^{\mu}\langle W^\mp\rangle^{\nu}\nabla\nabla^\psi g\|_{L^2}\\\nonumber
&\leq (1+\| \nabla\psi\|_{L^\infty})
\big(\|\langle W^\pm\rangle^{\mu}\langle W^\mp\rangle^{\nu}\nabla^\psi\nabla g\|_{L^2}
+\|\langle W^\pm\rangle^{\mu}\langle W^\mp\rangle^{\nu} [\nabla,\nabla^\psi] g\|_{L^2} \big)\\\nonumber
%&\lesssim (1+\| \nabla\psi\|_{L^\infty})^2
%\big(\| \nabla^2 g\|_{L^2}%(1+\| \nabla\psi\|_{L^\infty})
%+\|[\nabla,\nabla^\psi] g\|_{L^2} \big)\\\nonumber
&\leq \mathcal{F}(1+\| \nabla\psi\|_{L^\infty}+\| \nabla^2\psi\|_{L^2}+\| \nabla^3\psi\|_{L^2}  )
\sum_{1\leq |\alpha|\leq 2} \| \langle W^\pm\rangle^{\mu}\langle W^\mp\rangle^{\nu} \nabla^\alpha g\|_{L^2}.
\end{align}
{\bf{Third derivative and commutator estimate:}}

By \eqref{FF0}, we calculate
\begin{align}
\label{FF16}
[\nabla^2,\p^\psi_2]
&=\nabla [\nabla,\p^\psi_2] + [\nabla,\p^\psi_2] \nabla
=-\nabla(\nabla\p_2\psi J^{-2}\p_2)-\nabla\p_2\psi J^{-2}\p_2\nabla \\\nonumber
%&=-\nabla^2\p_2\psi J^{-2}\p_2
%+2\nabla\p_2\psi J^{-3}\nabla\p_2\psi\p_2 -\nabla\p_2\psi J^{-2}\p_2\nabla \\\nonumber
%&=\big(-\nabla^2\p_2\psi J^{-1}
%+2(\nabla\p_2\psi)^2 J^{-2}\big)\p_2^\psi -\nabla\p_2\psi J^{-1}\p_2^\psi\nabla\\\nonumber
%&=\big(-\nabla^2\p_2\psi J^{-1}
%+2(\nabla\p_2\psi)^2 J^{-2} \big)\p_2^\psi -\nabla\p_2\psi J^{-1}
%\big(\nabla \p_2^\psi +[ \p_2^\psi,\nabla]\big)\\\nonumber
&=\big(-\nabla^2\p_2\psi J^{-1}
+2(\nabla\p_2\psi)^2 J^{-2} \big)\p_2^\psi \\\nonumber
&\quad-\nabla\p_2\psi J^{-1}\big(\nabla \p_2^\psi +\nabla\p_2\psi J^{-1}\p_2^\psi \big),\\
\label{FF17}
[\nabla^2,\p^\psi_1]
&=\nabla [\nabla,\p^\psi_1] + [\nabla,\p^\psi_1] \nabla \\\nonumber
&=\nabla(-\nabla \p_1\psi +\p_1\psi \nabla\p_2\psi J^{-1})\p_2^\psi
+(-\nabla \p_1\psi +\p_1\psi \nabla\p_2\psi J^{-1})\nabla \p_2^\psi \\\nonumber
&\quad +(-\nabla \p_1\psi +\p_1\psi \nabla\p_2\psi J^{-1}) \p_2^\psi\nabla \\\nonumber
%&=\nabla(-\nabla \p_1\psi +\p_1\psi \nabla\p_2\psi J^{-1})\p_2^\psi
%+2(-\nabla \p_1\psi +\p_1\psi \nabla\p_2\psi J^{-1})\nabla \p_2^\psi \\\nonumber
%&\quad +(-\nabla \p_1\psi +\p_1\psi \nabla\p_2\psi J^{-1}) [\p_2^\psi,\nabla] \\\nonumber
%&=\nabla(-\nabla \p_1\psi +\p_1\psi \nabla\p_2\psi J^{-1})\p_2^\psi
%+2(-\nabla \p_1\psi +\p_1\psi \nabla\p_2\psi J^{-1})\nabla \p_2^\psi \\\nonumber
%&\quad +(-\nabla \p_1\psi +\p_1\psi \nabla\p_2\psi J^{-1}) \nabla\p_2\psi J^{-1}\p_2^\psi \\\nonumber
&=\big(-\nabla^2 \p_1\psi
+\nabla \p_1\psi \nabla\p_2\psi J^{-1}
+\p_1\psi \nabla^2\p_2\psi J^{-1}
-\p_1\psi (\nabla\p_2\psi)^2 J^{-2}
\big)\p_2^\psi\\\nonumber
&\quad+2(-\nabla \p_1\psi +\p_1\psi \nabla\p_2\psi J^{-1})\nabla \p_2^\psi
 +(-\nabla \p_1\psi +\p_1\psi \nabla\p_2\psi J^{-1}) \nabla\p_2\psi J^{-1}\p_2^\psi.
%&=\nabla(-\nabla \p_1\psi +\p_1\psi \nabla\p_2\psi J^{-1})\p_2^\psi
%+2(-\nabla \p_1\psi +\p_1\psi \nabla\p_2\psi J^{-1})\p_2^\psi\nabla\\
%&\quad+(-\nabla \p_1\psi +\p_1\psi \nabla\p_2\psi J^{-1})[\nabla,\p_2^\psi],\\
\end{align}
Thus by the first and the second order estimate \eqref{FF1}, \eqref{FF6}, we derive that
\begin{align}\label{FF19}
&\| \langle W^\pm\rangle^{\mu}\langle W^\mp\rangle^{\nu} [\nabla^2,\nabla^\psi] g \|_{L^2} \\\nonumber
&\lesssim (1+\| \nabla\psi\|_{L^\infty})
\Big((\| \nabla^3\psi\|_{L^2}+\| \nabla^2\psi\|^2_{L^4})
\|\langle W^\pm\rangle^{\mu}\langle W^\mp\rangle^{\nu} \nabla^\psi g\|_{L^\infty} \\\nonumber
&\quad+ \| \nabla^2\psi\|_{L^4} \| \langle W^\pm\rangle^{\mu}\langle W^\mp\rangle^{\nu} \nabla\nabla^\psi g\|_{L^4}
\Big) \\\nonumber
&\leq (1+\| \nabla\psi\|_{L^\infty})
\Big((\| \nabla^3\psi\|_{L^2}+\| \nabla^3\psi\|_{L^2} \| \nabla^2\psi\|_{L^2})
\sum_{|\alpha|\leq 2}\|  \langle W^\pm\rangle^{\mu}\langle W^\mp\rangle^{\nu} \nabla^\alpha \nabla^\psi g\|_{L^2} \\\nonumber
&\quad+ \|\nabla^3\psi\|_{L^2}^{\f12} \|\nabla^2\psi\|_{L^2}^{\f12}
\sum_{|\alpha|\leq 1}\|  \langle W^\pm\rangle^{\mu}\langle W^\mp\rangle^{\nu} \nabla^\alpha \nabla\nabla^\psi g\|_{L^2}
\Big) \\\nonumber
%\end{align}
%By the first and the second order estimate \eqref{FF1}, \eqref{FF4} and \eqref{FF5},
%\eqref{FF8} yields
%\begin{align}\label{}
%&\| [\nabla^2,\nabla^\psi] g \|_{L^2} \\\nonumber
&\leq \mathcal{F}(1+\| \nabla\psi\|_{L^\infty}+\| \nabla^2\psi\|_{L^2}+\| \nabla^3\psi\|_{L^2}  )
\cdot \sum_{1\leq |\alpha| \leq 3} \| \langle W^\pm\rangle^{\mu}\langle W^\mp\rangle^{\nu} (\nabla^\psi)^\alpha g\|_{L^2}.
%+\| \nabla^\psi g\|_{L^2}\big).
\end{align}
Consequently, by \eqref{FF1}, \eqref{FF6} and \eqref{FF19}, we obtain
\begin{align*} %\label{FF20}
&\| \langle W^\pm\rangle^{\mu}\langle W^\mp\rangle^{\nu} \nabla^3 g \|_{L^2} \\\nonumber
&\lesssim (1+\| \nabla \psi\|_{L^\infty})
\| \langle W^\pm\rangle^{\mu}\langle W^\mp\rangle^{\nu} \nabla^\psi \nabla^2 g \|_{L^2} \\\nonumber
&\leq (1+\| \nabla\psi\|_{L^\infty})
\Big( \| \langle W^\pm\rangle^{\mu}\langle W^\mp\rangle^{\nu} \nabla^2\nabla^\psi g \|_{L^2}
+\| \langle W^\pm\rangle^{\mu}\langle W^\mp\rangle^{\nu} [\nabla^2,\nabla^\psi] g \|_{L^2}
\Big) \\\nonumber
&\leq \mathcal{F}(1+\| \nabla\psi\|_{L^\infty}+\| \nabla^2\psi\|_{L^2}+\| \nabla^3\psi\|_{L^2}  )
\cdot \sum_{1\leq |\alpha| \leq 3} \| \langle W^\pm\rangle^{\mu}\langle W^\mp\rangle^{\nu} (\nabla^\psi)^\alpha g\|_{L^2}.
\end{align*}
%On the other hand,
%\begin{align}\label{}
%&\| \langle W^\pm\rangle^{\mu}\langle W^\mp\rangle^{\nu} [\nabla^2,\nabla^\psi] g \|_{L^2} \\\nonumber
%&\lesssim (1+\| \nabla\psi\|_{L^\infty})
%\Big((\| \nabla^3\psi\|_{L^2}+\| \nabla^2\psi\|^2_{L^4})
%\| \langle W^\pm\rangle^{\mu}\langle W^\mp\rangle^{\nu} \nabla g\|_{L^\infty} \\\nonumber
%&\quad+ \| \nabla^2\psi\|_{L^4} \| \langle W^\pm\rangle^{\mu}\langle W^\mp\rangle^{\nu} \nabla^2 g\|_{L^4}
%\Big) \\\nonumber
%%&\leq (1+\| \nabla\psi\|_{L^\infty})
%%\Big((\| \nabla^3\psi\|_{L^2}+\| \nabla^3\psi\|_{L^2} \| \nabla^2\psi\|_{L^2})
%%\|\nabla^\psi g\|_{L^2}^{\f12}\|\nabla^2\nabla^\psi g\|_{L^2}^{\f12} \\\nonumber
%%&\quad+ \|\nabla^3\psi\|_{L^2}^{\f12} \|\nabla^2\psi\|_{L^2}^{\f12}
%%\| \nabla^2\nabla^\psi g\|_{L^2}^{\f12}\| \nabla\nabla^\psi g\|_{L^2}^{\f12}
%%\Big).
%&\leq \mathcal{F}(1+\| \nabla\psi\|_{L^\infty}+\| \nabla^2\psi\|_{L^2}+\| \nabla^3\psi\|_{L^2}  )
%\cdot \sum_{1\leq |\alpha| \leq 3} \| \langle W^\pm\rangle^{\mu}\langle W^\mp\rangle^{\nu} \nabla^\alpha g\|_{L^2}.
%\end{align}
On the other side, by \eqref{FF0}, \eqref{FF3}, \eqref{FF4}, \eqref{FF16}, \eqref{FF17}, we deduce that
\begin{align*} %\label{FF19}
&\| \langle W^\pm\rangle^{\mu}\langle W^\mp\rangle^{\nu} [\nabla^2,\nabla^\psi] g \|_{L^2} \\\nonumber
&\lesssim (1+\| \nabla\psi\|_{L^\infty})
\Big((\| \nabla^3\psi\|_{L^2}+\| \nabla^2\psi\|^2_{L^4})
\|\langle W^\pm\rangle^{\mu}\langle W^\mp\rangle^{\nu} \nabla^\psi g\|_{L^\infty} \\\nonumber
&\quad+ \| \nabla^2\psi\|_{L^4}
\big(\| \langle W^\pm\rangle^{\mu}\langle W^\mp\rangle^{\nu} \nabla^\psi \nabla g\|_{L^4}
+\| \langle W^\pm\rangle^{\mu}\langle W^\mp\rangle^{\nu} [\nabla,\nabla^\psi] g\|_{L^4}
\big)
\Big) \\\nonumber
&\leq \mathcal{F}(1+\| \nabla\psi\|_{L^\infty}+\| \nabla^2\psi\|_{L^2}+\| \nabla^3\psi\|_{L^2}  )
\cdot \sum_{1\leq |\alpha| \leq 3} \| \langle W^\pm\rangle^{\mu}\langle W^\mp\rangle^{\nu} \nabla^\alpha g\|_{L^2}.
\end{align*}
Consequently, by \eqref{FF2} and \eqref{FF8}, we derive that
\begin{align*} %\label{FF21}
&\| \langle W^\pm\rangle^{\mu}\langle W^\mp\rangle^{\nu} (\nabla^\psi)^3 g \|_{L^2} \\\nonumber
%&\lesssim (1+\| \nabla \psi\|_{L^\infty})\| \nabla^\psi \nabla^2 g \|_{L^2} \\\nonumber
&\leq \mathcal{F}(1+\| \nabla\psi\|_{L^\infty}+\| \nabla^2\psi\|_{L^2}+\| \nabla^3\psi\|_{L^2}  )\\\nonumber
&\qquad \cdot\Big( \| \langle W^\pm\rangle^{\mu}\langle W^\mp\rangle^{\nu} \nabla^\psi\nabla^2 g \|_{L^2}
+\| \langle W^\pm\rangle^{\mu}\langle W^\mp\rangle^{\nu} [\nabla^2,\nabla^\psi] g \|_{L^2}
\Big) \\\nonumber
&\leq \mathcal{F}(1+\| \nabla\psi\|_{L^\infty}+\| \nabla^2\psi\|_{L^2}+\| \nabla^3\psi\|_{L^2}  )
\cdot \sum_{1\leq |\alpha| \leq 3} \| \langle W^\pm\rangle^{\mu}\langle W^\mp\rangle^{\nu} \nabla^\alpha g\|_{L^2}.
\end{align*}
The remaining higher order derivative estimates are the same. The details are omitted.
This finishes the proof of the lemma.
\end{proof}
Now we show Lemma \ref{lemPre} by using Lemma \ref{lemPreL} and Lemma \ref{lemCo}.
\begin{proof}[Proof of Lemma \ref{lemPre}]
By the coordinate transform, it suffices to prove
\begin{align*}
&\sum_{0\leq |\alpha|\leq s+1}\| \langle W^\pm\rangle^{2\mu}
\langle W^\mp\rangle^{\mu} (\nabla^\psi)^\alpha \wt{p}\|^2_{L^2}\\
&\lesssim E_s^b G_s^b\cdot
\mathcal{F}(1+\| f \|_{L^\infty(\BR)}+\| f \|_{\dot{H}^1(\BR)}+\| f \|_{\dot{H}^{s+\f12}(\BR)}).
\end{align*}
%The case for $|\alpha|=0$ is trivial.
By Lemma \ref{lemPreL} and Lemma \ref{lemCo}, we deduce that
\begin{align*}
&\sum_{ |\alpha|\leq s+1}\| \langle W^\pm\rangle^{2\mu}\langle W^\mp\rangle^{\mu} (\nabla^\psi)^\alpha \wt{p}\|^2_{L^2}\\
&\leq \sum_{|\alpha|\leq s+1}\| \langle W^\pm\rangle^{2\mu}\langle W^\mp\rangle^{\mu} \p^\alpha\wt{p}\|^2_{L^2}
\cdot \mathcal{F}(1+\| f \|_{L^\infty(\BR)}+\| f \|_{\dot{H}^1(\BR)}+\| f \|_{\dot{H}^{s+\f12}(\BR)})\\
&\leq \big(\sum_{|\alpha|\leq s}\| \langle W^\pm\rangle^{2\mu}\langle W^\mp\rangle^{\mu}\nabla^\psi \p^\alpha\wt{p}\|^2_{L^2}
+\| \langle W^\pm\rangle^{2\mu}\langle W^\mp\rangle^{\mu}\wt{p}\|^2_{L^2}
\big)\\\nonumber
&\quad\cdot \mathcal{F}(1+\| f \|_{L^\infty(\BR)}+\| f \|_{\dot{H}^1(\BR)}+\| f \|_{\dot{H}^{s+\f12}(\BR)})\\
&\lesssim
\sum_{|\alpha|\leq s-1,\ |\beta|\leq 1,\ \pm} \| \langle W^\pm\rangle^{2\mu}  \p^\alpha (\nabla^\psi)^\beta \wt{\Lambda}_\pm\|_{L^2}^2
\cdot\sum_{|\alpha|\leq s-1,\ |\beta|\leq 1,\ \mp} \Big\| \f{\langle W^\mp\rangle^{2\mu}  \p^\alpha (\nabla^\psi)^\beta \wt{\Lambda}_\mp}{\langle W^\pm\rangle^{\mu}}\Big\|_{L^2}^2\\
&\qquad\quad\cdot \mathcal{F}(1+\| f \|_{L^\infty(\BR)}+\| f \|_{\dot{H}^1(\BR)}+\| f \|_{\dot{H}^{s+\f12}(\BR)})\\
&\lesssim \sum_{|\alpha|\leq s,\ \pm} \| \langle W^\pm\rangle^{2\mu}  (\nabla^\psi)^\alpha \wt{\Lambda}_\pm\|_{L^2}^2
\cdot     \sum_{|\beta|\leq s,\ \mp} \Big\| \f{\langle W^\mp\rangle^{2\mu}  (\nabla^\psi)^\beta \wt{\Lambda}_\mp}{\langle W^\pm\rangle^{\mu}}\Big\|_{L^2}^2\\
&\qquad\quad \cdot\mathcal{F}(1+\| f \|_{L^\infty(\BR)}+\| f \|_{\dot{H}^1(\BR)}+\| f \|_{\dot{H}^{s+\f12}(\BR)})\\
&\lesssim E_s^b G_s^b \cdot\mathcal{F}(1+\| f \|_{L^\infty(\BR)}+\| f \|_{\dot{H}^1(\BR)}+\| f \|_{\dot{H}^{s+\f12}(\BR)}).
\end{align*}
Thus Lemma \ref{lemPre} is proved.
\end{proof}

\begin{proof}[Proof of Lemma \ref{lemPreL}]
~\\
\textbf{First derivative and zero-order estimate of pressure}:

\textbf{Step one} (first derivative estimate of pressure without weights):

%In the Lagrangian coordinates,
%the equations for pressure reads as follows:
%\begin{equation}
%\begin{cases}
%-a^\top\nabla\cdot \nabla^\psi \wt{p}
%=a^\top \nabla \cdot (\wt{\Lambda}_-\cdot \nabla^\psi \wt{\Lambda}_+)
%\, \,& {\rm in} \; \Omega_L
%\, ,\\
%\wt{p}\big|_{\Gamma} =0,\quad \p_2\wt{p}\big|_{\Gamma_-} =0.
%\end{cases}
%\end{equation}
Taking the $L^2(\Om_L,dx)$ inner product of \eqref{H4} with $\wt{p}$,
we obtain
\begin{align}\label{H8}
&-\int_{\Om_L} a^\top\nabla\cdot \nabla^\psi \wt{p} \cdot \wt{p}\, \dx\\\nonumber
&=\int_{\Om_L}
a^\top \nabla \cdot (\wt{\Lambda}_-\cdot \nabla^\psi \wt{\Lambda}_+)\cdot \wt{p}\, \dx\, .
\end{align}
For the first line of \eqref{H8},
by Piola's identity \eqref{H3} and the boundary condition for pressure \eqref{H2},
employing integration by parts, we deduce that
\begin{align}\label{H9}
&-\int_{\Om_L} a^\top\nabla\cdot \nabla^\psi \wt{p} \cdot \wt{p}\, \dx\\\nonumber
&=\int_{\Om_L} |\nabla^\psi \wt{p}|^2 J \dx
+\int_{\Om_L} (\nabla \cdot  a^\top)\cdot \nabla^\psi \wt{p}\cdot \wt{p}\, \dx\\\nonumber
&\quad -\int_{\Gamma} a_{2i}\p_i^\psi \wt{p}\cdot \wt{p}\ \dx_1
+\int_{\Gamma_-} a_{2i}\p_i^\psi \wt{p}\cdot \wt{p}\ \dx_1 \\\nonumber
&
%=\int_{\Om_L} |\nabla^\psi \wt{p}|^2 \dx
%+\int_{\Gamma} (1+|\p_1 f|^2)\p_2 \wt{p}\cdot \wt{p}\ \dx_1
%-\int_{\Gamma_-} \p_2 \wt{p}\cdot \wt{p}\ \dx_1
=\int_{\Om_L} |\nabla^\psi \wt{p}|^2 \dx .
\end{align}
For the second line of \eqref{H8},
similarly by Piola's identity \eqref{H3}, the boundary condition for pressure \eqref{H2} and by integration by parts, we derive that
\begin{align}\label{H10}
&\int_{\Om_L}
a^\top \nabla \cdot (\wt{\Lambda}_-\cdot \nabla^\psi \wt{\Lambda}_+)\cdot \wt{p}\, \dx\\\nonumber
&=-\int_{\Om_L}
 (\wt{\Lambda}_-\cdot \nabla^\psi \wt{\Lambda}_+)\cdot \nabla^\psi \wt{p}\, J\, \dx\\\nonumber
&\quad -\int_{\Om_L}
(\nabla \cdot a^\top)\cdot (\wt{\Lambda}_-\cdot \nabla^\psi \wt{\Lambda}_+)\cdot \wt{p}\, \dx\\\nonumber
&\quad +\int_{\Gamma}
a_{2,\cdot}\cdot (\wt{\Lambda}_-\cdot \nabla^\psi \wt{\Lambda}_+)\cdot \wt{p}\, \dx_1\\\nonumber
&\quad -\int_{\Gamma_-}
a_{2,\cdot}\cdot (\wt{\Lambda}_-\cdot \nabla^\psi \wt{\Lambda}_+)\cdot \wt{p}\, \dx_1\\\nonumber
&=-\int_{\Om_L}
 \f{\wt{\Lambda}_-}{\langle W^+\rangle^{\mu}}
 \cdot \langle W^+\rangle^{\mu} \nabla^\psi \wt{\Lambda}_+\cdot \nabla^\psi \wt{p}\, J\, \dx \\\nonumber
&\leq \| \nabla^\psi \wt{p} \|_{L^2} \| \langle W^+\rangle^{\mu} \nabla^\psi \wt{\Lambda}_+ \|_{L^4}
\Big\|   \f{\wt{\Lambda}_-}{\langle W^+\rangle^{\mu}} \Big\|_{L^4} \\\nonumber
%&\leq \| \nabla^\psi \wt{p} \|_{L^2}
%\sum_{|\alpha|\leq 1}\| \langle W^+\rangle^{\mu} \nabla^\alpha\nabla^\psi \wt{\Lambda}_+ \|_{L^2}
%\sum_{|\alpha|\leq 1}\Big\|   \f{\p^\alpha \wt{\Lambda}_-}{\langle W^+\rangle^{\mu}} \Big\|_{L^2} \\\nonumber
&\leq \| \nabla^\psi \wt{p} \|_{L^2} \mathcal{E}_2^{\f12} \mathcal{G}_1^{\f12}  (1+\|\nabla\psi\|_{L^\infty}).
\end{align}
Combining \eqref{H9} and \eqref{H10},  \eqref{H8} yields
\begin{align}\label{H11}
\| \nabla^\psi \wt{p} \|^2_{L^2}
\lesssim \mathcal{E}_2  \mathcal{G}_1\cdot\mathcal{F}(1+\|\nabla\psi\|_{L^\infty}).
\end{align}

\noindent \textbf{Step two} (first derivative estimate of pressure with small power weights):

For $0\leq \nu\leq \mu$,
taking the $L^2(\Om_L,dx)$ inner product of \eqref{H4} with
$ \langle W^\pm\rangle^{4\nu}  \wt{p}$,
we obtain
\begin{align}\label{H12}
&-\int_{\Om_L} a^\top\nabla\cdot \nabla^\psi \wt{p} \cdot \wt{p}
\,\langle W^\pm\rangle^{4\nu} \, \dx\\\nonumber
&= \int_{\Om_L}
a^\top \nabla \cdot (\wt{\Lambda}_-\cdot \nabla^\psi \wt{\Lambda}_+)\cdot \wt{p}
\,\langle W^\pm\rangle^{4\nu} \, \dx.
\end{align}
For the first line of \eqref{H12},
by Piola's identity \eqref{H3}, the boundary condition for pressure % \eqref{H2}
and integration by parts,
we calculate that
\begin{align}\label{H13}
&-\int_{\Om_L} a^\top\nabla\cdot \nabla^\psi \wt{p} \cdot \wt{p}
\,\langle W^\pm\rangle^{4\nu} \, \dx\\\nonumber
%&=\int_{\Om_L} |\nabla^\psi \wt{p}|^2 \,\langle W^\pm\rangle^{4\nu} \, J \dx
%+\int_{\Om_L} (\nabla \cdot  a^\top)\cdot \nabla^\psi \wt{p}\cdot \wt{p}\,\langle W^\pm\rangle^{4\nu} \, \dx\\\nonumber
%&\quad+\int_{\Om_L} (\nabla \langle W^\pm\rangle^{4\nu} \cdot  a^\top)\cdot \nabla^\psi \wt{p}\cdot \wt{p} \, \dx\\\nonumber
%&\quad -\int_{\Gamma} a_{2i}\p_i^\psi \wt{p}\cdot \wt{p}
%\,\langle \Wu^\pm\rangle^{4\nu} \, \dx_1
%+\int_{\Gamma_-} a_{2i}\p_i^\psi \wt{p}\cdot \wt{p}
%\,\langle \Wo^\pm\rangle^{4\nu} \, \dx_1 \\\nonumber
&=\int_{\Om_L} |\nabla^\psi \wt{p}|^2 \,\langle W^\pm\rangle^{4\nu} \, J \dx
+\int_{\Om_L} (\nabla \langle W^\pm\rangle^{4\nu} \cdot  a^\top)\cdot \nabla^\psi \wt{p}\cdot \wt{p} \, \dx\, \\\nonumber
%\end{align}
%For the last term of \eqref{H13}, we have
%\begin{align}
%&\int_{\Om_L} (\nabla \langle W^\pm\rangle^{4\nu} \cdot  a^\top)\cdot \nabla^\psi \wt{p}\cdot \wt{p} \, \dx\\\nonumber
&\geq
\int_{\Om_L} |\nabla^\psi \wt{p}|^2 \,\langle W^\pm\rangle^{4\nu} \, J \dx \\\nonumber
&\qquad -C \| \langle W^\pm\rangle^{2\nu} \nabla^\psi \wt{p}\|_{L^2}
\| \langle W^\pm\rangle^{2\nu-1}  \wt{p}\|_{L^2}
(1+\|\nabla \psi\|_{L^\infty})\,.
\end{align}
While for the second line of \eqref{H12}, by \eqref{F11},  Piola's identity \eqref{H3},
 the boundary condition \eqref{J21}, \eqref{H2} and the integration by parts,
we have
\begin{align}\label{H15}
&\int_{\Om_L}
a^\top \nabla \cdot (\wt{\Lambda}_-\cdot \nabla^\psi \wt{\Lambda}_+)\cdot \wt{p}
\,\langle W^\pm\rangle^{4\nu} \, \dx\\\nonumber
%&=-\int_{\Om_L}
% (\wt{\Lambda}_-\cdot \nabla^\psi \wt{\Lambda}_+)\cdot \nabla^\psi \wt{p}
% \,\langle W^\pm\rangle^{4\nu} J\, \dx\\\nonumber
%&\quad -\int_{\Om_L}
%(\nabla \cdot a^\top)\cdot (\wt{\Lambda}_-\cdot \nabla^\psi \wt{\Lambda}_+)\cdot \wt{p}
%\,\langle W^\pm\rangle^{4\nu} \, \dx\\\nonumber
%&\quad -\int_{\Om_L}
%(\nabla \langle W^\pm\rangle^{4\nu} \cdot a^\top)\cdot (\wt{\Lambda}_-\cdot \nabla^\psi \wt{\Lambda}_+)\cdot \wt{p}\, \dx\\\nonumber
%&\quad +\int_{\Gamma}
%a_{2,\cdot}\cdot (\wt{\Lambda}_-\cdot \nabla^\psi \wt{\Lambda}_+)\cdot \wt{p}
%\,\langle \Wu^\pm\rangle^{4\nu} \, \dx_1\\\nonumber
%&\quad -\int_{\Gamma_-}
%a_{2,\cdot}\cdot (\wt{\Lambda}_-\cdot \nabla^\psi \wt{\Lambda}_+)\cdot \wt{p}
%\,\langle \Wo^\pm\rangle^{4\nu} \, \dx_1\\\nonumber
&=-\int_{\Om_L}
 (\wt{\Lambda}_-\cdot \nabla^\psi \wt{\Lambda}_+)\cdot \nabla^\psi \wt{p}
 \langle W^\pm\rangle^{4\nu} J\, \dx\\\nonumber
&\quad -\int_{\Om_L}
(\nabla \langle W^\pm\rangle^{4\nu} \cdot a^\top)\cdot (\wt{\Lambda}_-\cdot \nabla^\psi \wt{\Lambda}_+)\cdot \wt{p}\, \dx \\\nonumber
%\end{align}
%%The above \eqref{H15} can be further bounded by
%\begin{align*}
%&=\int_{\Om_L}
% \langle W^\mp\rangle^{\mu} \wt{\Lambda}_-\cdot
% \f{\langle W^\pm\rangle^{2\nu}\nabla^\psi \wt{\Lambda}_+}{\langle W^\mp\rangle^{\mu}}
% \cdot \nabla^\psi \wt{p}\langle W^\pm\rangle^{2\nu} J\, \dx\\
%&\quad -\int_{\Om_L}
%(\nabla \langle W^\pm\rangle^{4\nu} \cdot a^\top)\cdot (\wt{\Lambda}_-\cdot \nabla^\psi \wt{\Lambda}_+)\cdot \wt{p}\, \dx \\
&\lesssim \| \langle W^\pm\rangle^{2\nu} \nabla^\psi \wt{p} \|_{L^2}
\| \langle W^\pm\rangle^{2\nu} \wt{\Lambda}_-\cdot \nabla^\psi \wt{\Lambda}_+\|_{L^2}
\\[-4mm]\nonumber \\\nonumber
&\quad +\| \langle W^\pm\rangle^{2\nu-1}  \wt{p} \|_{L^2}
\| \langle W^\pm\rangle^{2\nu} \wt{\Lambda}_-\cdot \nabla^\psi \wt{\Lambda}_+\|_{L^2}
(1+\|\nabla\psi\|_{L^\infty}) \\[-4mm]\nonumber\\\nonumber
&\lesssim \big(\| \langle W^\pm\rangle^{2\nu} \nabla^\psi \wt{p} \|_{L^2}
+\| \langle W^\pm\rangle^{2\nu-1} \wt{p} \|_{L^2} )
\mathcal{E}_2^{\f12}  \mathcal{G}_1^{\f12} (1+\|\nabla\psi\|_{L^\infty}).
\end{align}
Consequently, combining \eqref{H13}-\eqref{H15}, for sufficiently small $\epsilon$, \eqref{H12} yields
\begin{align}\label{H17}
&\int_{\Om_L} J \langle W^\pm\rangle^{4\nu}
\big|\nabla^\psi \wt{p}\big|^2
  \dx \\\nonumber
&\lesssim  \big(\mathcal{E}_2 \mathcal{G}_1
+\|  \langle W^\pm\rangle^{2\nu-1} \wt{p} \|^2_{L^2}\big)
\cdot \mathcal{F}(1+\|\nabla\psi\|_{L^\infty}).
\end{align}

For \eqref{H17}, set $\nu=\mu-\f12$. Note $\mu\leq \f34$, thus $\langle W^\pm\rangle^{2\nu-1}=\langle W^\pm\rangle^{2\mu-2}\lesssim 1$. Combined with Poincar\'e inequality Lemma \ref{lemHardy} and \eqref{H11}, \eqref{H17} yields
\begin{align}\label{H18}
&\int_{\Om_L} J \langle W^\pm\rangle^{4\mu-2}
\big| \nabla^\psi \wt{p}\big|^2\,  \dx
\lesssim  \mathcal{E}_2 \mathcal{G}_1 \cdot \mathcal{F}(1+\|\nabla\psi\|_{L^\infty}) .
\end{align}
Next for \eqref{H17}, set $\nu=\mu$. Combined with Lemma \ref{lemHardy},
%\eqref{H6}, \eqref{H7},
\eqref{H11} and \eqref{H18},  \eqref{H17} yields
\begin{align}\label{H19}
&\int_{\Om_L} J \langle W^\pm\rangle^{4\mu}
\big| \nabla^\psi \wt{p}\big|^2\,  \dx
\lesssim  \mathcal{E}_2 \mathcal{G}_1 \cdot \mathcal{F}(1+\|\nabla\psi\|_{L^\infty}).
\end{align}
Furthermore, applying  Lemma \ref{lemHardy} to \eqref{H19},
%\eqref{H6}, \eqref{H7},
by \eqref{H11} and \eqref{H19},
% combining \eqref{E1}, \eqref{E2}, \eqref{E4} and \eqref{E8},
we deduce that
\begin{align}\label{H20}
&\int_{\Om_L} \langle W^\pm\rangle^{4\mu} |\wt{p}|^2
 \dx\lesssim  \mathcal{E}_2 \mathcal{G}_1 \cdot \mathcal{F}(1+\|\nabla\psi\|_{L^\infty}).
\end{align}

\noindent \textbf{Step three } (first derivative estimate of pressure with weights):
For $0\leq \nu\leq \mu$,
taking the $L^2(\Om_L,dx)$ inner product of \eqref{H4} with
$ \langle W^+\rangle^{4\nu}\langle W^+\rangle^{2\nu}  \wt{p}$,
we obtain
\begin{align}\label{H21}
&-\int_{\Om_L} a^\top\nabla\cdot \nabla^\psi \wt{p} \cdot \wt{p}
\,\langle W^\pm\rangle^{4\nu}\langle W^\mp\rangle^{2\nu} \, \dx\\\nonumber
&= \int_{\Om_L}
a^\top \nabla \cdot (\wt{\Lambda}_-\cdot \nabla^\psi \wt{\Lambda}_+)\cdot \wt{p}
\,\langle W^\pm\rangle^{4\nu}\langle W^\mp\rangle^{2\nu} \, \dx.
\end{align}
Using a similar trick in Step two,
\eqref{H21} yields
\begin{align}\label{H22}
&\int_{\Om_L} J\langle W^+\rangle^{4\nu}\langle W^-\rangle^{2\nu}
\big| \nabla^\psi \wt{p}\big|^2\, \dx \\\nonumber
&\lesssim \big( \mathcal{E}_2 \mathcal{G}_1
+\|  \langle W^+\rangle^{2\nu} \wt{p} \|^2_{L^2}
+\|  \langle W^+\rangle^{2\nu-1}\langle W^-\rangle^{2\nu} \wt{p} \|^2_{L^2}\big)\\[-4mm]\nonumber\\\nonumber
&\quad \cdot \mathcal{F}(1+\|\nabla\psi\|_{L^\infty}).
\end{align}
Let
 $\nu\leq \f12$. By \eqref{H20}, \eqref{H22} yields
\begin{align*} %\label{F27}
\int_{\Om_L} J\langle W^+\rangle^{4\nu}\langle W^-\rangle^{2\nu}
\big|\nabla^\psi \wt{p}\big|^2\, \dx \lesssim  \mathcal{E}_2 \mathcal{G}_1 \cdot \mathcal{F}(1+\|\nabla\psi\|_{L^\infty}).
\end{align*}
Furthermore by Poincar\'e inequality Lemma \ref{lemHardy},
this implies that %for $\nu\leq \f12$
\begin{align}\label{H24}
&\int_{\Om_L} \langle W^+\rangle^{4\nu}\langle W^-\rangle^{2\nu} |\wt{p}|^2
J\, \dx\lesssim  \mathcal{E}_2 \mathcal{G}_1 \cdot \mathcal{F}(1+\|\nabla\psi\|_{L^\infty}) .
\end{align}
for $0\leq \nu\leq \f12$.
Now set $\nu$ in \eqref{H22} to be $\mu$, noting that $\mu \leq 3/4$, then combining \eqref{H20}, \eqref{H22}, \eqref{H24} and Poincar\'e inequality Lemma \ref{lemHardy} yields
\begin{align} \label{H25}
\int_{\Om_L} J\langle W^+\rangle^{4\nu}\langle W^-\rangle^{2\nu}
\big(\big|\nabla^\psi \wt{p}\big|^2 +|\wt{p}|^2
\big)\, \dx \lesssim  \mathcal{E}_2 \mathcal{G}_1 \cdot \mathcal{F}(1+\|\nabla\psi\|_{L^\infty}).
\end{align}

\textbf{Second tangential derivative estimate of pressure}:
%Before the application the tangential derivative $\p_1$. We first compute the commutators:
%\begin{align*}
%[\p_1,\nabla^\psi]=
%\end{align*}
%For the equation,
%\begin{align*}
%-\nabla^\psi \cdot \nabla^\psi \wt{p}
%=\nabla^\psi \cdot (\wt{\Lambda}_-\cdot \nabla^\psi \wt{\Lambda}_+).
%\end{align*}

We first write \eqref{H1} in the following component form
\begin{align}\label{H26}
-A_{kj}\p_k (A_{lj}\p_l\wt{p})
=\p^\psi_j \wt{\Lambda}^i_- \p^\psi_i \wt{\Lambda}^j_+.
\end{align}
The commutator of $\p_1$ and the Laplacian in the fixed domain $\Om_L$ is given by:
\begin{align}\label{H27}
[\p_1,\nabla^\psi\cdot \nabla^\psi] \wt{p}
=\p_1 A_{kj} \, \p_k (A_{l j} \, \p_l \wt{p} )
+A_{kj} \, \p_k ( \p_1 A_{l j} \, \p_l \wt{p}) .
\end{align}
Applying tangential derivative $\p_1$ to \eqref{H1}, we have
\begin{align}\label{H28}
&-\nabla^\psi \cdot \nabla^\psi \p_1\wt{p}
-[\p_1,\nabla^\psi\cdot \nabla^\psi] \wt{p}\\\nonumber
&=\p_1\nabla^\psi \cdot (\wt{\Lambda}_-\cdot \nabla^\psi \wt{\Lambda}_+).
\end{align}
Taking the $L^2(\Om_L,dx)$ inner product of  \eqref{H28} with
$ J \langle W^+\rangle^{4\mu}\langle W^-\rangle^{2\mu}  \p_1\wt{p}$, by \eqref{H27},
we obtain
%\begin{align}\label{H30}
%&-\int_{\Om_L} \nabla^\psi\cdot \nabla^\psi \p_1\wt{p} \cdot \p_1\wt{p}
%\,\langle W^+\rangle^{4\mu}\langle W^-\rangle^{2\mu} \, \dx\\\nonumber
%&-\int_{\Om_L} [\p_1,\nabla^\psi\cdot \nabla^\psi] \wt{p}
% \cdot \p_1\wt{p}
%\,\langle W^+\rangle^{4\mu}\langle W^-\rangle^{2\mu} \, \dx\\\nonumber
%&= \int_{\Om_L}
%\p_1 \nabla^\psi \cdot (\wt{\Lambda}_-\cdot \nabla^\psi \wt{\Lambda}_+)\cdot \p_1\wt{p}
%\,\langle W^+\rangle^{4\mu}\langle W^-\rangle^{2\mu} \, \dx.
%\end{align}
%Or
\begin{align}\label{H30}
&-\int_{\Om_L} a^\top\nabla \cdot \nabla^\psi \p_1\wt{p} \cdot \p_1\wt{p}
\,\langle W^+\rangle^{4\mu}\langle W^-\rangle^{2\mu} \, \dx\\\nonumber
&= \int_{\Om_L}
J \p_1 \nabla^\psi \cdot (\wt{\Lambda}_-\cdot \nabla^\psi \wt{\Lambda}_+)\cdot \p_1\wt{p}
\,\langle W^+\rangle^{4\mu}\langle W^-\rangle^{2\mu} \, \dx\\\nonumber
&\quad+\int_{\Om_L} J[\p_1,\nabla^\psi\cdot \nabla^\psi] \wt{p}
 \cdot \p_1\wt{p}
\,\langle W^+\rangle^{4\mu}\langle W^-\rangle^{2\mu} \, \dx\\\nonumber
&= \int_{\Om_L}
J \p_1 \nabla^\psi \cdot (\wt{\Lambda}_-\cdot \nabla^\psi \wt{\Lambda}_+)\cdot \p_1\wt{p}
\,\langle W^+\rangle^{4\mu}\langle W^-\rangle^{2\mu} \, \dx\\\nonumber
&\quad+\int_{\Om_L} J \p_1 A_{kj} \, \p_k (A_{l j} \, \p_l \wt{p} )
 \cdot \p_1\wt{p}
\,\langle W^+\rangle^{4\mu}\langle W^-\rangle^{2\mu} \, \dx \\\nonumber
&\quad+\int_{\Om_L} J\, A_{kj} \, \p_k ( \p_1 A_{l j} \, \p_l \wt{p}) \cdot \p_1\wt{p}
\,\langle W^+\rangle^{4\mu}\langle W^-\rangle^{2\mu} \, \dx .
\end{align}
For the first line of \eqref{H30},
by \eqref{F11}, Piola's identity \eqref{H3}, the boundary condition for pressure \eqref{H2} and integration by parts,
we have
\begin{align}%\label{}
&-\int_{\Om_L} a^\top\nabla\cdot \nabla^\psi \p_1\wt{p} \cdot \p_1\wt{p}
\,\langle W^+\rangle^{4\mu}\langle W^-\rangle^{2\mu} \, \dx\\\nonumber
&=\int_{\Om_L} |\nabla^\psi\p_1 \wt{p}|^2 \,\langle W^+\rangle^{4\mu}\langle W^-\rangle^{2\mu} \, J \dx
+\int_{\Om_L}
 a^\top\nabla (\langle W^+\rangle^{4\mu}\langle W^-\rangle^{2\mu})
\cdot \nabla^\psi \p_1\wt{p}\cdot \p_1\wt{p} \, \dx\\\nonumber
&\geq \int_{\Om_L} |\nabla^\psi \p_1\wt{p}|^2 \,\langle W^+\rangle^{4\mu}\langle W^-\rangle^{2\mu} \, J \dx \\\nonumber
&\quad-C(1+\|\nabla\psi \|_{L^\infty})
\| \langle W^+\rangle^{2\mu}\langle W^-\rangle^{\mu} \nabla^\psi\p_1 \wt{p} \|_{L^2}
\| \langle W^+\rangle^{2\mu}\langle W^-\rangle^{\mu} \p_1\wt{p} \|_{L^2}.
\end{align}
For the first line on the right hand side of \eqref{H30},
by \eqref{F11} and the Sobolev inequalities, we
%use integration by parts to
derive that
\begin{align}
&\int_{\Om_L}
J \p_1 \nabla^\psi \cdot (\wt{\Lambda}_-\cdot \nabla^\psi \wt{\Lambda}_+)\cdot \p_1\wt{p}
\,\langle W^+\rangle^{4\mu}\langle W^-\rangle^{2\mu} \, \dx \\\nonumber
&= -\int_{\Om_L}
\p^\psi_j \wt{\Lambda}^i_- \p^\psi_i \wt{\Lambda}^j_+
\cdot
\p_1\big( J\p_1\wt{p}
\,\langle W^+\rangle^{4\mu}\langle W^-\rangle^{2\mu} \big) \, \dx \\\nonumber
&\leq  \| \langle W^-\rangle^{2\mu} \nabla^\psi \wt{\Lambda}_-\|_{L^4}
   \Big\|\f{\langle W^+\rangle^{2\mu} \nabla^\psi \wt{\Lambda}_+}{\langle W^-\rangle^{\mu}} \Big\|_{L^4}
\| \langle W^+\rangle^{2\mu}\langle W^-\rangle^{\mu}\, \p^2_1 \wt{p} \|_{L^2} \|J\|_{L^\infty} \\\nonumber
&\quad+
\| \langle W^-\rangle^{2\mu} \nabla^\psi \wt{\Lambda}_-\|_{L^8}
   \Big\|\f{\langle W^+\rangle^{2\mu} \nabla^\psi \wt{\Lambda}_+}{\langle W^-\rangle^{\mu}} \Big\|_{L^8}
\| \langle W^+\rangle^{2\mu}\langle W^-\rangle^{\mu}\, \p_1 \wt{p}  \|_{L^4}
\|\p_1J\|_{L^2}\\\nonumber
%&\leq
%\big\| \langle W^-\rangle^{2\mu} |\nabla^\psi \wt{\Lambda}_-|
%\f{\langle W^+\rangle^{2\mu}|\nabla^\psi \wt{\Lambda}_+|}
%{\langle W^-\rangle^{\mu}} \big\|_{L^2} \\\nonumber
%&\quad\cdot \| \langle W^+\rangle^{2\mu}\langle W^-\rangle^{\mu}\,
%(|\p_1 \wt{p}|+|\p^2_1 \wt{p}| )\|_{L^2}
%(\|J \|_{L^\infty}+\|\p_1 J \|_{L^\infty}) \\\nonumber
%&\lesssim \sum_{|\alpha|\leq 1}\| \langle W^-\rangle^{2\mu} \p^\alpha \nabla^\psi \wt{\Lambda}_-\|_{L^2}
%\sum_{|\alpha|\leq 1} \Big\|\f{\langle W^+\rangle^{2\mu}\p^\alpha \nabla^\psi \wt{\Lambda}_+}
%{\langle W^-\rangle^{\mu}} \Big\|_{L^2} \\\nonumber
%&\quad \cdot
%\sum_{|\alpha|\leq 1}\| \langle W^+\rangle^{2\mu}\langle W^-\rangle^{\mu}\,
%\p_1\p^\alpha \wt{p}\|_{L^2}
%(1+\|\nabla\psi \|_{L^\infty}+\|\nabla^2\psi \|_{L^2}) \\[-5mm]\nonumber\\\nonumber
&\leq \mathcal{E}_2^{\f12} \mathcal{G}_2^{\f12}\cdot
\sum_{|\alpha|\leq 1}\| \langle W^+\rangle^{2\mu}\langle W^-\rangle^{\mu}\,
\p_1\p^\alpha \wt{p}\|_{L^2}
(1+\|\nabla\psi \|_{L^\infty}+\|\nabla^2\psi \|_{L^2})  .
\end{align}
For the second line on the right hand side of \eqref{H30},
by \eqref{F11} and the Sobolev inequalities, it is bounded by
\begin{align}
&-\int_{\Om_L} J \p_1 A_{kj} \, \p_k (A_{l j} \, \p_l \wt{p} )
 \cdot \p_1\wt{p}
\,\langle W^+\rangle^{4\mu}\langle W^-\rangle^{2\mu} \, \dx \\\nonumber
&\lesssim
\|\nabla A\|_{L^4}^2\| \langle W^+\rangle^{2\mu}\langle W^-\rangle^{\mu}\, \nabla\wt{p}\|^2_{L^4} \\[-4mm]\nonumber\\\nonumber
&\quad+\|A\|_{L^\infty}\|\nabla A\|_{L^4}\| \langle W^+\rangle^{2\mu}\langle W^-\rangle^{\mu}\, \nabla\wt{p}\|_{L^4}
\| \langle W^+\rangle^{2\mu}\langle W^-\rangle^{\mu}\, \nabla^2\wt{p}\|_{L^2}  \\[-4mm]\nonumber\\\nonumber
&\lesssim
%\| \langle W^+\rangle^{2\mu}\langle W^-\rangle^{\mu}\, \p_1\wt{p}\|_{L^2}
\| \langle W^+\rangle^{2\mu}\langle W^-\rangle^{\mu}\,
(|\nabla \wt{p}|+|\nabla^2 \wt{p}| )\|^2_{L^2} \\[-4mm]\nonumber\\\nonumber
&\quad\cdot(\|\nabla^2\psi \|_{L^2}+\|\nabla^3 \psi   \|_{L^2})
(1+\|\nabla\psi \|_{L^\infty}+\|\nabla^2\psi \|_{L^2}+\|\nabla^3 \psi   \|_{L^2}) .
\end{align}
For the third line on the right hand side of \eqref{H30},
note the derivative applied on $\p_1A$ is of third order, which is too high.
Hence we use integration by parts to move one derivative.
Note by the expression of $A$, $A|_{\Gamma_-}$ and \eqref{H2}, there hold
\begin{align} \label{H34}
&\p_1 \wt{p}\big|_{\Gamma}=0,\quad  %\\
%&
(J\, A_{2j} \,   \p_1 A_{l j} \, \p_l \wt{p})\big|_{\Gamma_-}
%=(  \p_1 A_{l 2} \, \p_l \wt{p})\big|_{\Gamma_-}
%=(  \p_1 A_{1 2} \, \p_1 \wt{p})\big|_{\Gamma_-}
=0 .
\end{align}
Consequently, by \eqref{F11}, Piola's identity \eqref{H3}, the boundary condition for pressure \eqref{H2},
the Sobolev inequalities
%matrix $A$
and the integration by parts,
we have
% (what about boundary term?)
\begin{align}\label{H35}
&-\int_{\Om_L} J\, A_{kj} \, \p_k ( \p_1 A_{l j} \, \p_l \wt{p}) \cdot \p_1\wt{p}
\,\langle W^+\rangle^{4\mu}\langle W^-\rangle^{2\mu} \, \dx \\\nonumber
&=\int_{\Om_L} J\, A_{kj} \,   \p_1 A_{l j} \, \p_l \wt{p} \cdot
\p_k\big( \p_1\wt{p}
\,\langle W^+\rangle^{4\mu}\langle W^-\rangle^{2\mu} \big)\, \dx \\\nonumber
&\quad-\int_{\Gamma} J\, A_{2j} \,  ( \p_1 A_{l j} \, \p_l \wt{p}) \cdot \p_1\wt{p}
\,\langle W^+\rangle^{4\mu}\langle W^-\rangle^{2\mu} \, \dx_1 \\\nonumber
%\qquad \textrm{note}\,\, \p_l \wt{p}\big|_{\Gamma}=0 \\
&\quad+\int_{\Gamma_-} J\, A_{2j} \,  ( \p_1 A_{l j} \, \p_l \wt{p}) \cdot \p_1\wt{p}
\,\langle W^+\rangle^{4\mu}\langle W^-\rangle^{2\mu} \, \dx_1 \\\nonumber
%\end{align}
%
%The last two lines of \eqref{H35} involving the boundary terms are zero due to the boundary condition.
%Consequently, \eqref{H35} can be bounded as follows:
%\begin{align*}
&=\int_{\Om_L} J\, A_{kj} \,   \p_1 A_{l j} \, \p_l \wt{p} \cdot
\p_k\big( \p_1\wt{p}
\,\langle W^+\rangle^{4\mu}\langle W^-\rangle^{2\mu} \big)\, \dx\\\nonumber
&\lesssim \| |A|  \,   |\p_1 A| \|_{L^4}
\| \langle W^+\rangle^{2\mu}\langle W^-\rangle^{\mu} \nabla \wt{p} \|_{L^4}
\|\langle W^+\rangle^{2\mu}\langle W^-\rangle^{\mu} ( |\nabla\p_1\wt{p}|+|\p_1\wt{p}|)
\|_{L^2}\\[-4mm]\nonumber\\\nonumber
&\lesssim  (1+\|\nabla\psi \|_{L^\infty})(\|\nabla^2\psi \|_{L^2}+\|\nabla^3 \psi   \|_{L^2}) %\\[-4mm]\nonumber\\\nonumber
%&\quad \cdot %\| \langle W^+\rangle^{2\mu}\langle W^-\rangle^{\mu} \nabla \wt{p} \|_{L^2}
\|\langle W^+\rangle^{2\mu}\langle W^-\rangle^{\mu} ( |\nabla^2\wt{p}|+|\nabla \wt{p}|)
\|^2_{L^2}.
\end{align}
Combining \eqref{H30}-\eqref{H35}, we obtain
\begin{align}\label{H37}
&\norm{\langle W^+\rangle^{2\mu}\langle W^-\rangle^{\mu}
\nabla^\psi\p_1 \wt{p}}^2_{L^2} \\\nonumber
&\lesssim  \Big(\sum_{|\alpha|\leq 1}\big\|\langle W^+\rangle^{2\mu}\langle W^-\rangle^{\mu} \nabla^\psi\nabla^\alpha \wt{p}\big\|^2_{L^2}
\cdot(\|\nabla\psi\|_{L^\infty}+\|\nabla^2\psi \|_{L^2}+\|\nabla^3 \psi   \|_{L^2})\\\nonumber
&\quad  +\|\langle W^+\rangle^{2\mu}\langle W^-\rangle^{\mu} \nabla^\psi  \wt{p}\big\|^2_{L^2}
+ \mathcal{E}_2\mathcal{G}_2
\Big)\cdot \mathcal{F}(1+\|\nabla\psi\|_{L^\infty}+\|\nabla^2\psi \|_{L^2}+\|\nabla^3 \psi   \|_{L^2}).
\end{align}

\textbf{Second normal derivative estimate of pressure}:

The remaining second order derivative $\p_2^2 \wt{p}$ is estimated directly from the
equation \eqref{H26} by using the explicit expression of the coefficients $A_{kj}$.
More precisely, \eqref{H26} can be written as follows
%We write the equations for the pressure as follows
\begin{equation}\label{H41}
A_{kj}\, A_{lj}\, \p_k \p_l \wt{p}
= -\p^\psi_j \wt{\Lambda}^i_- \p^\psi_i \wt{\Lambda}^j_+
 -A_{kj} \, \p_k A_{lj} \, \p_l \wt{p} \, .
\end{equation}
On the other hand, simple calculation yields
\begin{equation} \label{H42}
A_{kj}\, A_{lj}\, \p_k \p_l \wt{p}
=\f{1+|\p_1 \psi|^2}{(1+\p_2 \psi)^2} \, \p_2^2 \wt{p}
+\p_1^2 \wt{p}-2\f{\p_1 \psi \, \p_1 \p_2 \wt{p}}{1+\p_2 \psi}.
\end{equation}
Combining \eqref{H41} and \eqref{H42}, we obtain
\begin{align*} %\label{H43}
\p_2^2 \wt{p}
&=\f{(1+\p_2 \psi)^2 }{1+|\p_1 \psi|^2}
\Big( -\p^\psi_j \wt{\Lambda}^i_- \p^\psi_i \wt{\Lambda}^j_+
-A_{kj} \, \p_k A_{lj} \, \p_l \wt{p}
-\p_1^2 \wt{p}+2\f{\p_1 \psi \, \p_1 \p_2 \wt{p}}{1+\p_2 \psi}\Big).
\end{align*}
%By \eqref{H43}
Consequently, by \eqref{F11} and the Sobolev inequalities,
we obtain
%\begin{align*}
%&\|\langle W^+\rangle^{2\mu}\langle W^-\rangle^{\mu} \p_2^2 \wt{p} \|_{L^2}\\
%&\leq \|\langle W^+\rangle^{2\mu}\langle W^-\rangle^{\mu} \p_1^2 \wt{p} \|_{L^2}
%+\|\langle W^+\rangle^{2\mu}\langle W^-\rangle^{\mu}\, A\,  \p A \, \nabla \wt{p} \|_{L^2}\\
%&\quad +\|\langle W^+\rangle^{\mu}\langle W^-\rangle^{\mu}\, \p_1\psi \p_1\p_2 \wt{p} \|_{L^2}
%+\|\langle W^+\rangle^{2\mu}\langle W^-\rangle^{\mu}
%\p^\psi_j \wt{\Lambda}^i_- \p^\psi_i \wt{\Lambda}^j_+ \|_{L^2}.
%\end{align*}
%Or we write it as follows:
\begin{align}\label{H44}
&\f12\|\langle W^+\rangle^{2\mu}\langle W^-\rangle^{\mu} \p_2^\psi \p_2\wt{p} \|_{L^2}\leq
\|\langle W^+\rangle^{2\mu}\langle W^-\rangle^{\mu} \p_2^2 \wt{p} \|_{L^2}\\[-4mm]\nonumber\\\nonumber
&\leq (1+\|\p_2\psi\|_{L^\infty})^2
\cdot\big\{
\|\langle W^+\rangle^{2\mu}\langle W^-\rangle^{\mu}
\p^\psi_j \wt{\Lambda}^i_- \p^\psi_i \wt{\Lambda}^j_+ \|_{L^2}\\[-4mm]\nonumber\\\nonumber
&\quad+\|\langle W^+\rangle^{2\mu}\langle W^-\rangle^{\mu}\, |A|\,  |\nabla A| \, |\nabla \wt{p}| \|_{L^2}
+\|\langle W^+\rangle^{2\mu}\langle W^-\rangle^{\mu} \p_1^2 \wt{p} \|_{L^2}\\[-4mm]\nonumber\\\nonumber
&\quad +2(1+2\|\p_2\psi\|_{L^\infty})
\|\langle W^+\rangle^{2\mu}\langle W^-\rangle^{\mu}\, \p_1\psi \p_1\p_2 \wt{p} \|_{L^2}
\big\}\\\nonumber
&\leq (1+\|\p_2\psi\|_{L^\infty})^2
\cdot\big\{ \sum_{|\alpha|\leq 1}\| \langle W^-\rangle^{2\mu} \p^\alpha \nabla^\psi \wt{\Lambda}_-\|_{L^2}
\sum_{|\alpha|\leq 1} \Big\|\f{\langle W^+\rangle^{2\mu}\p^\alpha \nabla^\psi \wt{\Lambda}_+}
{\langle W^-\rangle^{\mu}} \Big\|_{L^2}\\\nonumber
&\quad+ (1+\|\nabla\psi\|_{L^\infty})(\|\nabla^2\psi\|_{L^2}+\|\nabla^3\psi\|_{L^2})
   \sum_{|\alpha|\leq 1}\|\langle W^+\rangle^{2\mu}\langle W^-\rangle^{\mu} \, \nabla \p^\alpha \wt{p} \|_{L^2}
   %\\[-6mm]\nonumber
   \\\nonumber
&\quad +\|\langle W^+\rangle^{2\mu}\langle W^-\rangle^{\mu} \p_1^2 \wt{p} \|_{L^2}
+2(1+2\|\p_2\psi\|_{L^\infty})\|\p_1\psi\|_{L^\infty}
\|\langle W^+\rangle^{2\mu}\langle W^-\rangle^{\mu}\, \p_1\p_2 \wt{p} \|_{L^2}\big\}.
%\\[-4mm]\nonumber\\\nonumber
%&\lesssim
% \|\langle W^+\rangle^{2\mu}\langle W^-\rangle^{\mu} \p_1^2 \wt{p} \|_{L^2}
%+\|\langle W^+\rangle^{2\mu}\langle W^-\rangle^{\mu}\, \p_1\p_2 \wt{p} \|_{L^2} \\[-4mm]\nonumber\\\nonumber
%&+\|\langle W^+\rangle^{2\mu}\langle W^-\rangle^{\mu}\, \nabla \wt{p} \|_{L^2}
%+\mathcal{E}_2\mathcal{G}_2.
\end{align}
Combining the second order tangential derivative estimate \eqref{H37} and
the normal derivative estimate \eqref{H44}, we deduce
%\begin{align}
%&\|\langle W^+\rangle^{2\mu}\langle W^-\rangle^{\mu} \p_2^2 \wt{p} \|_{L^2}\\[-4mm]\nonumber\\\nonumber
%&\lesssim \mathcal{E}_2\mathcal{G}_2+.
%\end{align}
\begin{align}\label{H45}
&\sum_{|\alpha|= 1}\big\|\langle W^+\rangle^{2\mu}\langle W^-\rangle^{\mu}
\nabla^\psi\p^\alpha \wt{p} \big\|^2_{L^2} \\\nonumber
&\lesssim   \mathcal{F}(1+\|\nabla\psi\|_{L^\infty}+\|\nabla^2\psi \|_{L^2}+\|\nabla^3 \psi   \|_{L^2})\\[-4mm]\nonumber\\\nonumber
&\quad \cdot \Big(\sum_{|\alpha|\leq 1}\big\|\langle W^+\rangle^{2\mu}\langle W^-\rangle^{\mu} \nabla^\psi\nabla^\alpha \wt{p}\big\|^2_{L^2}
\cdot(\|\nabla\psi\|_{L^\infty}+\|\nabla^2\psi \|_{L^2}+\|\nabla^3 \psi   \|_{L^2})\\\nonumber
&\qquad\quad  +\|\langle W^+\rangle^{2\mu}\langle W^-\rangle^{\mu} \nabla^\psi  \wt{p}\big\|^2_{L^2}
+\mathcal{G}_2 \mathcal{E}_2
\Big).
\end{align}
By \eqref{F1} and Lemma \ref{lemTR}, for sufficiently small $\epsilon$,
the terms containing the second order derivative of pressure in the third line of \eqref{H45}
will be absorbed by the left hand side.
Thus by \eqref{H25}, \eqref{H37} yields
\begin{align*}%\label{H38}
&\sum_{|\alpha|\leq 1}\big\|\langle W^+\rangle^{2\mu}\langle W^-\rangle^{\mu}
\nabla^\psi\p^\alpha \wt{p} \big\|^2_{L^2} \\\nonumber
&\lesssim   \mathcal{E}_2\mathcal{G}_2 \cdot \mathcal{F}(1+\|\nabla\psi\|_{L^\infty}+\|\nabla^2\psi \|_{L^2}+\|\nabla^3 \psi   \|_{L^2}).
\end{align*}

\textbf{$(r+1)$-order tangential derivative estimate of pressure ($2\leq r\leq s$)}
%For the equation,
%\begin{align*}
%-\nabla^\psi \cdot \nabla^\psi \wt{p}
%=\nabla^\psi \cdot (\wt{\Lambda}_-\cdot \nabla^\psi \wt{\Lambda}_+).
%\end{align*}
%Written in component form, we have
%\begin{align*}
%-A_{kj}\p_k (A_{lj}\p_l\wt{p})
%=\p^\psi_j \wt{\Lambda}^i_- \p^\psi_i \wt{\Lambda}^j_+.
%\end{align*}
%Applying derivative $\p_1^r$,

Firstly, there naturally holds the following identity:
\begin{align*}
&\p_1^r \big(A_{kj}\p_k (A_{lj}\p_l\wt{p}) \big)\\
&=\sum_{m<r} C_r^m \p_1^{r-m} A_{kj}\p_1^m\p_k (A_{lj}\p_l\wt{p})
+  A_{kj}\p_1^r\p_k (A_{lj}\p_l\wt{p})\\
&=\sum_{m<r} C_r^m \p_1^{r-m} A_{kj}\p_1^m\p_k (A_{lj}\p_l\wt{p})
+\sum_{m<r} C_r^m A_{kj}\p_k (\p_1^{r-m} A_{lj}\p_l\p_1^m\wt{p})\\
&\quad +A_{kj}\p_k ( A_{lj}\p_l\p_1^r\wt{p}).
\end{align*}
The above identity can be written in another form:
\begin{align}\label{H46}
&[\p_1^r,\nabla^\psi\cdot \nabla^\psi] \wt{p} \\\nonumber
&=\sum_{m<r} C_r^m \p_1^{r-m} A_{kj}\p_1^m\p_k (A_{lj}\p_l\wt{p})
+\sum_{m<r} C_r^m A_{kj}\p_k (\p_1^{r-m} A_{lj}\p_l\p_1^m\wt{p}) .
\end{align}
Applying derivative $\p_1^r$ onto \eqref{H1}, we have
\begin{align}\label{H48}
-\nabla^\psi \cdot \nabla^\psi \p_1^r\wt{p}
-[\p_1^r,\nabla^\psi\cdot \nabla^\psi] \wt{p}
=\p_1^r\nabla^\psi \cdot (\wt{\Lambda}_-\cdot \nabla^\psi \wt{\Lambda}_+).
\end{align}
Taking the $L^2(\Om_L,dx)$ inner product of \eqref{H48} with
$ J\langle W^+\rangle^{4\mu}\langle W^-\rangle^{2\mu}  \p_1^r\wt{p}$, by \eqref{H46},
we obtain
\begin{align}\label{H49}
&-\int_{\Om_L} a^\top\nabla \cdot \nabla^\psi \p_1^r\wt{p} \cdot \p_1^r\wt{p}
\,\langle W^+\rangle^{4\mu}\langle W^-\rangle^{2\mu} \, \dx\\\nonumber
&= \int_{\Om_L}
J \p_1^r \nabla^\psi \cdot (\wt{\Lambda}_-\cdot \nabla^\psi \wt{\Lambda}_+)\cdot \p_1^r\wt{p}
\,\langle W^+\rangle^{4\mu}\langle W^-\rangle^{2\mu} \, \dx\\\nonumber
&\quad +\int_{\Om_L} J[\p_1^r,\nabla^\psi\cdot \nabla^\psi] \wt{p}
 \cdot \p_1^r\wt{p}
\,\langle W^+\rangle^{4\mu}\langle W^-\rangle^{2\mu} \, \dx \\\nonumber
&= \int_{\Om_L}
J \p_1^r \nabla^\psi \cdot (\wt{\Lambda}_-\cdot \nabla^\psi \wt{\Lambda}_+)\cdot \p_1^r\wt{p}
\,\langle W^+\rangle^{4\mu}\langle W^-\rangle^{2\mu} \, \dx\\\nonumber
&\quad +\int_{\Om_L} J \sum_{m<r} C_r^m \p_1^{r-m} A_{kj}\p_1^m\p_k (A_{lj}\p_l\wt{p})
 \cdot \p_1^r\wt{p}
\,\langle W^+\rangle^{4\mu}\langle W^-\rangle^{2\mu} \, \dx \\\nonumber
&\quad +\int_{\Om_L} J\sum_{m<r} C_r^m A_{kj}\p_k (\p_1^{r-m} A_{lj}\p_l\p_1^m\wt{p})
 \cdot \p_1^r\wt{p} \,\langle W^+\rangle^{4\mu}\langle W^-\rangle^{2\mu} \, \dx .
\end{align}
For the first line of \eqref{H49},
by \eqref{F11}, Piola's identity \eqref{H3}, the boundary condition for pressure \eqref{H2} and the integration by parts,
we have
\begin{align}%\label{}
&-\int_{\Om_L} a^\top\nabla\cdot \nabla^\psi \p_1^r\wt{p} \cdot \p_1^r\wt{p}
\,\langle W^+\rangle^{4\mu}\langle W^-\rangle^{2\mu} \, \dx\\\nonumber
&=\int_{\Om_L} |\nabla^\psi \p_1^r \wt{p}|^2 \,\langle W^+\rangle^{4\mu}\langle W^-\rangle^{2\mu} \, J \dx
+\int_{\Om_L}
 a^\top\nabla (\langle W^+\rangle^{4\mu}\langle W^-\rangle^{2\mu})
\cdot \nabla^\psi \p_1^r\wt{p}\cdot \p_1^r\wt{p} \, \dx\\\nonumber
&\geq \int_{\Om_L} |\nabla^\psi \p_1^r \wt{p}|^2 \,\langle W^+\rangle^{4\mu}\langle W^-\rangle^{2\mu} \, J \dx \\\nonumber
&\quad-C(1+\|\nabla\psi \|_{L^\infty})
\| \langle W^+\rangle^{2\mu}\langle W^-\rangle^{\mu} \nabla^\psi\p_1^r \wt{p} \|_{L^2}
\| \langle W^+\rangle^{2\mu}\langle W^-\rangle^{\mu} \p_1^r \wt{p} \|_{L^2}.
\end{align}
For the first line on the right hand side of \eqref{H49},
by \eqref{F11} and the Sobolev inequalities
and integration by parts, we derive
\begin{align}
&\int_{\Om_L}
J \p_1\p_1^{r-1} \nabla^\psi \cdot (\wt{\Lambda}_-\cdot \nabla^\psi \wt{\Lambda}_+)\cdot \p_1^r\wt{p}
\,\langle W^+\rangle^{4\mu}\langle W^-\rangle^{2\mu} \, \dx\\\nonumber
&= -\int_{\Om_L}
\p_1^{r-1} \big(\p^\psi_j \wt{\Lambda}^i_- \p^\psi_i \wt{\Lambda}^j_+\big)
\cdot
\p_1\big( J \p_1^r\wt{p}
\,\langle W^+\rangle^{4\mu}\langle W^-\rangle^{2\mu} \big) \, \dx\\\nonumber
&\leq C\sum_{a+b= r-1}
\big\| \langle W^-\rangle^{2\mu} |\p_1^a\nabla^\psi \wt{\Lambda}_-|
\f{\langle W^+\rangle^{2\mu}|\p_1^b\nabla^\psi \wt{\Lambda}_+|}
{\langle W^-\rangle^{\mu}} \big\|_{L^2} \\\nonumber
&\quad\cdot \| \langle W^+\rangle^{2\mu}\langle W^-\rangle^{\mu}\,
(|\p_1^{r+1}\wt{p}|+|\p_1^r \wt{p}| )\|_{L^2}
%\big\| |J|+|\p_1 J| \big\|_{L^\infty} .
(\|J \|_{L^\infty}+\|\p_1 J \|_{L^\infty}) \\\nonumber
&\lesssim \mathcal{E}_s^{\f12} \mathcal{G}_s^{\f12} \| \langle W^+\rangle^{2\mu}\langle W^-\rangle^{\mu}\,
(|\p_1^{r+1}\wt{p}|+|\p_1^r \wt{p}| )\|_{L^2}
(1+\|\nabla\psi \|_{L^\infty}+\|\nabla^2 \psi   \|_{L^\infty}) .
\end{align}
%For the second line of \eqref{H49}, by \eqref{H46}, we obtain
%\begin{align}\label{H52}
%&-\int_{\Om_L} J [\p_1^r,\nabla^\psi\cdot \nabla^\psi] \wt{p}
% \cdot \p_1^r\wt{p}
%\,\langle W^+\rangle^{4\mu}\langle W^-\rangle^{2\mu} \, \dx\\\nonumber
%&=-\int_{\Om_L} J \big\{
%\sum_{m<r} C_r^m \p_1^{r-m} A_{kj}\p_1^m\p_k (A_{lj}\p_l\wt{p}) \\\nonumber
%&\qquad+\sum_{m<r} C_r^m A_{kj}\p_k (\p_1^{r-m} A_{lj}\p_l\p_1^m\wt{p})\big\}
%\cdot \p_1^r\wt{p}
%\,\langle W^+\rangle^{4\mu}\langle W^-\rangle^{2\mu} \, \dx.
%\end{align}
For the second line on the right hand side of \eqref{H49}, it is bounded by
\begin{align}\label{H50}
&-\int_{\Om_L} J
\sum_{m<r} C_r^m \p_1^{r-m} A_{kj}\p_1^m\p_k (A_{lj}\p_l\wt{p})
\cdot \p_1^r\wt{p}
\,\langle W^+\rangle^{4\mu}\langle W^-\rangle^{2\mu} \, \dx\\\nonumber
&\lesssim
\sum_{m<r} \|\langle W^+\rangle^{2\mu}\langle W^-\rangle^{\mu} J \p_1^{r-m} A_{kj}\p_1^m\p_k (A_{lj}\p_l\wt{p})
\|_{L^2}
\| \langle W^+\rangle^{2\mu}\langle W^-\rangle^{\mu}
\, \p_1^r\wt{p}
\|_{L^2}\\\nonumber
%&\lesssim \mathcal{F} (1+\| \nabla \psi \|_{L^\infty}+\| \nabla^2 \psi \|_{L^\infty}
%+\sum_{2\leq |\alpha|\leq r+1}\| \nabla^\alpha \psi \|_{L^2}) \\\nonumber
%&\quad\cdot \mathcal{F} (\| \nabla \psi \|_{L^\infty}+\| \nabla^2 \psi \|_{L^\infty}
%+\sum_{2\leq |\alpha|\leq r+1}\| \nabla^\alpha \psi \|_{L^2}) \\\nonumber
%&\quad\cdot\sum_{m\leq r-1,\ 1\leq|\alpha|\leq 2}
%\|\langle W^+\rangle^{2\mu}\langle W^-\rangle^{\mu}  \p_1^m \nabla^{\alpha} \wt{p}
%\|_{L^2}  \| \langle W^+\rangle^{2\mu}\langle W^-\rangle^{\mu} \, \p_1^r\wt{p}\|_{L^2} .
%\end{align}
%Or the last bound is give by
%\begin{align}
&\lesssim (1+\| \nabla \psi\|_{L^\infty})\| \nabla^2 \psi \|_{L^\infty}
\|\langle W^+\rangle^{2\mu}\langle W^-\rangle^{\mu}\,  \p_1^{r-1} \nabla^2 \wt{p}\|_{L^2}
\| \langle W^+\rangle^{2\mu}\langle W^-\rangle^{\mu}\, \p_1^r \wt{p}\|_{L^2} \\[-4mm]\nonumber\\\nonumber
&\quad+\sum_{1\leq|\alpha|\leq r}
\|\langle W^+\rangle^{2\mu}\langle W^-\rangle^{\mu}  \nabla^{\alpha} \wt{p}\|_{L^2}
\| \langle W^+\rangle^{2\mu}\langle W^-\rangle^{\mu} \, \p_1^r\wt{p}\|_{L^2} \\\nonumber
&\qquad\cdot\mathcal{F} (1+\| \nabla \psi \|_{L^\infty}%+\| \nabla^2 \psi \|_{L^\infty}
+\sum_{2\leq |\alpha|\leq q}\| \nabla^\alpha \psi \|_{L^2}),
\end{align}
where
\begin{equation*}
q=
\begin{cases}
4,\quad \text{if}\,\, r=2,\\
r+1,\quad \text{if}\,\, 3\leq r\leq s.
\end{cases}
\end{equation*}

In the sequel, we will give a detailed calculation on how \eqref{H50} is achieved. Before that, we calculate the pointwise estimate
of the matrix $A$.
%Let us now give the details for
Recalling
\begin{align*}
A=[\nabla \Psi]^{-1}=\frac{1}{J}
\left(
\begin{matrix}
1+\p_2\psi \quad 0\\
-\p_1\psi \qquad 1
\end{matrix}
\right)
=\left(
\begin{matrix}
1  \qquad\quad 0\\
-\p_1\psi J^{-1} \quad J^{-1}
\end{matrix}
\right)
\end{align*}
where  $J=1+\p_2\psi$.
Hence
\begin{align*}
\nabla A=\left(
\begin{matrix}
0  \qquad\qquad 0\\
-\nabla(\p_1\psi J^{-1}) \quad \nabla(J^{-1})
\end{matrix}
\right),
\end{align*}
where
\begin{align*}
\nabla(J^{-1})
&=-J^{-2}\nabla\p_2\psi,\\
-\nabla(\p_1\psi J^{-1})
&=-\nabla\p_1\psi J^{-1}+\p_1\psi \nabla\p_2\psi J^{-2} .
\end{align*}
Next, we calculate
\begin{align*}
\nabla^2 A=\left(
\begin{matrix}
0  \qquad\qquad 0\\
-\nabla^2(\p_1\psi J^{-1}) \quad \nabla^2(J^{-1})
\end{matrix}
\right),
\end{align*}
where
\begin{align*}
\nabla^2(J^{-1})
&=2J^{-3} (\nabla\p_2\psi)^2-J^{-2}\nabla^2\p_2\psi,\\
-\nabla^2(\p_1\psi J^{-1})
&=-\nabla^2\p_1\psi J^{-1}+ 2\nabla\p_1\psi\nabla\p_2\psi J^{-2} \\
&\quad + \p_1\psi\nabla^2\p_2\psi J^{-2}
-2 \p_1\psi(\nabla\p_2\psi)^2 J^{-3}.
\end{align*}
Hence
\begin{align*}
|A|        &\lesssim 1+|\p_1 \psi|,\\
|\nabla A| &\lesssim (1+|\p_1 \psi|)|\nabla^2 \psi|,\\
|\nabla^2 A|&\lesssim (1+|\nabla \psi|) \big( |\nabla^3 \psi|+|\nabla^2\psi|^2 \big).
\end{align*}
By a similar calculation, we obtain
\begin{align*}
|\nabla^3 A|&\lesssim (1+|\nabla \psi|) \big( |\nabla^4 \psi|+|\nabla^3\psi|\cdot|\nabla^2\psi|+|\nabla^2\psi|^3\big).
\end{align*}
%By a similar calculation,
In summary, by the assumption of \eqref{F1},
for integer $1\leq k\leq s$, $s\geq 4$,
there holds
\begin{align*} %\label{H51}
|\nabla^k A|&\lesssim (1+|\nabla \psi|) \sum_{ 2\leq i\leq k+1} |\nabla^i \psi| .
\end{align*}

%\begin{remark}
%For the term
%\begin{align*}
%\sum_{m<r} \|\langle W^+\rangle^{2\mu}\langle W^-\rangle^{\mu} J \p_1^{r-m} A_{kj}\p_1^m\p_k (A_{lj}\p_l\wt{p})
%\|_{L^2}
%\end{align*}
%Let us give more accurate estimate to clearly express the index.
Now let us show \eqref{H50}. To simplify the presentation,
let us just forget about the weight $\langle W^+\rangle^{2\mu}\langle W^-\rangle^{\mu}$.
The calculation with the weight function is the same since we just need to put the weight function and the pressure together.

If $r=2$, by the estimate for matrix $A$ and the Sobolev inequalities, we calculate
%($3$-order estimate of pressure):
\begin{align*}
& \big\|\sum_{m=0,1}\p_1^{r-m} A_{kj}\p_1^m\p_k (A_{lj}\p_l\wt{p})\big\|_{L^2} \\
& \lesssim \|\nabla^2 A \nabla(A\nabla \wt{p})+\nabla A \nabla^2 (A\nabla \wt{p})\|_{L^2} \\[-4mm]\\
%& \lesssim \underbrace{\nabla^2 A}_{L^4} \underbrace{\nabla A}_{L^\infty}  \underbrace{\nabla \wt{p}}_{L^4}
%       +\underbrace{\nabla^2 A}_{L^4} \, \cdot \underbrace{A}_{L^\infty}  \underbrace{\nabla^2 \wt{p}}_{L^4}\\[-4mm]\\
%&\quad +\underbrace{\nabla A}_{L^\infty} \underbrace{\nabla^2 A}_{L^4}  \underbrace{\nabla \wt{p}}_{L^4}
%+\underbrace{\nabla A}_{L^\infty} \underbrace{\nabla A}_{L^\infty}  \underbrace{\nabla^2 \wt{p}}_{L^2}
%       +\underbrace{\nabla A}_{L^\infty} \cdot \underbrace{A}_{L^\infty} \underbrace{\nabla^3 \wt{p}}_{L^2}\\
& \lesssim \|\nabla^2 A\|_{L^4} \|\nabla A\|_{L^\infty}  \|\nabla \wt{p}\|_{L^4}
       +\|\nabla^2 A\|_{L^4}  \|A\|_{L^\infty}  \|\nabla^2 \wt{p}\|_{L^4}\\[-4mm]\\
&\quad +\|\nabla A\|_{L^\infty} \|\nabla^2 A\|_{L^4}  \|\nabla \wt{p}\|_{L^4}
+\|\nabla A\|^2_{L^\infty}  \|\nabla^2 \wt{p}\|_{L^2}
       +\|\nabla A\|_{L^\infty}  \|A\|_{L^\infty} \|\nabla^3 \wt{p}\|_{L^2}\\[-4mm]\nonumber\\\nonumber
&\lesssim \mathcal{F}(1+\| \nabla\psi\|_{L^\infty}+\| \nabla^2\psi\|_{L^2}+\| \nabla^4\psi\|_{L^2} )
\big( \| \nabla^3 \wt{p}\|_{L^2}+\| \nabla \wt{p} \|_{L^2}  \big) .
\end{align*}
%It can be seen that the above involves $\| \nabla^4\psi\|_{L^2}$, thus cannot be closed.

If $r=3$, %by \eqref{H51},
similarly by the estimate for matrix $A$ and the Sobolev inequalities,
 we calculate
\begin{align*}
& \big\|\sum_{m=0,1,2}  \p_1^{r-m} A_{kj}\p_1^m\p_k (A_{lj}\p_l\wt{p}) \big\|_{L^2} \\
%& \sim \nabla^3 A \nabla(A\nabla \wt{p})+\nabla^2 A \nabla^2 (A\nabla \wt{p})
%       +\nabla A \nabla^3 (A\nabla \wt{p})  \\
%& \sim \big(\nabla^3 A \nabla A + \nabla^2 A \nabla^2 A \big)       \nabla \wt{p}
%       +\big(\nabla^3 A \cdot  A + \nabla^2 A \nabla  A \big)       \nabla^2 \wt{p} \\
%&\quad +\big(\nabla^2 A \cdot  A + \nabla A \nabla  A \big)       \nabla^3 \wt{p}
%       +\nabla A \cdot  A        \nabla^4 \wt{p}  \\
& \lesssim \big(\|\nabla^3 A\|_{L^2} \|\nabla A\|_{L^\infty}
           + \|\nabla^2 A\|_{L^4} \|\nabla^2 A\|_{L^4} \big)    \|\nabla \wt{p}\|_{L^\infty}\\[-4mm]\nonumber\\\nonumber
&\quad +\big(\|\nabla^3 A\|_{L^2}  \|A\|_{L^\infty}
             + \|\nabla^2 A\|_{L^4} \|\nabla  A\|_{L^4} \big)  \|\nabla^2 \wt{p}\|_{L^2} \\[-4mm]\nonumber\\\nonumber
&\quad +\big(\|\nabla^2 A \|_{L^4}  \|A\|_{L^\infty}
         + \|\nabla A\|_{L^4} \|\nabla A\|_{L^\infty}  \big) \|\nabla^3 \wt{p}\|_{L^4}
       +\|\nabla A\|_{L^\infty}  \|A\|_{L^\infty}        \|\nabla^4 \wt{p}\|_{L^2}  \\[-4mm]\nonumber\\\nonumber
%& \lesssim \big(\underbrace{\nabla^3 A}_{L^2} \underbrace{\nabla A}_{L^\infty}
%           + \underbrace{\nabla^2 A}_{L^4} \underbrace{\nabla^2 A}_{L^4} \big)    \underbrace{\nabla \wt{p}}_{L^\infty}
%       +\big(\underbrace{\nabla^3 A}_{L^2} \cdot  \underbrace{A}_{L^\infty}
%             + \underbrace{\nabla^2 A}_{L^4} \underbrace{\nabla  A}_{L^4} \big)      \underbrace{ \nabla^2 \wt{p}}_{L^2} \\
%&\quad +\big(\underbrace{\nabla^2 A \cdot}_{L^4}  \underbrace{A}_{L^\infty}
%         + \underbrace{\nabla A}_{L^4} \underbrace{\nabla A}_{L^\infty}  \big)\underbrace{\nabla^3 \wt{p}}_{L^4}
%       +\underbrace{\nabla A}_{L^\infty}  \cdot  \underbrace{A}_{L^\infty}        \underbrace{\nabla^4 \wt{p}}_{L^2}  \\
&\lesssim \mathcal{F}(1+\| \nabla\psi\|_{L^\infty}+\| \nabla^2\psi\|_{L^2}+\| \nabla^4\psi\|_{L^2} )
\big( \| \nabla^4 \wt{p}\|_{L^2}+\| \nabla \wt{p} \|_{L^2}  \big) .
\end{align*}
For $r\geq 4$, by a similar calculation, we get \eqref{H50}.
%In the above, we have used
%\begin{equation*}
%\| \nabla\psi\|_{L^\infty}+\| \nabla^2\psi\|_{L^2}+\| \nabla^4\psi\|_{L^2} \lesssim 1.
%\end{equation*}

%\end{remark}

Next we estimate the third line on the right hand side of \eqref{H49}.
%Note
%\begin{align*}
%&\p_1^r \wt{p}\big|_{\Gamma}=0,\quad
%(J\, A_{2j} \,   \p_1^{r-m} A_{l j} \, \p_l \wt{p})\big|_{\Gamma_-}=0 .
%\end{align*}
% we need use integration by parts to derive that
Similar to \eqref{H50}, by integration by parts, we move away one derivative applied on $\p^r A$ and by \eqref{F11}, \eqref{H34}, Piola's identity \eqref{H3} and the boundary condition for pressure \eqref{H2}, we have
\begin{align}\label{H54}
&-\int_{\Om_L} J \sum_{m<r} C_r^m A_{kj}\p_k (\p_1^{r-m} A_{lj}\p_l\p_1^m\wt{p})
\cdot \p_1^r\wt{p}
\,\langle W^+\rangle^{4\mu}\langle W^-\rangle^{2\mu} \, \dx \\\nonumber
%&=\int_{\Om_L} J \sum_{m<r} C_r^m A_{kj}\p_1^{r-m} A_{lj}\p_l\p_1^m\wt{p}
%\cdot \p_k \big( \p_1^r\wt{p}
%\,\langle W^+\rangle^{4\mu}\langle W^-\rangle^{2\mu}\big) \, \dx \\\nonumber
%&\quad-\int_{\Gamma} J \sum_{m<r} C_r^m A_{2j} \p_1^{r-m} A_{lj}\p_l\p_1^m\wt{p}
%\cdot \p_1^r\wt{p}
%\,\langle W^+\rangle^{4\mu}\langle W^-\rangle^{2\mu} \, \dx_1 \\\nonumber
%&\quad+\int_{\Gamma_-} J \sum_{m<r} C_r^m A_{2j} \p_1^{r-m} A_{lj}\p_l\p_1^m\wt{p}
%\cdot \p_1^r\wt{p}
%\,\langle W^+\rangle^{4\mu}\langle W^-\rangle^{2\mu} \, \dx_1\\\nonumber
&=\int_{\Om_L} J \sum_{m<r} C_r^m A_{kj}\p_1^{r-m} A_{lj}\p_l\p_1^m\wt{p}
\cdot \p_k \big( \p_1^r\wt{p}
\,\langle W^+\rangle^{4\mu}\langle W^-\rangle^{2\mu}\big) \, \dx \\\nonumber
&\lesssim \sum_{m<r} \| A_{kj}\p_1^{r-m} A_{lj}\p_l\p_1^m\wt{p}
\langle W^+\rangle^{2\mu}\langle W^-\rangle^{\mu} \|_{L^2}
\| \langle W^+\rangle^{2\mu}\langle W^-\rangle^{\mu}\, (|\p_k\p_1^r\wt{p}|+|\p_1^r\wt{p}|)
\|_{L^2}\\\nonumber
%&\lesssim (1+\| \nabla \psi \|_{L^\infty})\sum_{2\leq |\alpha|\leq r+1}\| \nabla^\alpha \psi \|_{L^2}\\
&\lesssim
(1+\| \nabla \psi\|_{L^\infty})
\mathcal{F} \big(\| \nabla\psi\|_{L^\infty}+\sum_{2\leq |\alpha|\leq q} \| \nabla^\alpha \psi\|_{L^2} \big) \\\nonumber
%&\quad \cdot \big(\| \nabla\psi\|_{L^\infty}+ \| \nabla^2 \psi\|_{L^\infty}+\sum_{2\leq |\alpha|\leq r+1} \| \nabla^\alpha \psi\|_{L^2} \big)\\
&\quad\cdot\sum_{m\leq r-1} \|\langle W^+\rangle^{2\mu}\langle W^-\rangle^{\mu}  \nabla \p_1^m \wt{p}\|_{L^2}
\big(\| \langle W^+\rangle^{2\mu}\langle W^-\rangle^{\mu} \, \p_1^r\wt{p}\|_{L^2}
+\| \langle W^+\rangle^{2\mu}\langle W^-\rangle^{\mu} \, \nabla\p_1^r\wt{p}\|_{L^2}\big) ,
%\\
%&\lesssim  (1+\| f \|_{L^\infty(\BR)}+\| f \|_{\dot{H}^1(\BR)}+\| f \|_{\dot{H}^2(\BR)})
%(\| f \|_{\dot{H}^1(\BR)}+\| f \|_{\dot{H}^{r+\f12}(\BR)})
%\\
%&\quad \cdot
%\sum_{1\leq |\alpha|\leq r+1} \|\langle W^+\rangle^{2\mu}\langle W^-\rangle^{\mu}  \p^\alpha \wt{p})\|_{L^2}
%\| \langle W^+\rangle^{2\mu}\langle W^-\rangle^{\mu} \, \p_1^r\wt{p}\|_{L^2}.
\end{align}
where
\begin{equation*}
q=
\begin{cases}
4,\quad \text{if}\,\, r=2,\\
r+1,\quad \text{if}\,\, 3\leq r\leq s.
\end{cases}
\end{equation*}

Combining \eqref{H49}-\eqref{H54}, by \eqref{F1} and Lemma \ref{lemTR}, we obtain
\begin{align}\label{H55}
&\big\|\langle W^+\rangle^{2\mu}\langle W^-\rangle^{\mu}
\nabla^\psi\p_1^r \wt{p}\big\|^2_{L^2} \\[-4mm]\nonumber\\\nonumber
&\lesssim
%(1+\| \nabla \psi\|_{L^\infty})
\| \nabla^2 \psi \|_{L^\infty}
\|\langle W^+\rangle^{2\mu}\langle W^-\rangle^{\mu}\,  \p_1^{r-1} \nabla^2 \wt{p}\|_{L^2}^2 \\[-4mm]\nonumber\\\nonumber
%\|\langle W^+\rangle^{2\mu}\langle W^-\rangle^{\mu} \p_1^{r-1} \nabla^2 \wt{p}\|^2_{L^2} \\\nonumber
%&\quad\cdot \mathcal{F}(\|\nabla\psi\|_{L^\infty}+\|\nabla^2\psi\|_{L^\infty}
%+\sum_{2\leq |\alpha|\leq r+1}\| \nabla^\alpha \psi \|_{L^2}) \\\nonumber
&\quad +\Big(\sum_{1\leq|\alpha|\leq r}\|\langle W^+\rangle^{2\mu}\langle W^-\rangle^{\mu} \p^\alpha \wt{p}\|^2_{L^2}+ \mathcal{E}_s \mathcal{G}_s
\Big)  \\\nonumber
&\qquad\cdot\mathcal{F}(1+\|\nabla\psi\|_{L^\infty}
+\sum_{2\leq |\alpha|\leq q}\| \nabla^\alpha \psi \|_{L^2}),
\end{align}
where
\begin{equation*}
q=
\begin{cases}
4,\quad \text{if}\,\, r=2,\\
r+1,\quad \text{if}\,\, 3\leq r\leq s.
\end{cases}
\end{equation*}

\textbf{$(r+1)$-order non-tangential derivative estimate of pressure}:

For multi-index $\alpha$
%derivative $\p^\alpha$
 with $|\alpha|=r-1$, there naturally holds the following identity:
\begin{align}\label{H60}
&\p^\alpha \big(A_{kj}\p_k (A_{lj}\p_l\wt{p}) \big)\\\nonumber
&=\sum_{\beta\neq \alpha } C_\alpha^\beta \p^{\alpha-\beta} A_{kj}\p^\beta\p_k (A_{lj}\p_l\wt{p})
+  A_{kj}\p^\alpha\p_k (A_{lj}\p_l\wt{p}) \\\nonumber
&=\sum_{\beta\neq \alpha} C_\alpha^\beta \p^{\alpha-\beta} A_{kj}\p^\beta\p_k (A_{lj}\p_l\wt{p})
+\sum_{\beta\neq \alpha} C_\alpha^\beta A_{kj}\p_k (\p^{\alpha-\beta} A_{lj}\p_l \p^\beta \wt{p}) \\\nonumber
&\quad +A_{kj}\p_k ( A_{lj}\p_l\p^\alpha\wt{p}).
\end{align}
Combining \eqref{H26} and \eqref{H60}, we obtain
%The above \eqref{H60} can be written in the following form
\begin{align}\label{H61}
[\p^\alpha,\nabla^\psi\cdot \nabla^\psi] \wt{p}
&=\sum_{\beta\neq \alpha} C_\alpha^\beta \p^{\alpha-\beta} A_{kj}\p^\beta\p_k (A_{lj}\p_l\wt{p})
+\sum_{\beta\neq \alpha} C_\alpha^\beta A_{kj}\p_k (\p^{\alpha-\beta} A_{lj}\p_l \p^\beta \wt{p}) \\\nonumber
&=-\p^\alpha(\p^\psi_j \wt{\Lambda}^i_- \p^\psi_i \wt{\Lambda}^j_+)
-A_{kj}\p_k ( A_{lj}\p_l\p^\alpha\wt{p})\\\nonumber
&=-\p^\alpha(\p^\psi_j \wt{\Lambda}^i_- \p^\psi_i \wt{\Lambda}^j_+)
-A_{kj} A_{lj}\p_l\p_k\p^\alpha\wt{p}
-A_{kj}\p_k A_{lj}\p_l\p^\alpha\wt{p}.
\end{align}
On the other hand, simple calculation yields the identity
\begin{equation} \label{H62}
%A_{ji}\, A_{ki}
A_{kj}\, A_{lj}
\, \p_k \p_l \p^\alpha \wt{p}
=\f{1+|\p_1 \psi|^2}{(1+\p_2 \psi)^2} \, \p_2^2 \p^\alpha\wt{p}
+\p_1^2 \p^\alpha\wt{p}
-2\f{\p_1 \psi \, \p_1 \p_2 \p^\alpha \wt{p}}{1+\p_2 \psi}.
\end{equation}
Combining \eqref{H61} and \eqref{H62},
% and write the above equations as follows:
we obtain
\begin{align*} %\label{H63}
\p_2^2 \p^\alpha\wt{p}
&=\f{(1+\p_2 \psi)^2 }{1+|\p_1 \psi|^2}
\Big( -\p^\alpha\big(\p^\psi_j \wt{\Lambda}^i_- \p^\psi_i \wt{\Lambda}^j_+\big)
-A_{kj} \, \p_k A_{lj} \, \p_l \p^\alpha\wt{p}  \\\nonumber
&\qquad
-[\p^\alpha,\nabla^\psi\cdot \nabla^\psi] \wt{p}
-\p_1^2 \p^\alpha \wt{p} +2\f{\p_1 \psi \, \p_1\p_2 \p^\alpha\wt{p}}{1+\p_2 \psi} \Big) .
\end{align*}
Consequently, we deduce
\begin{align}\label{H64}
&\f12\|\langle W^+\rangle^{2\mu}\langle W^-\rangle^{\mu} \p_2^\psi \p_2 \p^\alpha \wt{p} \|_{L^2}
\lesssim \|\langle W^+\rangle^{2\mu}\langle W^-\rangle^{\mu} \p_2^2 \p^\alpha \wt{p} \|_{L^2} \\\nonumber
&\leq (1+\|\p_2\psi\|_{L^\infty})^2
\cdot %\big\{
\Big\|\langle W^+\rangle^{2\mu}\langle W^-\rangle^{\mu}
\big\{\p^\alpha\big(\p^\psi_j \wt{\Lambda}^i_- \p^\psi_i \wt{\Lambda}^j_+\big)
-A_{kj} \, \p_k A_{lj} \, \p_l \p^\alpha \wt{p}  \\\nonumber
&\qquad
-[\p^\alpha,\nabla^\psi\cdot \nabla^\psi] \wt{p}
-\p_1^2 \p^\alpha \wt{p} +2\f{\p_1 \psi \, \p_1\p_2 \p^\alpha\wt{p}}{1+\p_2 \psi}  \big\}\Big\|_{L^2}.
\end{align}
Note $r\geq 2$, $|\alpha|=r-1$. For \eqref{H64}, we first estimate
%(if $r=2$, there is something wrong with the index)
\begin{align}\label{H65}
&\big\|\langle W^+\rangle^{2\mu}\langle W^-\rangle^{\mu}
\p^\alpha\big(\p^\psi_j \wt{\Lambda}^i_- \p^\psi_i \wt{\Lambda}^j_+\big)\|_{L^2}
\\\nonumber
&\lesssim\sum_{|\alpha|\leq r-1} \big\| \langle W^-\rangle^{2\mu} \p^\alpha\nabla^\psi \wt{\Lambda}_- \big\|_{L^2}
\sum_{|\alpha|\leq r-1} \Big\|\f{\langle W^+\rangle^{2\mu}\p^\alpha\nabla^\psi \wt{\Lambda}_+}
{\langle W^-\rangle^{\mu}} \Big\|_{L^2} \\\nonumber
&\leq \mathcal{E}_q^{\f12} \mathcal{G}_q^{\f12},
\end{align}
where
%\begin{align*}
%q&=3,\quad \text{if}\,\, r=2,\\
%q&=r,\quad \text{if}\,\, 3\leq r\leq s.
%\end{align*}
\begin{equation*}
q=
\begin{cases}
3,\quad \text{if}\,\, r=2,\\
r,\quad \text{if}\,\, 3\leq r\leq s.
\end{cases}
\end{equation*}
Next,
\begin{align}
&\big\|\langle W^+\rangle^{2\mu}\langle W^-\rangle^{\mu}
A_{kj} \, \p_k A_{lj} \, \p_l \p^\alpha \wt{p} \|_{L^2} \\[-4mm]\nonumber\\\nonumber
&\leq  \|\, |A|\, |\nabla A| \|_{L^\infty}
 \|\langle W^+\rangle^{2\mu}\langle W^-\rangle^{\mu}\nabla\p^\alpha \wt{p} \|_{L^2} \\[-4mm]\nonumber\\\nonumber
&\lesssim  \sum_{|\alpha|=r-1}\|\langle W^+\rangle^{2\mu}\langle W^-\rangle^{\mu}\nabla\p^\alpha \wt{p} \|_{L^2}
\big(1+\| \nabla\psi\|_{L^\infty}) \| \nabla^2 \psi\|_{L^\infty} .
%&\lesssim  \sum_{|\alpha|=r-1} \|\langle W^+\rangle^{2\mu}\langle W^-\rangle^{\mu}\nabla\p^\alpha \wt{p} \|_{L^2}
%\big(1+\| \nabla\psi\|_{L^\infty})( \| \nabla^2 \psi\|_{L^2}+\| \nabla^3 \psi\|_{L^2}) .
\end{align}
By \eqref{H61}, we estimate
\begin{align}
&\big\|\langle W^+\rangle^{2\mu}\langle W^-\rangle^{\mu}
[\p^\alpha,\nabla^\psi\cdot \nabla^\psi] \wt{p}\big\|_{L^2} \\[-4mm]\nonumber\\\nonumber
&\leq
C \sum_{\beta\neq \alpha}
\big\|\langle W^+\rangle^{2\mu}\langle W^-\rangle^{\mu}
\big(|\p^{\alpha-\beta} A_{kj}\p^\beta\p_k (A_{lj}\p_l\wt{p})|
+|A_{kj}\p_k (\p^{\alpha-\beta} A_{lj}\p_l \p^\beta \wt{p})|\big) \big\|_{L^2} \\\nonumber
&\leq \mathcal{F}\big(1+\| \nabla\psi\|_{L^\infty}+\sum_{2\leq |\alpha|\leq q} \| \nabla^\alpha \psi\|_{L^2} \big) \\\nonumber
&\quad \cdot \big(\| \nabla\psi\|_{L^\infty}+ \| \nabla^2 \psi\|_{L^\infty}+\sum_{2\leq |\alpha|\leq q} \| \nabla^\alpha \psi\|_{L^2} \big) \sum_{ 1\leq|\alpha|\leq r}\|\langle W^+\rangle^{2\mu}\langle W^-\rangle^{\mu}\p^\alpha \wt{p} \|_{L^2},
%&+\big(1+\| \nabla\psi\|_{L^\infty}\big)
%\big(\| \nabla\psi\|_{L^\infty}+ \| \nabla^2 \psi\|_{L^\infty}+\sum_{2\leq |\alpha|\leq r+1} \| \nabla^\alpha \psi\|_{L^2} \big)
%\sum_{ 1\leq|\alpha|\leq r-1}\|\langle W^+\rangle^{2\mu}\langle W^-\rangle^{\mu}\nabla\p^\alpha \wt{p} \|_{L^2} .
\end{align}
where
\begin{equation*}
q=
\begin{cases}
4,\quad \text{if}\,\, r=2,\\
r+1,\quad \text{if}\,\, 3\leq r\leq s.
\end{cases}
\end{equation*}

Finally, we estimate
\begin{align}\label{H67}
&\Big\|\langle W^+\rangle^{2\mu}\langle W^-\rangle^{\mu}
\f{\p_1 \psi \, \p_1\p_2 \p^\alpha\wt{p}}{1+\p_2 \psi} \Big\|_{L^2} \\\nonumber
&\lesssim (1+2\|\p_2\psi\|_{L^\infty}) \|\p_1\psi\|_{L^\infty}
\|\langle W^+\rangle^{2\mu}\langle W^-\rangle^{\mu}\,
 \p_1\p_2\p^\alpha \wt{p} \|_{L^2}.
\end{align}
%Combining all the estimates, we obtain
Consequently, combining \eqref{H65}-\eqref{H67}, \eqref{H64} yields
\begin{align}\label{H68}
&\|\langle W^+\rangle^{2\mu}\langle W^-\rangle^{\mu} \p_2^\psi \p_2 \p^\alpha \wt{p} \|_{L^2} \\ \nonumber
&\lesssim \mathcal{F}\big(1+\| \nabla\psi\|_{L^\infty}+ \| \nabla^2 \psi\|_{L^\infty}+\sum_{2\leq |\alpha|\leq r+1} \| \nabla^\alpha \psi\|_{L^2} \big) \\\nonumber
&\quad \Big(
\big(\| \nabla\psi\|_{L^\infty}+ \| \nabla^2 \psi\|_{L^\infty}+\sum_{2\leq |\alpha|\leq r+1} \| \nabla^\alpha \psi\|_{L^2} \big) \sum_{ 1\leq|\alpha|\leq r}\|\langle W^+\rangle^{2\mu}\langle W^-\rangle^{\mu}\p^\alpha \wt{p} \|_{L^2} \\\nonumber
&\quad+\|\langle W^+\rangle^{2\mu}\langle W^-\rangle^{\mu} \p_1^2\p^\alpha \wt{p} \|_{L^2}+
\| \p_1\psi\|_{L^\infty} \|\langle W^+\rangle^{2\mu}\langle W^-\rangle^{\mu} \p_1\p_2\p^\alpha \wt{p} \|_{L^2}
\Big).
\end{align}
Then combining the tangential derivative estimate \eqref{H55} and the normal derivative estimate \eqref{H68},
for $|\alpha|=r-1$, we obtain %(need update)
\begin{align}\label{H70}
& \|\langle W^+\rangle^{2\mu}\langle W^-\rangle^{\mu} \nabla^\psi \p_1^r \wt{p} \|^2_{L^2}
+\|\langle W^+\rangle^{2\mu}\langle W^-\rangle^{\mu} \p_2^\psi \p_2 \p^\alpha \wt{p} \|^2_{L^2} \\[-4mm] \nonumber\\\nonumber
&\lesssim
\|\langle W^+\rangle^{2\mu}\langle W^-\rangle^{\mu} \p_1^2\p^\alpha \wt{p} \|^2_{L^2}
+\| \p_1\psi\|_{L^\infty} \|\langle W^+\rangle^{2\mu}\langle W^-\rangle^{\mu} \p_1\p_2\p^\alpha \wt{p} \|^2_{L^2}\\[-4mm]\nonumber\\\nonumber
&\quad+\| \nabla^2 \psi \|_{L^\infty}
\|\langle W^+\rangle^{2\mu}\langle W^-\rangle^{\mu}\,  \p_1^{r-1} \nabla^2 \wt{p}\|_{L^2}^2 \\[-4mm]\nonumber\\\nonumber
&\quad +\Big(\sum_{ 1\leq|\alpha|\leq r}\|\langle W^+\rangle^{2\mu}\langle W^-\rangle^{\mu}\p^\alpha \wt{p} \|^2_{L^2}
+\mathcal{E}_s \mathcal{G}_s \Big)\\\nonumber
&\qquad \cdot\mathcal{F}\big(1+\| \nabla\psi\|_{L^\infty}+ \| \nabla^2 \psi\|_{L^\infty}+\sum_{2\leq |\alpha|\leq s+1} \| \nabla^\alpha \psi\|_{L^2} \big).
\end{align}
Now let us show Lemma \ref{lemPreL} by induction.
Firstly, we assume  for
%the bound for
%\begin{align*}
%\sum_{|\alpha|\leq r-1}\| \langle W^\pm\rangle^{2\mu}
%\langle W^\mp\rangle^{\mu} \nabla^\psi\p^\alpha \wt{p}\|_{L^2}
%\end{align*}
 $1\leq r\leq s-1$, there holds
\begin{align}\label{H71}
&\sum_{|\alpha|\leq r-1}\| \langle W^\pm\rangle^{2\mu}
\langle W^\mp\rangle^{\mu} \nabla^\psi\p^\alpha \wt{p}\|_{L^2}^2\\\nonumber
&\leq %\sum_{|\alpha|\leq r-2,\ \pm} \| \langle W^\pm\rangle^{2\mu}  \p^\alpha \nabla^\psi\wt{\Lambda}_\pm\|_{L^2}
%      \sum_{|\alpha|\leq r-2,\ \mp} \Big\| \f{\langle W^\mp\rangle^{2\mu}  \p^\alpha \nabla^\psi\wt{\Lambda}_\mp}{\langle W^\pm\rangle^{\mu}}\Big\|_{L^2}\\\nonumber
\mathcal{E}_s  \mathcal{G}_s
%&\qquad
\cdot
\mathcal{F}(1+\| \nabla\psi\|_{L^\infty}+ \| \nabla^2 \psi\|_{L^\infty}+\sum_{2\leq |\alpha|\leq s+1} \| \nabla^\alpha \psi\|_{L^2}).
\end{align}
Next, we show the control for
\begin{align*}
\sum_{|\alpha|= r}\| \langle W^\pm\rangle^{2\mu}
\langle W^\mp\rangle^{\mu} \nabla^\psi\p^\alpha \wt{p}\|^2_{L^2} .
\end{align*}
For \eqref{H70}, taking $\p^\alpha=\p_1^{r-1}$. Then by \eqref{F1}, \eqref{H55} and \eqref{H71},  \eqref{H70} yields
\begin{align*}%\label{H73}
& \|\langle W^+\rangle^{2\mu}\langle W^-\rangle^{\mu} \nabla^\psi \p_1^r \wt{p} \|^2_{L^2}
+\|\langle W^+\rangle^{2\mu}\langle W^-\rangle^{\mu} \p_2^\psi \p_2 \p^{r-1}_1 \wt{p} \|^2_{L^2}
\\\nonumber
&\lesssim \mathcal{E}_s  \mathcal{G}_s
\cdot \mathcal{F}(1+\| \nabla\psi\|_{L^\infty}+ \| \nabla^2 \psi\|_{L^\infty}+\sum_{2\leq |\alpha|\leq s+1} \| \nabla^\alpha \psi\|_{L^2}).
\end{align*}
Next, we take one by one $\p^\alpha=\p_1^{r-2}\p_2$, $\p^\alpha=\p_1^{r-3}\p_2^2$, ...
$\p^\alpha=\p_2^{r-1}$,
%\eqref{H70} yields
we obtain
\begin{align*}
&\sum_{|\alpha|= r}\| \langle W^\pm\rangle^{2\mu}
\langle W^\mp\rangle^{\mu} \nabla^\psi\p^\alpha \wt{p}\|_{L^2} \\\nonumber
&\lesssim
\mathcal{E}_s^{\f12} \mathcal{G}_s^{\f12}
\cdot \mathcal{F}(1+ \| \nabla\psi\|_{L^\infty}+ \| \nabla^2 \psi\|_{L^\infty}+\sum_{2\leq |\alpha|\leq s+1} \| \nabla^\alpha \psi\|_{L^2}).
\end{align*}

In summary, we obtain that for $s\geq 3$, $\frac12< \mu \leq \frac34$, there holds
%there holds Let $s\geq 4$ be an integer,
% Under the condition of \eqref{F1}. For pressure,
%there holds
\begin{align*}
&\| \langle W^\pm\rangle^{2\mu}
\langle W^\mp\rangle^{\mu}  \wt{p}\|^2_{L^2}
+\sum_{|\alpha|\leq s}\| \langle W^\pm\rangle^{2\mu}
\langle W^\mp\rangle^{\mu} \nabla^\psi\p^\alpha \wt{p}\|^2_{L^2}\\
&\lesssim\mathcal{E}_s \mathcal{G}_s
\cdot\mathcal{F}(1+ \| \nabla\psi\|_{L^\infty}+ \| \nabla^2 \psi\|_{L^\infty}+\sum_{2\leq |\alpha|\leq s+1} \| \nabla^\alpha \psi\|_{L^2}).
\end{align*}
By the estimate of $\psi$ in Lemma \ref{lemTR}, the above can be further controlled by
\begin{align*}
\mathcal{E}_s \mathcal{G}_s
\cdot \mathcal{F}(1+\| f \|_{L^\infty(\BR)}+\| f \|_{\dot{H}^1(\BR)}+\| f \|_{\dot{H}^{s+\f12}(\BR)}).
\end{align*}
This finishes the proof of Lemma \ref{lemPreL}.
\end{proof}

\section{Weighted energy estimate of the plasma}\label{sec-energy-b}

This section is devoted  to the weighted energy estimate of unknowns in the bulk of the plasma.
%In this section, we are going to estimate the amplitude of the free surface and
%deal with the weighted energy estimate of the free surface.
%The estimation the amplitude of the free surface is carried out using the characteristic integral method.
%Here, we only  require the estimation of the amplitude of the free surface without weights.
Under the assumption \eqref{AA4}, %\eqref{AA5},
\eqref{AA7} and \eqref{AA8}, the energy estimate is performed by applying the weights $\langle w^\pm\rangle^{2\mu}$
and by utilizing the ghost weight method. Here, we recall that the weight functions are defined by \eqref{C0}.

Let $s\geq 4$ be an integer, $1/2<\mu \leq 3/4$ and $q(z)=\int_0^z \langle\tau\rangle^{-2\mu} d\tau$,
so that $|q(z)|\lesssim 1$.
Let $\sigma^- = -w^+$, $\sigma^+ = w^-$, $|\alpha|\leq s$.
Denote $e^{q^+}=e^{q(\sigma^+)}=e^{q(w^-)}$, $e^{q^-}=e^{q(\sigma^-)}=e^{q(-w^+)}$.
Due to the boundary condition on $\Gamma_-$ and $\Ga_f$, we calculate
\begin{align}\label{E2}
&\f12\f{\d}{\dt}  \int_{\Omega_f}
| \langle w^+ \rangle^{2\mu}\nabla^\alpha \Lambda_+ |^2 e^{q^+} \dx\\\nonumber
&=\f12  \int_{\Omega_f}\p_t\big(
| \langle w^+ \rangle^{2\mu}\nabla^\alpha \Lambda_+ |^2 e^{q^+} \big)\dx
+\f12  \int_{\Ga_f}
| \langle w^+ \rangle^{2\mu}\nabla^\alpha \Lambda_+ |^2 e^{q^+}
Z_- \cdot n_f \d\sigma \\\nonumber
&=\f12  \int_{\Omega_f}( \p_t+Z_-\cdot\nabla) \big(
| \langle w^+ \rangle^{2\mu}\nabla^\alpha \Lambda_+ |^2 e^{q^+} \big)\dx \\\nonumber
&=\f12  \int_{\Omega_f}( \p_t+Z_-\cdot\nabla) \big(
| \langle w^+ \rangle^{2\mu}\nabla^\alpha \Lambda_+|^2\big) e^{q^+}  \dx \\\nonumber
&\quad +\f12  \int_{\Omega_f}\big(
| \langle w^+ \rangle^{2\mu}\nabla^\alpha \Lambda_+ |^2\big) ( \p_t+Z_-^1\p_1)  e^{q^+} \dx \\\nonumber
&\quad +\f12  \int_{\Omega_f}\big(
| \langle w^+ \rangle^{2\mu}\nabla^\alpha \Lambda_+ |^2\big) ( Z_-^2 \p_2)  e^{q^+} \dx,
\end{align}
where $n_f=N_f/|N_f|$.
On the other side, for any multi-index $\alpha\in\mathbb{N}^2$, $0\leq |\alpha|\leq s$.
Applying derivative $\nabla^\alpha$ onto \eqref{MHD1} gives
\begin{equation} \label{E3}
\begin{cases}
(\p_t +Z_-\cdot \nabla) \nabla^\alpha \Lambda_+
+[\nabla^\alpha,\Lambda_-\cdot\nabla] \Lambda_+ + \nabla \nabla^\alpha p =0,\\
(\p_t +Z_+\cdot \nabla) \nabla^\alpha \Lambda_-
+[\nabla^\alpha,\Lambda_+\cdot\nabla] \Lambda_- + \nabla \nabla^\alpha p =0,
\end{cases}
\textrm{in} \quad \Omega_f,
\end{equation}
%\begin{equation} \label{E2}
%\begin{cases}
%(\p_t +Z_-^1\p_1) \nabla^\alpha \Lambda_+ +\Lambda_-^2\p_2 \nabla^\alpha \Lambda_+
%+[\nabla^\alpha,\Lambda_-\cdot\nabla] \Lambda_+ + \nabla \nabla^\alpha p =0 ,\\[-4mm]\\
%(\p_t +Z_+^1\p_1) \nabla^\alpha \Lambda_- +\Lambda_+^2\p_2 \nabla^\alpha \Lambda_-
%+[\nabla^\alpha,\Lambda_+\cdot\nabla] \Lambda_- + \nabla \nabla^\alpha p =0 \end{cases}
% \textrm{in} \quad \Omega_f ,
%\end{equation}
where we have used the following commutator notation
$$
[\nabla^\alpha,\Lambda_-\cdot\nabla] \Lambda_+
=\nabla^\alpha(\Lambda_-\cdot\nabla \Lambda_+)
-\Lambda_-\cdot\nabla \nabla^\alpha\Lambda_+.
$$
%Here \eqref{E2} is organized in such way in order to conveniently apply the weight functions $w^\pm$.
%the above equations
%Applying the weight fun $\langle w^\pm \rangle^{2\mu} $ onto \eqref{E2},
Now multiplying the first equation of \eqref{E3} by $\langle w^+ \rangle^{4\mu}\nabla^\alpha\Lambda_+ $
and multiplying the second equation of \eqref{E3} by
$\langle w^- \rangle^{4\mu}\nabla^\alpha\Lambda_-$,
% onto the first and the second equation of \eqref{E2},
respectively,
one has
\begin{equation} \label{E4}
\begin{cases}
\f12(\p_t+Z_-\cdot\nabla )(\langle w^+ \rangle^{2\mu}\nabla^\alpha \Lambda_+)^2
-\f12\p_2\langle w^+\rangle^{4\mu}|\nabla^\alpha \Lambda_+|^2  \Lambda_-^2  \\[-5mm]\\
\quad
+\langle w^+\rangle^{4\mu}\nabla^\alpha \Lambda_+ \big([\nabla^\alpha,\Lambda_-\cdot\nabla] \Lambda_+ + \nabla \nabla^\alpha p \big) =0 ,\\[-5mm]\\
\f12(\p_t+Z_+\cdot\nabla )(\langle w^- \rangle^{2\mu}\nabla^\alpha \Lambda_-)^2
-\f12\p_2\langle w^-\rangle^{4\mu}|\nabla^\alpha \Lambda_-|^2  \Lambda_+^2 \\[-5mm]\\
\quad+\langle w^-\rangle^{4\mu} \nabla^\alpha \Lambda_- \big(
[\nabla^\alpha,\Lambda_+\cdot\nabla] \Lambda_- + \nabla \nabla^\alpha p \big) =0,
\end{cases}
\textrm{in} \ \ \Omega_f.
\end{equation}
%\end{remark}
Plugging \eqref{E4} into \eqref{E2}, one has
\begin{align}\label{E5}
&\f12\frac{\d}{\dt}  \int_{\Omega_f}
| \langle w^+ \rangle^{2\mu}\nabla^\alpha \Lambda_+|^2 e^{q^+}
+ |\langle w^- \rangle^{2\mu}\nabla^\alpha \Lambda_-|^2 e^{q^-} \dx \\
&-\f12\int_{\Omega_f}
|\langle w^+ \rangle^{2\mu}\nabla^\alpha \Lambda_+ |^2
(\p_t + Z_-^1\p_1)e^{q^+}  \dx
-\frac12\int_{\Omega_f}
|\langle w^- \rangle^{2\mu}\nabla^\alpha \Lambda_- |^2
(\p_t + Z_+^1\p_1)e^{q^-}  \dx\nonumber\\
&=
-\int_{\Omega_f}
\langle w^+ \rangle^{4\mu}\nabla^\alpha \Lambda_+ e^{q^+} \cdot
 [\nabla^\alpha,\Lambda_-\cdot\nabla] \Lambda_+
+\langle w^- \rangle^{4\mu}\nabla^\alpha \Lambda_- e^{q^-} \cdot
[\nabla^\alpha,\Lambda_+\cdot\nabla] \Lambda_-  \big) \dx \nonumber\\\nonumber
&\quad -\int_{\Omega_f}
\langle w^+ \rangle^{4\mu}\nabla^\alpha \Lambda_+ e^{q^+} \cdot
 \nabla \nabla^\alpha p
+\langle w^- \rangle^{4\mu}\nabla^\alpha \Lambda_- e^{q^-} \cdot
\nabla \nabla^\alpha p\,  \dx \nonumber\\\nonumber
&\quad
+\frac12\int_{\Omega_f}
|\nabla^\alpha \Lambda_+ |^2\Lambda_-^2\p_2\big(\langle w^+ \rangle^{4\mu} e^{q^+}\big)  \dx
+\frac12\int_{\Omega_f}
|\nabla^\alpha \Lambda_- |^2
\Lambda_+^2\p_2\big(\langle w^- \rangle^{4\mu} e^{q^-}\big)  \dx.
\end{align}
The second line  of \eqref{E5} corresponds to the flux energy of the ghost weight,
%Same to the estimate of the second line of \eqref{E3}, we have the lower bound
by Lemma \ref{lemC2} and the bootstrap assumption $|\Lambda_\pm|\leq C\epsilon$, we compute:
\begin{align*}
&-(\p_t + Z_-^1\p_1) e^{q^+}
=-\frac{e^{q^+}}{\langle w^- \rangle^{2\mu}}
 (\p_t + Z_-^1\p_1) w^-
%= -\frac{e^{q^+}}{\langle w^- \rangle^{2\mu}}
%\big( (\p_t -\p_1) w^-  +\Lambda_-^1\p_1 w^- \big)
\geq \frac32 \frac{e^{q^+}}{\langle w^- \rangle^{2\mu}} ,\\
&-(\p_t + Z_+^1\p_1) e^{q^-}
=\frac{e^{q^-}}{\langle w^+ \rangle^{2\mu}}
 (\p_t + Z_+^1\p_1) w^+
 %=\frac{e^{q^-}}{\langle w^+ \rangle^{2\mu}}
%\big( (\p_t + \p_1) w^+ + \Lambda_+^1\p_1 w^+
%\big)
\geq \frac32\frac{e^{q^-}}{\langle w^+ \rangle^{2\mu}}.
\end{align*}
%While
%\begin{align*}
%(\p_t + Z_-^1\p_1) w^-
%&= (\p_t + Z_+^1\p_1) w^-
%+(\Lambda_-^1 - \Lambda_+^1)\p_1 w^-   -2e_1\cdot\nabla w^+\\[-4mm]\\
%&=(\Lambda_-^1 - \Lambda_+^1)\p_1 w^-   -2e_1\cdot\nabla w^+ ,\\[-4mm]\\
%(\p_t + Z_+^1\p_1) w^+
%&= (\p_t + Z_-^1\p_1) w^+
%+(\Lambda_+^1 - \Lambda_-^1)\p_1 w^+   +2e_1\cdot\nabla w^+\\[-4mm]\\
%&=(\Lambda_+^1 - \Lambda_-^1)\p_1 w^+   +2e_1\cdot\nabla w^+ .
%\end{align*}
%Hence by Lemma \ref{lemC2} and the bootstrap assumption $|\Lambda_\pm|\leq C\epsilon$, there hold
%\begin{align*}
%-(\p_t + Z_-^1\p_1) e^{q^+}
%&\geq \frac32 \frac{e^{q^+}}{\langle w^- \rangle^{2\mu}} ,\\
%-(\p_t + Z_+^1\p_1) e^{q^-}
%&\geq \frac32\frac{e^{q^-}}{\langle w^+ \rangle^{2\mu}}.
%\end{align*}
Consequently, the second line  of \eqref{E5} has the following lower bound
\begin{align} \label{E9}
&-\f12\int_{\Omega_f}
|\langle w^+ \rangle^{2\mu}\nabla^\alpha \Lambda_+ |^2
(\p_t + Z_-^1\p_1)e^{q^+}  \dx
\geq  \f34\int_{\Omega_f}
\f{|\langle w^+ \rangle^{2\mu}\nabla^\alpha \Lambda_+ |^2}{\langle w^- \rangle^{2\mu}}
e^{q^+}  \dx,\\ %\nonumber
&-\frac12\int_{\Omega_f}
|\langle w^- \rangle^{2\mu}\nabla^\alpha \Lambda_- |^2
(\p_t + Z_+^1\p_1)e^{q^-}  \dx
\geq\f34\int_{\Omega_f}
\f{|\langle w^- \rangle^{2\mu} \nabla^\alpha \Lambda_- |^2}{\langle w^+ \rangle^{2\mu}}
e^{q^-}  \dx .
\end{align}
For the first commutator term on the right hand side of \eqref{E5}, by Lemma \ref{Sobo1}, one has
\begin{align}
&-\int_{\Omega_f}
\langle w^+ \rangle^{4\mu}\nabla^\alpha \Lambda_+ e^{q^+} \cdot
 [\nabla^\alpha,\Lambda_-\cdot\nabla] \Lambda_+ \dx \\\nonumber
&=- \sum_{\beta+\gamma=\alpha,\, \gamma\neq \alpha}C_\alpha^\beta\int_{\Omega_f}
\langle w^+ \rangle^{4\mu}\nabla^\alpha \Lambda_+ e^{q^+} \cdot
 (\nabla^\beta \Lambda_-\cdot\nabla \nabla^\gamma\Lambda_+) \dx \\\nonumber
&\lesssim \sum_{\beta+\gamma=\alpha,\, \gamma\neq \alpha}\int_{\Omega_f}
\frac{|\langle w^+ \rangle^{2\mu}\nabla^\alpha \Lambda_+|}{\langle w^-\rangle^{\mu}}
\frac{|\langle w^+ \rangle^{2\mu}\nabla\nabla^\gamma \Lambda_+|}{\langle w^- \rangle^{\mu}}
|\langle w^- \rangle^{2\mu}\nabla^\beta \Lambda_-| e^{q^+} \dx \\\nonumber
&\lesssim E_s^{\f12} G_s.
\end{align}
%In the above inequaity, we require $s\geq 3$.
The second commutator term on the right hand side of \eqref{E5} can be estimated similarly.
As for the first term in last line of \eqref{E5}, by Lemma \ref{lemC2} and Lemma \ref{Sobo1}, we have
\begin{align}
&\frac12\int_{\Omega_f}
|\nabla^\alpha \Lambda_+ |^2
\Lambda_-^2\p_2\big(\langle w^+ \rangle^{4\mu} e^{q^+}\big)  \dx\\\nonumber
&=\f12\int_{\Omega_f}
|\nabla^\alpha \Lambda_+ |^2
\Lambda_-^2\cdot \Big(4\mu \langle w^+ \rangle^{4\mu-2}w^+ \p_2w^+
+\f{\langle w^+ \rangle^{4\mu}}{\langle w^-\rangle^{2\mu}} \p_2w^- \Big)e^{q^+}  \dx\\\nonumber
%&\leq \int_{\Omega_f}
%\f{|\langle w^- \rangle^{2\mu} \nabla^\alpha \Lambda_- |}{\langle w^+ \rangle^{\mu}}
%|\langle w^- \rangle^{2\mu}|\nabla^\alpha \Lambda_- |
%\f{|\langle w^+ \rangle^{\mu} \Lambda_+^2|}{\langle w^- \rangle^{\mu}}
%\langle w^- \rangle^{\mu-1}  e^{q^-}  \dx\\
&\lesssim \int_{\Omega_f}
\f{|\langle w^+ \rangle^{2\mu} \nabla^\alpha \Lambda_+ |^2}{\langle w^- \rangle^{2\mu}}
%|\langle w^- \rangle^{2\mu}\nabla^\alpha \Lambda_- |
|\langle w^- \rangle^{2\mu} \Lambda_-^2|  e^{q^+}  \dx\\\nonumber
%&\quad+\int_{\Omega_f}
%\f{|\langle w^- \rangle^{2\mu} \nabla^\alpha \Lambda_- |^2}{\langle w^+ \rangle^{2\mu}}
%%|\langle w^- \rangle^{2\mu}|\nabla^\alpha \Lambda_- |
%|\Lambda_+^2|    e^{q^-}  \dx.
%\end{align*}
%By Lemma \ref{Sobo1}, the above is bounded by
%\begin{align*}
&\lesssim E_s^{\frac12} G_s.
\end{align}
The second term in the last line of \eqref{E5} has the same bound.
As for the last second line of \eqref{E5}, by Lemma \ref{lemPre}, we obtain
\begin{align}\label{E13}
&-\int_{\Omega_f}(
 e^{q^+}\langle w^+ \rangle^{4\mu}\nabla^\alpha \Lambda_+
+e^{q^-}\langle w^- \rangle^{4\mu}\nabla^\alpha \Lambda_- )
\cdot
\nabla \nabla^\alpha  p\,  \dx \\\nonumber
&\lesssim \int_{\Omega_f}
\f{\langle w^+ \rangle^{2\mu}|\nabla^\alpha \Lambda_+|}{\langle w^- \rangle^{\mu}}
\cdot \langle w^+ \rangle^{2\mu}\langle w^- \rangle^{\mu}|\nabla \nabla^\alpha  p| \,  \dx \\\nonumber
&\quad +\int_{\Omega_f} \f{\langle w^- \rangle^{2\mu}|\nabla^\alpha \Lambda_-|}{\langle w^+ \rangle^{\mu}}
\cdot \langle w^- \rangle^{2\mu}\langle w^+ \rangle^{\mu}|\nabla \nabla^\alpha  p| \,  \dx \\\nonumber
&\lesssim E_s^{\frac12} G_s.
\end{align}
In summary, combining \eqref{E9}-\eqref{E13}, \eqref{E5} yields
\begin{align*}
&\f12\frac{\d}{\dt}  \int_{\Omega_f}
| \langle w^+ \rangle^{2\mu}\nabla^\alpha \Lambda_+|^2 e^{q^+}
+ |\langle w^- \rangle^{2\mu}\nabla^\alpha \Lambda_-|^2 e^{q^-} \dx \\
&+\f34\int_{\Omega_f}
\f{|\langle w^+ \rangle^{2\mu}\nabla^\alpha \Lambda_+ |^2}{\langle w^- \rangle^{2\mu}}
e^{q^+}+
\f{|\langle w^- \rangle^{2\mu} \nabla^\alpha \Lambda_- |^2}{\langle w^+ \rangle^{2\mu}}
e^{q^-}  \dx \\
&\lesssim  E_s^{\f12}  G_s.
\end{align*}
%%ANd
%%\begin{align}
%%&\frac12\frac{d}{dt}\int_{\mathbb{R}} e^{\qu^-}
%%\big[\langle\wu^-\rangle^{2\mu} (\p_t + \ud{Z_-^1}\p_1)\langle\p_1\rangle^{\f12}\p_1^a f \big]^2 \dx_1
%%\lesssim E_s  G_s^{\f12}.
%%\end{align}
%Taking the sum for all $|\alpha|\leq s$. Note $e^{q^\pm}\sim 1$, we obtain
%\begin{align*}
%\frac{d}{dt} E^b_s+G^b_s \lesssim E_s^{\f12}  G_s.
%\end{align*}
Summing over all $|\alpha|\leq s$ and taking the integral over $[0,t]$,
and noting that the same estimate also holds for $\La_-$ and $e^{q^\pm}\sim 1$, we obtain
\begin{align*}
E^b_s(t)+\int_0^t G^b_s(\tau)\d\tau \lesssim E^b_s(0)+\int_0^t E_s^{\f12}  G_s (\tau)\d\tau.
\end{align*}
This yields %the first \emph{a priori} estimate
\eqref{AA1}.

\section{Weighted estimate for the free surface}

\subsection{Estimate of the amplitude of the free surface}\label{sec-amp-f}

In this subsection, we are going to show
\begin{align}\label{G3}
\| f(t,\cdot)\|_{L^\infty(\BR)} \leq
\| f(0,\cdot)\|_{L^\infty(\BR)}
+ M\sup_{0\leq \tau\leq t} \| \langle w^\pm \rangle^{2\mu}   \Lambda_\pm(\tau,\cdot) \|_{L^\infty},
%(E_s^b(\tau))^{\f12} \int_{-\infty}^{\infty} \langle z  \rangle^{-2\mu} \d z.
\end{align}
where $M=\int_{\bR} \langle z \rangle^{-2\mu} \d z$.

Firstly, the kinetic boundary equation $\p_t f=\ud{Z_\pm}\cdot N_f$ can be written as follows:
\begin{align*}
%(\p_t +\ud{Z_-^1}\p_1)f=\ud{Z^2_-}=\ud{\Lambda^2_-},
%\quad (\p_t+\ud{Z_+^1}\p_z)f=\ud{\Lambda_+^2}.
(\p_t +\ud{Z_\pm^1}\p_1)f=\ud{Z^2_\pm}=\ud{\Lambda^2_\pm}.
\end{align*}
To solve the above equations,
%we introduce the following characteristics %defined in \eqref{C2}
we draw two backward characteristics
$\phi_\pm(\tau,\sigma_\pm)$ from $(t,x_1)$ and intersect with the plane $\tau=0$ by two points $\sigma_\pm$:
\begin{align*} %\label{C2}
\begin{cases}
\f{\d}{\d\tau} \phi_\pm(\tau,\sigma_\pm)=Z^1_\pm(\tau,\phi_\pm (\tau, \sigma_\pm),f(\tau,\phi_\pm(\tau,\sigma_\pm)) ),\\
\phi_\pm (0,\sigma_\pm)=\sigma_\pm \in\BR ,\quad  \phi_\pm (t, \sigma_\pm)=x_1.
\end{cases}
\end{align*}
Note that the above characteristics are different from the ones defined by \eqref{C2}.
Along the above characteristics, % $\phi_\pm(s,\sigma_\pm,x_2)$,
%$f(t,x_1)$ satisfies
we have
\begin{align*}
\f{\d }{\d\tau} f(\tau,\phi_\pm(\tau,\sigma_\pm) )
&=(\p_tf+Z^1_\pm\p_1f) (\tau, \phi_\pm(\tau,\sigma_\pm),x_2 )|_{x_2=f(\tau,\phi_\pm(\tau,\sigma_\pm))} \\
&=\Lambda^2_\pm(\tau,\phi_\pm (\tau,\sigma_\pm),x_2)|_{x_2=f(\tau,\phi_\pm(\tau,\sigma_\pm))}.
\end{align*}
%Hence
%\begin{align}\label{C3}
%w^\mp(t,r,z )=w^\mp(t,r,\phi_\pm(t,r,\sigma_\pm) )=w^\mp(0,r,\phi_\pm(0,r,\sigma_\pm) )
%= w^\mp(0,r, \sigma_\pm )=\sigma_\pm.
%\end{align}
Taking integrals along the characteristics over $[0,t]$,
 respectively, one has
\begin{align}\label{G4}
f(t,x_1)=
f(0,\sigma_\pm)+\int_0^t \Lambda_\pm^2(\tau,\phi_\pm(\tau,\sigma_\pm),x_2)|_{x_2=f(\tau,\phi_\pm(\tau,\sigma_\pm))} \d\tau.
\end{align}
By Lemma \ref{lemC2}, we derive
\begin{align*}
&\int_0^t |(\Lambda_\pm^2)(\tau,\phi_\pm(\tau,\sigma_\pm),x_2)|_{x_2=f(\tau,\phi_\pm(\tau,\sigma_\pm))} | \d\tau\\
&\leq  \sup_{0\leq \tau\leq t}\| \langle w^\pm \rangle^{2\mu}   \Lambda_\pm^2(\tau,\cdot) \|_{L^\infty}
\int_0^t  \langle w^\pm (\tau,x) \rangle^{-2\mu} |_{x_1= \phi_\pm (\tau,\sigma_\pm),x_2=f(\tau,\phi_\pm(\tau,\sigma_\pm))} \d\tau\\
&\leq \sup_{0\leq \tau\leq t}\| \langle w^\pm \rangle^{2\mu}   \Lambda_\pm^2(\tau,\cdot) \|_{L^\infty}
\int_{-\infty}^{\infty} \langle w^\pm  \rangle^{-2\mu} \d w^\pm
\cdot\sup_{\tau}\f{1}{\big|\f{\d w^\pm(\tau,\phi_\pm (\tau,\sigma_\pm),f(\tau,\phi_\pm(\tau,\sigma_\pm))) }{\d\tau}\big|},
\end{align*}
where
\begin{align*}
&\big| \f{\d w^\pm(\tau,\phi_\pm (\tau,\sigma_\pm),f(\tau,\phi_\pm(\tau,\sigma_\pm))) }{\d\tau} \big| \\
&=\big| \big(\p_tw^\pm(\tau,x) +\p_1w^\pm(\tau,x) Z_\pm^1 (\tau,x)
 +\p_2w^\pm(\tau,x)  \Lambda^2_\pm(\tau,x) \big)|_{x_1= \phi_\pm (\tau,\sigma_\pm) ,x_2=f(\tau,\phi_\pm(\tau,\sigma_\pm))} \big|\\
&\geq 1.
\end{align*}
Consequently, \eqref{G4} yields the desired bound \eqref{G3}.
This yields \eqref{AA3}.

\subsection{Weighted energy estimate of the free surface}\label{sec-energy-f}
%In order to be compatible with the weight functions inside the domain occupied by plasmas,
Recalling that the weight functions on the free surface are defined as the trace of $w^\pm$ on $\Ga_f$:
%we define the weight functions on the free surface as follows
$$\wu^\pm (t,x_1)=w^\pm(t,x)\big|_{x_2=f(t,x_1)}.$$
For positive integer $s\geq 4$ and $1/2<\mu \leq 3/4$,
due to the bootstrap assumption $E_s\ll 1$,
employing the weight functions,  the weighted energy of the free surface
and the weighted ghost weight energy defined in Section \ref{energy} have the following equivalent form:
\begin{align*}%\label{G2}
E^f_{s+\f12} (t)\sim & \sum_{+,-} \sum_{|a|\leq s-1} \big\| \langle\wu^\pm\rangle^{2\mu}(\p_t + \ud{Z_\pm^1}\p_1)\langle\p_1\rangle^{\f12}\p_1^a f \big\|_{L^2(\bR)}^2,\\
G^f_{s+\f12} (t)\sim &\sum_{+,-}\sum_{|a|\leq s-1}
\Big\|  \f{\langle\wu^\pm\rangle^{2\mu}(\p_t + \ud{Z_\pm^1}\p_1)\langle\p_1\rangle^{\f12}\p_1^a f }
{\langle\wu^\mp\rangle^{\mu}} \Big\|_{L^2(\bR)}^2\,.\nonumber
\end{align*}
We will use these latter norms in the weighted energy estimate for the free surface.

Now we turn to the energy estimate of the free surface.
Due to the similarity of the left Alfv\'{e}n waves and the right Alfv\'{e}n waves,
we only present the estimate for
\begin{align*}
&  \sum_{|a|\leq s-1} \Big(\big\| \langle\wu^-\rangle^{2\mu}(\p_t + \ud{Z_-^1}\p_1)\langle\p_1\rangle^{\f12}\p_1^a f \big\|_{L^2(\bR)}^2
 +\Big\|  \f{\langle\wu^-\rangle^{2\mu}(\p_t + \ud{Z_-^1}\p_1)\langle\p_1\rangle^{\f12}\p_1^a f }
{\langle\wu^+\rangle^{\mu}} \Big\|_{L^2(\bR)}^2\Big)\,.\nonumber
\end{align*}
The estimate for the other weighted energy and weighted ghost weight energy of the free surface
is the same.

By Lemma \ref{lemf}, we write the equation for the surface  as follows:
\begin{align}\label{G5}
(\p_t + \ud{Z_+^1}\p_1)(\p_t + \ud{Z_-^1}\p_1)f=-\ud{\p_2 p}.
\end{align}
Now we introduce the ghost weight on the free surface.
Let $1/2<\mu \leq 3/4$ and $q(z)=\int_0^z \langle\tau\rangle^{-2\mu} d\tau$,
so that $|q(z)|\lesssim 1$.
Let $\sigu^-=-\wu^+$, $\sigu^+=\wu^-$.
Denote $e^{\qu^+}=e^{q(\sigu^{+})}=e^{q(\wu^-)}$ and $e^{\qu^-}=e^{q(\sigu^{-})}=e^{q(-\wu^+)}$ for simplicity.

Let $s\geq 4$ be an integer, $|a|\leq s-1$.
Applying the derivative
$\langle\p_1\rangle^{\f12}\p_1^a $ onto \eqref{G5}, then one has
\begin{align}\label{G8}
&(\p_t + \ud{Z_+^1}\p_1)(\p_t + \ud{Z_-^1}\p_1) \langle\p_1\rangle^{\f12}\p_1^a f
 \\[-4mm]\nonumber\\\nonumber
&= -[\langle\p_1\rangle^{\f12}\p_1^a,
\ud{\Lambda_-^1} \ud{\Lambda_+^1}] \p_1^2 f
-[\langle\p_1\rangle^{\f12}\p_1^a,\ud{\Lambda_+^1}]
  \p_1 (\p_t-\p_1) f
-[\langle\p_1\rangle^{\f12}\p_1^a,\ud{\Lambda_-^1}]
  \p_1 (\p_t+\p_1) f \\[-4mm]\nonumber\\\nonumber
&\quad
-\langle\p_1\rangle^{\f12}\p_1^a (N_f \cdot\ud{\nabla p})
-\ud{\p_1 p} \cdot \langle\p_1\rangle^{\f12}\p_1^{a+1} f
:=F_{s+\f12}.
\end{align}

A simple application of chain rules yields
\begin{align}\label{G11}
&(\p_t + \ud{Z_+^1} \p_1)\wu^-
%=\ud{(\p_t + Z_+^1\p_1)w^-}
%+\ud{\p_2 w}^- (\p_t + \ud{Z_+^1} \p_1)f
=\ud{\p_2 w}^- (\p_t + \ud{Z_+^1} \p_1)f
,\\\nonumber
&(\p_t + \ud{Z_-^1}\p_1)\wu^+
%=\ud{(\p_t + Z_-^1\p_1)w^+}+\ud{\p_2 w}^+ (\p_t + \ud{Z_-^1} \p_1)f
=\ud{\p_2 w}^+ (\p_t + \ud{Z_-^1} \p_1)f.
%\label{G12}
\end{align}
Multiplying \eqref{G8} by  $\langle \ud{w}^-\rangle^{2\mu}$,  by \eqref{G11}, one has
\begin{align*}
&(\p_t + \ud{Z_+^1}\p_1)
\big[\langle \ud{w}^-\rangle^{2\mu}(\p_t + \ud{Z_-^1}\p_1) \langle\p_1\rangle^{\f12}\p_1^a f \big] \\[-4mm]\nonumber\\\nonumber
&=\langle \ud{w}^-\rangle^{2\mu} F_{s+\f12}
 +2\mu \langle\wu^-\rangle^{2\mu-2}\wu^-
\ud{\p_2 w}^- (\p_t + \ud{Z_+^1} \p_1)f \cdot (\p_t + \ud{Z_-^1} \p_1)\langle\p_1\rangle^{\f12}\p_1^a f .
\end{align*}
Furthermore, multiplying the above equation by $e^{\qu^-} \langle\wu^-\rangle^{2\mu}(\p_t + \ud{Z_-^1}\p_1) \langle\p_1\rangle^{\f12}\p_1^a f $, we obtain
\begin{align}\label{G9}
&\f12 e^{\qu^-} (\p_t + \ud{Z_+^1}\p_1)
\big[\langle \ud{w}^-\rangle^{2\mu} (\p_t + \ud{Z_-^1}\p_1) \langle\p_1\rangle^{\f12}\p_1^a f \big]^2\\\nonumber
&=e^{\qu^-} \langle\wu^-\rangle^{2\mu} F_{s+\f12}\cdot \big[\langle \ud{w}^-\rangle^{2\mu} (\p_t + \ud{Z_-^1}\p_1)\langle\p_1\rangle^{\f12}\p_1^a f \big] \\[-4mm]\nonumber\\\nonumber
&\quad +e^{\qu^-} 2\mu \langle\wu^-\rangle^{2\mu-2}\wu^-
\ud{\p_2 w}^- (\p_t + \ud{Z_+^1} \p_1)f \cdot \langle\wu^-\rangle^{2\mu}
\big[(\p_t + \ud{Z_-^1} \p_1)\langle\p_1\rangle^{\f12}\p_1^a f\big]^2 .
\end{align}
Then taking integral of \eqref{G9} in the $x_1$ variable over $\BR$,
 by integration by parts, we have
\begin{align}\label{G10}
&\frac12\frac{\d}{\dt}\int_{\mathbb{R}} e^{\qu^-}
\big[\langle\wu^-\rangle^{2\mu} (\p_t + \ud{Z_-^1}\p_1)\langle\p_1\rangle^{\f12}\p_1^a f \big]^2 \dx_1\\ \nonumber
&-\frac12\int_{\mathbb{R}}
\big[\langle\wu^-\rangle^{2\mu} (\p_t + \ud{Z_-^1}\p_1)\langle\p_1\rangle^{\f12}\p_1^a f \big]^2
(\p_t + \ud{Z_+^1}\p_1)e^{\qu^-}  \dx_1\\ \nonumber
&=\frac12\int_{\mathbb{R}} e^{\qu^-} \p_1\ud{Z_+^1}
\big[\langle\wu^-\rangle^{2\mu} (\p_t + \ud{Z_-^1}\p_1)\langle\p_1\rangle^{\f12}\p_1^af \big]^2 \dx_1\\ \nonumber
&\quad+\int_{\mathbb{R}}
e^{\qu^-} 2\mu \langle\wu^-\rangle^{2\mu-2}\wu^-
\ud{\p_2 w}^- (\p_t + \ud{Z_+^1} \p_1)f \cdot \langle\wu^-\rangle^{2\mu}
\big[(\p_t + \ud{Z_-^1} \p_1)\langle\p_1\rangle^{\f12}\p_1^a f\big]^2
\dx_1\\\nonumber
&\quad+\int_{\mathbb{R}} e^{\qu^-}\langle\wu^-\rangle^{2\mu}
F_{s+\f12} \cdot \big[\langle\wu^-\rangle^{2\mu} (\p_t + \ud{Z_-^1}\p_1) \langle\p_1\rangle^{\f12}\p_1^a f \big]\dx_1.
\end{align}
The second line of \eqref{G10} corresponds to the ghost weight energy.
By Lemma \ref{lemC3} and the bootstrap assumption $|\Lambda_\pm|\leq C\epsilon$,
we compute
\begin{align*}
&-(\p_t + \ud{Z_+^1}\p_1) e^{\qu^-}=
\f{e^{\qu^-}}{\langle \wu^+ \rangle^{2\mu}}
 (\p_t + \ud{Z_+^1}\p_1) \wu^+\geq
 \f 32 \f{e^{\qu^-}}{\langle \wu^+ \rangle^{2\mu}}\,.
\end{align*}
Consequently, the second line of \eqref{G10} has the following lower bound
\begin{align*}
&-\f12\int_{\mathbb{R}}
\big[\langle\wu^-\rangle^{2\mu} (\p_t + \ud{Z_-^1}\p_1)\langle\p_1\rangle^{\f12}\p_1^a f \big]^2
(\p_t + \ud{Z_+^1}\p_1) e^{\qu^-}  \dx_1\\ \nonumber
&\geq \f34
\int_{\mathbb{R}}
\f{\big[\langle\wu^-\rangle^{2\mu} (\p_t + \ud{Z_-^1}\p_1)\langle\p_1\rangle^{\f12}\p_1^a f \big]^2}{\langle\wu^+\rangle^{2\mu} }  e^{\qu^-}  \dx_1.
\end{align*}
For the first two terms on the right hand side of \eqref{G10},
by the weighted Sobolev inequalities Lemma \ref{Sobo2}, the weighted chain rules Lemma \ref{lemG3}
and the weighted trace Lemma \ref{lemD2},
they can be bounded as follows:
\begin{align*}
&\frac12\int_{\mathbb{R}} e^{\qu^-} \p_1\ud{Z_+^1}
\big[\langle\wu^-\rangle^{2\mu} (\p_t + \ud{Z_-^1}\p_1)\langle\p_1\rangle^{\f12}\p_1^af \big]^2 \dx_1\\ \nonumber
&-\int_{\mathbb{R}}
e^{\qu^-} 2\mu \langle\wu^-\rangle^{2\mu-2}\wu^-
\ud{\p_2 w}^- (\p_t + \ud{Z_+^1} \p_1)f \cdot \langle\wu^-\rangle^{2\mu}
\big[(\p_t + \ud{Z_-^1} \p_1)\langle\p_1\rangle^{\f12}\p_1^a f\big]^2
\dx_1\\\nonumber
&\leq \f12\int_{\BR} e^{\qu^-} \langle\wu^+\rangle^{2\mu} |\p_1\ud{\Lambda_+^1}|
\f{\big[\langle\wu^-\rangle^{2\mu} (\p_t + \ud{Z_-^1}\p_1)\langle\p_1\rangle^{\f12}\p_1^af \big]^2}{\langle\wu^+\rangle^{2\mu}} \dx_1\\ \nonumber
&\quad +\int_{\BR}
e^{\qu^-} 2\mu \langle\wu^+\rangle^{2\mu}
 |(\p_t + \ud{Z_+^1} \p_1)f| \cdot
\f{\big[\langle\wu^-\rangle^{2\mu} (\p_t + \ud{Z_-^1} \p_1)\langle\p_1\rangle^{\f12}\p_1^a f\big]^2}{\langle\wu^+\rangle^{2\mu}}
\dx_1\\\nonumber
&\lesssim E^{\f12}_s G_s.
\end{align*}

%\newpage
For the last line of \eqref{G10}, it has the following direct bound
%which involves the commutators and the pressure term:
\begin{align}\label{G20}
&\int_{\BR} e^{\qu^-} \langle\wu^-\rangle^{2\mu}F_{s+\f12}
\cdot \big[\langle\wu^-\rangle^{2\mu}(\p_t + \ud{Z_-^1} \p_1)\langle\p_1\rangle^{\f12}\p_1^af \big]\dx_1\\\nonumber
&\lesssim \| \langle\wu^+\rangle^{\mu} \langle\wu^-\rangle^{2\mu}F_{s+\f12}\|_{L^2(\BR)}
G_s^{\f12} .
\end{align}
By the definition of $F_{s+\f12}$ in \eqref{G8},
the estimate of \eqref{G20} reduces to
\begin{align}\label{G21}
&\| \langle\wu^+\rangle^{\mu} \langle\wu^-\rangle^{2\mu}F_{s+\f12}\|_{L^2(\BR)}
\\[-4mm]\nonumber\\\nonumber
&\leq  \| \langle\wu^+\rangle^{\mu} \langle\wu^-\rangle^{2\mu} \big[\langle\p_1\rangle^{\f12}\p_1^a,\ud{\Lambda^1_+}\ \big] \p_1(\p_t-\p_1) f\|_{L^2(\BR)} \\[-4mm]\nonumber\\\nonumber
&\quad+  \| \langle\wu^+\rangle^{\mu} \langle\wu^-\rangle^{2\mu} \big[\langle\p_1\rangle^{\f12}\p_1^a,\ud{\Lambda^1_-}\ \big] \p_1(\p_t+\p_1) f\|_{L^2(\BR)} \\[-4mm]\nonumber\\\nonumber
&\quad+  \| \langle\wu^+\rangle^{\mu} \langle\wu^-\rangle^{2\mu} \big[\langle\p_1\rangle^{\f12}\p_1^a, \ud{\Lambda_-^1} \ud{\Lambda_+^1}\ \big] \p_1^2 f\|_{L^2(\BR)} \\[-4mm]\nonumber\\\nonumber
&\quad+  \| \langle\wu^+\rangle^{\mu} \langle\wu^-\rangle^{2\mu}
\big(  \langle\p_1\rangle^{\f12}\p_1^a ( N_f \cdot\ud{\nabla p} )
+ \langle\p_1\rangle^{\f12}\p_1^{a+1} f \ud{\p_1 p} \big) \|_{L^2(\BR)} .
\end{align}

\textbf{Estimate of the second and the third line of \eqref{G21}:}

For the second and the third line of \eqref{G21},
%involving commutators.
they have the same structure.
Hence we only present the estimate for the second line of \eqref{G21}.
The third line can be estimated similarly.

For $|a|\leq s-1$, if $|a|=0$, by the weighted commutator estimate Lemma \ref{lemD8},
the weighted Sobolev inequalities Lemma \ref{Sobo2},
the weighted chain rule Lemma \ref{lemG3} and
the weighted trace Lemma \ref{lemD2}, we calculate
\begin{align*}
&\big\| \langle\wu^+\rangle^{\mu} \langle\wu^-\rangle^{2\mu}
\big[ \langle\p_1\rangle^{\f12},\ud{\Lambda^1_+}\ \big]  \p_1(\p_t-\p_1) f
\big\|_{L^2(\BR)} \\[-5mm]\nonumber\\\nonumber
&\lesssim \big\| \langle\wu^+\rangle^{2\mu} \langle \p_1\rangle^{\f12} \ud{\Lambda^1_+}
\big\|_{L^\infty(\BR)}
\Big\|\f{\langle\wu^-\rangle^{2\mu}}{\langle\wu^+\rangle^{\mu}}
 \p_1(\p_t-\p_1) f \Big\|_{L^2(\BR)}\\\nonumber
&\lesssim E_2^{\f12} G_2^{\f12} .
\end{align*}
If $a=1$, by the weighted commutator estimate Lemma \ref{lemD10},
the weighted Sobolev inequalities Lemma \ref{Sobo2},
the weighted chain rules Lemma \ref{lemG3} and the weighted trace Lemma \ref{lemD3}, we deduce that
\begin{align*}
&\| \langle \wu^+\rangle^{\mu} \langle \wu^-\rangle^{2\mu} \big[\langle\p_1\rangle^{\f12}\p_1,\ud{\La_{+}^1 } \big] \p_1(\p_t-\p_1) f\|_{L^2(\BR)} \\
&\lesssim \big\| \langle \wu^+\rangle^{2\mu} \langle \p_1\rangle^{\f12}\p_1\ud{\La_{+}^1 }
\big\|_{L^2(\BR)}
\Big\|\f{\langle \wu^-\rangle^{2\mu}}{\langle \wu^+\rangle^{\mu}}
 \p_1 (\p_t-\p_1) f \Big\|_{L^\infty(\BR)}  \\\nonumber
&\quad+ \big\| \langle \wu^+\rangle^{2\mu} \langle \p_1\rangle^{\f12}\ud{\La_{+}^1}
\big\|_{L^\infty(\BR)}
\Big\|\f{\langle \wu^-\rangle^{2\mu}}{\langle \wu^+\rangle^{\mu}}
 \p_1^2 (\p_t-\p_1) f \Big\|_{L^2(\BR)}  \\\nonumber
%&\lesssim \big( \| \langle \wu^+\rangle^{2\mu}\p^{\leq 2}  \La_{+}^1\|_{L^2(\Om_f)}
% + \| \langle \wu^+\rangle^{2\mu} \p^{\leq 2}  \hat{\La}_{+}^1\|_{L^2(\hat{\Om}_f)} \big)  G_3^{\f12} \\\nonumber
&\lesssim E_2^{\f12} G_3^{\f12} .
\end{align*}
If $2\leq a\leq s-1$, by weighted commutator estimate Lemma \ref{lemD11}, weighted chain rule Lemma \ref{lemG3} and the weighted trace Lemma \ref{lemD3}, there holds
\begin{align*}
&\| \langle \wu^+\rangle^{\mu} \langle \wu^-\rangle^{2\mu} \big[\langle\p_1\rangle^{\f12}\p_1^a,\ud{\La_{+}^1 } \big] \p_1(\p_t-\p_1) f\|_{L^2(\BR)} \\
&\lesssim \big\| \langle \wu^+\rangle^{2\mu} \langle \p_1\rangle^{\f12}
 \p_1^{\leq a} \ud{\La_{+}^1 }
\big\|_{L^2(\BR)}
\Big\|\f{\langle \wu^-\rangle^{2\mu}}{\langle \wu^+\rangle^{\mu}} \langle \p_1\rangle^{\f12}
 \p_1^{\leq a}(\p_t-\p_1) f \Big\|_{L^2(\BR)}  \\\nonumber
&\lesssim  \| \langle w^+\rangle^{2\mu}\p^{\leq s}  \La_{+}^1\|_{L^2(\Om_f)}
 \mathcal{F}(1+\|\p_1 f\|_{H^{s-\f32}})
   G_s^{\f12} \\\nonumber
&\lesssim E_s^{\f12} G_s^{\f12} \mathcal{F}(1+\|\p_1 f\|_{H^{s-\f32}}).
\end{align*}

\textbf{Estimate of the fourth line of \eqref{G21}:}
\begin{align*}
& \| \langle\wu^+\rangle^{\mu} \langle\wu^-\rangle^{2\mu} \big[\langle\p_1\rangle^{\f12}\p_1^a, \ud{\Lambda_-^1} \ud{\Lambda_+^1}\ \big] \p_1^2 f\|_{L^2(\BR)} .
\end{align*}
Similar to the estimate of the second line of \eqref{G21}, by employing the weighted Sobolev inequalities Lemma \ref{Sobo2},
the weighted commutator estimate Lemma \ref{lemD8}, Lemma \ref{lemD10}, Lemma \ref{lemD11},
the weighted chain rules Lemma \ref{lemG3} and the weighted trace Lemma \ref{lemD3},
we infer
\begin{align*}
&\| \langle \wu^+\rangle^{\mu} \langle \wu^-\rangle^{2\mu} \big[\langle\p_1\rangle^{\f12}\p_1^a, \ud{\La_{-}^1} \ud{\La_{+}^1}\ \big] \p_1^2 f\|_{L^2(\BR)} \\
&\lesssim \big\| \langle \wu^+\rangle^{\mu} \langle \wu^-\rangle^{2\mu} \langle \p_1\rangle^{\f12}
 \p_1^{\leq s-1} \big( \ud{\La_{-}^1} \ud{\La_{+}^1} \big)
\big\|_{L^2(\BR)}
\|\langle \p_1\rangle^{\f12} \p_1^{\leq s} f \|_{L^2(\BR)}\\\nonumber
&\lesssim E_s G_s^{\f12} \mathcal{P}(1+\| f'\|_{H^{s-\f32}}).
\end{align*}

\textbf{Estimate of the fifth line of \eqref{G21} concerning pressure term:}
%The remaining terms involving pressure is
\begin{align*}
& \| \langle\wu^+\rangle^{\mu} \langle\wu^-\rangle^{2\mu}
\big(  \langle\p_1\rangle^{\f12}\p_1^a ( N_f \cdot\ud{\nabla p} )
+ \langle\p_1\rangle^{\f12}\p_1^a\p_1 f \ud{\p_1 p} \big) \|_{L^2(\BR)} .
\end{align*}
%The last two
%terms concerning pressure can be written as follows:
Note that we have the rearrangement
\begin{align}\label{G30}
& \langle\p_1\rangle^{\f12}\p_1^a \big( N_f \cdot\ud{\nabla p}\big)
+ \langle\p_1\rangle^{\f12}\p_1^{a+1} f \ud{\p_1 p} \\[-5mm]\nonumber\\\nonumber
&= \langle\p_1\rangle^{\f12}\p_1^a ( -\p_1f\ud{\p_1 p}+\ud{\p_2 p})
+ \langle\p_1\rangle^{\f12}\p_1^{a+1} f \ud{\p_1 p} \\[-5mm]\nonumber\\\nonumber
&=-\big[\langle\p_1\rangle^{\f12}\p_1^a,\ud{\p_1 p}\ \big] \p_1 f
+ \langle\p_1\rangle^{\f12}\p_1^a \ud{\p_2 p}.
\end{align}
We first estimate the last term of \eqref{G30}.
By the weighted chain rules of Lemma \ref{lemG3}
and the weighted trace Lemma \ref{lemD3}, we obtain
\begin{align}\label{G31}
&\big\|\langle\wu^+\rangle^{\mu} \langle\wu^-\rangle^{2\mu}
 \langle\p_1\rangle^{\f12}\p_1^a  \ud{\p_2 p} \big\|_{L^2(\BR)} \\\nonumber
&\lesssim \sum_{|\alpha|\leq a}\big\| \langle\wu^+\rangle^{\mu} \langle\wu^-\rangle^{2\mu} \langle\p_1\rangle^{\f12}  \ud{\p^\alpha \p_2 p}\big\|_{L^2(\BR)} \\\nonumber
&\lesssim \sum_{|\alpha|\leq a+1}\big\| \langle w^+\rangle^{\mu} \langle w^-\rangle^{2\mu}   \p^\alpha\p_2 p \big\|_{L^2(\Om_f)}.
\end{align}
By the weighted  estimate of pressure in Lemma \ref{lemPre}, it is further bounded by
$E_s^{\f12} G_s^{\f12}$.

Next, we estimate the first term on the right hand side of \eqref{G30}.
By the Leibniz rule, we write
\begin{align*}
\langle\p_1\rangle^{\f12}\p_1^a ( \p_1f\ud{\p_1 p} )
=\langle\p_1\rangle^{\f12} \sum_{b+c=a}
C_a^b \p_1^{b+1}f \p_1^c \ud{\p_1 p} .
\end{align*}
Consequently, by Lemma \ref{lemD8} %Lemma \ref{lemD10}
 and Lemma \ref{Sobo2}, we obtain
\begin{align}\label{G33}
& \| \langle\wu^+\rangle^{\mu} \langle\wu^-\rangle^{2\mu}
\big[\langle\p_1\rangle^{\f12}\p_1^a,\ud{\p_1 p}\ \big] \p_1 f \|_{L^2(\BR)} \\[-4mm]\nonumber\\\nonumber
&\lesssim  \sum_{b+c=a} \| \langle\wu^+\rangle^{\mu} \langle\wu^-\rangle^{2\mu}
\langle\p_1\rangle^{\f12} \big( \p_1^{b+1}f \p_1^c \ud{\p_1 p}\big)  \|_{L^2(\BR)} \\\nonumber
&\quad+ \| \langle\wu^+\rangle^{\mu} \langle\wu^-\rangle^{2\mu}
\langle\p_1\rangle^{\f12}\p_1^{a+1}f   \ud{\p_1 p}  \|_{L^2(\BR)} \\\nonumber
&\lesssim
(\|f\|_{\dot{H}^1(\BR)}+\|
  f\|_{\dot{H}^{s+\f12}(\BR)}) \cdot
\sum_{c\leq s-1}\| \langle\wu^+\rangle^{\mu} \langle\wu^-\rangle^{2\mu}
\langle\p_1\rangle^{\f12}\p_1^c  \ud{\p_1 p}  \|_{L^2(\BR)}.
\end{align}
Then similar to the estimate of
$\big\|\langle\wu^+\rangle^{\mu} \langle\wu^-\rangle^{2\mu}
 \langle\p_1\rangle^{\f12}\p_1^a  \ud{\p_2 p} \big\|_{L^2(\BR)}$ in \eqref{G31},
\eqref{G33}  can be further bounded by
$E_s  G_s^{\f12}$.

In summary, combining all the above estimates, \eqref{G10} yields
\begin{align*}
&\frac12\frac{\d}{\dt}\int_{\mathbb{R}} e^{\qu^-}
\big[\langle\wu^-\rangle^{2\mu} (\p_t + \ud{Z_-^1}\p_1)\langle\p_1\rangle^{\f12}\p_1^a f \big]^2 \dx_1\\
&+\f34
\int_{\mathbb{R}}
\f{\big[\langle\wu^-\rangle^{2\mu} (\p_t + \ud{Z_-^1}\p_1)\langle\p_1\rangle^{\f12}\p_1^a f \big]^2}{\langle\wu^+\rangle^{2\mu} }  e^{\qu^-}  \dx_1\\
& \lesssim (E_s^{\f12} + E_s ) G_s.
\end{align*}
%Taking the sum for all $|a|\leq s-1$. Note similar estimates also holds for the right Alfv\'{e}n waves
%and $e^{\qu^\pm} \sim 1$, we obtain
%%\begin{align*}
%%\frac{d}{dt} E_s^{f}+G_s^f \lesssim (E_s^{\f12} + E_s )  G_s.
%%\end{align*}
Summing over all $|a|\leq s-1$ and taking the integral over $[0,t]$,
and noting that the same estimate also holds for the right Alfv\'{e}n waves and $e^{q^\pm}\sim 1$, we obtain
\begin{align*}
E^f_s(t)+\int_0^t G^f_s(\tau)\d\tau \lesssim E^f_s(0)+\int_0^t E_s^{\f12}  G_s (\tau)\d\tau.
\end{align*}
This yields %the \emph{a priori} estimate
\eqref{AA2}.

\begin{rem}\label{rem-hw}
%If we use the energy in the form \eqref{G1}. % instead of \eqref{G2}.
%Then we would derive that....
If we treat the energy estimate using \eqref{B13} %which we write it
 in the following form:
\begin{align}\label{GG1}
\p_t^2 f-\p_1^2 f
%=&- (\ud{\Lambda_+^1} +\ud{\Lambda_-^1})
%\p_t\p_1 f
%-\ud{\Lambda_-^1}\p^2_1 f + \ud{\Lambda_+^1}\p^2_1 f \nonumber\\
%&- \ud{\Lambda_-^1}  \ud{\Lambda_+^1}  \p_1^2 f
%- N_f \cdot\ud{\nabla p} \nonumber\\
=- \ud{\Lambda_+^1}\p_1(\p_t-\p_1)f
  - \ud{\Lambda_-^1}\p_1(\p_t+\p_1)f
-\ud{\Lambda_-^1}  \ud{\Lambda_+^1} \p_1^2 f
- N_f \cdot\ud{\nabla p}\, .
\end{align}
Then it is natural to use the following weighted energy
\begin{align*}
E^f_{s+\f12} (t)= \sum_{+,-} \sum_{|a|\leq s-1} \big\| \langle v^\pm \rangle^{2\mu}(\p_t\pm\p_1)\langle\p_1\rangle^{\f12}\p_1^a f \big\|_{L^2(\bR)}^2,
\end{align*}
where $v^\pm=x_1\pm t$.
Let us formally treat the energy estimate using the trick in \cite[Section 6.2]{CL-24}.
 Let $e^{q^+}=e^{q(v^-)}$ and $e^{q^-}=e^{q(-v^+)}$.
% without weights $\wu^\pm$ and ghost weights $e^{\qu^\pm}$.
%and without ghost weight.
Apply the derivative $\langle\p_1\rangle^{s-\f12}$ to \eqref{GG1}, then
taking the $L^2$ inner product of the resulting equation with
%$(\p_t-\p_1)\langle\p_1\rangle^{s-\f12} f$,
%(the $L^2$ inner product $(\p_t-\p_1)f$ is similar),
$2\langle v^-\rangle^{4\mu}(\p_t-\p_1)\langle \p_1\rangle^{\f12}\p_1^af\cdot e^{q^-}$,
%and  $2\langle v^+\rangle^{4\mu}(\p_t+\p_1)\langle \p_1\rangle^{\f12}\p_1^af \cdot e^{q^+}$), respectively,
we obtain
\begin{align}\label{GG2}
& \f{\d}{\dt}
\int_{\BR}\big| \langle v^-\rangle^{2\mu}(\p_t-\p_1) \langle \p_1\rangle^{s-\f12}  f
\big|^2 e^{q^-}\d x_1
+2\int_{\BR}\f{\big| \langle v^-\rangle^{2\mu}(\p_t-\p_1) \langle \p_1\rangle^{s-\f12} f
\big|^2}{\langle v^+\rangle^{2\mu}} e^{q^-}\d x_1\\\nonumber
&= \f14 \f{\d}{\dt}\int_{\BR}
|\p_1 \langle \p_1\rangle^{s-\f12} f|^2
e^{q^-}\cdot\langle v^-\rangle^{4\mu} \ud{\La^1_{+}} \ud{\La^1_{-}}\d x_1\\\nonumber
&\quad-\f14\int_{\BR} \big| \p_1 \langle \p_1\rangle^{s-\f12} f  \big|^2 e^{q^-}\cdot (\p_t-  \p_1)
\big(\langle v^-\rangle^{4\mu} \ud{\La_{+}^1} \ud{\La_{-}^1} \big)
  \d x_1\\\nonumber
&\quad+....
\end{align}
To deal with the second line and the third line of \eqref{GG2},
we need to estimate
%(or slightly higher $L^2$ based Sobolev norms with bigger power weight functions.)
\begin{align*}
\sum_{+,-}\sum_{|a|\leq 1}
\| \langle v^\pm\rangle^{5\mu} \p_1^a \ud{\La_{\pm}^1} \|_{L^\infty}.
\end{align*}
Thus we need to apply the weights $\langle v^\pm\rangle^{5\mu}$ on $\La_{\pm}$
rather than $\langle v^\pm\rangle^{2\mu}$.
We refer to \cite{CL-24} for more details concerning this approach.
\end{rem}

\section{Local solutions}
In this last section, we show the existence of local solutions of the plasma-vacuum free boundary problem
for the two dimensional ideal incompressible MHD under strong magnetic field background in the plasma region.
The  local solution  to  the problem  will be obtained by the method of successive approximations.
Precisely, we show that a sequence of solutions
$\{\Lambda_+^{(n)},\Lambda_-^{(n)}, f^{(n)}, p^{(n)}\}_{n=0}^{\infty}$
with initial value
 $\{ \Lambda_+^{(n)},\Lambda_-^{(n)}, f^{(n)}\}_{n=0}
=\{ \Lambda_+^{0},\Lambda_-^{0}, f^{0}\}$
%$p^{(0)}$ will be constructed by \eqref{R12}.
will finally converge to the solutions of the plasma-vacuum free boundary problem.

\subsection{Approximate solutions}

Recalling that the equation for the free surface \eqref{B13} can be written as follows:
\begin{align} \label{R0}
(\p_t^2-\p_1^2) f
+ (\ud{\Lambda_+^1} +\ud{\Lambda_-^1})
\p_t\p_1 f
+\big(\ud{\Lambda_-^1}  - \ud{\Lambda_+^1}
+ \ud{\Lambda_-^1}  \ud{\Lambda_+^1}  \big)\p_1^2 f
=- N_f \cdot\ud{\nabla p} ,
\end{align}
and recalling  the equations for the incompressible MHD \eqref{MHD1}-\eqref{A19} in the fixed domain $\Om_L$ as follows (see Section \ref{Sec-Lag}):
\begin{equation} \label{R1}
\begin{cases}
\p_t \wt{\Lambda}_\pm
+(\wt{\Lambda}_\mp  \mp e_1 )\cdot (A^\top\nabla) \wt{\Lambda}_\pm  +A^\top \nabla \wt{p}
-(0,\p_t\psi/J)\cdot \nabla \wt{\Lambda}_\pm =0 \quad \mathrm{in}\,\, \Om_L, \\[-4mm]\\
\div^{\Psi} \wt{\Lambda}_\pm=0\quad \mathrm{in}\,\,  \Om_L, \\[-5mm]\\
   \wt{p}=0\quad \mathrm{on}\,\,  \Gamma, \\[-5mm]\\
\wt{\Lambda}_+\cdot e_2=0,\,\, \wt{\Lambda}_-\cdot e_2=0
\quad \text{on}\,\, \Gamma^- ,\\[-5mm]\\
\wt{\Lambda}_\pm(0,x)= \wt{\Lambda}_\pm^0(x)=\Lambda_\pm( \Psi_{f^0}(x) ).
\end{cases}
\end{equation}
Moreover, by \eqref{com}, the initial data satisfy
\begin{equation} \label{R-com}
\left\{\begin{aligned}
&\div^{\Psi} \wt{\Lambda}_+^{0} = 0, \,\, \div^{\Psi} \wt{\Lambda}_-^{0} = 0\,\, \text{ in } \Om_L,\,\,  \\
& v^0=(\wt{\Lambda}_\pm^{0} \pm e_1)\cdot N_{f^0} \,\, \text{ on } \Gamma,\\
&  %u_{20},\, h_{20} = 0
\wt{\Lambda}_+^{0}\cdot e_2=0,\,\, \wt{\Lambda}_-^{0}\cdot e_2=0 \,\,\text{ on }\Gamma^- .
\end{aligned}\right.
\end{equation}
where $N_{f^0}=(-\p_1 f^0,1)$.

Now we construct approximate solutions $\{\Lambda_+^{(n)},\Lambda_-^{(n)}, f^{(n)}, p^{(n)}\}_{n=0}^{\infty}$.
Denote
\begin{align*}
{Z}_\pm^{(n)}
&={\Lambda}_\pm^{(n)}\pm e_1\quad \mathrm{in}\ \Om_{f^{(n)}},\\
\ud{\Lambda_+^1}^{(n)}
&={\Lambda_+^1}^{(n)}\big|_{\Gamma_{f^{(n)}}},\\
N_{f^{(n)}}&=(-\p_1f^{(n)},1),\\
{p}^{(n+1)}:&\,\, [0,T]\times\Om_{f^{(n)}}\rightarrow \BR.
\end{align*}
where
\begin{align*}
\Om_{f^{(n)}}:&=\left\{ x=(x_1,x_2) | -1<x_2 < f^{(n)}(t,x_1),\, x_1\in\BR \right\},\\
\Gamma_{f^{(n)}}:&=\left\{ x=(x_1,x_2) | x_2 = f^{(n)}(t,x_1),\, x_1\in\BR \right\}.
\end{align*}
Note that the domains of ${Z}_\pm^{(n)},\, {\Lambda}_\pm^{(n)},\, {p}^{(n)}$ are different
for varying $n$.
To treat the problem in the fixed domain $\Om_L$, we construct a series of maps $\Psi_{f^{(n)}}$ as follows:
% We make the following change of variables $\Om_L \mapsto\Om_f$ to flatten the boundary of  $\Omega_f$ by a regularized mapping \cite{Lannes, ABZ, CMST}:
\begin{align*}
%(x_1,x_2)\in \BR\times [-1,0]=\Om_L
%\mapsto
\BR\times [-1,0]=\Om_L \ni (x_1,x_2)
\mapsto
\Psi_{f^{(n)}}(t,x_1,x_2)=(x_1,x_2+\psi^{(n)}(t,x_1,x_2)) \in \Om_{f^{(n)}},
\end{align*}
where
\begin{align*}
%&\BR\times [-1,0]=\Om_L \ni (x_1,x_2)
%\mapsto   [-1, f^{(n)}(t,x_1)],\\
&\psi^{(n)}(t,x_1,x_2)=(1+x_2) \varphi(\rho x_2|\p_1|) f^{(n)}(t,x_1).
\end{align*}
Here $\rho$ is a small parameter, $\varphi \in C_c^\infty(\BR)$ is a smooth cut-off function satisfying:
%($\varphi$ need to be define in $\BR$ since we will
%calculate the Fourier transform,
% and need check this cut-off and the above construction)
\begin{equation*}
\varphi(x_2)=
\begin{cases}
1,\quad \textrm{if}\quad -\f31 \leq x_2 \leq 0,\\
0,\quad \textrm{if}\quad  -1\leq x_2 \leq  -\f23.
\end{cases}
\end{equation*}

%Denote the inverse of $\nabla \Psi_{f^{(n)}}$   by $A_n$:
Later we will show $\|f^{(n)}\|_{L^\infty}\leq 1-\delta$ for all $n\geq 0$.
Thus there holds $\f14\delta\leq J_n\leq 2-\f14\delta$, which ensures that all the maps $\Psi_{f^{(n)}}$ are  diffeomorphisms.

The properties of $\Psi_{f^{(n)}}$ are similar to $\Psi$ in Section \ref{Lag-stra}.
Let us use the following notations:
\begin{align*}
J_n=\det(\nabla \Psi_{f^{(n)}}),\quad
A_n=[\nabla \Psi_{f^{(n)}}]^{-1}=\frac{1}{J_n}
\left(
\begin{matrix}
1+\p_2\psi^{(n)} \quad 0\\
-\p_1\psi^{(n)} \qquad 1
\end{matrix}
\right).
\end{align*}
%It is easy to see that $\textrm{det}(A)=J^{-1}=(1+\p_2\psi)^{-1}$.
%By the series of maps, we
Set
\begin{align*}
&\wt{Z}_\pm^{(n)} (t,x)=Z_\pm^{(n)}  (t,\Psi_{f^{(n)}}(x)),\quad
\wt{\Lambda}_\pm^{(n)} (t,x)=\Lambda_\pm^{(n)} (t,\Psi_{f^{(n)}}(x) ),\quad\\
&\wt{p}^{(n)} (t,x)=p^{(n)} (t,\Psi_{f^{(n-1)}}(x) ).
\end{align*}
Consequently,
\begin{align*}
\ud{\wt{\Lambda}_\pm}^{(n)}:=\wt{\Lambda}_\pm^{(n)} \big|_{\Gamma}
%=\Lambda_\pm^{(n)} (t,\Psi_{f^{(n)}}(x) )\big|_{\Gamma_{f^{(n)}}} .
=\Lambda_\pm^{(n)}\big|_{\Gamma_{f^{(n)}}} .
\end{align*}
%where $\Gamma$ is the upper boundary of $\Om_L$: $\Gamma=\BR\times\{0\}$.
%and the lower boundary by $\Gamma_-=\BR\times\{-1\}$.
Now for all $n\geq 0$,
$\wt{Z}_\pm^{(n)}(t),\, \wt{\Lambda}_\pm^{(n)}(t),\, \wt{p}^{(n)}(t)\,$
have the same domain $\Om_L$.

In order to simplify the presentation,
let us denote
\begin{align*}
\nabla^{\Psi_n}=A_n^\top \nabla.
\end{align*}
For integer $n\geq 0$, we construct the following approximate solutions for the free surface:
%Problem I:
\begin{equation}\label{R10}
\begin{cases}
(\p_t^2-\p_1^2) f^{(n+1)} + ({\ud{\wt{\Lambda}_+^1}^{(n)}}+{\ud{\wt{\Lambda}_-^1}^{(n)}})\p_t\p_1 f^{(n+1)} \\[-4mm]\\
\quad +\big( {\ud{\wt{\Lambda}_-^1}^{(n)}}-{\ud{\wt{\Lambda}_+^1}^{(n)}}+ {\ud{\wt{\Lambda}_-^1}^{(n)}} {\ud{\wt{\Lambda}_+^1}^{(n)}} \big)\p_1^2 f^{(n+1)}\\[-4mm]\\
\quad=-N_{f^{(n)}}\cdot \ud{\nabla p}^{(n+1)},  \\[-4mm]\\
%(\p_t + \ud{Z_+^1}\p_1)(\p_t + \ud{Z_-^1}\p_1)g
%=-\ud{\p_2 p} &\text{in } \mathbb R\\
f^{(n+1)} (0) = f^0,\,\p_t f^{(n+1)} (0)=v^0,
%\p_t f^{(n+1)} (0)= (\Lambda_\pm^0 \pm e_1)|_{\Ga_{f^0}}\cdot N_{f^0}
\end{cases}
\text{in } \BR.
\end{equation}
Moreover, we construct the following approximate solutions for the plasma in the fixed domain $\Om_L$:
%In the fixed domain $\Om_L$, the equations for the incompressible MHD reads as follows:
\begin{equation}\label{R11}
\begin{cases}
\p_t \wt{\Lambda}_\pm^{(n+1)}
+(\wt{\Lambda}_\mp^{(n)}  \mp e_1 )\cdot (A_n^\top\nabla) \wt{\Lambda}_\pm^{(n+1)}  \\[-4mm]\\
\quad+A^\top_n \nabla \wt{p}^{(n+1)}
-(0,\p_t\psi^{(n)}/J_n)\cdot \nabla \wt{\Lambda}_\pm ^{(n+1)} =0 \quad \mathrm{in}\,\, \Om_L ,\\[-4mm]\\
\div^{\Psi_n} \wt{\Lambda}_\pm^{(n+1)}=0,\quad \mathrm{in}\,\, \Om_L, \\[-4mm]\\
\wt{p}^{(n+1)}=0\,\,\mathrm{on}\,\,  \Gamma, \\[-4mm]\\
\wt{\Lambda}_+^{(n+1)} \cdot e_2=0,\,\, \wt{\Lambda}_-^{(n+1)} \cdot e_2=0
\quad \text{on}\,\, \Gamma^- .
\end{cases}
\end{equation}
By applying $\div^{\Psi_n}$  to the first equation of \eqref{R11}, we obtain the elliptic equation for the pressure as follows:
\begin{equation}\label{R12}
\begin{cases}
 -\nabla^{\Psi_n}\cdot \nabla^{\Psi_n} \wt{p}^{(n+1)}
=\p^{\Psi_n}_j \wt{\Lambda}^{i\,(n)}_- \p^{\Psi_n}_i \wt{\Lambda}^{j\,(n+1)}_+  \quad  {\rm in} \; \Omega_L,\\
\wt{p}^{(n+1)} \big|_{\Gamma}=0,\quad  \p_2 \wt{p}^{(n+1)} \big|_{\Gamma_-}=0.
\end{cases}
\end{equation}

\subsection{Estimate for the approximate equations}
For integer $s\geq 0,$ denote
\begin{align*}
\mathcal{E}_s^b (\wt{\Lambda}_+,\wt{\Lambda}_-)
&= \|\wt{\Lambda}_+\|^2_{H^s(\Om_L)}+\|\wt{\Lambda}_-\|^2_{H^s(\Om_L)}, \\
\mathcal{E}_s^f (\p_tf,f)
&= \|\p_t f\|^2_{H^{s-\f12}(\BR)}+\|\p_1 f\|^2_{H^{s-\f12}(\BR)},\\
\mathcal{E}_s (\wt{\Lambda}_+,\wt{\Lambda}_-, f)
&= \mathcal{E}_s^b (\wt{\Lambda}_+,\wt{\Lambda}_-)
  +\mathcal{E}_s^f (\p_tf,f),\\
\mathcal{E}_s^{(n+1)}(t)
&=\mathcal{E}_s (\wt{\Lambda}_+^{(n)},\wt{\Lambda}_-^{(n)}, f^{(n)}),\\
%\end{align*}
%And denote
%\begin{align*}
A_1&=\mathcal{E}_s^b (\wt{\Lambda}_+^0,\wt{\Lambda}_-^0)+\mathcal{E}_s^f (v^0, f^0)
.
\end{align*}

\begin{lem}\label{}
Let $s\geq 4$, $A_1>0, 0<\delta<\f12$.
Assume that
%\begin{align*}
%&\mathcal{E}_s^b (\wt{\Lambda}_+^0,\wt{\Lambda}_-^0)+\mathcal{E}_s^f (v^0, f^0)=A_1 ,\\
%&\quad \| f^0\|_{L^\infty} \leq 1-2\delta.
%\end{align*}
\begin{align*}
\mathcal{E}_s^b (\wt{\Lambda}_+^0,\wt{\Lambda}_-^0)+\mathcal{E}_s^f (v^0, f^0)\leq A_1 ,\,\,
 \| f^0\|_{L^\infty} \leq 1-2\delta.
\end{align*}
There exists a time $T>0$, depending on $A_1$ and $\delta$, such that the systems \eqref{R10}, \eqref{R11}, \eqref{R12} admit unique solutions
$(f^{(n+1)},\p_tf^{(n+1)})\in C\big([0,T];H^{s+\f12}\times H^{s-\f12}(\bR)\big)$,
$\wt{\Lambda}_\pm^{(n)}\in C\big([0,T];H^s(\Om_L)\big)$,
$\wt{p}^{(n)}\in C\big([0,T];H^{s+1}(\Om_L)\big)$,
 such that
%\begin{align*}%\label{}
%&\sup_{t\in[0,T_*]} E_{s}^w(\omega_\pm^{(n)},\hat{\omega}_\pm^{(n)})
%+\sup_{t\in[0,T_*]}E_{s+\f12}^f(\p_tf^{(n)},f^{(n)}) \leq A, \\
%&\qquad \sup_{t\in[0,T_*]}\| f^{(n)}(t) \|_{L^\infty}\leq 1-\delta.
%\end{align*}
\begin{equation}\label{R13}
\begin{cases}
\mathcal{E}_s (\wt{\Lambda}_+^{(n)},\wt{\Lambda}_-^{(n)}, f^{(n)})\leq 2A_1,\\
\| \wt{p}^{(n)} \|_{H^{s+1}(\Om_L)}\leq C_1A_1,\\
 \| f^{(n)}(t,\cdot)\|_{L^\infty} \leq 1-\delta,
\end{cases}
 \,\, \textrm{for all}\,\, n\geq 0,
\end{equation}
where $C_1$ is a constant depending on $A_1$ and $\delta$.
\end{lem}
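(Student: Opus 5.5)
We prove the lemma by induction on $n$, with the uniform bounds \eqref{R13} as the induction hypothesis. For $n=0$ the approximations equal the initial data, so \eqref{R13} holds at $t=0$. For $n\geq1$ we assume \eqref{R13} for the $n$-th iterate on $[0,T]$, and we derive the same bounds for $(f^{(n+1)},\wt{\Lambda}_\pm^{(n+1)},\wt{p}^{(n+1)})$ on $[0,T]$, where $T$ depends only on $A_1,\delta$. The three systems are linear in the new unknowns once the $n$-th iterate is frozen, and they decouple in a natural order. We work without weights and use the unweighted analogues of Lemma \ref{lemTR}, Lemma \ref{lemCo} and the trace/chain rules (Lemmas \ref{lemD1}, \ref{lemG3}). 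The bound $\|f^{(n)}\|_{L^\infty}\leq1-\delta$ together with a small choice of $\rho$ gives $\f14\delta\leq J_n\leq2-\f14\delta$. The bound $\|\p_1 f^{(n)}\|_{H^{s-\f12}}\lesssim A_1^{1/2}$ then controls $\nabla^k\psi^{(n)}$ for $k\leq s+1$, and hence $A_n,\nabla A_n$, by a constant depending on $A_1,\delta$.

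\textbf{Pressure and bulk.} The pressure equation \eqref{R12} involves $\wt{\Lambda}_+^{(n+1)}$, so we solve \eqref{R11}--\eqref{R12} as a coupled linear transport--elliptic system. Existence follows, for instance, by a further Picard iteration or a Galerkin scheme. The elliptic estimate repeats the proof of Lemma \ref{lemPreL} with all weights removed. It uses the Piola identity \eqref{H3}, the boundary conditions \eqref{H2}, the Poincaré inequality \eqref{HH5}, and the tangential/normal splitting \eqref{H46}--\eqref{H70}. It gives
\[\|\wt{p}^{(n+1)}\|_{H^{s+1}}\leq C(A_1,\delta)\,\|\wt{\Lambda}_-^{(n)}\|_{H^s}\|\wt{\Lambda}_+^{(n+1)}\|_{H^s}.\]
Here the divergence-form right-hand side $\nabla^{\Psi_n}\cdot(\wt{\Lambda}_-^{(n)}\cdot\nabla^{\Psi_n}\wt{\Lambda}_+^{(n+1)})$ is the reason one derivative is gained. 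For $\wt{\Lambda}_\pm^{(n+1)}$ we take $\p^\alpha$, $|\alpha|\leq s$, of \eqref{R11} and pair with $J_n\p^\alpha\wt{\Lambda}_\pm^{(n+1)}$. The transport field $(\wt{\Lambda}_\mp^{(n)}\mp e_1)\cdot A_n^\top\nabla-(0,\p_t\psi^{(n)}/J_n)\cdot\nabla$ is tangent to $\Gamma$ by the kinematic relation $\p_tf^{(n)}=\ud{Z_\mp^{(n)}}\cdot N_{f^{(n)}}$ (which must be propagated for the iterates), and it is tangent to $\Gamma_-$ by $\wt{\Lambda}^{(n)}\cdot e_2=0$. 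Hence no boundary terms arise. We obtain
\[\frac{\d}{\dt}\mathcal{E}^b_s\leq C(A_1,\delta)\big(\mathcal{E}^b_s+\|\wt{p}^{(n+1)}\|_{H^{s+1}}^2\big),\]
which closes on a short interval.

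\textbf{Free surface.} Equation \eqref{R10} is a linear strictly hyperbolic $1$D wave equation with coefficients
\[a=\ud{\wt{\Lambda}_+^1}^{(n)}+\ud{\wt{\Lambda}_-^1}^{(n)},\qquad b=-1+\ud{\wt{\Lambda}_-^1}^{(n)}-\ud{\wt{\Lambda}_+^1}^{(n)}+\ud{\wt{\Lambda}_-^1}^{(n)}\ud{\wt{\Lambda}_+^1}^{(n)}.\]
It factorises as $(\p_t+\ud{Z^1_+}\p_1)(\p_t+\ud{Z^1_-}\p_1)$ up to lower-order terms, as in Lemma \ref{lemf}. We apply $\langle\p_1\rangle^{\f12}\p_1^a$, $a\leq s-1$, and perform the energy estimate on $(\p_t+\ud{Z_\pm^1}\p_1)\langle\p_1\rangle^{\f12}\p_1^af^{(n+1)}$. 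The commutators are handled by the unweighted Kato--Ponce estimates (Lemma \ref{lemD6}). The coefficient regularity uses the trace theorem: $\wt{\Lambda}^{(n)}\in H^s(\Om_L)$ gives $\langle\p_1\rangle^{\f12}\p_1^{s-1}\ud{\wt{\Lambda}}^{(n)}\in L^2$. The forcing $N_{f^{(n)}}\cdot\ud{\nabla p}^{(n+1)}$ lies in $H^{s-\f12}(\BR)$ because $\wt{p}^{(n+1)}\in H^{s+1}$. Grönwall then gives $\mathcal{E}^f_s\leq A_1+CT$, and we get
\[\|f^{(n+1)}(t)\|_{L^\infty}\leq\|f^0\|_{L^\infty}+\int_0^t\|\p_tf^{(n+1)}\|_{L^\infty}\leq 1-2\delta+CT\leq 1-\delta\]
for $T$ small. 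Choosing $T=T(A_1,\delta)$ small makes all three bounds in \eqref{R13} hold with $2A_1$ and $C_1A_1$.

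\textbf{Main obstacle.} The hard step is the loss of derivatives at the top order in the coupled pressure/transport estimate. The term $A_n^\top\nabla\wt{p}^{(n+1)}$ requires $\wt{p}^{(n+1)}\in H^{s+1}$, and this in turn depends on $\wt{\Lambda}_+^{(n+1)}$ at order $s$. Moreover the coefficients $A_n$ need $\psi^{(n)}\in H^{s+1}$, i.e.\ $f^{(n)}\in H^{s+\f12}$; this is exactly the regularity the hyperbolic estimate provides, so there is no room to spare. We would first try to absorb the pressure term using the structure $\div^{\Psi_n}\wt{\Lambda}^{(n+1)}_\pm=0$ together with $\wt{p}^{(n+1)}|_\Gamma=0$: integrating by parts removes the top-order pressure contribution from the $\wt{\Lambda}$-energy. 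Failing that, we would fall back on a vorticity-type reformulation as in \cite{SWZ18,SWZ19}. Preserving the divergence-free constraint and the kinematic compatibility \eqref{R-com} along the iteration also needs care. For uniqueness we would take the limit of the contraction in the lower norm $\mathcal{E}_{s-1}$.
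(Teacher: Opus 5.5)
Your proposal follows essentially the same route as the paper: an unweighted version of the bulk estimate of Section~\ref{sec-energy-b} for \eqref{R11} (the paper's \eqref{R14}), the unweighted elliptic estimate of Lemma~\ref{lemPreL} for \eqref{R12} (the paper's \eqref{R18}), an $H^{s+\f12}$-level energy estimate for the linear wave equation \eqref{R10}, an $L^\infty$ bound on $f^{(n+1)}$, and a time $T=T(A_1,\delta)$ uniform in $n$. Your two variations are harmless: the factored energy on $(\p_t+\ud{Z^1_\pm}\p_1)\langle\p_1\rangle^{\f12}\p_1^af^{(n+1)}$ as in Subsection~\ref{sec-energy-f} (instead of the multiplier $\p_t\langle\p_1\rangle^{s-\f12}f^{(n+1)}$), and the bound $\|f^{(n+1)}(t)\|_{L^\infty}\le\|f^0\|_{L^\infty}+\int_0^t\|\p_tf^{(n+1)}\|_{L^\infty}\,\d\tau$ (instead of \eqref{R16}). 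The ``main obstacle'' you describe does not actually arise, because the bound $\|\wt{p}^{(n+1)}\|_{H^{s+1}}\lesssim\|\wt{\Lambda}_-^{(n)}\|_{H^s}\|\wt{\Lambda}_+^{(n+1)}\|_{H^s}$ you already wrote feeds directly into the Gr\"onwall inequality for $\mathcal{E}^b_s$ with no loss of derivatives, so neither an integration-by-parts absorption of the pressure nor a vorticity formulation is needed; the point you flag parenthetically, tangency of the transport field at $\Gamma$ for the iterates, is likewise not verified in the paper, which asserts \eqref{R14} by analogy with Section~\ref{sec-energy-b}.
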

%Let $A_1<\f14$, $\delta<\f12$ be two constants.
% In the sequel, we will show that for some $T>0$, there holds
\begin{proof}
We can perform the energy estimate for \eqref{R11} similar to the energy estimate of the plasma without weights in Section \ref{sec-energy-b}
and derive that
%the energy estimate as follows:
\begin{align}\label{R14}
& \mathcal{E}_s^b (\wt{\Lambda}_+^{(n+1)},\wt{\Lambda}_-^{(n+1)})\\\nonumber
& \leq    \mathcal{E}_s^b (\wt{\Lambda}_+^0,\wt{\Lambda}_-^0)
   +C \int_0^t \mathcal{F}(1+\|f^{(n)}(\tau)\|_{L^\infty}+\|\p_1 f^{(n)}(\tau)\|_{H^{s-\f12}(\BR)})\\\nonumber
&\qquad \cdot   \Big(
   \big(\mathcal{E}_s^b (\wt{\Lambda}_+^{(n+1)}(\tau),\wt{\Lambda}_-^{(n+1)}(\tau))\big)^{\f12}
   \cdot \| \nabla \wt{p}^{(n+1)} (\tau)\|_{H^s(\Om_L)} \\[-4mm]\nonumber\\\nonumber
&\qquad\quad  + \mathcal{E}_s^b (\wt{\Lambda}_+^{(n+1)}(\tau),\wt{\Lambda}_-^{(n+1)}(\tau))
   \cdot \big(\mathcal{E}_s^b (\wt{\Lambda}_+^{(n)}(\tau),\wt{\Lambda}_-^{(n)}(\tau))\big)^{\f12} \Big)\d\tau ,
\end{align}
%For the detailed calculation of \eqref{R14},
where $\mathcal{F}: \BR^+\rightarrow\BR^+$ is a generic non-decreasing function.
We also refer to Wang-Xin \cite{Wang_20} (taking the resistivity and the surface tension to be zero), Gu \cite{Gu_17} etc.
for a similar calculation of \eqref{R14}.

Next, apply the derivative
$\langle\p_1\rangle^{s-\f12}  $ to \eqref{R10}
and then taking the inner product with $\p_t \langle\p_1\rangle^{s-\f12} f^{(n+1)}$,
by integration by parts, the Sobolev inequalities, the trace theorem, commutator estimate, it implies
\begin{align}\label{R15}
&   \mathcal{E}_s^f (f^{(n+1)})
   \leq \mathcal{E}_s^f (v^0,f^0)\\\nonumber
&\quad   +C \int_0^t
\mathcal{F}(1+\|f^{(n)}(\tau)\|_{L^\infty(\BR)}+\|\p_1 f^{(n)}(\tau)\|_{H^{s-\f12}(\BR)})\\\nonumber
&\qquad\quad  \Big(\mathcal{E}_s^f (f^{(n+1)}(\tau))\cdot
\big(\mathcal{E}_s^{\f12} (\wt{\Lambda}_+^{(n)}(\tau),\wt{\Lambda}_-^{(n)}(\tau), f^{(n)}(\tau))
+\mathcal{E}_s (\wt{\Lambda}_+^{(n)}(\tau),\wt{\Lambda}_-^{(n)}(\tau), f^{(n)}(\tau))
\big)\\\nonumber
&\qquad\quad +\big(\mathcal{E}_s^f (f^{(n+1)}(\tau))\big)^{\f12}
   \| \nabla \wt{p}^{(n+1)} (\tau)\|_{H^s(\Om_L)} \Big) \d\tau .
\end{align}
We also refer to Sun-Wang-Zhang \cite{SWZ18, SWZ19} for a similar calculation of \eqref{R15}.

On the other hand,
by the expression for the one dimensional wave equation for \eqref{R10}, we obtain
\begin{align}\label{R16}
&\| f^{(n+1)}(t,\cdot)\|_{L^\infty(\BR)}
\leq \| f_0 \|_{L^\infty(\BR)} +t\| v^0\|_{L^\infty(\BR)}\\\nonumber
&\quad +t \sup_{0\leq \tau\leq t}\mathcal{E}_s^{\f12} (f^{(n+1)}(\tau))
\cdot\Big(\mathcal{E}_s^{\f12} (\wt{\Lambda}_+^{(n)}(\tau),\wt{\Lambda}_-^{(n)}(\tau))
+\mathcal{E}_s (\wt{\Lambda}_+^{(n)}(\tau),\wt{\Lambda}_-^{(n)}(\tau))
\Big)\\\nonumber
&\quad+t \sup_{0\leq \tau\leq t} \mathcal{F}(1+\|f^{(n)}(\tau)\|_{L^\infty}+\|\p_1 f^{(n)}(\tau)\|_{H^{s-\f12}(\BR)})
\| \nabla \wt{p}^{(n+1)} (\tau)\|_{H^s(\Om_L)} .
\end{align}

For the pressure $ \wt{p}^{(n+1)}$ of \eqref{R12},
by choosing $\rho\leq \f18\delta (A_1\tilde{C})^{-1}$, we have
 $|\p_1\psi^{(n)}|\leq \|f^{(n)}(t,\cdot)\|_{L^\infty}+\rho\tilde{C}\|\p_1f^{(n)}(t,\cdot) \|_{H^1(\bR)}\leq 1-\f14\delta$, then $\f14\delta\leq J_n=1+\p_2\psi^{(n)}\leq 2-\f14\delta$.
Similar to the estimate of $ \wt{p}$ in Lemma \ref{lemPreL}, we obtain that
\begin{align}\label{R18}
\|\wt{p}^{(n+1)}\|^2_{H^{s+1}(\Om_L)}
&\leq \mathcal{F}(1+\|f^{(n)}\|_{L^\infty(\BR)}+\|\p_1 f^{(n)}\|_{H^{s-\f12}(\BR)})\\ \nonumber
&\quad\cdot \|\wt{\Lambda}_+^{(n+1)}\|_{H^s(\Om_L)}  \|\wt{\Lambda}_-^{(n)}\|_{H^s(\Om_L)}.
\end{align}

Combining \eqref{R14}-\eqref{R18},
for $t\leq \f14$, we obtain
\begin{align*} %\label{R19}
&\mathcal{E}_s (\wt{\Lambda}_+^{(n+1)}(t),\wt{\Lambda}_-^{(n+1)}(t), f^{(n+1)}(t)) \\\nonumber
& \leq \f32 A_1+Ct \sup_{0\leq \tau\leq t}\Big(
\mathcal{F}(1+\|f^{(n)}\|_{L^\infty(\BR)}+\mathcal{E}_s (\wt{\Lambda}_+^{(n)},\wt{\Lambda}_-^{(n)}, f^{(n)}))
\cdot \mathcal{E}_s (\wt{\Lambda}_+^{(n+1)},\wt{\Lambda}_-^{(n+1)}, f^{(n+1)})
\Big),\\\nonumber
&\| f^{(n+1)}(t,\cdot)\|_{L^\infty(\BR)}
\leq \| f_0 \|_{L^\infty(\BR)} +t\| v^0\|_{L^\infty(\BR)}\\\nonumber
&\quad+Ct \sup_{0\leq \tau\leq t}\Big(
%\mathcal{F}(\mathcal{E}_s (\wt{\Lambda}_+^{(n)},\wt{\Lambda}_-^{(n)}, f^{(n)}))
\mathcal{F}(1+\|f^{(n)}\|_{L^\infty(\BR)}+\mathcal{E}_s (\wt{\Lambda}_+^{(n)},\wt{\Lambda}_-^{(n)}, f^{(n)}))
\cdot \mathcal{E}_s (\wt{\Lambda}_+^{(n+1)},\wt{\Lambda}_-^{(n+1)}, f^{(n+1)})
\Big).
\end{align*}
Thus we obtain
\begin{align}
\label{R20}
&\sup_{0\leq t\leq T} \mathcal{E}_s^{(n+1)}(t)\leq \f32 A_1+T\cdot
\sup_{0\leq t\leq T} \mathcal{E}_s^{(n+1)}(t)
\sup_{0\leq t\leq T}  \mathcal{F}\big(1+\|f^{(n)}(t)\|_{L^\infty(\BR)}+\mathcal{E}_s^{(n)}(t)\big) ,\\
\label{R21}
&\sup_{0\leq t\leq T}\| f^{(n+1)}(t,\cdot)\|_{L^\infty(\BR)}
\leq \| f_0 \|_{L^\infty(\BR)} +T  CA_1\\\nonumber
&\qquad\quad +T\cdot
\sup_{0\leq t\leq T} \mathcal{E}_s^{(n+1)}
\sup_{0\leq t\leq T}  \mathcal{F}\big(1+\|f^{(n)}\|_{L^\infty(\BR)}+\mathcal{E}_s^{(n)}\big).
\end{align}
Taking $T=\min\{\f14 \big({\mathcal{F}(2+2A_1)}\big)^{-1} ,\f14 ,\f12(CA_1)^{-1}\delta , \f12 (A_1\mathcal{F}(2+2A_1))^{-1}\delta \} $, \eqref{R20} yields  $\eqref{R13}_1$, \eqref{R21} yields  $\eqref{R13}_3$. Combining $\eqref{R13}_1$, $\eqref{R13}_3$ and  \eqref{R18} yields $\eqref{R13}_2$.
%\begin{align}\label{R21}
%\sup_{0\leq t\leq T} \mathcal{E}_s^{(n+1)}\leq 2 A_1 .
%\end{align}
\end{proof}

\subsection{Convergence of the approximate solutions}

Let us show that $\{ \wt{\Lambda}_+^{(n)},\wt{\Lambda}_-^{(n)}, f^{(n)}, \wt{p}^{(n)}\}$ is a Cauchy sequence in some Sobolev spaces.
Indeed, it follows from \eqref{R10} and \eqref{R11} that
\begin{equation}\label{R31}
\begin{cases}
\p_t \big(\wt{\Lambda}_\pm^{(n+1)}-\wt{\Lambda}_\pm^{(n)} \big)
+A_n\wt{Z}_\mp^{(n)}\cdot \nabla \big(\wt{\Lambda}_\pm^{(n+1)}-\wt{\Lambda}_\pm^{(n)} \big) \\[-4mm]\\
\quad +\big(A_n\wt{Z}_\mp^{(n)}-A_{n-1}\wt{Z}_\mp^{(n-1)}\big)\cdot\nabla  \wt{\Lambda}_\pm^{(n)}% \\[-4mm]\\
\quad + A^\top_n \nabla \wt{p}^{(n)}-A^\top_{n-1}\nabla \wt{p}^{(n-1)}   \\[-4mm]\\
=(0,\p_t\psi^{(n)}/J_n)\cdot \nabla \big(\wt{\Lambda}_\pm ^{(n+1)}-\wt{\Lambda}_\pm ^{(n)} \big)\\[-4mm]\\
\quad+(0,\p_t\psi^{(n)}/J_n-\p_t\psi^{(n-1)}/J_{n-1})\cdot \nabla \wt{\Lambda}_\pm ^{(n)},  \\[-4mm]\\
\div^{\Psi_n} \big( \wt{\Lambda}_\pm^{(n+1)}-\wt{\Lambda}_\pm^{(n)} \big)
+\big(\div^{\Psi_n}-\div^{\Psi_{n-1}} \big) \wt{\Lambda}_\pm^{(n)}=0, \\[-4mm]\\
\wt{\Lambda}_+^{(n+1)} - \wt{\Lambda}_+^{(n)} =0, \quad
\wt{\Lambda}_-^{(n+1)} - \wt{\Lambda}_-^{(n)} =0,
\end{cases}
\text{in } \Omega_L.
\end{equation}
and
\begin{equation} \label{R32}
\begin{cases}
(\p_t^2-\p_1^2) \big(f^{(n+1)}-f^{(n)} \big)+ ({\ud{\wt{\Lambda}_+^1}^{(n)}}+{\ud{\wt{\Lambda}_-^1}^{(n)}})\p_t\p_1 \big(f^{(n+1)}-f^{(n)} \big) \\[-4mm]\\
+ ({\ud{\wt{\Lambda}_+^1}^{(n)}}+{\ud{\wt{\Lambda}_-^1}^{(n)}} -{\ud{\wt{\Lambda}_+^1}^{(n-1)}}-{\ud{\wt{\Lambda}_-^1}^{(n-1)}} )\p_t\p_1 f^{(n)} \\[-4mm]\\
+\big( {\ud{\wt{\Lambda}_-^1}^{(n)}}-{\ud{\wt{\Lambda}_+^1}^{(n)}}+ {\ud{\wt{\Lambda}_-^1}^{(n)}} {\ud{\wt{\Lambda}_+^1}^{(n)}}\big)
 \p_1^2 \big( f^{(n+1)}-f^{(n)} \big)
  \\[-4mm]\\
+\big( {\ud{\wt{\Lambda}_-^1}^{(n)}}-{\ud{\wt{\Lambda}_+^1}^{(n)}}+ {\ud{\wt{\Lambda}_-^1}^{(n)}} {\ud{\wt{\Lambda}_+^1}^{(n)}}
  - {\ud{\wt{\Lambda}_-^1}^{(n-1)}}+{\ud{\wt{\Lambda}_+^1}^{(n-1)}}- {\ud{\wt{\Lambda}_-^1}^{(n-1)}} {\ud{\wt{\Lambda}_+^1}^{(n-1)}}\big)
 \p_1^2  f^{(n)}
  \\[-4mm]\\
%\quad= -N^{(n)}\cdot \ud{\nabla p}^{(n+1)}+N^{(n-1)}\cdot \ud{\nabla p}^{(n)} \, , \\[-4mm]\\
\quad= -N_{f^{(n)}} \cdot \big(\ud{\nabla p}^{(n+1)}-\ud{\nabla p}^{(n)}\big)
-(N_{f^{(n)}}-N_{f^{(n-1)}})\cdot \ud{\nabla p}^{(n)} \, , \\[-4mm]\\
\big(f^{(n+1)}-f^{(n)} \big) (0) = 0,\big(\p_tf^{(n+1)}-\p_tf^{(n)} \big) (0)= 0,
\end{cases}
\text{in } \BR.
\end{equation}

Similar to the estimate for \eqref{R14}-\eqref{R18}, for $s'=s-1$,
%(or $s'=s-2$)
we treat the $H^{s'}(\Om_L)$ energy estimate for \eqref{R31} and the $H^{s'+\f12}(\BR)$
energy estimate for \eqref{R32} with slight modifications. For $0<T'\leq T$, we conclude that
\begin{align}\label{R33}
&\sup_{[0,T']}\mathcal{E}_{s'} (\wt{\Lambda}_+^{(n+1)}-\wt{\Lambda}_+^{(n)},
\wt{\Lambda}_-^{(n+1)}-\wt{\Lambda}_-^{(n)}, f^{(n+1)}-f^{(n)}) \\\nonumber
&\leq T' \mathcal{F}(2+2A_1) \sup_{[0,T']}
\Big(\mathcal{E}_{s'} (\wt{\Lambda}_+^{(n+1)}-\wt{\Lambda}_+^{(n)},
\wt{\Lambda}_-^{(n+1)}-\wt{\Lambda}_-^{(n)}, f^{(n+1)}-f^{(n)}) \\\nonumber
&\qquad\quad +\mathcal{E}_{s'} (\wt{\Lambda}_+^{(n)}-\wt{\Lambda}_+^{(n-1)},
\wt{\Lambda}_-^{(n)}-\wt{\Lambda}_-^{(n-1)}, f^{(n)}-f^{(n-1)})
+\|\wt{p}^{(n+1)}-\wt{p}^{(n)}\|^2_{H^{s+1}(\Om_L)}\Big)
\end{align}
and
%\begin{align}\label{R34}
%&\| f^{(n+1)}(t,\cdot)-f^{(n)}(t,\cdot)\|_{L^\infty(\BR)} \\\nonumber
%&\leq \sup_{[0,T']}\mathcal{E}_{s'} (\wt{\Lambda}_+^{(n)}-\wt{\Lambda}_+^{(n-1)},
%\wt{\Lambda}_-^{(n)}-\wt{\Lambda}_-^{(n-1)}, f^{(n)}-f^{(n-1)}) \\\nonumber
%&\quad +TC \| f^{(n+1)}(t,\cdot)-f^{(n)}(t,\cdot)\|_{L^\infty(\BR)}.
%\end{align}
\begin{align}\label{R34}
&\|\wt{p}^{(n+1)}-\wt{p}^{(n)}\|^2_{H^{s+1}(\Om_L)} \\ \nonumber
&\leq \mathcal{F}(2+2A_1)
\cdot \big(\mathcal{E}_{s'} (\wt{\Lambda}_+^{(n+1)}-\wt{\Lambda}_+^{(n)},
\wt{\Lambda}_-^{(n+1)}-\wt{\Lambda}_-^{(n)}, f^{(n+1)}-f^{(n)}) \\[-5mm]\nonumber\\\nonumber
&\quad +\mathcal{E}_{s'} (\wt{\Lambda}_+^{(n)}-\wt{\Lambda}_+^{(n-1)},
\wt{\Lambda}_-^{(n)}-\wt{\Lambda}_-^{(n-1)}, f^{(n)}-f^{(n-1)}) \big)
\end{align}
and
\begin{align}\label{R33-2}
&\sup_{[0,T']}\|f^{(n+1)}-f^{(n)}\|^2_{L^\infty(\bR)} \\\nonumber
&\leq T'^2 \mathcal{F}(2+2A_1) \sup_{[0,T']}
\Big(\mathcal{E}_{s'} (\wt{\Lambda}_+^{(n+1)}-\wt{\Lambda}_+^{(n)},
\wt{\Lambda}_-^{(n+1)}-\wt{\Lambda}_-^{(n)}, f^{(n+1)}-f^{(n)}) \\\nonumber
&\qquad\quad +\mathcal{E}_{s'} (\wt{\Lambda}_+^{(n)}-\wt{\Lambda}_+^{(n-1)},
\wt{\Lambda}_-^{(n)}-\wt{\Lambda}_-^{(n-1)}, f^{(n)}-f^{(n-1)})
+\|\wt{p}^{(n+1)}-\wt{p}^{(n)}\|^2_{H^{s+1}(\Om_L)}\Big),
\end{align}

Taking $T' \mathcal{F}(A_1)\leq \f13$, \eqref{R33} and \eqref{R34} yield
\begin{align}\label{R35}
&\sup_{[0,T']}\mathcal{E}_{s'} (\wt{\Lambda}_+^{(n+1)}-\wt{\Lambda}_+^{(n)},
\wt{\Lambda}_-^{(n+1)}-\wt{\Lambda}_-^{(n)}, f^{(n+1)}-f^{(n)}) \\\nonumber
&\leq \f12 \sup_{[0,T']}
\mathcal{E}_{s'} (\wt{\Lambda}_+^{(n)}-\wt{\Lambda}_+^{(n-1)},
\wt{\Lambda}_-^{(n)}-\wt{\Lambda}_-^{(n-1)}, f^{(n)}-f^{(n-1)}).
\end{align}

We are now ready to state and prove the local well-posedness result for \eqref{R0} and \eqref{R1}.
\begin{thm}\label{thm2}
Let $s\ge 4$  be an integer, $A_1>0, 0<\delta<\f12$. %$\kappa>0$ and $\sigma>0$ and
%Assume that $\wt{\Lambda}_\pm\in H^s(\Omega_L)$  and
%$f_0\in H^{s+\f12}(\BR )$
%Assume $(\Lambda_+^0, \Lambda_-^0) \in H^s(\Omega_{f^0})$,
Assume that $\wt{\Lambda}^0_\pm\in H^s(\Omega_L)$,
%$f^0 \in  H^{s+\frac12}(\BR)$,
$f^0 \in  \dot{H}^1(\BR) \cap \dot{H}^{s+\frac12}(\BR) \cap L^{\infty}(\BR)$
and $v^0 \in  H^{s-\frac12}(\BR)  $
are given such that $\mathcal{E}_s^b (\wt{\Lambda}_+^0,\wt{\Lambda}_-^0)+\mathcal{E}_s^f (v^0, f^0)\leq A_1$,
 $\|f^0\|_{L^\infty(\bR)}\leq 1-2\delta$
  and  the  compatibility conditions \eqref{R-com}  are satisfied.
There exist universal positive constants $\delta_0$ and $T_0$ depending only on $A_1$ such that if $\mathcal{E}_s (0) \le \delta_0$ and $0<T\leq T_0$,  then there exists a  unique solution $(\Lambda_+,\Lambda_-,p,f )$  to  \eqref{R0} and \eqref{R1} on $[0,T]$ satisfying
\begin{equation*} %\label{bound110}
\sup_{t\in [0,T]}\mathcal{E}_s(t) \lesssim \mathcal{E}_s(0),\, \sup_{t\in[0,T]}\| f(t,\cdot)\|_{L^\infty} \leq 1-\delta.
\end{equation*}
\end{thm}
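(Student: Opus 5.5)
Theorem \ref{thm2} follows by the method of successive approximations set up in the preceding subsections. We combine the uniform bounds \eqref{R13} with the contraction \eqref{R35} in the lower norm, and then recover the top regularity by weak compactness.

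\textbf{Existence.} Fix $T_0\le T$, where $T$ is the time from the preceding lemma. By \eqref{R13}, for all $n$ the iterates $(\wt{\Lambda}_\pm^{(n)},f^{(n)},\wt{p}^{(n)})$ are bounded in $C([0,T_0];H^s(\Om_L))\times C([0,T_0];H^{s+\f12}\times H^{s-\f12})\times C([0,T_0];H^{s+1}(\Om_L))$ by a constant depending only on $A_1$. They also satisfy $\|f^{(n)}\|_{L^\infty}\le 1-\delta$, so each $\Psi_{f^{(n)}}$ is a diffeomorphism with $\f14\delta\le J_n\le 2-\f14\delta$.

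Shrinking $T_0$ so that $T_0\mathcal{F}(2+2A_1)\le\f13$ and $T_0^2\mathcal{F}(2+2A_1)$ is small, we get the following from \eqref{R33}--\eqref{R35} and \eqref{R33-2}. The differences decay geometrically in $\mathcal{E}_{s-1}$, in $L^\infty$ for $f$, and in the pressure norm given by \eqref{R34}. Hence $\{\wt{\Lambda}_\pm^{(n)},f^{(n)},\wt{p}^{(n)}\}$ is Cauchy in $C([0,T_0];H^{s-1})\times C([0,T_0];H^{s-\f12}\times H^{s-\f32})\times C([0,T_0];H^{s})$, and we take the strong limit $(\wt{\Lambda}_\pm,f,\wt{p})$.

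By interpolation with the uniform $H^s$ bounds, the convergence also holds in every intermediate norm $H^{s-\theta}$ with $\theta>0$. Since $s\ge4$, this is more than enough to pass to the limit in every nonlinear term of \eqref{R10}--\eqref{R12}: the traces $\ud{\wt{\Lambda}^1_\pm}^{(n)}$, the coefficients $A_n$ and $J_n$, $\p_t\psi^{(n)}$, and $N_{f^{(n)}}\cdot\ud{\nabla p}^{(n+1)}$. The limit therefore solves \eqref{R0}, \eqref{R1} together with the divergence constraints and boundary conditions. The divergence constraint and the relation $\p_tf=\ud{Z_\pm}\cdot N_f$ are propagated from the compatibility conditions \eqref{R-com}, through the transport arguments of the Remark in Section 1 and Lemma \ref{lemf}.

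\textbf{Regularity and the bound.} By weak-$*$ lower semicontinuity, the limit lies in $L^\infty([0,T_0];H^s)$ with $\mathcal{E}_s\le 2A_1$ and $\|f\|_{L^\infty}\le 1-\delta$. Redoing the energy inequalities \eqref{R14}--\eqref{R18} directly on the limit solution (now without any index shift), and using the smallness $\mathcal{E}_s(0)\le\delta_0$, a Gronwall argument on $[0,T_0]$ gives $\sup_t\mathcal{E}_s(t)\lesssim\mathcal{E}_s(0)$. Continuity in time with values in $H^s$ follows by the standard argument: weak continuity, plus continuity of the energy norm from the energy identity.

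\textbf{Uniqueness.} Given two solutions with the same data, take their difference and perform the same $H^{s-1}$ difference estimate that produced \eqref{R33}--\eqref{R34}; the only change is that the index shift $n\to n-1$ disappears. This gives $\sup_{[0,T']}\mathcal{E}_{s-1}(\text{diff})\le CT'\sup_{[0,T']}\mathcal{E}_{s-1}(\text{diff})$, so the difference vanishes on a short interval, and iterating covers $[0,T_0]$.

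\textbf{Main obstacle.} The delicate point is the difference estimate itself, in particular the pressure difference \eqref{R34}. The domains differ through $\Psi_{f^{(n)}}$ versus $\Psi_{f^{(n-1)}}$, so $(\div^{\Psi_n}-\div^{\Psi_{n-1}})\wt{\Lambda}^{(n)}$ and $A_n-A_{n-1}$ involve $f^{(n)}-f^{(n-1)}$. These terms must be controlled in $H^{s-\f12}$ via Lemma \ref{lemTR} without any loss of derivatives. Closing this step requires placing $f$ half a derivative above $\Lambda$ in the difference norm, and using the elliptic gain in Lemma \ref{lemPreL} so that the pressure difference is bounded by differences of $\wt{\Lambda}$ and $f$ at the same lowered level.
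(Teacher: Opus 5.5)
Your overall scheme is the paper's. You use the uniform bounds \eqref{R13} and the low-norm contraction \eqref{R33}--\eqref{R35}, including the $L^\infty$ control of $f^{(n+1)}-f^{(n)}$. You then take the strong limit in \eqref{R10}--\eqref{R12} and get uniqueness by rerunning the difference estimate without the index shift. Your interpolation and lower-semicontinuity remarks only fill in what the paper asserts.

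One step, however, is justified circularly. Lemma \ref{lemf} is proved \emph{using} $(\p_t+\ud{Z_\pm^1}\p_1)f=\ud{Z_\pm^2}$. The boundary part of the Remark in Section 1 likewise takes $\Ga_f$ to be transported by the fluid. So neither can be used to propagate $\p_t f=\ud{Z_\pm}\cdot N_f$, and the limit only solves \eqref{R0}--\eqref{R1}, where this relation is not built in.

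The paper settles this after the proof, in \eqref{RR1}--\eqref{RR5}. Using \eqref{R0} and the trace of the Eulerian equations, the defects $Q_\pm:=(\p_t+\ud{Z_\pm^1}\p_1)f-\ud{Z_\pm^2}$ satisfy the closed linear system $(\p_t+\ud{Z_\mp^1}\p_1)Q_\pm=(\ud{\p_2 Z_\pm^1}\,\p_1 f-\ud{\p_2 Z_\pm^2})\,Q_\mp$. They vanish at $t=0$ by \eqref{R-com}, hence vanish identically by an $L^2$ estimate and Gronwall.

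This matters for your route to $\sup_t\mathcal{E}_s(t)\lesssim\mathcal{E}_s(0)$. Redoing the top-order energy estimate on the limit in $\Om_L$ produces a flux on $\Gamma$ with density $(\p_t f-\ud{Z_\mp}\cdot N_f)\,|\p^\alpha\wt{\La}_\pm|^2$. That flux is not controlled by $H^s$ and vanishes only once the kinematic relation is established. The same applies to your ``continuity of the energy norm from the energy identity.'' The divergence constraint, by contrast, is part of \eqref{R1} and passes to the limit directly, so it needs no propagation.

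Alternatively, you can read off $\sup_t\mathcal{E}_s(t)\le 2\mathcal{E}_s(0)$ from \eqref{R13} applied with $A_1$ replaced by $\mathcal{E}_s(0)$, together with weak-$*$ lower semicontinuity. This works because the time in that lemma is nonincreasing in $A_1$, and it avoids the flux issue in your argument.
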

\begin{proof}[Proof of Theorem \ref{thm2}]
%Take  $\delta_0=\tilde \delta_2$ and $T_0=T_2$ as given in Proposition  \ref{ }.
Let $\{(\wt{\Lambda}_+^{(n)},\wt{\Lambda}_-^{(n)}, f^{(n)}, \wt{p}^{(n)})\}_{n=1}^\infty $
 on $[0,T]$ with $0<T\le T_0$ be  the sequence constructed in \eqref{R10} and \eqref{R11}.
The uniform estimate  \eqref{R13}
and the contractive estimate \eqref{R34}-\eqref{R35}
  show that the whole sequence  $\{(\wt{\Lambda}_+^{(n)},\wt{\Lambda}_-^{(n)}, f^{(n)}, \wt{p}^{(n)})\} $ converges strongly to the limit
$\{(\wt{\Lambda}_+,\wt{\Lambda}_-, f, \wt{p})\}$  % in the  norms of $C^2$,
which is sufficient for passing to the limit in  \eqref{R10} and \eqref{R11}.  Then one finds that the limit $\{(\wt{\Lambda}_+,\wt{\Lambda}_-, f, \wt{p})\}$ is a
strong solution to \eqref{R0} and \eqref{R1} on $[0,T]$. The
uniqueness of   solutions to \eqref{R0} and \eqref{R1}
%satisfying \eqref{}
 follows  similarly to that in the proof of the contraction.
\end{proof}

Finally, we show the identity \eqref{A21} $\p_t f=\ud{Z_\pm}\cdot N_f$.
Due to the local existence of solutions to \eqref{R0}, \eqref{R1}
and the diffeomorphism of the inverse map $\Psi$, we obtain the MHD equations of the plasma in the
 Eulerian formulation \eqref{MHD1}.
Restricting \eqref{MHD1} to the free surface in the trace sense,  by chain rules, we derive
\begin{align}\label{RR1}
(\p_t +\ud{Z_+^1}\p_1)\ud{Z_-}
&=\ud{(\p_t +{Z_+^1}\p_1)Z_-}
+\ud{\p_2 Z_-}  (\p_t +\ud{Z_+^1}\p_1)f \\\nonumber
&=-\ud{{Z_+^2}\p_2 Z_-}-\ud{\nabla p}
+\ud{\p_2 Z_-}  (\p_t +\ud{Z_+^1}\p_1)f\\\nonumber
&=-\ud{\nabla p}+\ud{\p_2 Z_-} \big( (\p_t +\ud{Z_+^1}\p_1)f- \ud{Z_+^2}\big).
\end{align}
On the other hand, there naturally holds
\begin{align} \label{RR2}
&(\p_t + \ud{Z_+^1}\p_1)(\p_t + \ud{Z_-^1}\p_1)f \\ \nonumber
&=\p_t^2 f+(\ud{Z_+^1}+\ud{Z_-^1})
\p_1\p_t f
+(\ud{Z_-^1}\ud{Z_+^1})\p_1^2 f\\ \nonumber
&\quad+(\p_t + \ud{Z_+^1}\p_1)\ud{Z_-^1}\p_1f.
\end{align}
Plugging \eqref{RR1} into \eqref{RR2} yields
\begin{align*} %\label{RR3}
&(\p_t + \ud{Z_+^1}\p_1)(\p_t + \ud{Z_-^1}\p_1)f \\ \nonumber
&=\p_t^2 f+(\ud{Z_+^1}+\ud{Z_-^1})
\p_1\p_t f
+(\ud{Z_-^1}\ud{Z_+^1})\p_1^2 f\\ \nonumber
&\quad+(\p_t + \ud{Z_+^1}\p_1)\ud{Z_-^1}\p_1f \\ \nonumber
&=-N_f \cdot \ud{\nabla p}
+\big(-\ud{{Z_+^2}\p_2 Z_-^1}-\ud{\p_1 p}
+\ud{\p_2 Z_-^1}  (\p_t +\ud{Z_+^1}\p_1)f \big)\p_1f \\ \nonumber
&=- \ud{\p_2 p}
+\ud{\p_2 Z_-^1} \big( (\p_t +\ud{Z_+^1}\p_1)f -\ud{Z_+^2} \big)\p_1f.
\end{align*}
In the above deduction, we have used \eqref{R0}.
Consequently, we calculate
\begin{align} \label{RR4}
&(\p_t + \ud{Z_+^1}\p_1)\big((\p_t + \ud{Z_-^1}\p_1)f-\ud{Z_-^2} \big)\\ \nonumber
&=- \ud{\p_2 p}
+\ud{\p_2 Z_-^1} \big( (\p_t +\ud{Z_+^1}\p_1)f -\ud{Z_+^2} \big)\p_1f \\\nonumber
&\quad -\Big( -\ud{\p_2 p}+\ud{\p_2 Z_-^2} \big( (\p_t +\ud{Z_+^1}\p_1)f- \ud{Z_+^2}\big) \Big)\\\nonumber
&=\big(\ud{\p_2 Z_-^1} \p_1f- \ud{\p_2 Z_-^2} \big) \big( (\p_t +\ud{Z_+^1}\p_1)f -\ud{Z_+^2} \big) .
%&\quad -\ud{\p_2 Z_-^2} \big( (\p_t +\ud{Z_+^1}\p_1)f- \ud{Z_+^2} \big) \\\nonumber
\end{align}
Similarly we obtain
\begin{align} \label{RR5}
&(\p_t + \ud{Z_-^1}\p_1)\big((\p_t + \ud{Z_+^1}\p_1)f-\ud{Z_+^2} \big)\\ \nonumber
&=\big(\ud{\p_2 Z_+^1} \p_1f- \ud{\p_2 Z_+^2} \big) \big( (\p_t +\ud{Z_-^1}\p_1)f -\ud{Z_-^2} \big) .
\end{align}
Then by Gronwall's inequality, the energy estimate for \eqref{RR4}, \eqref{RR5}   yields \eqref{A21}.

\section*{Acknowledgment.}
This work was supported by NSFC grant (No. 12571247).

\end{document}